\apptocmd{\thebibliography}{\raggedright}{}{}
\newcommand\Figure[1]{\centerline{\psfig{file=#1,scale=1}}}
\patchcmd{\@maketitle}{\global\topskip42\p@\relax}
  {\global\topskip42\p@\relax \vspace*{-38pt}}
  {}{}
\renewcommand*{\backref}[1]{}
\renewcommand*{\backrefalt}[4]{%
    \ifcase #1 (Not cited.)%
    \or        (Cited on page~#2.)%
    \else      (Cited on pages~#2.)%
    \fi}
\newcommand{\arxiv}[1]{\href{http://arxiv.org/abs/#1}{{\tt arXiv:#1}}}
\newcommand*{\Cdot}[1][1.25]{%
  \mathpalette{\CdotAux{#1}}\cdot%
}
\newdimen\CdotAxis
\newcommand*{\CdotAux}[3]{%
  {%
    \settoheight\CdotAxis{$#2\vcenter{}$}%
    \sbox0{%
      \raisebox\CdotAxis{%
        \scalebox{#1}{%
          \raisebox{-\CdotAxis}{%
            $\mathsurround=0pt #2#3$%
          }%
        }%
      }%
    }%
    \dp0=0pt %
    \sbox2{$#2\bullet$}%
    \ifdim\ht2<\ht0 %
      \ht0=\ht2 %
    \fi
    \sbox2{$\mathsurround=0pt #2#3$}%
    \hbox to \wd2{\hss\usebox{0}\hss}%
  }%
}
\numberwithin{equation}{section}
\theoremstyle{plain}
\newtheorem{theorem}{Theorem}[section]
\newtheorem{maintheorem}{Theorem}
\newtheorem{maincorollary}[maintheorem]{Corollary}
\newtheorem{maintheoremprime}{Theorem}
\newtheorem{proposition}[theorem]{Proposition}
\newtheorem{lemma}[theorem]{Lemma}
\newtheorem{corollary}[theorem]{Corollary}
\newtheorem*{unnumberedclaim}{Claim}
\newenvironment{step}[1]
 {\stepx}
 {\endstepx}
\newenvironment{case}[1]
 {\casex}
 {\endcasex}
\newenvironment{claim}[1]
 {\claimx}
 {\endclaimx}
\theoremstyle{definition}
\newtheorem{asm}[theorem]{Assumption}
\newenvironment{assumption}[1][]{\begin{asm}[#1]\pushQED{\qed}}{\popQED \end{asm}}
\newtheorem{defn}[theorem]{Definition}
\newenvironment{definition}[1][]{\begin{defn}[#1]\pushQED{\qed}}{\popQED \end{defn}}
\newtheorem{notn}[theorem]{Notation}
\theoremstyle{remark}
\newtheorem{rmk}[theorem]{Remark}
\newenvironment{remark}[1][]{\begin{rmk}[#1] \pushQED{\qed}}{\popQED \end{rmk}}
\newtheorem{eg}[theorem]{Example}
\newenvironment{example}[1][]{\begin{eg}[#1] \pushQED{\qed}}{\popQED \end{eg}}
\newtheorem{cvn}[theorem]{Convention}
\theoremstyle{plain}
\DeclareMathOperator{\Hom}{Hom}
\DeclareMathOperator{\coker}{coker}
\DeclareMathOperator{\Coker}{coker}
\DeclareMathOperator{\Image}{Im}
\DeclareMathOperator{\GL}{GL}
\DeclareMathOperator{\KL}{KL}
\DeclareMathOperator{\SL}{SL}
\DeclareMathOperator{\Sp}{Sp}
\DeclareMathOperator{\Mat}{Mat}
\newcommand\C{\ensuremath{\mathbb{C}}}
\newcommand\Z{\ensuremath{\mathbb{Z}}}
\newcommand\Q{\ensuremath{\mathbb{Q}}}
\DeclareMathOperator{\HH}{H}
\newcommand\RH{\ensuremath{\widetilde{\HH}}}
\DeclareMathOperator{\CC}{C}
\DeclareMathOperator{\Ind}{Ind}
\DeclareMathOperator{\Res}{Res}
\DeclareMathOperator{\Int}{Int}
\DeclareMathOperator{\Char}{char}
\DeclareMathOperator{\Sym}{Sym}
\DeclareMathOperator{\id}{id}
\newcommand\Span[1]{\ensuremath{\langle #1 \rangle}}
\newcommand\SpanSet[2]{\ensuremath{\langle \text{#1 $|$ #2} \rangle}}
\newcommand\Set[2]{\ensuremath{\left\{\text{#1 $|$ #2}\right\}}}
\newcommand\GroupPres[2]{\ensuremath{\left\langle \text{#1 $|$ #2} \right\rangle}}
\newcommand\cC{\ensuremath{\mathcal{C}}}
\newcommand\cK{\ensuremath{\mathcal{K}}}
\newcommand\fK{\ensuremath{\mathfrak{K}}}
\newcommand\fc{\ensuremath{\mathfrak{c}}}
\newcommand\fd{\ensuremath{\mathfrak{d}}}
\newcommand\fh{\ensuremath{\mathfrak{h}}}
\newcommand\bL{\ensuremath{\mathbf{L}}}
\newcommand\bU{\ensuremath{\mathbf{U}}}
\newcommand\bV{\ensuremath{\mathbf{V}}}
\newcommand\bW{\ensuremath{\mathbf{W}}}
\newcommand\bk{\ensuremath{\mathbf{k}}}
\newcommand\bbF{\ensuremath{\mathbb{F}}}
\newcommand\bbI{\ensuremath{\mathbb{I}}}
\newcommand\te{\ensuremath{\widetilde{e}}}
\newcommand\tv{\ensuremath{\widetilde{v}}}
\newcommand\tsigma{\ensuremath{\widetilde{\sigma}}}
\newcommand\tphi{\ensuremath{\widetilde{\phi}}}
\newcommand\okappa{\ensuremath{\overline{\kappa}}}
\newcommand\oomega{\ensuremath{\overline{\omega}}}
\newcommand\Mod{\ensuremath{\operatorname{Mod}}}
\newcommand\Torelli{\ensuremath{\mathcal{I}}}
\newcommand\tTorelli{\ensuremath{\widetilde{\Torelli}}}
\newcommand\diag{\ensuremath{\operatorname{diag}}}
\newcommand\LLComm[1]{\ensuremath{\langle\!\langle #1 \rangle\!\rangle}}
\newcommand\LLAComm[1]{\ensuremath{\langle\!\langle #1 \rangle\!\rangle}_{\alpha}}
\newcommand\LLBComm[1]{\ensuremath{\langle\!\langle #1 \rangle\!\rangle}_{\beta}}
\newcommand\Pres[1]{\ensuremath{\llbracket #1 \rrbracket}}
\newcommand\pt{\ensuremath{\operatorname{pt}}}
\newcommand\cone{\ensuremath{\operatorname{Cone}}}
\newcommand\ssE{\ensuremath{\operatorname{E}}}
\newcommand\ssF{\ensuremath{\operatorname{F}}}
\title{The second rational homology of the Torelli group}
\author{Daniel Minahan}
\address{Dept of Mathematics; University of Chicago; Chicago, IL 60637}
\email{dminahan@uchicago.edu}
\author{Andrew Putman}
\address{Dept of Mathematics; University of Notre Dame; 255 Hurley Hall; Notre Dame, IN 46556}
\email{andyp@nd.edu}
\thanks{AP was supported by NSF grant DMS-2305183.  DM was supported by NSF grant DMS-2402060.}
\begin{document}

\newpage

\begin{abstract}
We calculate the second rational homology group of the Torelli group for $g \geq 6$.
\end{abstract}

\maketitle
\thispagestyle{empty}

\section{Introduction}
\label{section:introduction}

Let $\Sigma_{g,p}^b$ be an oriented genus $g$ surface with $p$ marked points
and $b$ boundary components.  We often omit $p$ or $b$ if they vanish.  The
mapping class group $\Mod_{g,p}^b$ is
the group
of isotopy classes of orientation-preserving diffeomorphisms of $\Sigma_{g,p}^b$ that fix each marked point and boundary
component pointwise.  Deleting the marked points and gluing discs to the boundary components,
we get an action of $\Mod_{g,p}^b$ on $\HH_1(\Sigma_g)$ that fixes the algebraic intersection form.
This gives a surjection $\Mod_{g,p}^b \rightarrow \Sp_{2g}(\Z)$ whose
kernel $\Torelli_{g,p}^b$ is the Torelli group:
\[\begin{tikzcd}
1 \arrow{r} & \Torelli_{g,p}^b \arrow{r} & \Mod_{g,p}^b \arrow{r} & \Sp_{2g}(\Z) \arrow{r} & 1.
\end{tikzcd}\]
Johnson \cite{JohnsonFinite, JohnsonAbel} proved that $\Torelli_{g,p}^b$ is finitely generated for $g \geq 3$ and
calculated $\HH^1(\Torelli_{g,p}^b)$.  
The conjugation action of $\Mod_{g,p}^b$ on $\Torelli_{g,p}^b$
induces an action of $\Sp_{2g}(\Z)$ on each $\HH^d(\Torelli_{g,p}^b)$.  Let $H = \HH^1(\Sigma_g;\Q)$.
For $g \geq 3$, it follows from Johnson's work (see \cite[Theorem 3.5]{HainTorelli}) that there is
an $\Sp_{2g}(\Z)$-equivariant isomorphism\footnote{In this, the inclusion
$H \hookrightarrow \wedge^3 H$ takes $h \in H$ to $h \wedge \omega$, where $\omega \in \wedge^2 H$
is the algebraic intersection form.}
\[\HH^1(\Torelli_{g,p}^b;\Q) \cong H^{\oplus (p+b)} \oplus (\wedge^3 H)/H.\]
In particular, $\HH^1(\Torelli_{g,p}^b;\Q)$
is a finite-dimensional algebraic representation\footnote{A representation $\bV$ of $\Sp_{2g}(\Z)$ over a field
$\bk$ of characteristic $0$ is algebraic if the action of $\Sp_{2g}(\Z)$ on $\bV$ extends to
a polynomial representation of 
the $\bk$-points $\Sp_{2g}(\bk)$ of the algebraic group $\Sp_{2g}$.  Since
$\Sp_{2g}(\Z)$ is Zariski dense in $\Sp_{2g}(\bk)$, such an extension is unique if
it exists.}
of $\Sp_{2g}(\Z)$.

\subsection{Main theorem}
A long-standing folk conjecture\footnote{One place where this conjecture appears in print
is in work of Church--Farb; see \cite[Conjecture 1.7]{ChurchFarbAbelJacobi}.} says that $\HH^2(\Torelli_{g,p}^b;\Q)$ is also a finite-dimensional\footnote{We emphasize
that even finite-dimensionality was unknown before our work.}
algebraic representation of $\Sp_{2g}(\Z)$ for $g \gg 0$.  We prove this for $g \geq 6$.  When $p+b \leq 1$, we actually
compute $\HH^2(\Torelli_{g,p}^b;\Q)$.
The irreducible algebraic representations of $\Sp_{2g}(\Z)$ are indexed by partitions $\sigma$ with at most
$g$ parts (see \cite[\S 17]{FultonHarris}).  Let $\bV_{\sigma}$ be the representation corresponding to $\sigma$,
so $\bV_1 = H$ and $\bV_{1^3} = (\wedge^3 H)/H$.  We prove:

\begin{maintheorem}
\label{maintheorem:h2torellicalc}
For $g \geq 6$, we have
\begin{alignat*}{7}
&\HH^2(\Torelli_g;\Q)     &&\cong                         &&\bV_{1^2}            &&                   &&\oplus \bV_{1^4}            &&\oplus \bV_{2^2,1^2} &&\oplus \bV_{1^6}, \\
&\HH^2(\Torelli_g^1;\Q)   &&\cong                         &&\bV_{1^2}^{\oplus 2} &&\oplus \bV_{2,1^2} &&\oplus \bV_{1^4}^{\oplus 2} &&\oplus \bV_{2^2,1^2} &&\oplus \bV_{1^6}, \\
&\HH^2(\Torelli_{g,1};\Q) &&\cong \bV_0            \oplus &&\bV_{1^2}^{\oplus 2} &&\oplus \bV_{2,1^2} &&\oplus \bV_{1^4}^{\oplus 2} &&\oplus \bV_{2^2,1^2} &&\oplus \bV_{1^6}.
\end{alignat*}
In all three cases, these cohomology groups are spanned by cup products of elements of $\HH^1$.
\end{maintheorem}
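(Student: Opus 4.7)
The plan is to analyse the cup product map
\[\mu \colon \wedge^2 \HH^1(\Torelli_{g,*};\Q) \longrightarrow \HH^2(\Torelli_{g,*};\Q),\]
proving both that it is surjective (which is the final clause of the theorem) and that its kernel is an explicit $\Sp_{2g}(\Z)$-submodule whose complement realises the listed decomposition. Johnson's computation gives $\HH^1(\Torelli_{g,*};\Q)$ explicitly as an algebraic $\Sp_{2g}(\Z)$-representation, so $\wedge^2 \HH^1$ decomposes into irreducibles by standard Pieri-type manipulations for $\Sp_{2g}$; matching this decomposition against the statement pinpoints the conjectural kernel $R \subseteq \wedge^2 \HH^1$.

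To show that $R$ does lie in $\ker(\mu)$ — giving the upper bound on the image — I would invoke Hain's theorem, which (via mixed Hodge theory) states that the Malcev Lie algebra of $\Torelli_g$ is quadratically presented for $g \geq 3$, with relations forming a prescribed $\Sp_{2g}$-submodule of $\wedge^2 \HH^1$. Dualising gives $\mathrm{image}(\mu) \subseteq (\wedge^2 \HH^1)/R$, whose decomposition I would verify matches the list of $\bV_\sigma$ in the theorem. For the matching lower bound — nonvanishing of each claimed summand — I would exhibit explicit cup products of Johnson-type classes realising each irreducible, using the rich algebraic structure available in $\HH^1$.

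The main obstacle is proving surjectivity of $\mu$, equivalently that $\HH^2(\Torelli_{g,*};\Q)$ is algebraic, finite-dimensional, and contains no classes beyond those detected by Hain's presentation. My approach would be to combine the Hochschild--Serre spectral sequence
\[E_2^{p,q} = \HH^p\bigl(\Sp_{2g}(\Z); \HH^q(\Torelli_{g,*};\Q)\bigr) \Rightarrow \HH^{p+q}(\Mod_{g,*};\Q)\]
with vanishing of $\HH^p(\Sp_{2g}(\Z);\bV_\sigma)$ for nontrivial $\bV_\sigma$ in small $p$ (Borel, Franke) and the stable computation of $\HH^{\leq 2}(\Mod_{g,*};\Q)$ (Mumford, Harer, Madsen--Weiss). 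The delicate step is ruling out non-algebraic or infinite-dimensional pieces in $E_2^{0,2}$: for this I would analyse a highly connected simplicial complex on which $\Torelli_{g,*}$ acts with tractable stabilisers (for instance a Johnson-filtration complex, or a variant of the complex of cycles), and run its equivariant spectral sequence to bound $\HH^2(\Torelli_{g,*};\Q)$ from above. The genus bound $g \geq 6$ should come out of the connectivity estimates for this complex and the stability threshold at which the relevant Borel vanishing and stable $\Mod$-cohomology calculations apply. The core geometric work lies in selecting a complex whose connectivity and stabiliser structure make this upper bound match the lower bound produced by cup products.

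Finally, the three cases ($\Torelli_g$, $\Torelli_g^1$, $\Torelli_{g,1}$) should be linked by the Birman and capping-off exact sequences, and I would deduce two of them from the third by chasing low-degree cohomology and tracking how the $H^{\oplus(p+b)}$ summands of $\HH^1$ contribute new irreducibles to $\wedge^2 \HH^1$. This reduces the theorem, up to bookkeeping of Johnson-homomorphism decorations, to the closed or bounded case where Hain's presentation is simplest.
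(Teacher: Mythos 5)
Your proposal correctly identifies the ingredients around the cup product: Johnson's computation of $\HH^1$, Hain's determination of the kernel of $\mu$ via the quadratic presentation of the Malcev Lie algebra, and the reduction between the three decoration types via Birman/capping exact sequences. But there are two substantive gaps in the hard direction (proving $\HH^2$ is finite-dimensional, algebraic, and generated by cup products).

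First, you do not invoke, or supply a substitute for, the theorem of Kupers--Randal-Williams that the image of $\mu$ is precisely the \emph{maximal algebraic subrepresentation} of $\HH^2(\Torelli_{g,*};\Q)$. Without that input, proving that $\HH^2$ is a finite-dimensional algebraic representation does not yet yield surjectivity of $\mu$; you would still need to rule out algebraic classes not hit by cup products, and nothing in your outline does this.

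Second, and more seriously, your strategy for establishing finite-dimensionality and algebraicity does not close. The Hochschild--Serre spectral sequence for $\Torelli \to \Mod \to \Sp_{2g}(\Z)$, combined with Borel-type vanishing and the stable $\Mod$-cohomology, only constrains groups such as $\HH^0(\Sp_{2g}(\Z);\HH^2(\Torelli))$ and $\HH^1(\Sp_{2g}(\Z);\HH^2(\Torelli))$; it cannot by itself detect whether $\HH^2(\Torelli)$ contains an infinite-dimensional trivial summand, since such a summand is invisible to $\Sp$-coinvariants and higher $\Sp$-cohomology alike (cf.\ Example~\ref{example:trivialcoh}). Likewise, running the equivariant spectral sequence for a highly connected $\Torelli$-complex to ``bound $\HH^2$ from above'' is circular as stated: the $E^1$-terms in the relevant range involve $\HH_2$ of curve stabilizers, which are themselves Torelli-like groups whose second homology you have no independent control over. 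The paper breaks this circularity with a genuinely new device, the Stability Theorem~\ref{maintheorem:stability}: a coherent sequence $\{\bV_g\}$ whose stabilization maps have finite-dimensional algebraic cokernels and whose coinvariants $(\bV_g)_{\Sp_{2g}(\Z)}$ are finite-dimensional must itself consist of finite-dimensional algebraic representations. This ``induction without a base case'' is what makes the equivariant-complex argument (applied to the cokernel of the map from curve-stabilizer homology, not to $\HH^2$ directly) actually close. Your proposal does not contain this idea or an alternative to it, so the surjectivity/algebraicity step remains open.
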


To explain the origin of the representations in Theorem~\ref{maintheorem:h2torellicalc}, 
consider the cup product pairing $\wedge^2 \HH^1(\Torelli_{g,p}^b;\Q) \rightarrow \HH^2(\Torelli_{g,p}^b;\Q)$.  We
have 
\[\wedge^2 \HH^1(\Torelli_g;\Q) \cong \wedge^2((\wedge^3 H)/H) \quad \text{and} \quad \wedge^2 \HH^1(\Torelli_g^1;\Q) \cong \wedge^2 \HH^1(\Torelli_{g,1};\Q) \cong \wedge^2(\wedge^3 H).\]
These decompose as\footnote{This calculation can easily be done using the program ``LiE''; see \cite{LieProgram}.}
\begin{alignat*}{8}
&\wedge^2 ((\wedge^3 H)/H) &&\cong \bV_0            &&\oplus \bV_{1^2}            &&\oplus \bV_{2^2} &&                   &&\oplus \bV_{1^4}            &&\oplus \bV_{2^2,1^2} &&\oplus \bV_{1^6}, \stepcounter{equation}\tag{\theequation}\label{eqn:decomposecup}\\
&\wedge^2 (\wedge^3 H)     &&\cong \bV_0^{\oplus 2} &&\oplus \bV_{1^2}^{\oplus 3} &&\oplus \bV_{2^2} &&\oplus \bV_{2,1^2} &&\oplus \bV_{1^4}^{\oplus 2} &&\oplus \bV_{2^2,1^2} &&\oplus \bV_{1^6}.
\end{alignat*}
Hain (\cite{HainInfinitesimal}; see also \cite[Corollary 7.4]{HainKahler} and
\cite[\S 9]{HainUniversal} and \cite{HabeggerSorger}) 
computed the kernel of the cup product pairing on $\Torelli_{g,p}^b$ for $g \geq 3$ and $p+b \leq 1$. 
When $g \geq 6$, it is isomorphic to $\bV_0 \oplus \bV_{2^2}$ for $\Torelli_g$, it is
isomorphic to $\bV_0^{\oplus 2} \oplus \bV_{1^2} \oplus \bV_{2^2}$ for $\Torelli_g^1$, and
it is isomorphic to $\bV_0 \oplus \bV_{1^2} \oplus \bV_{2^2}$ for $\Torelli_{g,1}$.
Deleting these kernels from \eqref{eqn:decomposecup} gives the representations in Theorem~\ref{maintheorem:h2torellicalc}.

Kupers--Randal-Williams \cite{KupersRandalWilliamsTorelli} showed that in the above cases the image of the cup product pairing is the maximal
algebraic subrepresentation of $\HH^2(\Torelli_{g,p}^b;\Q)$.  To prove Theorem~\ref{maintheorem:h2torellicalc}, we must show that
this is all of $\HH^2(\Torelli_{g,p}^b;\Q)$.  We actually prove more:\footnote{In this theorem, we switch to homology
since that is more natural for our proofs.}

\begin{maintheorem}
\label{maintheorem:h2torelli}
Let $b,p \geq 0$.  Then $\HH_2(\Torelli_{g,p}^b;\Q)$ is finite dimensional for $g \geq 5$
and an algebraic representation of $\Sp_{2g}(\Z)$ for $g \geq 6$.
\end{maintheorem}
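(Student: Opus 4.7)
The plan has two parts: first reduce to the closed case $p=b=0$, then attack $\Torelli_g$ by induction on genus using an equivariant spectral sequence associated to its action on a highly-connected complex.

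For the reduction, I would use the Birman-type exact sequences
\[ 1 \to \Z \to \Torelli_g^1 \to \Torelli_{g,1} \to 1 \qquad\text{and}\qquad 1 \to \pi_1(\Sigma_g) \to \Torelli_{g,1} \to \Torelli_g \to 1, \]
together with analogs for additional marked points and boundary components. The kernels of these extensions have rational homology that is finite-dimensional and algebraic as an $\Sp_{2g}(\Z)$-representation (a polynomial functor in $H$ built from $\HH_*(\pi_1(\Sigma_g);\Q) \cong \wedge^* H$ and trivial $\Z$-factors). The Hochschild--Serre spectral sequence in total degree $\leq 2$ then propagates finite dimensionality and algebraicity from $\Torelli_g$ to every $\Torelli_{g,p}^b$.

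For the closed case, let $\Torelli_g$ act on a suitably chosen simplicial complex $X_g$ of geometric structures on $\Sigma_g$ -- a natural candidate is a variant of the complex of cycles or a complex of homology splittings -- arranged so that $X_g$ is $2$-connected, has finitely many $\Mod_g$-orbits of simplices in each dimension, and has simplex stabilizers that are extensions involving Torelli groups of subsurfaces of genus strictly less than $g$. The equivariant spectral sequence
\[ E^1_{p,q} = \bigoplus_{\sigma \in X_g^{(p)}/\Torelli_g} \HH_q(\mathrm{Stab}_{\Torelli_g}(\sigma);\Q) \Longrightarrow \HH_{p+q}(\Torelli_g;\Q) \]
then converges in degrees $\leq 2$. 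Inducting on $g$, with Johnson's finite generation of $\Torelli_g$ ($g \geq 3$) as the base case for $\HH_1$, yields the finite dimensionality of $\HH_2(\Torelli_g;\Q)$ for $g \geq 5$. For algebraicity at $g \geq 6$ one refines the same argument: stabilizer subgroups lie inside the $\Sp_{2g}(\Z)$-algebraic stabilizers of isotropic subspaces of $H$, so by induction each $\HH_q(\mathrm{Stab};\Q)$ is an algebraic module for that subgroup, algebraic induction produces algebraic $\Sp_{2g}(\Z)$-representations at the $E^1$-page, and $\HH_2(\Torelli_g;\Q)$ appears as a subquotient thereof. The shift from $g \geq 5$ to $g \geq 6$ reflects one extra degree of connectivity of $X_g$ needed to carry the algebraic structure cleanly through the spectral sequence.

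The hardest step will be the construction of $X_g$ together with the sharp connectivity estimate. Proving high connectivity for Torelli-related complexes is classically delicate (in the tradition of Hatcher--Vogtmann, Bestvina--Bux--Margalit, and Putman's earlier work on the Torelli group), and obtaining precisely the bounds $g \geq 5$ and $g \geq 6$ requires a fine analysis at the borderline dimensions. A secondary difficulty is verifying that the algebraic structure on stabilizer homology survives the spectral sequence differentials, which entails careful $\Sp$-representation-theoretic bookkeeping near the boundary genus.
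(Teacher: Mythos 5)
Your reduction to a single case via Birman-type exact sequences matches the paper's strategy (Lemmas \ref{lemma:capboundary} and \ref{lemma:deletepuncture}), but the proposed ``closed case'' argument has a fatal gap: a naive induction on genus cannot get started, because there is no base case. For $g=2$ the Torelli group is not even finitely generated (McCullough--Miller), and for $g=3$ it is plausible that $\HH_2(\Torelli_3;\Q)$ is infinite dimensional (the paper cites evidence of Gaifullin to this effect). The theorem itself only claims finite dimensionality for $g \geq 5$, so there is no genus $g_0 < 5$ at which you could anchor an induction. Moreover, your sketch quietly assumes that the complex $X_g$ has finitely many $\Torelli_g$-orbits of simplices (you wrote $\Mod_g$-orbits, but the equivariant spectral sequence you wrote down requires controlling $\Torelli_g$-orbits), and the standard Torelli-equivariant complexes in this area --- the complex of cycles, $\cC_v(\Sigma_g)$, the handle complex $\cC_{ab}(\Sigma_g)$ --- have \emph{infinitely} many $\Torelli_g$-orbits of $1$-simplices, because $\Torelli_g$ sits inside $\Mod_g$ with infinite index and infinitely many $\Sp_{2g}(\Z)$-orbits of pairs of curves exist. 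Finally, the suggestion that the jump from $g\geq 5$ to $g\geq 6$ is ``one extra degree of connectivity'' is not what happens; in the paper the algebraicity bound comes from representation-theoretic input on finite quotients $\Sp_{2g}(\Z/\ell)$ (Proposition \ref{proposition:spuniversal}) fed into the Algebraicity Criterion (Theorem \ref{theorem:algebraicity}), which is a different mechanism entirely.

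The missing idea is the one the paper calls ``induction without a base case.'' Rather than inducting genus by genus, the paper proves a Stability Theorem (Theorem \ref{maintheorem:stability}): for a coherent sequence $\bV_{g-1}\to\bV_g$ of $\Sp_{2g}(\Z)$-representations, if the cokernels are eventually finite-dimensional and algebraic \emph{and} the coinvariants $(\bV_g)_{\Sp_{2g}(\Z)}$ are finite dimensional, then the $\bV_g$ themselves are finite-dimensional (resp.\ algebraic one genus later). The coinvariants hypothesis is the crucial extra input that substitutes for a base case; it rules out trivial counterexamples like $\bV_g=\Q^\infty$ with the identity as the connecting maps. For $\HH_2(\Torelli_g^1;\Q)$, the coinvariants condition follows from Kassabov--Putman's theorem that $\HH_2$ is finitely generated over $\Q[\Sp_{2g}(\Z)]$. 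The hard part is showing that $\coker\bigl(\HH_2(\Torelli_{g-1}^1;\Q)\to\HH_2(\Torelli_g^1;\Q)\bigr)$ is a finite-dimensional algebraic representation of $\Sp_{2(g-1)}(\Z)$; there an equivariant spectral sequence for the handle complex does appear, but only to extract generators and relations for a specific cokernel $\Lambda_g$ (not to run an induction over the whole $\HH_2$), and the quotient $\cC_{ab}(\Sigma_g)/\Torelli_g$ is shown to be contractible but not finite. Without the stability theorem and the coinvariants hypothesis, the spectral sequence bookkeeping you describe cannot close the argument.
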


\begin{remark}
Our proof only works over $\Q$.  It is not known if $\HH_2(\Torelli_{g,p}^b)$ is
finitely generated.
\end{remark}

\begin{remark}
This paper supersedes a paper of of Minahan \cite{MinahanThesis} that uses a less sophisticated 
version of our argument to prove that $\HH_2(\Torelli_g;\Q)$ is finite-dimensional for $g \geq 51$.
\end{remark}

\subsection{Representation stability}

Theorem~\ref{maintheorem:h2torellicalc} implies the following:\footnote{This requires not only the
formula for $\HH^2(\Torelli_g^1;\Q)$ from Theorem~\ref{maintheorem:h2torellicalc}, but also the
explicit isomorphism underlying it.  A version of Corollary \ref{maincorollary:repstability}
without the explicit starting value $g=6$ also follows from Theorem~\ref{maintheorem:h2torelli}
along with \cite[Theorem B]{PatztFiltrations} and \cite{BoldsenDollerup}.}

\begin{maincorollary}
\label{maincorollary:repstability}
The sequence of $\Sp_{2g}(\Z)$-representations $\{\HH_2(\Torelli_g^1;\Q)\}_{g=1}^{\infty}$ is uniformly
representation stable starting at $g=6$.
\end{maincorollary}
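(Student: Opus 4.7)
The plan is to deduce the corollary from Theorem~\ref{maintheorem:h2torellicalc} by exploiting its explicit cup-product presentation of $\HH^2(\Torelli_g^1;\Q)$, then passing to homology by duality. Everything in sight is finite-dimensional over $\Q$, and $\Sp_{2g}(\Q)$-irreducibles are self-dual, so isotypic decompositions and their stabilities transfer cleanly between $\HH^2$ and $\HH_2$.

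Johnson's isomorphism $\HH^1(\Torelli_g^1;\Q) \cong \wedge^3 H$ is natural with respect to the stabilization $\Sigma_g^1 \hookrightarrow \Sigma_{g+1}^1$, so $\{\wedge^2 \HH^1(\Torelli_g^1;\Q)\}$ is identified with $\{\wedge^2(\wedge^3 H)\}$. The latter is a uniformly representation stable sequence: its decomposition~\eqref{eqn:decomposecup} has $g$-independent multiplicities once all partitions appearing fit in $g$ parts (i.e., for $g \geq 6$), and the stabilization maps are the obvious inclusions of tensor constructions on $H_g \hookrightarrow H_{g+1}$. Combining Theorem~\ref{maintheorem:h2torellicalc} with Hain's theorem on the kernel of the cup product yields, for $g \geq 6$, a natural short exact sequence
\[0 \to K_g \to \wedge^2(\wedge^3 H) \xrightarrow{\mu_g} \HH^2(\Torelli_g^1;\Q) \to 0,\]
in which $K_g \cong \bV_0^{\oplus 2} \oplus \bV_{1^2} \oplus \bV_{2^2}$ has $g$-independent decomposition. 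Since Hain's generators for $K_g$ arise from universal algebraic operations (the symplectic form $\omega$, contractions, symmetrizations) that commute with $H_g \hookrightarrow H_{g+1}$, the subsequence $\{K_g\}$ is itself uniformly representation stable. Uniform representation stability passes to the quotient, so $\{\HH^2(\Torelli_g^1;\Q)\}$ is uniformly representation stable starting at $g = 6$, with surjective restriction maps.

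Finally, dualize. Since each $\HH^2(\Torelli_g^1;\Q)$ is finite-dimensional, universal coefficients gives $\HH_2(\Torelli_g^1;\Q) \cong \HH^2(\Torelli_g^1;\Q)^*$, and self-duality of symplectic irreducibles preserves the decomposition. The stabilization map $\HH_2(\Torelli_g^1;\Q) \to \HH_2(\Torelli_{g+1}^1;\Q)$ is dual to the restriction on cohomology, so the surjectivity above becomes injectivity with matching multiplicities; the ``surjective algebraic span'' axiom follows from multiplicity stability together with the symplectic branching rule, which guarantees that each $\bV_\lambda^{(g+1)}|_{\Sp_{2g}}$ contains $\bV_\lambda^{(g)}$ with multiplicity one, so the injected copy of $\bV_\lambda^{(g)}$ generates the full $\Sp_{2(g+1)}(\Z)$-orbit. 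The main subtlety, and the one point requiring care, is verifying that Hain's generators for $\{K_g\}$ are genuinely compatible with surface stabilization; this is essentially built into Hain's construction, but needs to be checked by tracking how his relations transform when one attaches a handle.
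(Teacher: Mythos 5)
Your proposal is aligned with what the paper intends: the authors do not give a written proof of Corollary~\ref{maincorollary:repstability} beyond a footnote, which says precisely that the corollary requires Theorem~\ref{maintheorem:h2torellicalc} \emph{together with the explicit isomorphism underlying it} (i.e., the cup-product description via Johnson's $\HH^1 \cong \wedge^3 H$), and your argument tracks that route faithfully. Multiplicity stability for $g \geq 6$ falls out of the explicit decomposition, and the reduction to the behavior of the cup-product map and of Hain's kernel $K_g$ under stabilization is exactly the content the authors are delegating to the reader.

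That said, two places in your write-up are thinner than they should be. First, you correctly flag that compatibility of Hain's kernel $K_g$ with stabilization must be checked; but a cleaner way to see it is already in your own diagram: the cup product square commutes with the restriction on $\HH^1 \cong \wedge^3 H$, so $K_{g+1}$ automatically maps into $K_g$, and the snake lemma reduces surjectivity of $K_{g+1}\to K_g$ to controlling $\ker\bigl(\HH^2(\Torelli_{g+1}^1;\Q)\to\HH^2(\Torelli_g^1;\Q)\bigr)$ --- which again needs the explicit form of $\mu_g$. Better to say this directly than to appeal to the ``universality'' of Hain's generators, which is a heuristic rather than a proof. Second, the closing claim that the ``surjective algebraic span'' axiom follows purely from multiplicity stability plus the fact that $\bV_\lambda^{(g+1)}\big|_{\Sp_{2g}}$ contains $\bV_\lambda^{(g)}$ with multiplicity one is not correct as stated: the branching rule also allows $\bV_\lambda^{(g)}$ to appear in $\bV_\mu^{(g+1)}\big|_{\Sp_{2g}}$ for $\mu\neq\lambda$, so an injective map $\bV_\lambda^{(g)} \hookrightarrow V_{g+1}$ could a priori land entirely in the $\bV_\mu$-isotypic component, and equal multiplicities do not automatically prevent this. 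Ruling this out requires either an ordering-and-induction argument on the partitions involved or, more in the spirit of the paper, simply reading off from the explicit isomorphism that the stabilization map is the natural inclusion of a tensor construction. That computation is what actually verifies the surjectivity axiom; the branching fact alone is insufficient.
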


\noindent
This was conjectured by Church--Farb \cite[Conjecture 6.1]{ChurchFarbRepStability}.  It means that for $g \geq 6$ the maps
\begin{equation}
\label{eqn:stabmap}
\begin{tikzcd}
\HH_2(\Torelli_g^1;\Q) \arrow{r} & \HH_2(\Torelli_{g+1}^1;\Q)
\end{tikzcd}
\end{equation}
induced by embedding $\Sigma_g^1$ into $\Sigma_{g+1}^1$ and extending mapping classes lying in
$\Torelli_g^1$ to $\Sigma_{g+1}^1$ by the identity are injective and (roughly speaking) match up the decompositions
of $\HH_2(\Torelli_g^1;\Q)$ and $\HH_2(\Torelli_{g+1}^1;\Q)$ into irreducible representations of
the symplectic groups.  Partial results in this direction were previously
proven by Boldsen--Dollerup \cite{BoldsenDollerup} and Miller--Patzt--Wilson \cite{MillerPatztWilson}.
However, before our work it was not even known if the maps \eqref{eqn:stabmap} were injective for $g \gg 0$.

\begin{remark}
Since the representation $\bV_{1^6}$ appears in $\HH_2(\Torelli_6^1;\Q)$ and 
is not the stabilization of a representation of $\Sp_{2g}(\Z)$ for $g=5$, the $g=6$ in
Corollary \ref{maincorollary:repstability} is optimal.
\end{remark}

\subsection{Previous work}

Questions about $\HH_{\bullet}(\Torelli_{g,p}^b)$ can generally be reduced to questions about
$\HH_{\bullet}(\Torelli_g)$ (see, e.g., \S \ref{section:deleteboundary}), so we mostly focus on this.  

\subsubsection{Low genus}
\label{section:lowgenus}

As we said, for $g \geq 3$ Johnson \cite{JohnsonFinite, JohnsonAbel} proved that $\Torelli_{g}$ 
is finitely generated and computed
$\HH_1(\Torelli_{g})$.  In contrast, McCullough--Miller \cite{McCulloughMiller} proved that $\Torelli_{2}$ 
is not finitely generated,
and later Mess \cite{MessThesis} proved that $\Torelli_2$ is an infinite rank free group.  Johnson--Millson (cf.\ \cite{MessThesis}) 
and Hain \cite{HainThreeFolds} proved that $\HH_3(\Torelli_3;\Q)$ and $\HH_4(\Torelli_3;\Q)$ are infinite dimensional.  
Spiridonov \cite{Spiridonov} later calculated
$\HH_4(\Torelli_3;\Q)$.  Some evidence that
$\HH_2(\Torelli_3)$ might not be finitely generated was given by Gaifullin \cite{GaifullinSS}.

\subsubsection{Second homology}

For $g \geq 3$, 
the only earlier finiteness result about $\HH_2(\Torelli_g)$
was a theorem of Kassabov--Putman \cite{KassabovPutman} saying
that $\HH_2(\Torelli_{g})$ is spanned by the $\Sp_{2g}(\Z)$-orbit of a finite set.  In other words, it is finitely generated
as a module over $\Z[\Sp_{2g}(\Z)]$.  Boldsen--Dollerup \cite{BoldsenDollerup} also proved a related
theorem that roughly speaking says that $\HH_2(\Torelli_g^1)$ is spanned by classes supported on
genus $6$ subsurfaces for $g \geq 6$.

The torsion in $\HH_2(\Torelli_g)$ remains mysterious.
The group $\HH_1(\Torelli_g)$ calculated by Johnson \cite{JohnsonAbel} contains $2$-torsion coming
from the Birman--Craggs--Johnson (BCJ) homomorphism.  This was constructed by Johnson \cite{JohnsonBCJ} using 
work of Birman--Craggs \cite{BirmanCraggs} on the Rochlin invariant of homology $3$-spheres.
Though they were unable to compute it completely, Brendle--Farb \cite{BrendleFarbTorelli} constructed large
parts of $\HH_2(\Torelli_g;\bbF_2)$ that are detected by the BCJ homomorphism.

\subsubsection{High degree}

Akita \cite{AkitaTorelli} proved that for each $g \geq 7$ 
there exists some $d$ such that $\HH_d(\Torelli_g;\Q)$ is infinite dimensional.
Bestvina--Bux--Margalit \cite{BestvinaBuxMargalitcd} sharpened this and proved for $g \geq 2$
that $\Torelli_g$ has cohomological dimension $3g-5$ and that
$\HH_{3g-5}(\Torelli_g;\Q)$ is infinite dimensional.  Later Gaifullin \cite{Gaifullin} proved for $g \geq 2$ that
$\HH_{d}(\Torelli_g;\Q)$ is infinite dimensional for $2g-3 \leq d \leq 3g-5$.
For $g=3$, this recovers the infinite dimensionality results discussed in \S \ref{section:lowgenus} above.  Other work constructing
high-dimensional classes can be found in \cite{ChurchFarbAbelJacobi}.

\subsection{Stability}
\label{section:coherent}

We prove Theorem~\ref{maintheorem:h2torelli} via a criterion with a representation
stability flavor.  A {\em coherent sequence} of representations of $\Sp_{2g}(\Z)$ over a field $\bk$ is a sequence
\begin{equation}
\label{eqn:cohseq}
\begin{tikzcd}
\bV_1 \arrow{r}{f_1} & \bV_2 \arrow{r}{f_2} & \bV_3 \arrow{r}{f_3} & \cdots
\end{tikzcd}
\end{equation}
of $\bk$-vector spaces connected by linear maps such that the following hold:
\begin{itemize}
\item Each $\bV_g$ is a representation of $\Sp_{2g}(\Z)$.
\item Let $\bV_g \boxtimes \bk$ be the
external tensor product of the $\Sp_{2g}(\Z)$-representation
$\bV_g$ and the trivial $\Sp_{2}(\Z)$-representation $\bk$, so
$\bV_g \boxtimes \bk$ is a representation of $\Sp_{2g}(\Z) \times \Sp_2(\Z)$
that as a vector space is isomorphic to $\bV_g$.
Then the maps\footnote{In \eqref{eqn:cohseq} the map $f_g$ has domain $\bV_g$ since
there it is just regarded as a linear map between vector spaces.  We switch its
domain to $\bV_g \boxtimes \bk$ here since we are now regarding it as a representation
of $\Sp_{2g}(\Z) \times \Sp_2(\Z)$.} $f_g\colon \bV_g \boxtimes \bk \rightarrow \bV_{g+1}$ are 
$\Sp_{2g}(\Z) \times \Sp_2(\Z)$-equivariant,
where $\Sp_{2g}(\Z) \times \Sp_2(\Z)$ acts on $\bV_{g+1}$
via the standard inclusion $\Sp_{2g}(\Z) \times \Sp_2(\Z) \hookrightarrow \Sp_{2(g+1)}(\Z)$.
\end{itemize}

\begin{example}
\label{example:cohh1}
Let $\iota_g\colon \Sigma_{g}^1 \hookrightarrow \Sigma_{g+1}^1$ be the following embedding:\\
\Figure{TorelliStabilityIntro}
The induced map on homology $(\iota_g)_{\ast}\colon \HH_1(\Sigma_{g}^1;\bk) \rightarrow \HH_1(\Sigma_{g+1}^1;\bk)$ fits
into a coherent sequence
\[\begin{tikzcd}
\HH_1(\Sigma_1^1;\bk) \arrow{r}{(\iota_1)_{\ast}} & \HH_1(\Sigma_2^1;\bk) \arrow{r}{(\iota_2)_{\ast}} & \HH_1(\Sigma_3^1;\bk) \arrow{r}{(\iota_3)_{\ast}} & \cdots
\end{tikzcd}\]
of representations of $\Sp_{2g}(\Z)$.
\end{example}

\begin{example}
\label{example:cohtorelli}
The map $\iota_g\colon \Sigma_{g}^1 \hookrightarrow \Sigma_{g+1}^1$ 
from Example \ref{example:cohh1} induces a map $t_g\colon \Torelli_{g}^1 \rightarrow \Torelli_{g+1}^1$ that extends
mapping classes lying in $\Torelli_{g}^1$ to $\Sigma_{g+1}^1$ by the identity.
For a fixed $d \geq 0$, the map $t_g$ induces a map $f_g\colon \HH_d(\Torelli_g^1;\bk) \rightarrow \HH_d(\Torelli_{g+1}^1;\bk)$.  These fit into
a coherent sequence
\[\begin{tikzcd}
\HH_d(\Torelli_1^1;\bk) \arrow{r}{f_1} & \HH_d(\Torelli_2^1;\bk) \arrow{r}{f_2} & \HH_d(\Torelli_3^1;\bk) \arrow{r}{f_3} & \cdots
\end{tikzcd}\]
of representations of $\Sp_{2g}(\Z)$.
\end{example}

We will use the following theorem to prove our results about the Torelli group:

\begin{maintheorem}[Stability Theorem]
\label{maintheorem:stability}
Consider a coherent sequence of representations of $\Sp_{2g}(\Z)$ over a field $\bk$ of characteristic $0$:
\[\begin{tikzcd}
\bV_1 \arrow{r}{f_1} & \bV_2 \arrow{r}{f_2} & \bV_3 \arrow{r}{f_3} & \cdots
\end{tikzcd}\]
For some $g_0 \geq 2$, assume that the following hold for $g \geq g_0$:
\begin{itemize}
\item[(i)] the cokernel of $f_{g-1}\colon \bV_{g-1} \rightarrow \bV_g$ is
a finite dimensional algebraic
representation of~\hspace{-1pt}\footnote{This cokernel is a representation of $\Sp_{2(g-1)}(\Z) \times \Sp_2(\Z)$, but we are ignoring the $\Sp_{2}(\Z)$-action.} $\Sp_{2(g-1)}(\Z)$; and
\item[(ii)] the coinvariants\footnote{If $\bW$ is a representation of a group $G$, then the
coinvariants $\bW_G$ are the largest $G$-invariant quotient of $\bW$.  This can be expressed
as $\bW/\SpanSet{$\vec{w}-x \Cdot \vec{w}$}{$\vec{w} \in W$ and $x \in G$}$.}
$(\bV_g)_{\Sp_{2g}(\Z)}$ are finite dimensional.
\end{itemize}
Then $\bV_g$ is finite dimensional for $g \geq g_0$ and an algebraic representation of $\Sp_{2g}(\Z)$ for $g \geq g_0+1$.
\end{maintheorem}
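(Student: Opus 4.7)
The plan is to prove the Stability Theorem by induction on $g \geq g_0$, using the coherent-sequence structure together with assumptions (i) and (ii) to propagate finite-dimensionality and then upgrade it to algebraicity.

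For the inductive step, suppose $\bV_{g-1}$ is finite dimensional (and algebraic when $g - 1 \geq g_0 + 1$). The short exact sequence of $\Sp_{2(g-1)}(\Z) \times \Sp_2(\Z)$-representations
\[
0 \to \Image(f_{g-1}) \to \bV_g \to \coker(f_{g-1}) \to 0
\]
presents $\bV_g$ as an extension of $\coker(f_{g-1})$ (finite dimensional algebraic by (i)) by a quotient of $\bV_{g-1} \boxtimes \bk$ (finite dimensional by induction), so $\bV_g$ is finite dimensional. For algebraicity over $\Sp_{2g}(\Z)$ when $g \geq g_0 + 1$, the key auxiliary claim is that a finite-dimensional representation of $\Sp_{2g}(\Z)$ with $g \geq 2$ whose restriction to the block-diagonal $\Sp_{2(g-1)}(\Z) \times \Sp_2(\Z)$ is algebraic must itself be algebraic; this uses the congruence subgroup property of $\Sp_{2g}(\Z)$ together with Margulis-type rigidity to rule out finite-image summands, after which the category of algebraic representations of $\Sp_{2g}$ being semisimple in characteristic zero makes the extension split.

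The real obstacle is the base case $g = g_0$, where $\bV_{g_0-1}$ is not assumed finite dimensional and so $\Image(f_{g_0-1})$ cannot be controlled by induction. Here condition (ii) must be invoked essentially. I would form the $\Sp_{2g_0}(\Z)$-submodule $\tilde{U} \subseteq \bV_{g_0}$ generated by $\Image(f_{g_0 - 1})$; then $\bV_{g_0}/\tilde{U}$ is a $\Sp_{2g_0}(\Z)$-quotient of $\coker(f_{g_0 - 1})$, hence finite dimensional by (i). The remaining task is to bound $\tilde{U}$. Since $\Sp_2(\Z)$ acts trivially on the domain $\bV_{g_0 - 1} \boxtimes \bk$, the image $\Image(f_{g_0-1})$ lies in the $\Sp_2(\Z)$-invariants of $\bV_{g_0}$. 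I would then exploit that $\Sp_{2g_0}(\Z)$ is generated by (conjugates of) its various $\Sp_2(\Z)$-subgroups, so the $\Sp_{2g_0}(\Z)$-orbit of any $\Sp_2(\Z)$-invariant element is controlled by the coinvariants bound (ii), which should force $\tilde{U}$ to be finite dimensional.

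The hardest part will be this base-case analysis: extracting finite dimensionality from the coinvariants bound and the algebraic cokernel alone, without any a priori control over $\bV_{g_0-1}$. The delicate point is combining the trivial $\Sp_2(\Z)$-action on $\Image(f_{g_0-1})$ with the way $\Sp_{2g_0}(\Z)$ permutes the embedded $\Sp_2(\Z)$-subgroups under conjugation, so that (ii) supplies enough relations to cut $\tilde{U}$ down to finite size. Once the base case is established, the inductive step and the rigidity lemma above finish the theorem.
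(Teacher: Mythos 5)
Your inductive framing misses the structure of the paper's proof and, more importantly, contains a genuine gap in the algebraicity step.

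\textbf{Finite dimensionality.} You frame the argument as an induction and locate all the difficulty in the base case $g = g_0$, but in fact the paper does \emph{not} induct at all: its Theorem~\ref{theorem:finitedim} applies uniformly to each $g \geq g_0$ with no appeal to $\bV_{g-1}$ being finite dimensional. Your base-case sketch --- exploit that $\Image(f_{g_0-1})$ lies in the $\Sp_2(\Z)$-invariants, that $\Sp_{2g_0}(\Z)$ is generated by finitely many conjugates of $\Sp_2(\Z)$, and that the coinvariants bound (ii) then takes over --- is pointing in the right direction, but your choice to take the $\Sp_{2g_0}(\Z)$-\emph{span} $\tilde U$ of the image is the wrong move: it is not clear how (ii), which controls only the largest trivial quotient of $\bV_{g_0}$, would cut down an arbitrary submodule. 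The paper instead \emph{intersects} finitely many translates $m_i \Cdot \bW$ of the image to produce a finite-codimension subspace $\bU$ on which all of $\Sp_{2g}(\Z)$ acts trivially, and then the segment $\HH_1(\Sp_{2g}(\Z);\bV_g/\bU) \to \bU \to \HH_0(\Sp_{2g}(\Z);\bV_g)$ of the long exact sequence makes (ii) bite directly. That trick does the whole job for every $g \geq g_0$ at once; the induction you set up is unnecessary and the base case, done correctly, already is the general case.

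\textbf{Algebraicity.} Here there is a real gap. Your proposed lemma --- a finite-dimensional representation of $\Sp_{2g}(\Z)$ whose restriction to $\Sp_{2(g-1)}(\Z) \times \Sp_2(\Z)$ is algebraic must be algebraic --- is plausible via Lubotzky's classification, but its hypothesis is never verified. Assumption (i) only says that $\coker(f_{g-1})$ is algebraic as a representation of $\Sp_{2(g-1)}(\Z)$ \emph{ignoring the $\Sp_2(\Z)$-action}; the $\Sp_2(\Z)$-action on the cokernel is a priori completely uncontrolled, and it can be nontrivial and non-algebraic. The paper's \S\ref{section:algebraicity} lays out exactly this danger: a nontrivial irreducible $\bW$ of $\Sp_{2g}(\Z/\ell)$ whose restriction to the product subgroup splits as $\bW' \oplus \bW''$ with $\Sp_{2(g-1)}(\Z/\ell)$ trivial on $\bW''$ would give a cokernel that is algebraic for $\Sp_{2(g-1)}(\Z)$ while $\bV_g$ itself is not algebraic. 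Ruling this out is the content of the ``universally mixed subgroup'' machinery in \S\ref{section:unmixed}, and that machinery genuinely requires dropping genus by two (i.e., using $\Sp_{2(g-2)}(\Z) \times \Sp_4(\Z)$ rather than $\Sp_{2(g-1)}(\Z) \times \Sp_2(\Z)$): Proposition~\ref{proposition:spuniversal} needs $g_1, g_2 \geq 2$, because the $\GL_1 \times \GL_1 \subset \GL_2$ case fails over $\bbF_2$ and $\bbF_3$. This is also why the paper's algebraicity conclusion only starts at $g_0 + 1$. A one-line appeal to ``congruence subgroup property plus Margulis-type rigidity'' will not substitute for this: the phenomenon being ruled out is a concrete representation-theoretic one about restrictions of finite-group representations to product subgroups, and it requires the Piatetski-Shapiro analysis of $\GL_2(\bbF_q)$-representations and the ``poison subgroup'' bootstrapping to dispose of.
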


\begin{remark}
If in (i) the cokernel of $f_{g-1}$ is just assumed to be finite dimensional for $g \geq g_0$,
then our proof shows that $\bV_g$ is finite dimensional for $g \geq g_0$.
\end{remark}

\begin{remark}
We have stated Theorem~\ref{maintheorem:stability} for coherent sequences of representations
of $\Sp_{2g}(\Z)$, but the same proof works for many other kinds of representations (possibly
with worse bounds).  For instance, it also works for
coherent sequences of representations of $\SL_n(\Z)$.
\end{remark}

\subsection{Induction without a base case}

Consider a coherent sequence of representations $\{\bV_g\}$ as in Theorem
\ref{maintheorem:stability}.  One way to view Theorem~\ref{maintheorem:stability} is that it allows finiteness results
for $\bV_g$ to be proved by induction, but without a base case.  In a traditional proof
by induction, to prove that $\bV_g$ is finite-dimensional for $g \geq g_0$ you would need to prove
two things:
\begin{itemize}
\item as a base case, that $\bV_{g_0}$ is finite-dimensional; and
\item for the inductive step, that the cokernel of $f_{g-1}\colon \bV_{g-1} \rightarrow \bV_{g}$
is finite-dimensional for $g \geq g_0+1$.  
\end{itemize}
Without some further hypothesis,
a finiteness assumption about the cokernels of the $f_{g}$ implies 
nothing about the $\bV_g$ themselves.  For instance:

\begin{example}
\label{example:trivialcoh}
For all $g \geq 1$, let $\bV_g = \bk^{\infty}$ with the trivial $\Sp_{2g}(\Z)$-action and let
$f_{g}\colon \bV_{g} \rightarrow \bV_{g+1}$ be the identity map.  Then
\[\begin{tikzcd}
\bV_1 \arrow{r}{f_1} & \bV_2 \arrow{r}{f_2} & \bV_3 \arrow{r}{f_3} & \cdots
\end{tikzcd}\]
is a coherent sequence of infinite-dimensional representations of $\Sp_{2g}(\Z)$.
However, the cokernel of each $f_{g-1}\colon \bV_{g-1} \rightarrow \bV_{g}$ is $0$.
\end{example}

It is a little surprising that the weak finiteness assumption about the
coinvariants in Theorem~\ref{maintheorem:stability} allows such strong conclusions.
Since the coinvariants are the largest trivial quotient representation, this assumption
essentially just rules out things like Example \ref{example:trivialcoh}.

\subsection{Torelli proof outline}

As we will show in \S \ref{section:step1prelim}, it is straightforward to deduce that $\HH_2(\Torelli_{g,p}^b;\Q)$ is a finite dimensional algebraic representation of $\Sp_{2g}(\Z)$ from the special case of $\HH_2(\Torelli_g^1;\Q)$, so we will focus our proof outline on $\Torelli_g^1$.  We will apply Theorem~\ref{maintheorem:stability} to the coherent sequence
\[\begin{tikzcd}
\HH_2(\Torelli_1^1;\bk) \arrow{r}{f_1} & \HH_2(\Torelli_2^1;\bk) \arrow{r}{f_2} & \HH_2(\Torelli_3^1;\bk) \arrow{r}{f_3} & \cdots
\end{tikzcd}\]
of representations of $\Sp_{2g}(\Z)$ with $g_0 = 5$.

Theorem~\ref{maintheorem:stability} has two hypotheses.  Hypothesis (ii) says
that the coinvariants
$\HH_2(\Torelli_g^1;\Q)_{\Sp_{2g}(\Z)}$
are finite-dimensional for $g \geq 5$, and is an immediate consequence
of Kassabov--Putman's theorem \cite{KassabovPutman}
that $\HH_2(\Torelli_g^1;\Q)$ is finitely generated as a $\Q[\Sp_{2g}(\Z)]$-module for $g \geq 3$.  
In \S \ref{section:step1main} we will give an alternate direct proof 
that these coinvariants are finite-dimensional.

Hypothesis (i) is more substantial.  It asserts that for $g \geq 5$ the cokernel of
\begin{equation}
\label{eqn:iota}
f_{g-1}\colon \HH_2(\Torelli_{g-1}^1;\Q) \longrightarrow \HH_2(\Torelli_g^1;\Q)
\end{equation}
is a finite dimensional algebraic representation of $\Sp_{2(g-1)}(\Z)$.  Our proof of this has
three steps.

\subsubsection{Step 1 (reduction to curve stabilizers)}
\label{section:step1reduction}

The first step uses a variety of known results about the homology of the Torelli group to reduce what we
must show to a result about curve stabilizers on a closed surface.  Let $\alpha$ and $\beta$ be the following oriented curves
on $\Sigma_g$:\\
\Figure{AandBcurves}
The mapping class group acts on the set of isotopy classes of simple closed curves
on $\Sigma_g$.  Let $(\Torelli_g)_{\alpha}$ and $(\Torelli_g)_{\beta}$ be the $\Torelli_g$-stabilizers of $\alpha$ and $\beta$.
From the above figure, it is clear that both of these stabilizers contain $\Torelli_{g-1}^1$.  Let
\[\lambda\colon \HH_2((\Torelli_g)_{\alpha};\Q) \oplus \HH_2((\Torelli_g)_{\beta};\Q) \rightarrow \HH_2(\Torelli_g;\Q)\]
be the sum of the maps induced by the inclusions $(\Torelli_g)_{\alpha} \hookrightarrow \Torelli_g$
and $(\Torelli_g)_{\beta} \hookrightarrow \Torelli_g$.  Letting $f_{g-1}$ be the map \eqref{eqn:iota}, we will prove:
\begin{itemize}
\item {\bf Claim}: To prove that $\coker(f_{g-1})$ is a finite dimensional
algebraic representation of $\Sp_{2(g-1)}(\Z)$, it is enough to prove a similar result for $\coker(\lambda)$.
\end{itemize}
The idea behind the proof of this claim is as follows.  Consider the following commutative diagram
of homology groups:
\[\begin{tikzcd}
\HH_2(\Torelli_{g-1}^1;\Q) \arrow{r}{f_{g-1}} \arrow{rd}{h_{\alpha} \times h_{\beta}} & \HH_2(\Torelli_g^1;\Q) \arrow{r}{\pi} & \HH_2(\Torelli_g;\Q) \\
                                                          & \HH_2((\Torelli_g)_{\alpha};\Q) \oplus \HH_2((\Torelli_g)_{\beta};\Q) \arrow{ru}{\lambda} &
\end{tikzcd}\]
Here $\pi$ is induced by gluing a disc to $\partial \Sigma_g^1$ and extending mapping classes by $\id$, and
$h_{\alpha}$ and $h_{\beta}$ are induced by the inclusions $\Torelli_{g-1}^1 \hookrightarrow (\Torelli_g)_{\alpha}$ and
$\Torelli_{g-1}^1 \hookrightarrow (\Torelli_g)_{\beta}$.  We will show:
\begin{itemize}
\item Both $\ker(\pi)$ and $\coker(\pi)$ are finite-dimensional algebraic representations of $\Sp_{2g}(\Z)$.  This is a standard
argument using the Birman exact sequence.
\item Both $\coker(h_{\alpha})$ and $\coker(h_{\beta})$ are finite-dimensional algebraic representations of
$\Sp_{2(g-1)}(\Z)$.  This uses
a recent theorem of the authors \cite[Theorem A]{MinahanPutmanAbelian} about the first homology group of $\Torelli_g^1$
with certain infinite-dimensional twisted coefficients.  
\end{itemize}
The above claim will follow easily from these two facts and a bit of algebra.
This reduces us to proving that $\coker(\lambda)$ is a finite-dimensional algebraic representation of $\Sp_{2(g-1)}(\Z)$.  
Roughly speaking, we do this by constructing generators and relations for $\coker(\lambda)$.

\subsubsection{Step 2 (generators of cokernel)}

We identify generators for $\coker(\lambda)$
by studying the action of $\Torelli_g$ on a carefully chosen space.
Let $a = [\alpha] \in \HH_1(\Sigma_g)$ and $b = [\beta] \in \HH_1(\Sigma_g)$.
An oriented simple closed curve is an {\em $a$-curve} if its homology class is $a$ and a {\em $b$-curve} if its
homology class is $b$.  Let $\cC_a(\Sigma_g)$ be the simplicial complex whose $p$-simplices are collections $\{\gamma_0,\ldots,\gamma_k\}$
of disjoint isotopy classes of $a$-curves on $\Sigma_g$.  Similarly, define $\cC_b(\Sigma_g)$.  Simplices of these complexes
look like:\footnote{Our convention is that $a$-curves are orange and $b$-curves are blue, and we often omit the orientations.}\\
\Figure{HomologousCurves}
The complexes $\cC_a(\Sigma_g)$ and $\cC_b(\Sigma_g)$ were introduced by
Putman \cite{PutmanTrick}, who proved that they are connected for $g \geq 3$.  Hatcher--Margalit \cite{HatcherMargalitGenerators}
found an alternate
proof of this, and Minahan \cite{MinahanComplex}
generalized Hatcher--Margalit's proof to show that these complexes are $(g-3)$-acyclic.\footnote{This means that $\RH_k(\cC_a)=\RH_k(\cC_b)=0$
for $k \leq g-3$.  It is not clear if they are simply connected.}

The space $\cC_{ab}(\Sigma_g)$ we use is a combination of $\cC_a(\Sigma_g)$ and $\cC_b(\Sigma_g)$ that is inspired by
the ``handle graph'' introduced by Putman \cite{PutmanSmallGenset} to find small generating sets for $\Torelli_g$.
We call it the {\em handle complex}.  It is the union of $\cC_a(\Sigma_g)$ and $\cC_b(\Sigma_g)$ with certain ``mixed simplices'' attached
that look like:\\
\Figure{MixedCells}
Their key property is that a mixed simplex cannot contain both two $a$-curves and two $b$-curves.  We will define this 
carefully later.  The group $\Torelli_g$ acts on $\cC_{ab}(\Sigma_g)$.  We will prove:
\begin{itemize}
\item[(a)] $\cC_{ab}(\Sigma_g)$ is $1$-acyclic for $g \geq 4$; and
\item[(b)] the quotient $\cC_{ab}(\Sigma_g) / \Torelli_g$ is contractible.
\end{itemize}
These two results will allow us to use the action of $\Torelli_g$ on $\cC_{ab}(\Sigma_g)$
to study $\HH_2(\Torelli_g;\Q)$.
Our main tool is (Borel) equivariant homology, which for a group $G$ acting
on a space $X$ mixes together information about the topology of $X/G$ and the group homology of the
$G$-stabilizers of cells in $X$.  

Using (a), we will show that the $\Torelli_g$-equivariant homology
of $C_{ab}(\Sigma_g)$
surjects onto $\HH_2(\Torelli_g;\Q)$.  The terms
$\HH_2((\Torelli_g)_{\alpha};\Q)$ and $\HH_2((\Torelli_g)_{\beta};\Q)$
in the domain of $\lambda$ appear in the $\Torelli_g$-equivariant homology of 
$C_{ab}(\Sigma_g)$ since $\alpha$ and $\beta$ are vertices of $C_{ab}(\Sigma_g)$.  Using
(b), we will show that rest of the equivariant homology has a tractable description,
from which we will deduce generators for $\coker(\lambda)$.

\subsubsection{Step 3 (algebraicity of cokernel)}
\label{section:step3relations}

The third step identifies topologically some relations among our generators for
$\coker(\lambda)$.  Let $H = \HH_1(\Sigma_{g-1}^1;\Q)$.  We use these relations
to embed $\coker(\lambda)$ into $V=((\wedge^2 H)/\Q)^{\otimes 2}$.
Since $V$ is
a finite-dimensional algebraic representation of $\Sp_{2g}(\Z)$, this will imply
that the same is true for $\coker(\lambda)$.  This step
uses recent work of the authors \cite[Theorem A.6]{MinahanPutmanRepPresentations}
giving tools for identifying representations given by generators and relations.  This
is difficult since $\coker(\lambda)$ has infinitely many generators
and relations.  

\subsection{Outline}
Theorem~\ref{maintheorem:stability} is proved in Part \ref{part:stability}.
Theorem~\ref{maintheorem:h2torelli} is proved in Parts \ref{part:step1} -- \ref{part:step3},
which perform the three steps outlined above.

\subsection{Acknowledgments}
We would like to thank Nate Harman for helpful discussions about the material in \S \ref{section:unmixed}
and Benson Farb, Richard Hain, Dan Margalit, and Xiyan Zhong for corrections.

\part{The stability theorem}
\label{part:stability}

The proof of Theorem~\ref{maintheorem:stability} (the stability theorem) is divided
into two parts: \S \ref{section:finitedim} addresses finite dimensionality, and
\S \ref{section:algebraicity} -- \S \ref{section:algebraicityproof} addresses algebraicity.

\section{A criterion for finite dimensionality}
\label{section:finitedim}

The following theorem strengthens part
of Theorem~\ref{maintheorem:stability} (the stability theorem).  For a commutative ring $\bk$
and a group $G$, we call a $\bk[G]$-module $\bV$ a {\em representation} of
$G$ over $\bk$.  We say that $\bV$ is {\em finite dimensional} if $\bV$ is finitely generated
as a $\bk$-module.

\begin{theorem}[Finite dim criterion]
\label{theorem:finitedim}
Let $\bk$ be a commutative Noetherian ring and let $g \geq 2$.  Let $\bV_{g-1}$ be an $\Sp_{2(g-1)}(\Z)$-representation over $\bk$
and $\bV_g$ be an $\Sp_{2g}(\Z)$-representation over $\bk$.
Let $f\colon \bV_{g-1} \boxtimes \bk \rightarrow \bV_g$ be an $\Sp_{2(g-1)}(\Z) \times \Sp_2(\Z)$-equivariant map.
Assume:
\begin{itemize}
\item[(i)] the cokernel of $f$ is finite dimensional; and
\item[(ii)] the coinvariants $(\bV_g)_{\Sp_{2g}(\Z)}$ are finite dimensional.
\end{itemize}
Then $\bV_g$ is finite dimensional.
\end{theorem}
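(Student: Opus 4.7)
The plan is to reduce the theorem to showing that the $\Sp_{2g}(\Z)$-submodule $\bU = \bk[\Sp_{2g}(\Z)] \cdot f(\bV_{g-1} \boxtimes \bk) \subseteq \bV_g$ generated by the image of $f$ is finitely generated over $\bk$. Since $\bV_g/\bU$ is a quotient of $\coker(f)$, hypothesis (i) makes $\bV_g/\bU$ finitely generated over $\bk$, and by Noetherianity of $\bk$ the full statement follows once $\bU$ is shown to be finitely generated over $\bk$.

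Two structural facts about $\bU$ are central. First, equivariance of $f$ together with the trivial $\Sp_2(\Z)$-action on $\bV_{g-1} \boxtimes \bk$ forces $\Image(f) \subseteq \bV_g^{\Sp_2(\Z)}$, so $\bU$ is generated over $\Sp_{2g}(\Z)$ by vectors fixed by the standard copy of $\Sp_2(\Z)$. Second, applying the right-exact functor $(-)_{\Sp_{2g}(\Z)}$ to the short exact sequence $0 \to \bU \to \bV_g \to \bV_g/\bU \to 0$, combined with hypothesis (ii) and the fact that $\HH_1(\Sp_{2g}(\Z); \bV_g/\bU)$ is a finitely generated $\bk$-module (since $\Sp_{2g}(\Z)$ is of type $F_\infty$ for $g \geq 2$ and $\bV_g/\bU$ is finitely generated), one deduces that $\bU_{\Sp_{2g}(\Z)}$ is a finitely generated $\bk$-module.

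The main obstacle is to bootstrap these two pieces of information to the finite generation of $\bU$ itself. Examples such as $\bk[\Sp_{2g}(\Z)/\Sp_2(\Z)]$ show that ``generated as an $\Sp_{2g}(\Z)$-module by $\Sp_2(\Z)$-fixed vectors with finitely generated coinvariants'' does not by itself suffice; the additional input must be the finite codimension of $\Image(f)$ in $\bU$ supplied by (i). The plan is then to exploit that $\Sp_{2g}(\Z)$ is normally generated by $\Sp_2(\Z)$: for $g \geq 2$ the group $\Sp_{2g}(\Z)$ is generated by symplectic transvections, and every transvection lies in a conjugate of $\Sp_2(\Z)$. Combined with the $\Sp_{2(g-1)}(\Z) \times \Sp_2(\Z)$-invariance of $\Image(f)$, this lets $\Sp_{2g}(\Z)$-translates of elements of $\Image(f)$ be rewritten using iterated commutators with $\Sp_2(\Z)$-elements, and the finite generation of $\bU_{\Sp_{2g}(\Z)}$ together with the finite codimension of $\Image(f)$ in $\bU$ should then cut these contributions down to finitely many independent directions. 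Formalizing this --- extracting an explicit finite $\bk$-generating set for $\bU$ from a presentation of $\Sp_{2g}(\Z)$ adapted to $\Sp_2(\Z)$ and the coinvariants computation --- is the technical heart of the argument.
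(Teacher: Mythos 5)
Your setup correctly identifies the two tools the argument needs --- that $\Sp_{2g}(\Z)$ is generated by finitely many conjugates of the standard $\Sp_2(\Z)$, and the long-exact-sequence computation showing the coinvariants of a suitable submodule are finite dimensional --- but the step you defer to ``the technical heart'' is a genuine gap, and it cannot be closed along the lines you sketch. With $\bU = \bk[\Sp_{2g}(\Z)]\cdot\Image(f)$ you are left needing to infer ``$\bU$ is finite dimensional'' from ``$\bU_{\Sp_{2g}(\Z)}$ is finite dimensional''; your own example $\bk[\Sp_{2g}(\Z)/\Sp_2(\Z)]$ shows this implication has no hope in general, and the supplementary conditions you invoke (finite codimension of $\Image(f)$ in $\bU$, commutator rewriting via transvections) do not supply a mechanism that controls how many new independent directions the $\Sp_{2g}(\Z)$-translates can contribute. ``Finite coinvariants plus a spanning set of $\Sp_2(\Z)$-fixed vectors'' is simply not a finite-generation criterion, and nothing in the sketch provides the missing leverage.

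The fix is to \emph{intersect} translates rather than span them. Let $\bW = \Image(f)$, pick $m_1,\dots,m_n \in \Sp_{2g}(\Z)$ so that the conjugates $m_i G m_i^{-1}$ of $G = 1\times\Sp_2(\Z)$ generate $\Sp_{2g}(\Z)$, and set $\bU = \bigcap_{i=1}^n m_i \Cdot \bW$. Each $m_i G m_i^{-1}$ acts trivially on $m_i\Cdot\bW$ and hence on $\bU$, so $\Sp_{2g}(\Z)$ itself acts trivially on $\bU$; and a finite intersection of finite-codimension submodules still has finite codimension, so it suffices to bound $\bU$. Triviality of the action gives $\bU = \bU_{\Sp_{2g}(\Z)} = \HH_0(\Sp_{2g}(\Z);\bU)$, and the exact segment $\HH_1(\Sp_{2g}(\Z);\bV_g/\bU) \to \bU \to \HH_0(\Sp_{2g}(\Z);\bV_g)$ then pins $\bU$ between two finitely generated $\bk$-modules (the left term because $\Sp_{2g}(\Z)$ is finitely generated, $\bV_g/\bU$ is finitely generated, and $\bk$ is Noetherian; the right term by hypothesis (ii)). So the coinvariants computation you set up in your second paragraph is exactly the right tool --- it just has to be applied to a submodule on which $\Sp_{2g}(\Z)$ genuinely acts \emph{trivially}, and the way to manufacture one of finite codimension is to take the intersection of the $m_i\Cdot\Image(f)$ over the finitely many $m_i$ whose conjugates of $\Sp_2(\Z)$ generate, not the $\Sp_{2g}(\Z)$-span of $\Image(f)$.
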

\begin{proof}
Let $G = 1 \times \Sp_2(\Z) \subset \Sp_{2g}(\Z)$ and 
let $\bW$ be the image of $f$.  The group $G$ 
acts trivially on $\bW$, and assumption (i) says that the $\bk$-submodule $\bW$ of $\bV_g$ has
finite codimension.\footnote{This simply means that $\bV_g/\bW$ is a finitely generated $\bk$-module.}
For $m \in \Sp_{2g}(\Z)$, the $\bk$-submodule $m \Cdot \bW$ of $\bV_g$ also has
finite codimension and the subgroup $m G m^{-1}$ of $\Sp_{2g}(\Z)$ acts trivially on it.
The group $\Sp_{2g}(\Z)$ is generated by finitely many conjugates of\footnote{One way to
see this is to observe that $G$ contains the transvection along a primitive element
of $0 \times \Z^2 \subset \Z^{2g}$, all transvections along primitive elements of $\Z^{2g}$ are conjugate in $\Sp_{2g}(\Z)$, and $\Sp_{2g}(\Z)$ 
is generated by
finitely many transvections along primitive elements of $\Z^{2g}$.} $G$.
Pick $m_1,\ldots,m_n \in \Sp_{2g}(\Z)$ such that $\Sp_{2g}(\Z)$ is generated by
the subgroups $\{m_i G m_i^{-1}\}_{i=1}^n$.
Set 
\[\bU = \bigcap_{i=1}^n m_i \Cdot \bW.\]
Each $m_i G m_i^{-1}$ acts trivially on $\bU$, so $\Sp_{2g}(\Z)$ itself acts trivially
on $\bU$.  Moreover, since the intersection of finitely many finite codimension submodules is
a finite codimension submodule, the submodule $\bU$ has finite codimension.  To prove that
$\bV_g$ is finite dimensional, it is therefore enough to prove that $\bU$ is finite dimensional.

Consider the short exact sequence of $\Sp_{2g}(\Z)$-representations
\begin{equation}
\label{eqn:uext}
\begin{tikzcd}
0 \arrow{r} & \bU \arrow{r} & \bV_g \arrow{r} & \bV_g / \bU \arrow{r} & 0.
\end{tikzcd}
\end{equation}
Since $\Sp_{2g}(\Z)$ acts trivially on $\bU$, we have
\[\HH_0(\Sp_{2g}(\Z);\bU) = \bU_{\Sp_{2g}(\Z)} = \bU,\]
where the subscript indicates that we are taking coinvariants.
The long exact sequence in $\Sp_{2g}(\Z)$-homology associated to \eqref{eqn:uext} thus contains the segment
\[\begin{tikzcd}
\HH_1(\Sp_{2g}(\Z);\bV_g/\bU) \arrow{r} & \bU \arrow{r} & \HH_0(\Sp_{2g}(\Z);\bV_g).
\end{tikzcd}\]
Since $\Sp_{2g}(\Z)$ is finitely generated and the quotient $\bV_g/\bU$ is finite dimensional, it follows\footnote{This uses the fact that our base ring $\bk$ is Noetherian.}
that $\HH_1(\Sp_{2g}(\Z);\bV_g/\bU)$ is finite dimensional.  Also, by assumption (ii) the coinvariants
\[\HH_0(\Sp_{2g}(\Z);\bV_g) = (\bV_g)_{\Sp_{2g}(\Z)}\]
are finite dimensional.  It follows that $\bU$ is finite dimensional, as desired.
\end{proof}

\section{A criterion for algebraicity I: statement and motivation for proof}
\label{section:algebraicity}

Theorem~\ref{theorem:finitedim} from \S \ref{section:finitedim} is a strengthening of one
part of Theorem~\ref{maintheorem:stability} (the stability theorem).  
We now turn to the following, which is a strengthening of the other part:

\begin{theorem}[Algebraicity criterion]
\label{theorem:algebraicity}
Let $\bk$ be a field of characteristic $0$ and let $g \geq 3$.  Let $\bV_{g-2}$ be an $\Sp_{2(g-2)}(\Z)$-representation over $\bk$
and $\bV_g$ be an $\Sp_{2g}(\Z)$-representation over $\bk$.
Let $f\colon \bV_{g-2} \boxtimes \bk \rightarrow \bV_g$ be an $\Sp_{2(g-2)}(\Z) \times \Sp_4(\Z)$-equivariant map.
Assume:
\begin{itemize}
\item[(a)] the representation $\bV_g$ is finite dimensional; and
\item[(b)] the cokernel of $f$ is an algebraic representation of $\Sp_{2(g-2)}(\Z)$.
\end{itemize}
Then $\bV_g$ is an algebraic representation of $\Sp_{2g}(\Z)$.
\end{theorem}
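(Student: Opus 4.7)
The plan is to adapt the conjugation argument from Theorem~\ref{theorem:finitedim}, with $\Sp_4(\Z)$ playing the role of $\Sp_2(\Z)$, and to carry algebraicity through the argument. Let $\bW := \mathrm{im}(f) \subseteq \bV_g$ and $G := 1 \times \Sp_4(\Z) \subseteq \Sp_{2g}(\Z)$. Since $f$ is $G$-equivariant with trivial source action, $G$ fixes $\bW$ pointwise; since $\bV_g$ is finite dimensional by (a), so is $\bW$; and by (b), $\bV_g/\bW$ is a finite dimensional algebraic $\Sp_{2(g-2)}(\Z)$-representation.

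First I would repeat the argument of Theorem~\ref{theorem:finitedim}: pick $m_1,\ldots,m_n \in \Sp_{2g}(\Z)$ with $\Sp_{2g}(\Z)=\langle m_1Gm_1^{-1},\ldots,m_nGm_n^{-1}\rangle$ and set $\bU:=\bigcap_{i=1}^n m_i \cdot \bW$. Then $\bU$ has finite codimension in $\bV_g$ and is fixed pointwise by $\Sp_{2g}(\Z)$, so $\bU$ is trivially algebraic. Since finite dimensional algebraic representations of $\Sp_{2g}$ in characteristic zero are completely reducible, in the short exact sequence
$$0 \longrightarrow \bU \longrightarrow \bV_g \longrightarrow \bV_g/\bU \longrightarrow 0$$
it suffices to prove that $\bV_g/\bU$ is algebraic. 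The quotient maps $\bV_g \twoheadrightarrow \bV_g/(m_i\cdot\bW)$ assemble into a vector-space injection $\bV_g/\bU \hookrightarrow \bigoplus_{i=1}^n \bV_g/(m_i\cdot\bW)$, and each summand is algebraic for the conjugate subgroup $m_i\Sp_{2(g-2)}(\Z)m_i^{-1}$.

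The main content of the proof is then to show that this family of algebraic structures, one per subgroup, forces $\bV_g/\bU$ to be algebraic as an $\Sp_{2g}(\Z)$-representation. This is the main obstacle: in general, a finite dimensional $\Sp_{2g}(\Z)$-representation whose restriction to a proper subgroup is algebraic need not itself be algebraic. I expect the proof to exploit two features of the setup. First, $\Sp_{2(g-2)}(\Z) \times \Sp_4(\Z)$ is Zariski dense in the reductive algebraic subgroup $\Sp_{2(g-2)} \times \Sp_4 \subseteq \Sp_{2g}$, which contains a maximal torus of $\Sp_{2g}$; so the torus weights of $\bV_g/\bW$ are already pinned down by the given $\Sp_{2(g-2)}(\Z)$-algebraic structure together with the trivial $\Sp_4(\Z)$-action, and algebraicity can only fail through a non-polynomial clutching of weight data. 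Second, the $\Sp_4(\Z)$-factor has rank two and satisfies stronger rigidity than $\Sp_2(\Z)$—this is presumably why the hypothesis is stated with $\Sp_4$ rather than $\Sp_2$—which should allow the clutching data to be controlled.

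Concretely, to close the argument I would try to realize $\bV_g/\bU$ as a subquotient of an explicit algebraic $\Sp_{2g}(\Z)$-representation built out of the standard representation $H=\Q^{2g}$. One natural source: if $\bV_g/\bW \cong \bigoplus_\sigma \bV_\sigma^{\oplus n_\sigma}$ as an algebraic $\Sp_{2(g-2)}(\Z)$-representation, then one hopes to embed the finite dimensional $\bV_g/\bU$ into a direct sum of the corresponding $\Sp_{2g}(\Z)$-algebraic representations $\bV_\sigma$ of $\Sp_{2g}$. Pinning this embedding down from the $\Sp_{2(g-2)}(\Z)\times\Sp_4(\Z)$-equivariance is the crux, and is where I expect the representation-theoretic technology of \cite{MinahanPutmanRepPresentations} (the tools for identifying representations given by generators and relations advertised in \S\ref{section:step3relations}) to enter.
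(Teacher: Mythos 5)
Your initial reduction (forming $\bU = \bigcap_i m_i \cdot \bW$, observing $\bU$ is fixed pointwise and hence trivially algebraic, and reducing to $\bV_g/\bU$) is sound and sensibly adapts the proof of Theorem~\ref{theorem:finitedim}. However, the reduction is not the crux, and the part you flag as ``the main content'' is a genuine gap: your proposed tools do not close it, and they are not the ones the paper uses.

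The central difficulty --- which you correctly identify --- is that a finite-dimensional $\Sp_{2g}(\Z)$-representation whose restrictions to a family of proper subgroups are algebraic can still fail to be algebraic, because it can have a factor that is pulled back from a finite quotient $\Sp_{2g}(\Z/\ell)$. Your first suggested mechanism (Zariski density and torus weights) cannot detect this obstruction: representations factoring through $\Sp_{2g}(\Z/\ell)$ have perfectly well-defined restrictions to the torus in $\Sp_{2(g-2)}(\Z)\times\Sp_4(\Z)$, so ``pinning down the weights'' does not rule them out. Your second suggested mechanism, the presentation technology of \cite{MinahanPutmanRepPresentations}, is used in the paper for an entirely different purpose (step~3 of the Torelli argument, identifying the cokernel $\Lambda_g$) and plays no role here.

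The paper's actual proof runs orthogonally to your suggestion. It first invokes Lubotzky's theorem \cite{LubotzkyProAlgebraic}: over an algebraically closed field of characteristic $0$, every finite-dimensional $\Sp_{2g}(\Z)$-representation is semisimple, with irreducibles of the form $\bU\otimes\bW$ where $\bU$ is irreducible algebraic and $\bW$ is an irreducible representation of some $\Sp_{2g}(\Z/\ell)$. This reduces the problem to showing the finite-quotient factor $\bW$ is trivial. Hypothesis (b) then forces the restriction of $\bW$ to $\Sp_{2(g-2)}(\Z/\ell)\times\Sp_4(\Z/\ell)$ to be \emph{unmixed} (each irreducible constituent has one of the two factors acting trivially). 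The paper's Proposition~\ref{proposition:spuniversal} --- proved via a chain of results about ``universally mixed'' subgroups of $\GL_n$ and $\Sp_{2g}$ over finite rings in \S\ref{section:unmixed} --- says this product subgroup is universally mixed in $\Sp_{2g}(\Z/\ell)$ (for $g_1,g_2\geq 2$), so the restriction must actually be trivial, and a normal-closure argument then kills $\bW$. Your intuition that $\Sp_4$ rather than $\Sp_2$ is needed for some form of rigidity is correct in spirit, but the precise reason is the failure of the ``universally mixed'' property for $\GL_1(\bbF_q)\times\GL_1(\bbF_q)$ at $q\in\{2,3\}$ (see the footnote to Lemma~\ref{lemma:unmixedgl2}), forcing the rank-$\geq 2$ hypothesis in Propositions~\ref{proposition:congruencemixed} and~\ref{proposition:spuniversal}. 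None of this finite-group-theoretic content is present or hinted at in your proposal, so the argument as written does not go through.
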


Theorem~\ref{maintheorem:stability} is an immediate consequence\footnote{The only potentially
non-obvious point here is that in Theorem~\ref{maintheorem:stability}, we are given
maps
\[\begin{tikzcd}[ampersand replacement=\&]
\bV_{g-2} \arrow{r}{f_{g-2}} \& \bV_{g-1} \arrow{r}{f_{g-1}} \& \bV_g
\end{tikzcd}\]
between representations of the appropriate symplectic groups such that $\coker(f_{g-2})$ is
a finite dimensional algebraic representation of $\Sp_{2(g-2)}(\Z)$ and
$\coker(f_{g-1})$ is a finite dimensional algebraic representation of $\Sp_{2(g-1)}(\Z)$.  To
apply Theorem~\ref{theorem:algebraicity}, we need for $\coker(f_{g-1} \circ f_{g-2})$ to
be a finite dimensional algebraic representation of $\Sp_{2(g-2)}(\Z)$.  This follows
from the fact that algebraic representations of $\Sp_{2(g-1)}(\Z)$ restrict
to algebraic representations of $\Sp_{2(g-2)}(\Z)$ along with the fact that the collection
of algebraic representations of $\Sp_{2(g-2)}(\Z)$ is closed under extensions and subquotients.}
of Theorems \ref{theorem:finitedim} and \ref{theorem:algebraicity}.

\subsection{Motivation for proof}

To motivate what we do, let us consider one way a counterexample to Theorem~\ref{theorem:algebraicity} might arise.  Fix some $\ell \geq 2$.
The prototypical non-algebraic representations of $\Sp_{2g}(\Z)$ are those that factor through the finite group $\Sp_{2g}(\Z/\ell)$.
The group $\Sp_{2g}(\Z/\ell)$ contains the subgroup $\Sp_{2(g-2)}(\Z/\ell) \times \Sp_4(\Z/\ell)$.  Via the projections
\[\Sp_{2(g-2)}(\Z/\ell) \times \Sp_4(\Z/\ell) \rightarrow \Sp_{2(g-2)}(\Z/\ell) \ \  \text{and} \ \ \Sp_{2(g-2)}(\Z/\ell) \times \Sp_4(\Z/\ell) \rightarrow \Sp_{4}(\Z/\ell),\]
we can regard representations of $\Sp_{2(g-2)}(\Z/\ell)$ and $\Sp_4(\Z/\ell)$ as representations
of the product $\Sp_{2(g-2)}(\Z/\ell) \times \Sp_4(\Z/\ell)$.
Assume there exists a nontrivial
finite-dimensional representation $\bV$ of the finite group $\Sp_{2g}(\Z/\ell)$ over $\bk$ with the following property:
\begin{itemize}
\item The restriction of $\bV$ to $\Sp_{2(g-2)}(\Z/\ell) \times \Sp_4(\Z/\ell)$ decomposes as $\bW \oplus \bW'$
with $\Sp_4(\Z/\ell)$ acting trivially on $\bW$ and $\Sp_{2(g-2)}(\Z/\ell)$ acting trivially
on $\bW'$.
\end{itemize}
Let $\bV_g = \bV$ and $\bV_{g-2} = \bW$, regarded as representations of $\Sp_{2g}(\Z)$ and $\Sp_{2(g-2)}(\Z)$ via the surjections
\[\Sp_{2g}(\Z) \longrightarrow \Sp_{2g}(\Z/\ell) \quad \text{and} \quad \Sp_{2(g-2)}(\Z) \longrightarrow \Sp_{2(g-2)}(\Z/\ell).\]
Consider the map $f\colon \bV_{g-2} \boxtimes \bk \rightarrow \bV_g$
coming from the inclusion $\bW \hookrightarrow \bV$.
We claim that this is a counterexample to Theorem~\ref{theorem:algebraicity}.  This requires checking two things:
\begin{itemize}
\item The representation $\Coker(f) = \bV / \bW \cong \bW'$ is an algebraic representation of $\Sp_{2(g-2)}(\Z)$.  This holds
since the $\Sp_{2(g-2)}(\Z)$-action on $\bW'$ is trivial.\footnote{The group $\Sp_{2(g-2)}(\Z) \times \Sp_4(\Z)$ acts on 
$\Coker(f) = \bW'$ and this action is nontrivial since $\Sp_4(\Z)$ acts nontrivially, but we only care about the action
of $\Sp_{2(g-2)}(\Z)$.}
\item The representation $\bV_g = \bV$ is a non-algebraic representation of $\Sp_{2g}(\Z)$.  This holds since the (nontrivial)
$\Sp_{2g}(\Z)$-action on it factors through a finite group.
\end{itemize}

\subsection{Outline}
It turns out that the potential existence of such $\bV$ is the main obstacle to proving Theorem~\ref{theorem:algebraicity}.
Ruling this out requires a long detour into the representation theory of finite groups,
which we do in \S \ref{section:unmixed}.  We then prove Theorem~\ref{theorem:algebraicity} in \S \ref{section:algebraicityproof}.

\section{A criterion for algebraicity II: unmixed representations of finite groups}
\label{section:unmixed}

This section studies the restriction to product subgroups of representations of finite groups.

\subsection{Mixed representations}

Let $G_1$ and $G_2$ be finite groups.  Via the projections
\[G_1 \times G_2 \rightarrow G_1 \quad \text{and} \quad G_1 \times G_2 \rightarrow G_2,\]
the irreducible representations of $G_1$ and $G_2$ over a field are irreducible 
representations of $G_1 \times G_2$.  We will call these the {\em unmixed} irreducible
representations of $G_1 \times G_2$.  The other irreducible representations of $G_1 \times G_2$ 
are the {\em mixed} irreducible
representations; they have the property that their restrictions to $G_1$ and $G_2$ are both nontrivial.\footnote{If $\bk$ is an algebraically closed
field of characteristic $0$, the mixed irreducible
representations of $G_1 \times G_2$ over $\bk$ are of the form $\bV \otimes \bW$ with $\bV$ a nontrivial irreducible representation of $G_1$
and $\bW$ a nontrivial irreducible representation of $G_2$.}
A general representation of $G_1 \times G_2$ over a field of characteristic $0$ is {\em unmixed} if 
all of its irreducible factors are unmixed.  If at least one of its irreducible
factors is mixed, then it is {\em mixed}.

\subsection{Universally mixed subgroups}

Let $G$ be a finite group and let $G_1 \times G_2$ be a subgroup of $G$.  We say that $G_1 \times G_2$
is {\em universally mixed} in $G$ if for all all representations $\bV$ of $G$ over algebraically closed
fields of characteristic $0$, the restriction $\Res^G_{G_1 \times G_2} \bV$
is either mixed or trivial.\footnote{This implies that the same holds for representations $\bV$ of $G$ over
arbitrary fields of characteristic $0$.}  We remark that it is enough to check this
for irreducible representations $\bV$.  

\subsection{\texorpdfstring{GL\textsubscript{2}}{GL2} over finite fields}

The following gives an important example of this property:

\begin{lemma}
\label{lemma:unmixedgl2}
Let $\bbF_q$ be a finite field with\footnote{While the lemma is true for $q=2$ for the (uninteresting) reason that
$\GL_1(\bbF_2) = 1$, it is false for $q=3$.  Indeed, let $G = \GL_2(\bbF_3)$ and let
$P \subset G$ be the subgroup of upper triangular
matrices.
Define a character $\rho\colon P \rightarrow \C^{\times}$ via the formula
\[\rho\left(\begin{matrix} a & u \\ 0 & b\end{matrix}\right) = \begin{cases} 1 & \text{if $a = 1$},\\ -1 & \text{if $a = -1$.}\end{cases}\]
Note that we write $\rho$ like this since $1,-1 \in \bbF_3$ are different from $1,-1 \in \C^{\times}$ despite the fact that they look the same.
Let $\C_{\rho}$ be the associated $1$-dimensional representation of $P$ and let
$\bV = \Ind_P^{G} \C_{\rho}$.  It is an easy exercise to show that $\bV$ restricts to an unmixed
representation of $\GL_1(\bbF_3) \times \GL_1(\bbF_3)$.} $q > 3$.
Then $\GL_1(\bbF_q) \times \GL_1(\bbF_q)$ is universally mixed in $\GL_2(\bbF_q)$.
\end{lemma}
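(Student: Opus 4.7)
The plan is to use the classical classification of irreducible representations of $G = \GL_2(\bbF_q)$ over an algebraically closed field of characteristic zero, computing the restriction of each to the diagonal torus $T = \GL_1(\bbF_q) \times \GL_1(\bbF_q)$, and verifying case by case that it is trivial or mixed. Note that a character $\nu_1 \boxtimes \nu_2$ of $T$ is unmixed precisely when $\nu_1 = \mathbf{1}$ or $\nu_2 = \mathbf{1}$. The irreducible representations of $G$ are the one-dimensional characters $\alpha \circ \det$, the twisted Steinberg representations $\operatorname{St} \otimes (\alpha \circ \det)$ of dimension $q$, the principal series $\pi(\chi_1,\chi_2) = \Ind_B^G (\chi_1 \boxtimes \chi_2)$ of dimension $q+1$ (with $\chi_1 \neq \chi_2$), and the cuspidal representations $\bV_\theta$ of dimension $q-1$ indexed by regular characters $\theta$ of $\bbF_{q^2}^\times$.

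The main computational tool is Mackey's formula applied to the principal series. The $T$-orbits on $G/B \cong \bbP^1(\bbF_q)$ are the two $T$-fixed points together with a free orbit of size $q-1$ whose stabilizer in $T$ is the center $Z = \{\diag(a,a)\}$, which yields
\[
\Res^G_T \pi(\chi_1,\chi_2) \;=\; (\chi_1 \boxtimes \chi_2) \;\oplus\; (\chi_2 \boxtimes \chi_1) \;\oplus\; \bigoplus_{\psi} \psi \boxtimes (\chi_1 \chi_2 \psi^{-1}),
\]
where $\psi$ ranges over characters of $\bbF_q^\times$. The one-dimensional case gives $\alpha \boxtimes \alpha$, which is unmixed only when $\alpha = \mathbf{1}$ (hence trivial). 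Decomposing $\Ind_B^G \mathbf{1} = \mathbf{1} \oplus \operatorname{St}$ and twisting by $\alpha \circ \det$ gives
\[
\Res^G_T \bigl(\operatorname{St} \otimes (\alpha\circ\det)\bigr) \;=\; 2(\alpha \boxtimes \alpha) \;\oplus\; \bigoplus_{\psi \neq \mathbf{1}} (\alpha\psi) \boxtimes (\alpha\psi^{-1}).
\]
For the cuspidal case, the standard character formula (in which $\chi_{\bV_\theta}$ vanishes on noncentral split semisimple elements and equals $(q-1)\theta(a)$ on $\diag(a,a)$), combined with orthogonality of $T$-characters, yields $\Res^G_T \bV_\theta = \bigoplus_{\nu_1\nu_2 = \theta|_{\bbF_q^\times}} \nu_1 \boxtimes \nu_2$.

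In each higher-dimensional family the summands are parameterized by a character $\psi$ (or $\nu_1$) of $\bbF_q^\times$, and the requirement that a given summand be unmixed pins this character to one of at most two values. The main obstacle is the principal series case: the two ``boundary'' summands $\chi_1 \boxtimes \chi_2$ and $\chi_2 \boxtimes \chi_1$ can each independently be unmixed (forcing, say, $\chi_1 = \mathbf{1}$), so one must still extract a mixed constituent from the induced family $\bigoplus_\psi \psi \boxtimes (\chi_2 \psi^{-1})$; this family has $q-1$ terms and at most two of them (namely $\psi = \mathbf{1}$ and $\psi = \chi_2$) are unmixed, so the hypothesis $q-1 \geq 3$ is exactly what forces some summand to be mixed. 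The footnote's counterexample for $q=3$ arises precisely because, with only two characters of $\bbF_3^\times$ available, there is no ``room'' left for a mixed summand to appear, and the same combinatorial check verifies the analogous assertions in the Steinberg and cuspidal cases.
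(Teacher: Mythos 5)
Your proof is correct, but it takes a genuinely different route from the paper's. You enumerate the irreducible representations of $\GL_2(\bbF_q)$ (characters, twisted Steinberg, principal series, cuspidal), compute the restriction of each to the torus $T = A \times B$ directly (via Mackey's formula for the induced and Steinberg cases and via the standard character of the cuspidal), and check case by case that a mixed constituent survives when $q > 3$. The paper instead avoids the classification entirely: it shows that every irreducible $\bV$ of dimension $> 1$ satisfies $\Res^G_A \bV \supseteq \bk[A]$ by inducing a nontrivial character of the unipotent subgroup $U$ up through the mirabolic subgroup $M$ and invoking Piatetski-Shapiro's results (irreducibility of $\Ind_U^M \bk_\rho$ and the fact that $\Ind_M^G$ of that representation contains every higher-dimensional irreducible); it then observes that each $\chi_1$-eigenline for $A$ is automatically $B$-stable with $B$-character determined by the central character $\lambda$, so one only needs to choose $\chi_1$ nontrivial avoiding one forbidden value -- which is exactly where $q > 3$ enters. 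Both arguments trace the hypothesis $q>3$ to the same combinatorial fact (the $q-1$ characters of $\bbF_q^\times$ leave room to dodge two ``bad'' values), but the paper's version is uniform over the dimension-$>1$ representations whereas yours is explicit and self-contained given the standard representation theory of $\GL_2(\bbF_q)$. Your analysis of the cuspidal restriction $\bigoplus_{\nu_1\nu_2 = \theta|_{\bbF_q^\times}} \nu_1 \boxtimes \nu_2$ and your remark that for $q=3$ the principal series $\pi(\epsilon,\mathbf{1})$ restricts to $2(\epsilon\boxtimes\mathbf{1})\oplus 2(\mathbf{1}\boxtimes\epsilon)$ correctly recover the footnote's counterexample as well.

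One small polishing note: when you assert that "at most two" summands of a family of $q-1$ are unmixed, it is worth stating explicitly that $q > 3$ plus $q$ a prime power forces $q - 1 \geq 3$, which is the needed inequality; and in the Steinberg case the summand $2(\alpha\boxtimes\alpha)$ already forces $\alpha = \mathbf{1}$, after which every $\psi\boxtimes\psi^{-1}$ with $\psi \neq \mathbf{1}$ is mixed, so the Steinberg representations are mixed for all $q \geq 3$, not just $q > 3$ -- the $q=3$ failure really is confined to principal series (and certain cuspidals with $\theta|_{\bbF_q^\times} \neq \mathbf{1}$), as your own computation shows.
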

\begin{proof}
Let $G = \GL_2(\bbF_q)$, let $A = \GL_1(\bbF_q) \times 1$,
and let $B = 1 \times \GL_1(\bbF_q)$.  Let $\bk$ be an algebraically closed field
of characteristic $0$ and let $\bV$ be a nontrivial irreducible representation of
$G$ over $\bk$.  It is enough to prove that $\Res^G_{A \times B} \bV$ is a mixed representation of
$A \times B$.

If $\bV$ is $1$-dimensional then the only way that the restriction of $\bV$ to
$A \times B$ can be unmixed is if the restriction 
of $\bV$ to either $A$ or $B$ is trivial.  Since
the normal closures in $G$ of both $A$ and $B$
are\footnote{This uses the fact that $q \neq 2$.} all of $G$, this would imply that $\bV$ is the trivial representation
of $G$, contrary to our assumptions.  We can thus assume without
loss of generality that $\bV$ has dimension greater than $1$.

If $\Gamma$ is a group and $\chi\colon \Gamma \rightarrow \bk^{\times}$ is a character,
then denote by $\bk_{\chi}$ the associated $1$-dimensional representation of $\Gamma$.
Since $A \times B$ is abelian and $\bk$ is algebraically closed, proving that the restriction of $\bV$ to $A \times B$ is mixed
is equivalent to finding nontrivial characters
$\chi_1 \colon A \rightarrow \bk^{\times}$ and $\chi_2 \colon B \rightarrow \bk^{\times}$ such that
letting $\chi_1 \chi_2\colon A \times B \rightarrow \bk^{\times}$ be their product,
the restriction of $\bV$ to $A \times B$ contains $\bk_{\chi_1 \chi_2}$ as a subrepresentation.

For $a,b \in \bk^{\times}$ let $\diag(a,b) \in A \times B$
be the associated diagonal matrix.  Let $Z < G$ be the central subgroup of diagonal matrices
$\diag(z,z)$.  
By Schur's Lemma, there is a character $\lambda\colon Z \rightarrow \bk^{\times}$ called
the {\em central character} such that for all $z \in Z$, the element $z$ acts on $\bV$
as multiplication by $\lambda(z)$.  

If $\chi_1\colon A \rightarrow \bk^{\times}$ is a character and $\bL$ is a subrepresentation
of $\Res^G_A \bV$ with $\bL \cong \bk_{\chi_1}$, then
for $b \in \bbF_q^{\times}$ and $\vec{x} \in \bL$ we have
\[\diag(1,b) \Cdot \vec{x} = \diag(b^{-1},1) \diag(b,b) \Cdot \vec{x} = \chi_1(\diag(b,1))^{-1} \lambda(\diag(b,b)) \vec{x}.\]
In other words, $\bL$ is also preserved by $B$.  Moreover, letting 
$\chi_2 \colon B \rightarrow \bk^{\times}$ be defined via the formula
\begin{equation}
\label{eqn:chi2formula}
\chi_2(\diag(1,b)) = \chi_1(\diag(b,1))^{-1} \lambda(\diag(b,b)) \quad \text{for all $b \in \bbF_q^{\times}$},
\end{equation}
as an $A \times B$-representation we have $\bL \cong \bk_{\chi_1 \chi_2}$.  It follows that to
prove the lemma, it is enough to find such an $\bL$ with both $\chi_1$ and $\chi_2$ nontrivial.

Let $a_0 \in \bbF_q^{\times}$ be a generator of this cyclic group.  Since $q > 3$ and $A \cong \bbF_q^{\times}$ and $\bk$
is algebraically closed, we can find a nontrivial character
$\chi_1\colon A \rightarrow \bk^{\times}$ with
\[\chi_1(\diag(a_0,1)) \neq \lambda(\diag(a_0,a_0)).\]
It follows that the character $\chi_2\colon B \rightarrow \bk^{\times}$ defined
in \eqref{eqn:chi2formula}
satisfies $\chi_2(\diag(0,a_0)) \neq 1$, and in particular is also nontrivial.
It is therefore enough to prove that $\Res^G_A \bV$
contains a subrepresentation $\bL$ with $\bL \cong \bk_{\chi_1}$.  In
fact, we will prove the following:

\begin{unnumberedclaim}
For all irreducible representations $\bV$ of $G$ of dimension greater than $1$,
the representation $\Res^G_A \bV$ contains a copy of the left
regular representation $\bk[A]$ of $A$.  In particular, it contains a copy
of every irreducible representation of $A$.
\end{unnumberedclaim}

Let $U < G$ be the unipotent subgroup consisting of matrices
\[\text{$\left(\begin{matrix} 1 & u \\ 0 & 1 \end{matrix}\right)$ with $u \in \bbF_q$}\]
and let $M < G$ be the mirabolic (``miraculous parabolic'') subgroup consisting of matrices
\[\text{$\left(\begin{matrix} a & u \\ 0 & 1 \end{matrix}\right)$ with $a \in \bbF_q^{\times}$ and $u \in \bbF_q$}.\]
The group $U$ is isomorphic to the additive group of $\bbF_q$.
Let $\rho\colon U \rightarrow \bk^{\times}$ be an arbitrary nontrivial character and 
let $\bW = \Ind_U^M \bk_{\rho}$.  Then:
\begin{itemize}
\item $\bW$ is an irreducible representation of $M$ (see \cite[Theorem 6.1]{PiatetskiShapiro}); and
\item $\Ind_M^G \bW$ is the direct sum of one copy of every representation of $G$ of dimension greater than $1$ (see \cite[Theorem 16.1]{PiatetskiShapiro}).
\end{itemize}
In particular, since our representation $\bV$ is irreducible and has dimension greater than $1$ we can apply Frobenius reciprocity and see that
\[\bk = \Hom_G(\Ind_M^G \bW,\bV) = \Hom_M(\bW,\Res^G_M \bV).\]
Since $\bW$ is an irreducible representation of $M$, it follows that $\bW$ injects into 
$\Res^G_M \bV$.  Recalling that we are trying to prove that
$\Res^G_A \bV$ contains a copy of $\bk[A]$, 
it is thus enough to prove that $\Res^M_A \bW \cong \bk[A]$.  For this, we have $M = U \rtimes A$, so
\[\bW = \Ind_U^M \bk_{\rho} = \bigoplus_{a \in \bbF_q^{\times}} \diag(a,1) \Cdot \bk_{\rho}.\]
The action of $A$ on $\bW$ permutes these one-dimensional factors simply transitively.  It follows
that the restriction of $\bW$ to $A$ is isomorphic to $\bk[A]$, as desired.  
\end{proof}

\subsection{\texorpdfstring{GL\textsubscript{n}}{GLn} over finite fields}

The following generalizes Lemma~\ref{lemma:unmixedgl2}:

\begin{proposition}
\label{proposition:unmixedgl}
Let $\bbF_q$ be a finite field and $n_1,n_2 \geq 1$.  Set $n=n_1+n_2$.  Assume that $q>3$ if $n_1 = 1$ or
$n_2 = 1$.  Then 
$\GL_{n_1}(\bbF_q) \times \GL_{n_2}(\bbF_q)$ is universally mixed in $\GL_n(\bbF_q)$.
\end{proposition}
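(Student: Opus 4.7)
The plan is to show that any irreducible representation $\bV$ of $G = \GL_n(\bbF_q)$ over an algebraically closed field $\bk$ of characteristic $0$ with unmixed restriction to $H = \GL_{n_1}(\bbF_q) \times \GL_{n_2}(\bbF_q)$ must be the trivial representation. The strategy, in both of the regimes below, is to use Lemma \ref{lemma:unmixedgl2} to find a subgroup $K \subset G$ acting trivially on $\bV$, propagate this to $\SL_n(\bbF_q)$ via a generation or normal-closure argument, and finally eliminate the remaining character of $\det$ using unmixedness.

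When $q > 3$, for each cross-block pair $(i, j)$ with $i \leq n_1 < j$ let $K = G_{ij} \cong \GL_2(\bbF_q)$ be the subgroup acting on $\text{span}(e_i, e_j)$ and fixing the other basis vectors, so that $K \cap H = \GL_1(\bbF_q) \times \GL_1(\bbF_q)$. Unmixedness restricts to subproducts, so Lemma \ref{lemma:unmixedgl2} applied to each irreducible summand of $\Res^G_K \bV$ forces $\Res^G_K \bV$ to be trivial; hence every cross-block transvection $I + tE_{ij}$ acts trivially on $\bV$. The commutator identity $[I + E_{jk}, I + tE_{kl}] = I + tE_{jl}$, with $k$ chosen in the block opposite $\{j, l\}$, then produces every within-block transvection, so all transvections act trivially on $\bV$ and thus $\SL_n(\bbF_q)$ does as well.

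When $q \in \{2, 3\}$, the hypothesis forces $n_1, n_2 \geq 2$. Pick distinct $i, i' \leq n_1$ and distinct $j, j' > n_1$, and choose an $\bbF_q$-basis of $\bbF_{q^2}^2$ compatible with the decomposition $\bbF_{q^2} \oplus \bbF_{q^2}$. This realizes $K := \GL_2(\bbF_{q^2})$ as a subgroup of $\GL_4(\bbF_q) \subset G$ acting on $\text{span}(e_i, e_{i'}, e_j, e_{j'})$, with diagonal torus $\GL_1(\bbF_{q^2}) \times \GL_1(\bbF_{q^2})$ landing inside $\GL_2(\bbF_q) \times \GL_2(\bbF_q) \subset H$ as a pair of nonsplit tori. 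Applying Lemma \ref{lemma:unmixedgl2} over $\bbF_{q^2}$ (where $q^2 \geq 4 > 3$) to the irreducible summands of $\Res^G_K \bV$ shows that $\Res^G_K \bV$ is trivial. Since $\PSL_n(\bbF_q)$ is simple for $n \geq 3$ and the image of $\SL_2(\bbF_{q^2}) \subset K$ in $\PSL_n(\bbF_q)$ is nontrivial, the normal closure of $K$ in $\SL_n(\bbF_q)$ is all of $\SL_n(\bbF_q)$, so $\SL_n(\bbF_q)$ again acts trivially on $\bV$.

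In both cases, $\bV$ now factors through $\det \colon G \to \bbF_q^\times$ as $\bk_{\chi \circ \det}$ for some character $\chi$. Since $\det$ surjects onto $\bbF_q^\times$ from each $\GL_{n_i}(\bbF_q)$, the restriction $\chi(\det|_{H_1}) \boxtimes \chi(\det|_{H_2})$ is unmixed only if $\chi = 1$, so $\bV$ is trivial. I expect the main technical subtlety to be the case $q \in \{2, 3\}$: the nonsplit-torus embedding must be set up so that the tori really land inside the prescribed $(2, 2)$ block decomposition, and the normal closure step depends on simplicity results for $\PSL_n(\bbF_q)$ that need small-$n$ exceptions to be noted. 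The whole point of introducing $\GL_2(\bbF_{q^2})$ is to bootstrap Lemma \ref{lemma:unmixedgl2} into the small-$q$ regime where the lemma itself fails, at the cost of only seeing triviality on a single large subgroup rather than on a collection of $\GL_2(\bbF_q)$'s.
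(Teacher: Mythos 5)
Your proof is correct, and it uses the same two key embeddings as the paper: the cross-block copy of $\GL_2(\bbF_q)$ for $q>3$, and the embedding $\GL_2(\bbF_{q^2})\hookrightarrow\GL_4(\bbF_q)\hookrightarrow\GL_n(\bbF_q)$ to bootstrap Lemma~\ref{lemma:unmixedgl2} into the small-$q$ regime. Where you diverge from the paper is in how the triviality of the restriction to $K$ gets propagated to all of $\GL_n(\bbF_q)$: the paper packages this step into the poison subgroup lemma (Lemma~\ref{lemma:poison}), which only requires computing the $\GL_n(\bbF_q)$-normal closure of the small factor $\GL_1\times\GL_1$ (or $\GL_1(\bbF_{q^2})\times\GL_1(\bbF_{q^2})$) and observing it equals $\GL_n(\bbF_q)$, whereas you carry out the propagation by hand — elementary transvections plus the commutator identity in the $q>3$ case, and simplicity of $\PSL_n(\bbF_q)$ in the $q\in\{2,3\}$ case — landing only on $\SL_n(\bbF_q)$, and then dispatching the residual determinant character via a separate unmixedness argument. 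Both routes are sound; the paper's version is more modular because the poison subgroup lemma is reused verbatim for Propositions~\ref{proposition:congruencemixed} and~\ref{proposition:spuniversal} (and in Lemma~\ref{lemma:kuniversallmixed}), so if you pursued your hands-on strategy you would need to repeat analogous normal-closure computations in each of those settings. One small remark: your reliance on $\PSL_n(\bbF_q)$ simplicity is safe here since $n\geq 4$, and your determinant step correctly uses that $\det$ surjects onto $\bbF_q^\times$ from each block, but these are exactly the kinds of small bookkeeping facts the poison subgroup lemma is designed to sweep away.
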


The proof of Proposition~\ref{proposition:unmixedgl} requires some preliminary lemmas.

\begin{lemma}
\label{lemma:pushup}
For $i=1,2$ let $\Gamma_i$ be a finite
group and let $G_i < \Gamma_i$ be a subgroup.  Let $\bV$ be a representation of $\Gamma_1 \times \Gamma_2$ over
a field of characteristic $0$ 
whose restriction to $G_1 \times G_2$ is mixed.  Then $\bV$ is mixed.
\end{lemma}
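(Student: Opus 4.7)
The plan is to prove the contrapositive: if $\bV$ is an unmixed representation of $\Gamma_1 \times \Gamma_2$, then $\Res^{\Gamma_1 \times \Gamma_2}_{G_1 \times G_2} \bV$ is an unmixed representation of $G_1 \times G_2$. Since the groups are finite and the base field has characteristic $0$, Maschke's theorem lets us write $\bV = \bigoplus_\alpha \bU_\alpha$ as a direct sum of irreducibles. Because ``unmixed'' is defined componentwise on irreducible factors, and restriction commutes with direct sums, it is enough to treat the case in which $\bV$ itself is an irreducible unmixed representation of $\Gamma_1 \times \Gamma_2$.

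By definition, such a $\bV$ is pulled back from an irreducible representation of one of the factors along the projection $\Gamma_1 \times \Gamma_2 \rightarrow \Gamma_i$. Without loss of generality, suppose $\bV = p_1^{\ast} \bW$ for an irreducible representation $\bW$ of $\Gamma_1$, so $\Gamma_2$ acts trivially on $\bV$. Restricting along the inclusion $G_1 \times G_2 \hookrightarrow \Gamma_1 \times \Gamma_2$, the subgroup $G_2$ still acts trivially on the underlying space; equivalently, the restriction is pulled back along the projection $G_1 \times G_2 \rightarrow G_1$ from the $G_1$-representation $\Res^{\Gamma_1}_{G_1} \bW$. Any irreducible constituent of such a pullback is itself pulled back from an irreducible representation of $G_1$, hence is unmixed. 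Therefore $\Res^{\Gamma_1 \times \Gamma_2}_{G_1 \times G_2} \bV$ decomposes as a direct sum of unmixed irreducibles, which is the desired conclusion.

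There is no real obstacle: the argument is essentially a formal unwinding of the definition together with semisimplicity. The only point deserving care is verifying that the definition of ``unmixed'' used in the paper makes sense and behaves correctly over a not-necessarily-algebraically-closed field of characteristic $0$, but the componentwise definition given earlier handles this uniformly, so no extension-of-scalars argument is needed.
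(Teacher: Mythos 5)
Your proof is correct and is essentially the paper's argument run in the contrapositive direction: the paper starts from a mixed irreducible subrepresentation of the restriction and pulls it up to a mixed irreducible subrepresentation of $\bV$, while you start from an unmixed irreducible summand of $\bV$ and push it down, observing that triviality of one factor's action is inherited by the restriction and by every irreducible constituent thereof. Both versions hinge on the same two facts — Maschke's theorem and the characterization of unmixed irreducibles as those on which one factor acts trivially — so this is the same proof in different clothing.
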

\begin{proof}
Let $\bW$ be a mixed irreducible subrepresentation of $\Res^{\Gamma_1 \times \Gamma_2}_{G_1 \times G_2} \bV$.
By definition, both $G_1$ and $G_2$ act nontrivially on $\bW$.  
Let $\bW'$ be the irreducible subrepresentation of $\bV$ such
that $\Res^{\Gamma_1 \times \Gamma_2}_{G_1 \times G_2} \bW'$ contains $\bW$.  Since $G_1$ and $G_2$
act nontrivially on $\bW$, both $\Gamma_1$ and $\Gamma_2$ act nontrivially on $\bW'$.  It follows
that $\bW'$ is a mixed irreducible representation of $\Gamma_1 \times \Gamma_2$, so $\bV$ is a mixed
representation of $\Gamma_1 \times \Gamma_2$.
\end{proof}

\begin{lemma}[Poison subgroup] 
\label{lemma:poison} 
Let $\Gamma$ be a finite group and let $\Gamma_1 \times \Gamma_2$ be a subgroup of $\Gamma$.  Let $G < \Gamma$
be a subgroup, and for $i=1,2$ let $G_i < \Gamma_i$ be a subgroup such that $G_1 \times G_2 < G$.  
Assume that $G_1 \times G_2$ is universally mixed in $G$ and that the $\Gamma$-normal closure of $G_1 \times G_2$
contains $\Gamma_1 \times \Gamma_2$.
Then $\Gamma_1 \times \Gamma_2$ is a universally mixed subgroup of $\Gamma$.
\end{lemma}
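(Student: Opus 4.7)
The plan is to take an arbitrary irreducible representation $\bV$ of $\Gamma$ over an algebraically closed field of characteristic $0$ (which suffices by the remark following the definition of universally mixed), restrict it first to $G$, and apply the universal mixedness of $G_1 \times G_2$ in $G$ to the resulting $G$-representation. This will produce a dichotomy --- $\Res^\Gamma_{G_1 \times G_2} \bV$ is either mixed or trivial as a $G_1 \times G_2$-representation --- and I would handle each case separately.

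In the trivial case, $G_1 \times G_2$ acts trivially on $\bV$, which means $G_1 \times G_2$ lies in the kernel of the homomorphism $\Gamma \rightarrow \GL(\bV)$. Since this kernel is a normal subgroup of $\Gamma$, it must contain the $\Gamma$-normal closure of $G_1 \times G_2$. By the standing hypothesis, this normal closure contains $\Gamma_1 \times \Gamma_2$, so $\Gamma_1 \times \Gamma_2$ also acts trivially on $\bV$. Thus $\Res^\Gamma_{\Gamma_1 \times \Gamma_2} \bV$ is trivial, which is one of the two acceptable outcomes for universal mixedness.

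In the mixed case, I would view $\bW \coloneqq \Res^\Gamma_{\Gamma_1 \times \Gamma_2} \bV$ as a representation of $\Gamma_1 \times \Gamma_2$. Its further restriction to $G_1 \times G_2$ agrees with $\Res^\Gamma_{G_1 \times G_2} \bV$ by transitivity of restriction, and is therefore mixed by assumption. Then Lemma~\ref{lemma:pushup}, applied with the groups $\Gamma_1, \Gamma_2$ and subgroups $G_1, G_2$, immediately yields that $\bW$ itself is mixed. Combining the two cases shows that $\Res^\Gamma_{\Gamma_1 \times \Gamma_2} \bV$ is mixed or trivial for every irreducible $\bV$, which is exactly the statement that $\Gamma_1 \times \Gamma_2$ is universally mixed in $\Gamma$.

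I do not expect any serious obstacle here: the argument is essentially two one-line deductions chained by a case split. The only mildly nontrivial point is recognizing that triviality of a subgroup's action propagates to the normal closure simply because the kernel of any representation is a normal subgroup --- this is what justifies the naming ``poison subgroup,'' since $G_1 \times G_2$ forces triviality to ``spread'' through conjugation in $\Gamma$.
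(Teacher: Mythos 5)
Your proof is correct and matches the paper's argument essentially line for line: restrict to $G$, invoke universal mixedness of $G_1 \times G_2$ to get the mixed/trivial dichotomy, handle the trivial case via the kernel being a normal subgroup, and handle the mixed case via Lemma~\ref{lemma:pushup}. The only cosmetic difference is that you reduce to irreducible $\bV$ at the outset (a reduction the paper notes is permissible), whereas the paper's proof works with an arbitrary $\bV$ directly.
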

\begin{proof}
Let $\bV$ be a representation of $\Gamma$ over an algebraically closed field of characteristic $0$.
We must prove that $\Res^{\Gamma}_{\Gamma_1 \times \Gamma_2} \bV$ is either
trivial or mixed.  Since $G_1 \times G_2$ is universally mixed
in $G$, the restriction 
\[\Res^{G}_{G_1 \times G_2} \Res^{\Gamma}_G \bV = \Res^{\Gamma}_{G_1 \times G_2} \bV\]
is either trivial or mixed.  

If $\Res^{\Gamma}_{G_1 \times G_2} \bV$ is trivial, then the kernel of the action of $\Gamma$
on $\bV$ contains $G_1 \times G_2$.  Since the $\Gamma$-normal closure of $G_1 \times G_2$ contains
$\Gamma_1 \times \Gamma_2$, this implies that
the kernel of the action of $\Gamma$ on $\bV$ contains $\Gamma_1 \times \Gamma_2$,
i.e., that $\Res^{\Gamma}_{\Gamma_1 \times \Gamma_2} \bV$ is trivial.  If instead
$\Res^{\Gamma}_{G_1 \times G_2} \bV$ is mixed, then Lemma~\ref{lemma:pushup} implies
that $\Res^{\Gamma}_{\Gamma_1 \times \Gamma_2} \bV$ is also mixed.
\end{proof}

\begin{proof}[Proof of Proposition~\ref{proposition:unmixedgl}]
We divide the proof into two cases.

\begin{case}{1}
\label{case:unmixedglq2}
$q > 3$.
\end{case}

Lemma~\ref{lemma:unmixedgl2} says that $\GL_1(\bbF_q) \times \GL_1(\bbF_q)$ is
universally mixed in $\GL_2(\bbF_q)$.
Embed $\GL_2(\bbF_q)$ into $\GL_n(\bbF_q)$ such that
$\GL_1(\bbF_q) \times 1 \subset \GL_2(\bbF_q)$ maps to
$\GL_{n_1}(\bbF_q) \times 1$ and $1 \times \GL_1(\bbF_q) \subset \GL_2(\bbF_q)$ maps
to $1 \times \GL_{n_2}(\bbF_q)$.  Since $q \neq 2$, the normal closure of $\GL_1(\bbF_q) \times \GL_1(\bbF_q)$
in $\GL_{n}(\bbF_q)$ is $\GL_{n}(\bbF_q)$,
so this embedding satisfies
the conditions of the poison subgroup lemma (Lemma~\ref{lemma:poison}).
This lemma implies that $\GL_{n_1}(\bbF_q) \times \GL_{n_2}(\bbF_q)$ is
universally mixed in $\GL_n(\bbF_q)$, as desired.

\begin{case}{2}
$q \in \{2,3\}$.  Our hypotheses thus say that $n_1,n_2 \geq 2$.
\end{case}

Since $q^2 > 3$, Lemma~\ref{lemma:unmixedgl2} says that $\GL_1(\bbF_{q^2}) \times \GL_1(\bbF_{q^2})$ is
universally mixed in $\GL_{2}(\bbF_{q^2})$.
Embed the group $\GL_2(\bbF_{q^2})$ of $\bbF_{q^2}$-linear automorphisms of
$\bbF_{q^2}^2$ in the group $\GL_4(\bbF_{q})$ of $\bbF_{q}$-linear
automorphisms of $\bbF_q^4$ by regarding $\bbF_{q^2}$ as a $2$-dimensional vector space over $\bbF_q$.
This maps 
$\GL_1(\bbF_{q^2}) \times 1$ to $\GL_2(\bbF_q) \times 1$ and
$1 \times \GL_1(\bbF_{q^2})$ to $1 \times \GL_2(\bbF_q)$.  

We can now embed $\GL_4(\bbF_q)$ into $\GL_n(\bbF_q)$ such that $\GL_2(\bbF_q) \times 1$
maps to $\GL_{n_1}(\bbF_q) \times 1$ and $1 \times \GL_2(\bbF_q)$ maps
to $1 \times \GL_{n_2}(\bbF_q)$.  The normal closure of $\GL_1(\bbF_{q^2}) \times \GL_1(\bbF_{q^2})$ in
$\GL_n(\bbF_q)$ is $\GL_n(\bbF_q)$, so this embedding satisfies the conditions
of the poison subgroup lemma (Lemma~\ref{lemma:poison}).
We conclude that $\GL_{n_1}(\bbF_q) \times \GL_{n_2}(\bbF_q)$ is
universally mixed in $\GL_n(\bbF_q)$.
\end{proof}

\subsection{\texorpdfstring{GL\textsubscript{n}}{GLn} over integers mod \texorpdfstring{$\ell$}{l}}

Here is another example of a universally mixed subgroup:

\begin{proposition}
\label{proposition:congruencemixed}
Let $\ell \geq 2$ and $n_1,n_2 \geq 2$.  Set $n=n_1+n_2$.  Then
$\GL_{n_1}(\Z/\ell) \times \GL_{n_2}(\Z/\ell)$ is universally mixed in $\GL_n(\Z/\ell)$.
\end{proposition}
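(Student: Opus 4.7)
The plan is to reduce to Proposition~\ref{proposition:unmixedgl} in two stages: a Chinese Remainder Theorem reduction to the prime-power case $\ell = p^k$, followed by induction on $k$ using Clifford theory for the congruence kernel.

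For the first stage, I would write $\ell = \prod_i p_i^{k_i}$, so that CRT gives $\Z/\ell \cong \prod_i \Z/p_i^{k_i}$ and hence $\GL_n(\Z/\ell) \cong \prod_i \GL_n(\Z/p_i^{k_i})$, with the subgroup $\GL_{n_1}(\Z/\ell) \times \GL_{n_2}(\Z/\ell)$ decomposing compatibly as $\prod_i (\GL_{n_1}(\Z/p_i^{k_i}) \times \GL_{n_2}(\Z/p_i^{k_i}))$. I would prove a general lemma: if $A_i \times B_i$ is universally mixed in $G_i$ for each $i$, then $(A_1 \times \cdots \times A_r) \times (B_1 \times \cdots \times B_r)$ is universally mixed in $G_1 \times \cdots \times G_r$. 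Since irreducibles of a finite direct product over an algebraically closed characteristic $0$ field are external tensor products, this is a straightforward case analysis on whether each tensor factor's restriction is trivial or mixed: in every case where at least one factor is nontrivial, one produces a tensor summand that remains mixed after regrouping $(A_1 \times B_1) \times \cdots \times (A_r \times B_r)$ as $(A_1 \times \cdots \times A_r) \times (B_1 \times \cdots \times B_r)$. This reduces the problem to the case $\ell = p^k$ a prime power.

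For the second stage, I would induct on $k$, with the base case $k=1$ being Proposition~\ref{proposition:unmixedgl} (applicable since $n_1, n_2 \geq 2$, which covers $p \in \{2,3\}$). For $k \geq 2$, consider the reduction map $\pi\colon \GL_n(\Z/p^k) \twoheadrightarrow \GL_n(\Z/p^{k-1})$, whose kernel $K = \{I + p^{k-1}X : X \in \Mat_n(\bbF_p)\}$ is an abelian $p$-group. The conjugation action of $\GL_n(\Z/p^k)$ on $K$ factors through the adjoint action of $\GL_n(\bbF_p)$ on $\Mat_n(\bbF_p)$, and the characters of $K$ over an algebraically closed characteristic $0$ field $\bk$ correspond to elements of $\Mat_n(\bbF_p)$ via the trace pairing (after fixing a nontrivial character $\bbF_p \to \bk^\times$). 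Let $V$ be an irreducible $\GL_n(\Z/p^k)$-representation over such a $\bk$. If $K$ acts trivially, then $V$ factors through $\pi$ and the inductive hypothesis applies directly. Otherwise, choose a nontrivial character $\chi$ of $K$ appearing in $V|_K$, let $H$ be the stabilizer of $\chi$ in $\GL_n(\Z/p^k)$, and apply Clifford theory to write $V = \Ind_H^{\GL_n(\Z/p^k)} W$ for some irreducible $H$-representation $W$ whose restriction to $K$ is $\chi$-isotypic.

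The hard part will be completing the nontrivial-$K$ case. Applying Mackey's formula, the restriction $V|_P$ where $P = \GL_{n_1}(\Z/p^k) \times \GL_{n_2}(\Z/p^k)$ decomposes as a direct sum over double cosets in $P \backslash \GL_n(\Z/p^k) / H$, each summand induced from an intersection $P \cap gHg^{-1}$, and we must locate a summand carrying an irreducible factor on which both $\GL_{n_1}(\Z/p^k)$ and $\GL_{n_2}(\Z/p^k)$ act nontrivially. This reduces to analyzing the orbits and stabilizers of nonzero matrices $A \in \Mat_n(\bbF_p)$ under the block-conjugation action of $\GL_{n_1}(\bbF_p) \times \GL_{n_2}(\bbF_p)$, and for each orbit type identifying a suitable double-coset contribution. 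I expect the poison subgroup lemma (Lemma~\ref{lemma:poison}) applied inside the relevant stabilizer, together with the inductive hypothesis and Proposition~\ref{proposition:unmixedgl} at the residue-field level, will provide the tools needed to extract the required mixed factor.
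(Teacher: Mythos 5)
Your CRT reduction and base case are correct; the general lemma you propose about products of universally mixed subgroups is a fine variant of the paper's argument, which instead applies the pushup lemma (Lemma~\ref{lemma:pushup}) to a single prime-power factor. The genuine gap is in the inductive step for $\ell = p^k$ with $k \geq 2$. You correctly identify $K = \ker(\GL_n(\Z/p^k) \to \GL_n(\Z/p^{k-1})) \cong \Mat_n(\bbF_p)$ and handle the trivial-$K$ case, but in the nontrivial-$K$ case you set up Clifford theory and Mackey's formula and then explicitly defer the substance (``I expect\dots will provide the tools needed''). This is the heart of the proof and it is not done: your plan would require a classification of $\GL_{n_1}(\bbF_p) \times \GL_{n_2}(\bbF_p)$-orbits on $\Mat_n(\bbF_p)$, an analysis of the double cosets $P \backslash G / H$ for each orbit type (where $P = \GL_{n_1}(\Z/p^k) \times \GL_{n_2}(\Z/p^k)$ and $H$ is the inertia group of a character of $K$), and control over the unknown inertia-group representation $W$ sufficient to extract a mixed factor from the Mackey decomposition of $\Ind_H^G W$. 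None of this is attempted, and it is far from clear it is tractable.

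The paper bypasses Clifford theory altogether. Rather than decomposing $V|_K$ and inducing back, it works with the smaller abelian subgroup $\KL_{n_1}(\Z/p^d) \times \KL_{n_2}(\Z/p^d) \cong \Mat_{n_1}(\bbF_p) \times \Mat_{n_2}(\bbF_p)$, which is already contained in $P$. Lemma~\ref{lemma:kuniversallmixed} shows this subgroup is universally mixed in $\GL_n(\Z/p^d)$, which after the poison subgroup lemma (Lemma~\ref{lemma:poison}) reduces to a short, explicit matrix-conjugation calculation in $\Mat_2(\bbF_p)$ (for $p \geq 3$) and $\Mat_3(\bbF_2)$, using the self-duality of $\Mat_n(\bbF_p)$ to identify characters of the kernel with matrices whose block-diagonal entries detect (non)triviality of the restrictions. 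The dichotomy is then clean: if the restriction of $V$ to $\KL_{n_1} \times \KL_{n_2}$ is mixed, Lemma~\ref{lemma:pushup} finishes immediately---no Mackey analysis is needed because this subgroup already sits inside $P$, so mixedness can be checked directly rather than extracted from an induced module; if it is trivial, the normal closure of $\KL_{n_1} \times \KL_{n_2}$ in $\GL_n(\Z/p^d)$ is all of $K$, so $V$ factors through $\GL_n(\Z/p^{d-1})$ and the inductive hypothesis applies. Replacing your Clifford/Mackey step with this argument would complete the proof.
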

\begin{proof}
Write $\ell$ as a product of powers of distinct primes: $\ell = p_1^{d_1} \cdots p_m^{d_m}$.  By the
Chinese remainder theorem, we have
\[\GL_n(\Z/\ell) \cong \GL_n(\Z/p_1^{d_1}) \times \cdots \times \GL_n(\Z/p_m^{d_m}).\]
Lemma~\ref{lemma:congruenceprimemixed} below says that $\GL_{n_1}(\Z/p_i^{d_i}) \times \GL_{n_2}(\Z/p_i^{d_i})$ is
universally mixed in $\GL_n(\Z/p_i^{d_i})$.  Assuming this, let
$\bV$ be a representation of $\GL_n(\Z/\ell)$ over an algebraically closed 
field of characteristic $0$.  

Since $\GL_{n_1}(\Z/p_i^{d_i}) \times \GL_{n_2}(\Z/p_i^{d_i})$ is
universally mixed in $\GL_n(\Z/p_i^{d_i})$, it is also universally mixed in $\GL_n(\Z/\ell)$.  The restriction of
$\bV$ to each $\GL_{n_1}(\Z/p_i^{d_i}) \times \GL_{n_2}(\Z/p_i^{d_i})$ is thus either trivial or mixed.  If
all these restrictions are trivial, then the restriction of $\bV$ to $\GL_{n_1}(\Z/\ell) \times \GL_{n_2}(\Z/\ell)$ is
trivial.  If one of these restrictions is mixed, then Lemma~\ref{lemma:pushup} implies that 
the restriction of $\bV$ to $\GL_{n_1}(\Z/\ell) \times \GL_{n_2}(\Z/\ell)$ is mixed. 
\end{proof}

The above proof required:\footnote{The hypotheses of Lemma~\ref{lemma:congruenceprimemixed} are more
general than is needed for Proposition~\ref{proposition:congruencemixed}, but are exactly what
is needed for the proof of Lemma~\ref{lemma:congruenceprimemixed}.}

\begin{lemma}
\label{lemma:congruenceprimemixed}
Let $p^d$ be a prime power and $n_1,n_2 \geq 1$.  Set $n=n_1+n_2$.  If $n_1 = 1$ or $n_2 = 1$, then
assume that $p>3$.  Then
$\GL_{n_1}(\Z/p^d) \times \GL_{n_2}(\Z/p^d)$ is universally mixed in $\GL_n(\Z/p^d)$.
\end{lemma}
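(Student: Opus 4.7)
The approach is induction on $d$, with base case $d = 1$ supplied by Proposition~\ref{proposition:unmixedgl}. For the inductive step $d \geq 2$, let $\Gamma = \GL_n(\Z/p^d)$, $\Gamma_i = \GL_{n_i}(\Z/p^d)$, and let
\[ K \;=\; \ker\bigl(\Gamma \twoheadrightarrow \GL_n(\Z/p^{d-1})\bigr) \]
be the principal congruence subgroup.  The key structural observation is that $K$ is a normal abelian subgroup of $\Gamma$, canonically isomorphic to the additive group $(\Mat_n(\bbF_p),+)$ via $1+p^{d-1}M \leftrightarrow M$, and the conjugation action of $\Gamma$ on $K$ factors through the standard action of $\GL_n(\bbF_p)$ on $\Mat_n(\bbF_p)$.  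Writing $K_i = K \cap \Gamma_i$, the subgroup $K_1 \times K_2 \subset K$ corresponds to block-diagonal matrices in $\Mat_{n_1}(\bbF_p) \oplus \Mat_{n_2}(\bbF_p)$.

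Given an irreducible representation $\bV$ of $\Gamma$ over an algebraically closed field $\bk$ of characteristic $0$, Clifford theory applied to $K$ yields $\bV|_K = \bigoplus_{A \in \mathcal{O}} \bV_{\chi_A}$, where $\mathcal{O} \subset \Mat_n(\bbF_p)$ is a single $\GL_n(\bbF_p)$-conjugacy class under the trace pairing identification $\hat K \cong \Mat_n(\bbF_p)$.  If $\mathcal{O} = \{0\}$ then $\bV$ descends to $\GL_n(\Z/p^{d-1})$, and the inductive hypothesis applied to $\GL_{n_1}(\Z/p^{d-1}) \times \GL_{n_2}(\Z/p^{d-1})$ shows that the restriction is trivial or mixed; this is inherited via the surjection $\Gamma_1 \times \Gamma_2 \twoheadrightarrow \GL_{n_1}(\Z/p^{d-1}) \times \GL_{n_2}(\Z/p^{d-1})$.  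In the main case $\mathcal{O} \neq \{0\}$, the plan is to establish a combinatorial claim: every nonzero $\GL_n(\bbF_p)$-conjugacy class in $\Mat_n(\bbF_p)$ contains a matrix $A$ with both diagonal blocks $A_{11}, A_{22}$ nonzero.  Given such an $A$, the character $\chi_A$ appears in $\bV|_K$, and its restriction to $K_1 \times K_2$ is the mixed character $\chi_{A_{11}} \otimes \chi_{A_{22}}$.  Picking $v \in \bV$ on which $K$ acts by $\chi_A$ and decomposing $\bV = \bigoplus_i \bW_i$ into irreducible $\Gamma_1 \times \Gamma_2$-summands (Maschke), at least one component $v_i$ of $v$ is nonzero and transforms under $K_1 \times K_2$ by $\chi_{A_{11}} \otimes \chi_{A_{22}}$.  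Applying Clifford theory to the normal abelian subgroup $K_1 \times K_2 \subset \Gamma_1 \times \Gamma_2$, the $K_1 \times K_2$-characters appearing in $\bW_i$ form a single $\GL_{n_1}(\bbF_p) \times \GL_{n_2}(\bbF_p)$-orbit, which lies in a product of nonzero conjugacy classes since it contains $(A_{11}, A_{22})$.  Hence $K_1$ and $K_2$---and therefore $\Gamma_1$ and $\Gamma_2$---both act nontrivially on $\bW_i$, so $\bW_i$ is mixed and consequently $\bV$ is mixed.

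The main obstacle is the combinatorial claim, which reduces to a case analysis of $\GL_n(\bbF_p)$-orbits in $\Mat_n(\bbF_p)$.  For orbits of matrices of rank $\geq 2$, explicit row and column operations suffice for every prime $p$; for rank-one orbits the argument is more delicate.  The worst case is a rank-one orbit of trace $c \in \bbF_p^\times$ (such as the orbit of $cE_{11}$): writing a conjugate as $vu^T$ with $u^T v = c$, one must simultaneously arrange some product $v_i u_j$ with $i,j \leq n_1$ to be nonzero and some with $i,j > n_1$ to be nonzero.  When $n_1, n_2 \geq 2$ the extra coordinates provide room for any $p$; when $n_1 = 1$ or $n_2 = 1$ the hypothesis $p > 3$---inherited through the induction from the base case Lemma~\ref{lemma:unmixedgl2}---provides the arithmetic needed, allowing a decomposition $c = a + (c-a)$ with $a, c-a \in \bbF_p^\times$.
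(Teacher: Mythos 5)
Your proposal is correct at the level of structure and takes a genuinely different route in its crucial reduction step. Both you and the paper induct on $d$ and exploit the identification $K = \KL_n(\Z/p^d) \cong (\Mat_n(\bbF_p),+)$ together with the observation that the conjugation action of $\GL_n(\Z/p^d)$ on $K$ factors through the standard $\GL_n(\bbF_p)$-conjugation action on $\Mat_n(\bbF_p)$; your branching on whether $K$ acts trivially (descend one level) or not (analyze characters) is also the same as in the paper's proof. Your use of Clifford theory for $K \lhd \Gamma$ and for $K_1 \times K_2 \lhd \Gamma_1 \times \Gamma_2$ is a mild repackaging of what the paper does more hands-on with $\chi$-eigenspaces plus Lemma~\ref{lemma:pushup}, so that part is essentially equivalent. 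The genuine divergence is how the combinatorial content is handled. The paper first isolates Lemma~\ref{lemma:kuniversallmixed} --- that $\KL_{n_1}(\Z/p^d) \times \KL_{n_2}(\Z/p^d)$ is universally mixed in $\GL_n(\Z/p^d)$ --- and then proves it via the poison subgroup lemma (Lemma~\ref{lemma:poison}), which reduces all the matrix-conjugacy computations to the single fixed size $n=2$ when $p \geq 3$, and to $n=3$, $(n_1,n_2) \in \{(1,2),(2,1)\}$ when $p=2$ (the latter discharged by a finite computer check). You instead attempt a direct combinatorial claim for all $n$: every nonzero $\GL_n(\bbF_p)$-conjugacy class in $\Mat_n(\bbF_p)$ contains a matrix with both diagonal blocks nonzero. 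This claim does appear to be true under the stated hypotheses and your sketch of the rank-one case (parametrize by $vu^T$ with $u^T v = c$ and distribute nonzero coordinates) is on the right track, but it is strictly more than the paper ever verifies, and the promised ``explicit row and column operations'' for rank $\geq 2$ over small primes would require an honest case analysis over rational canonical forms --- the paper's poison-subgroup reduction exists precisely to sidestep that. In short: your approach buys a conceptually cleaner single combinatorial statement, while the paper buys a much smaller and finite verification at the cost of introducing the auxiliary Lemmas~\ref{lemma:poison} and~\ref{lemma:kuniversallmixed}. One small imprecision: for the rank-one idempotent argument when $n_1=1$ or $n_2=1$, the decomposition $c = a + (c-a)$ with both parts nonzero only needs $p \geq 3$, not $p > 3$; this matches the footnote to Lemma~\ref{lemma:kuniversallmixed} and is harmless here since the lemma already assumes $p > 3$.
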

\begin{proof}
The proof is by induction on $d$.  The base case $d=1$ is Proposition~\ref{proposition:unmixedgl}.
For the inductive step, assume that $d \geq 2$ and that the lemma holds for $\GL_n(\Z/p^{d-1})$.  For
$m \geq 1$, define 
\begin{equation}
\label{eqn:definek}
\KL_m(\Z/p^d) = \ker(\GL_m(\Z/p^d) \rightarrow \GL_m(\Z/p^{d-1})).
\end{equation}
We will prove in Lemma~\ref{lemma:kuniversallmixed} below that $\KL_{n_1}(\Z/p^d) \times \KL_{n_2}(\Z/p^d)$ is universally mixed in $\GL_n(\Z/p^d)$.  
To see that this implies
the lemma, consider a representation $\bV$ of $\GL_n(\Z/p^d)$ over an algebraically closed field of characteristic $0$.  Since $\KL_{n_1}(\Z/p^d) \times \KL_{n_2}(\Z/p^d)$ is universally
mixed in $\GL_n(\Z/p^d)$, the restriction of $\bV$ to $\KL_{n_1}(\Z/p^d) \times \KL_{n_2}(\Z/p^d)$ is either trivial or mixed.

If it is mixed, then Lemma~\ref{lemma:pushup} implies that the restriction of $\bV$ to $\GL_{n_1}(\Z/p^{d}) \times \GL_{n_2}(\Z/p^{d})$
is mixed.  If it is trivial, then the kernel of the action of $\GL_n(\Z/p^d)$ on $\bV$ contains $\KL_{n_1}(\Z/p^d) \times \KL_{n_2}(\Z/p^d)$.  The
$\GL_n(\Z/p^d)$-normal closure of $\KL_{n_1}(\Z/p^d) \times \KL_{n_2}(\Z/p^d)$ is $\KL_n(\Z/p^d)$, so we deduce that the kernel of the action of $\GL_n(\Z/p^d)$ on $\bV$
contains $\KL_n(\Z/p^d)$.  This implies that the action of $\GL_n(\Z/p^d)$ on $\bV$ factors through $\GL_n(\Z/p^{d-1})$.  

Regarding
$\bV$ as a representation of $\GL_n(\Z/p^{d-1})$, our inductive hypothesis implies that the restriction of $\bV$ to
$\GL_{n_1}(\Z/p^{d-1}) \times \GL_{n_2}(\Z/p^{d-1})$ is either trivial or mixed.  This implies the analogous result
for $\GL_{n_1}(\Z/p^{d}) \times \GL_{n_2}(\Z/p^{d})$, finishing the proof.
\end{proof}

The following lemma was invoked during the proof of Lemma~\ref{lemma:congruenceprimemixed}.  It
uses the notation $\KL_m(\Z/p^d)$ from \eqref{eqn:definek}.

\begin{lemma} 
\label{lemma:kuniversallmixed}
Let $p^d$ be a prime power with $d \geq 2$ and let $n_1,n_2 \geq 1$.  Set $n=n_1+n_2$.  If $n_1 = 1$ or $n_2 = 1$, then
assume that\footnote{The condition $p \geq 3$ is not a typo.  Lemma~\ref{lemma:congruenceprimemixed}
required $p > 3$ here, but Lemma~\ref{lemma:kuniversallmixed} is true when $p \geq 3$ and the proof
naturally gives the more general statement.} $p \geq 3$.
Then $\KL_{n_1}(\Z/p^d) \times \KL_{n_2}(\Z/p^d)$ is universally mixed in $\GL_n(\Z/p^d)$.
\end{lemma}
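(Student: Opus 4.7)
The plan is to identify $\KL_m(\Z/p^d)$ with the additive group $\Mat_m(\bbF_p)$ and reduce the lemma to a concrete statement about $\GL_n(\bbF_p)$-conjugation orbits of matrices. Since $d \geq 2$, we have $p^{2(d-1)} \equiv 0 \pmod{p^d}$, so the map $A \mapsto I + p^{d-1}\widetilde{A}$ (for any lift $\widetilde{A} \in \Mat_m(\Z/p^d)$) is a group isomorphism from the additive group of $\Mat_m(\bbF_p)$ onto $\KL_m(\Z/p^d)$. Under this identification $\KL_{n_1}(\Z/p^d) \times \KL_{n_2}(\Z/p^d)$ becomes the block-diagonal subgroup $\Mat_{n_1}(\bbF_p) \oplus \Mat_{n_2}(\bbF_p) \subset \Mat_n(\bbF_p)$, and the $\GL_n(\Z/p^d)$-conjugation action on $\KL_n(\Z/p^d)$ factors through the standard $\GL_n(\bbF_p)$-action on $\Mat_n(\bbF_p)$.

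Given an irreducible representation $\bV$ of $\GL_n(\Z/p^d)$ over an algebraically closed field $\bk$ of characteristic $0$, I will decompose $\bV|_{\KL_n(\Z/p^d)}$ into character isotypic components. Fixing a primitive $p$-th root of unity $\zeta \in \bk$, the characters of $\KL_n(\Z/p^d) \cong \Mat_n(\bbF_p)$ are parameterized by $B \in \Mat_n(\bbF_p)$ via $\chi_B(I + p^{d-1}A) = \zeta^{\operatorname{tr}(BA)}$, and this parameterization intertwines the action on characters with matrix conjugation. Clifford's theorem then implies that the set of $B$ for which $\chi_B$ appears in $\bV$ is a single $\GL_n(\bbF_p)$-conjugacy class $[B_0]$. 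The restriction of $\chi_B$ to $\KL_{n_1} \times \KL_{n_2}$ depends only on the diagonal blocks $(B_{11}, B_{22})$ of $B$, so $\bV|_{\KL_{n_1}(\Z/p^d) \times \KL_{n_2}(\Z/p^d)}$ is trivial exactly when $B_0 = 0$, and is unmixed but nontrivial exactly when $B_0 \neq 0$ and every $B \in [B_0]$ satisfies $B_{11} = 0$ or $B_{22} = 0$. By Lemma~\ref{lemma:pushup}-style reasoning, it then suffices to rule out the second alternative.

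The lemma thus reduces to the following matrix-theoretic statement: \emph{if $B \in \Mat_n(\bbF_p)$ is nonzero, with $p \geq 3$ when $\min(n_1, n_2) = 1$, then some conjugate $B' = PBP^{-1}$ with $P \in \GL_n(\bbF_p)$ has both $B'_{11} \neq 0$ and $B'_{22} \neq 0$.} Equivalently, there is a direct sum decomposition $\bbF_p^n = U \oplus V$ with $\dim U = n_1$ and $\dim V = n_2$ such that $B(U) \not\subset V$ and $B(V) \not\subset U$. This matrix claim is the main technical obstacle. My plan of attack is to pick a vector $u$ with $Bu \neq 0$ and engineer $U$ and $V$ so that each contains a pair witnessing the ``swap-violating'' condition, drawing on the vectors $u, Bu, B^2u, \ldots$ together with a second source vector when $B$ has rank $\geq 2$; the various cases are distinguished by whether $B$ has an eigenvector with nonzero eigenvalue, whether $u$ and $Bu$ are linearly independent, and the comparison of $\min(n_1, n_2)$ with the number of available linearly independent such pairs. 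The hypothesis is sharp: when $n_1 = n_2 = 1$ and $p = 2$, a direct enumeration shows that each of the six rank-one idempotents in $\Mat_2(\bbF_2)$ has a zero diagonal entry, so the lemma fails there. When $\min(n_1, n_2) \geq 2$ or $p \geq 3$ there are enough decompositions to route around this obstruction.
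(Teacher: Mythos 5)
Your reduction to a matrix-theoretic statement is correct and closely parallels the paper's: you identify $\KL_m(\Z/p^d)$ with the additive group $\Mat_m(\bbF_p)$, decompose the restriction to $\KL_n(\Z/p^d)$ into character eigenspaces (your trace-pairing parameterization $\chi_B(I+p^{d-1}A) = \zeta^{\operatorname{tr}(BA)}$ is a concrete realization of the paper's appeal to self-duality of $\Mat_n(\bbF_p)$ as a $\GL_n(\bbF_p)$-module), and observe via Clifford that the characters appearing form a single conjugacy class $[B_0]$. You then correctly translate the lemma into: given nonzero $B \in \Mat_n(\bbF_p)$, find a conjugate with both diagonal blocks nonzero. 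Up to here everything checks out.

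The genuine gap is that you do not prove this matrix-conjugation statement; you only sketch a ``plan of attack.'' That statement \emph{is} the technical content of the lemma. The rank-one case in particular is delicate: if $B = ef^T$ has a one-dimensional image $\Span{e}$, then any decomposition with $e \in U$ forces $B(V) \subset U$, so you must arrange $e$ to split nontrivially across $U$ and $V$ while simultaneously keeping both $U$ and $V$ out of $\ker B$ (a hyperplane), and the feasibility of this is precisely where $p \geq 3$ enters when $\min(n_1,n_2)=1$ (your $p=2$, $n_1=n_2=1$ counterexample --- the rank-one idempotent --- lives exactly here). The paper sidesteps the need for a general argument by invoking the poison subgroup lemma (Lemma~\ref{lemma:poison}) to reduce to only the small cases $(n_1,n_2)=(1,1)$ with $p\geq 3$ and $(n_1,n_2) \in \{(1,2),(2,1)\}$ with $p=2$; the first is handled by three explicit conjugations and the second by a finite computer check over $\Mat_3(\bbF_2)$. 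If you want to complete your argument you either need to carry out the full case analysis of your plan (rank of $B$, position of $\operatorname{im}(B)$ and $\ker(B)$ relative to candidate $U,V$, with separate treatment for $p=2$), or adopt the paper's poison-subgroup reduction, which is designed to keep the matrix casework to a minimum.
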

\begin{proof}
Let $m \geq 1$.  We start by clarifying the nature of $\KL_m(\Z/p^d)$.
Elements of $\KL_m(\Z/p^d)$ can be written as $\bbI_m + p^{d-1} A$ with $A$ an $m \times m$ matrix over $\Z/p^d$.  The value
of $\bbI_m + p^{d-1} A$
only depends on the image of $A$ in the set of matrices over $\Z/p \cong \bbF_p$.  Since
\[(\bbI_n + p^{d-1} A)(\bbI_n + p^{d-1} B) = \bbI_n + p^{d-1}(A+B) + p^{2d-2} AB = \bbI_n + p^{d-1}(A+B),\]
we deduce that $\KL_m(\Z/p^d)$ is isomorphic to the additive group
$\Mat_m(\bbF_p)$ of $m \times m$ matrices over $\bbF_p$.  Under this isomorphism, the conjugation action
of $\GL_m(\Z/p^d)$ on its normal subgroup $\KL_m(\Z/p^d)$ is identified
with the conjugation action of $\GL_m(\Z/p)$ on $\Mat_m(\bbF_p)$ via the surjection
$\GL_m(\Z/p^d) \twoheadrightarrow \GL_m(\Z/p)$.

We return to proving that $\KL_{n_1}(\Z/p^d) \times \KL_{n_2}(\Z/p^d)$ is universally mixed in $\GL_n(\Z/p^d)$.
Arguing via the poison subgroup lemma (Lemma~\ref{lemma:poison}) as
in the proof of Case \ref{case:unmixedglq2} of the proof of Proposition~\ref{proposition:unmixedgl},
to prove that 
\[\KL_{n_1}(\Z/p^d) \times \KL_{n_2}(\Z/p^d) \cong \Mat_{n_1}(\bbF_p) \times \Mat_{n_2}(\bbF_p)\] 
is universally mixed in $\GL_n(\Z/p^d)$, it is enough handle the following small $n_i$ cases:

\begin{case}{1}
\label{case:k2pnot2}
$p \geq 3$ and $n_1 = n_2 = 1$, so $n=2$.
\end{case}

Consider a representation $\bW$ of $\GL_2(\Z/p^d)$ over an algebraically closed field $\bk$ of characteristic $0$.  
If the restriction of $\bW$ to $\KL_1(\Z/p^d) \times \KL_1(\Z/p^d)$ is trivial,
we are done.  Assume, therefore, that this restriction is nontrivial.  We must prove that it is mixed.  For this, we
will study it as a representation of the larger group $\KL_2(\Z/p^d)$.  Let $\bU$ be the restriction
of $\bW$ to $\KL_2(\Z/p^d)$.  We know that
the restriction of $\bU$ to $\KL_1(\Z/p^d) \times \KL_1(\Z/p^d)$ is nontrivial, and our goal
is to prove that this restriction is mixed.

Since $\KL_2(\Z/p^d) \cong \Mat_2(\bbF_p)$ is abelian and $\bk$ is algebraically closed, the irreducible representations of
$\KL_2(\Z/p^d)$ over $\bk$ are the one-dimensional representations associated to characters $\chi \in \Hom(\KL_2(\Z/p^d),\bk^{\times})$.
For $\chi \in \Hom(\KL_2(\Z/p^d),\bk^{\times})$, let $\bU_{\chi}$ denote the $\chi$-eigenspace:
\[\bU_{\chi} = \Set{$x \in \bU$}{$m \Cdot x = \chi(m) x$ for all $m \in \KL_2(\Z/p^d)$}.\]
We thus have
\[\bU = \bigoplus_{\chi \in \Hom(\KL_2(\Z/p^d),\bk^{\times})} \bU_{\chi}.\]
To prove that the restriction of $\bU$ to $\KL_1(\Z/p^d) \times \KL_1(\Z/p^d)$ 
is mixed, we must find some $\chi \in \Hom(\KL_2(\Z/p^d),\bk^{\times})$ such that:
\begin{itemize}
\item $\bU_{\chi} \neq 0$; and
\item $\chi$ restricts to a nontrivial character on both $\KL_1(\Z/p^d) \times 1$ and $1 \times \KL_1(\Z/p^d)$.
\end{itemize}
Since the restriction of $\bU$ to $\KL_1(\Z/p^d) \times \KL_1(\Z/p^d)$ is nontrivial, we can find some $\chi_0 \in \Hom(\KL_2(\Z/p^d),\bk^{\times})$ such that
$\bU_{\chi_0} \neq 0$ and $\chi_0$ restricts to a nontrivial character on either $\KL_1(\Z/p^d) \times 1$ or $1 \times \KL_1(\Z/p^d)$.
We will assume that $\chi_0$ restricts to a nontrivial character on $\KL_1(\Z/p^d) \times 1$; 
the other case is identical up to changes in notation.

The conjugation action of $\GL_2(\Z/p^d)$ on its normal subgroup $\KL_2(\Z/p^d)$ induces an action of $\GL_2(\Z/p^d)$
on $\Hom(\KL_2(\Z/p^d),\bk^{\times})$.  What is more, the action of $\GL_2(\Z/p^d)$ on 
\[\bU = \Res^{\GL_2(\Z/p^d)}_{\KL_2(\Z/p^d)} \bW\]
permutes the $\bU_{\chi}$ with $g \in \GL_2(\Z/p^d)$ taking $\bU_{\chi}$ to $\bU_{g \Cdot \chi}$.  Since
$\bU_{\chi_0} \neq 0$, we also have $\bU_{g \Cdot \chi_0} \neq 0$.  It follows that it is enough to find
some $g \in \GL_2(\Z/p^d)$ such that $g \Cdot \chi_0$ restricts to a nontrivial character on both $\KL_1(\Z/p^d) \times 1$ and $1 \times \KL_1(\Z/p^d)$.

As we said when describing $\KL_m(\Z/p^d)$ above, the action of $\GL_2(\Z/p^d)$ on $\KL_2(\Z/p^d) \cong \Mat_2(\bbF_p)$ comes from the conjugation action of $\GL_2(\bbF_p)$ on
$\Mat_2(\bbF_p)$ via the surjection
\[\begin{tikzcd}
\GL_2(\Z/p^d) \arrow[two heads]{r} & \GL_2(\Z/p) = \GL_2(\bbF_p).
\end{tikzcd}\]
Since
\[\Mat_2(\bbF_p) \cong \Hom(\bbF_p^2,\bbF_p^2) = (\bbF_p^2)^{\ast} \otimes \bbF_p^2,\]
we see that as a representation of $\GL_2(\bbF_p)$ the vector space $\Mat_2(\bbF_p)$ is self-dual.  Since
$\Char(\bk)=0$, this
implies that there is an $\GL_2(\bbF_p)$-equivariant isomorphism
\[\Hom(\KL_2(\Z/p^d),\bk^{\times}) = \Hom(\Mat_2(\bbF_p),\bk^{\times}) \cong \Mat_2(\bbF_p).\]
Let $X_0 \in \Mat_2(\bbF_p)$ be the image of $\chi_0 \in \Hom(\KL_2(\Z/p^d),\bk^{\times})$ under this isomorphism.
Since $\chi_0$ restricts to a nontrivial character on $\KL_1(\Z/p^d) \times 1$, the $(1,1)$-entry of the $2 \times 2$ matrix $X_0$ is nonzero.
Our goal is to find some $g \in \GL_2(\bbF_p)$ such that the both the $(1,1)$- and the $(2,2)$-entries of $g X_0 g^{-1}$ are nonzero.

If the $(2,2)$-entry of $X_0$ is already nonzero, there is nothing to prove, so we can assume it is zero.  Write
\[\text{$X_0 = \left(\begin{matrix} a & b \\ c & 0 \end{matrix}\right)$ with $a,b,c \in \bbF_p$ and $a \neq 0$}.\]
It is enough to deal with the following three cases:
\begin{itemize}
\item $b=c=0$.  Since $p \geq 3$, what we want follows from
\[\left(\begin{matrix} 2 & 1 \\ 1 & 1 \end{matrix}\right) \left(\begin{matrix} a & 0 \\ 0 & 0 \end{matrix}\right) \left(\begin{matrix} 2 & 1 \\ 1 & 1 \end{matrix}\right)^{-1} 
= \left(\begin{matrix} 2a & 0 \\ a & 0 \end{matrix}\right) \left(\begin{matrix} 1 & -1 \\ -1 & 2 \end{matrix}\right)
= \left(\begin{matrix} 2a & -2a \\ a & -a \end{matrix}\right).\]
\item $b \neq 0$.  Since $p \geq 3$, we can find some $x \in \bbF_p$ with $x \neq 0$ and $a-bx \neq 0$ and what
we want follows from
\[\left(\begin{matrix} 1 & 0 \\ x & 1 \end{matrix}\right) \left(\begin{matrix} a & b \\ c & 0 \end{matrix}\right) \left(\begin{matrix} 1 & 0 \\ x & 1 \end{matrix}\right)^{-1} 
= \left(\begin{matrix} a & b \\ ax+c & bx \end{matrix}\right) \left(\begin{matrix} 1 & 0 \\ -x & 1 \end{matrix}\right)
= \left(\begin{matrix} a-bx & b \\ ax+c-bx^2 & bx \end{matrix}\right).\]
\item $c \neq 0$.  Since $p \geq 3$, we can find some $x \in \bbF_p$ with $x \neq 0$ and $a+cx \neq 0$ and what
we want follows from
\[\left(\begin{matrix} 1 & x \\ 0 & 1 \end{matrix}\right) \left(\begin{matrix} a & b \\ c & 0 \end{matrix}\right) \left(\begin{matrix} 1 & x \\ 0 & 1 \end{matrix}\right)^{-1} 
= \left(\begin{matrix} a+cx & b \\ c & 0 \end{matrix}\right) \left(\begin{matrix} 1 & -x \\ 0 & 1 \end{matrix}\right)
= \left(\begin{matrix} a+cx & -ax-cx^2+b \\ c & -cx \end{matrix}\right).\]
\end{itemize}

\begin{case}{2}
$p = 2$ and either $(n_1,n_2) = (1,2)$ or $(n_1,n_2) = (2,1)$, so $n=3$.
\end{case}

For $\bbF_2$, the argument in Case \ref{case:k2pnot2} fails only at the last step, and in fact keeping in mind that $1$ is the only nonzero element of $\bbF_2$ one can check for instance that there does not exist
\[\text{$\left(\begin{matrix} x & y \\ z & w \end{matrix}\right) \in \GL_2(\bbF_2)$ such that $\left(\begin{matrix} x & y \\ z & w \end{matrix}\right) \left(\begin{matrix} 1 & 0 \\ 0 & 0 \end{matrix}\right) \left(\begin{matrix} x & y \\ z & w \end{matrix}\right)^{-1} = \left(\begin{matrix} 1 & r \\ s & 1 \end{matrix}\right)$ with $r,s \in \bbF_2$}.\]
This is why we must go up to $3 \times 3$ matrices.

The cases $(n_1,n_2) = (1,2)$ and $(n_1,n_2) = (2,1)$ are identical up to changes in notation, so we will explain what to do
for $(n_1,n_2) = (1,2)$.  An argument identical to the one in in Case \ref{case:k2pnot2} shows that it is enough to prove the
following:
\begin{itemize}
\item Consider
\[\text{$X_0 = \left(\begin{matrix} a_{11} & a_{12} & a_{13} \\ a_{21} & a_{22} & a_{23} \\ a_{31} & a_{32} & a_{33} \end{matrix}\right) \in \Mat_3(\bbF_2)$ with either $a_{11} \neq 0$ or $\left(\begin{matrix} a_{22} & a_{23} \\ a_{32} & a_{33} \end{matrix}\right) \neq 0$.}\]
Then there exists some $g \in \GL_3(\bbF_2)$ such that
\[\text{$g X_0 g^{-1} = \left(\begin{matrix} a'_{11} & a'_{12} & a'_{13} \\ a'_{21} & a'_{22} & a'_{23} \\ a'_{31} & a'_{32} & a'_{33} \end{matrix}\right) \in \Mat_3(\bbF_2)$ with both $a'_{11} \neq 0$ and $\left(\begin{matrix} a'_{22} & a'_{23} \\ a'_{32} & a'_{33} \end{matrix}\right) \neq 0$.}\]
\end{itemize}
Since $\Mat_3(\bbF_2)$ has $2^9 = 512$ elements there are finitely many cases to check, and since $\GL_3(\bbF_2)$ has
$168$ elements this is easily done with a computer.  We omit the details.
\end{proof}

\subsection{\texorpdfstring{Sp\textsubscript{2g}}{Sp2g} over integers mod \texorpdfstring{$\ell$}{l}}

Our final example of a universally mixed subgroup is:

\begin{proposition}
\label{proposition:spuniversal}
Let $\ell \geq 2$ and $g_1,g_2 \geq 2$.  Set $g=g_1+g_2$.  Then
$\Sp_{2g_1}(\Z/\ell) \times \Sp_{2g_2}(\Z/\ell)$ is universally mixed in $\Sp_{2g}(\Z/\ell)$.
\end{proposition}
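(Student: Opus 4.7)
The plan is to adapt the poison-subgroup strategy used in Propositions~\ref{proposition:unmixedgl} and~\ref{proposition:congruencemixed}: I would find a subgroup $G_1 \times G_2 \subset \Sp_{2g}(\Z/\ell)$ that is already known to be universally mixed in a suitable intermediate subgroup $G$, and whose $\Sp_{2g}(\Z/\ell)$-normal closure is large enough to contain $\Sp_{2g_1}(\Z/\ell) \times \Sp_{2g_2}(\Z/\ell)$.  The natural choice is the Siegel Levi embedding.  Fix a symplectic basis of $(\Z/\ell)^{2g}$, embed $\GL_g(\Z/\ell) \hookrightarrow \Sp_{2g}(\Z/\ell)$ as the Siegel Levi via $A \mapsto \diag(A,(A^T)^{-1})$, and arrange the basis so that the block-diagonal subgroup $\GL_{g_1}(\Z/\ell) \times \GL_{g_2}(\Z/\ell) \subset \GL_g(\Z/\ell)$ also lies inside the block-diagonal subgroup $\Sp_{2g_1}(\Z/\ell) \times \Sp_{2g_2}(\Z/\ell)$.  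I would then apply Lemma~\ref{lemma:poison} with $\Gamma = \Sp_{2g}(\Z/\ell)$, $\Gamma_i = \Sp_{2g_i}(\Z/\ell)$, $G = \GL_g(\Z/\ell)$, and $G_i = \GL_{g_i}(\Z/\ell)$.

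The universal mixing of $\GL_{g_1}(\Z/\ell) \times \GL_{g_2}(\Z/\ell)$ inside $\GL_g(\Z/\ell)$ is precisely Proposition~\ref{proposition:congruencemixed}, whose hypotheses $n_1, n_2 \geq 2$ hold here since $g_1, g_2 \geq 2$.  This leaves the remaining hypothesis of Lemma~\ref{lemma:poison}: the $\Sp_{2g}(\Z/\ell)$-normal closure of $\GL_{g_1}(\Z/\ell) \times \GL_{g_2}(\Z/\ell)$ must contain $\Sp_{2g_1}(\Z/\ell) \times \Sp_{2g_2}(\Z/\ell)$.  It suffices to prove the per-factor statement that for $g_i \geq 2$ the $\Sp_{2g_i}(\Z/\ell)$-normal closure of the Siegel-embedded $\GL_{g_i}(\Z/\ell)$ is all of $\Sp_{2g_i}(\Z/\ell)$; applying this separately to each of $\GL_{g_1}(\Z/\ell) \times 1$ and $1 \times \GL_{g_2}(\Z/\ell)$ and then multiplying yields the desired containment.

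The main obstacle is therefore this per-factor normal-closure claim, which must be proved uniformly in $\ell$.  By the Chinese remainder theorem one reduces to $\ell = p^d$, and I would then induct on $d$ following the pattern of Lemma~\ref{lemma:congruenceprimemixed}.  For the base case $\ell = p$, I would invoke the near-simplicity of $\Sp_{2g_i}(\bbF_p)$ for $g_i \geq 2$: the quotient by the central subgroup $\{\pm I\}$ is simple apart from a small handful of exceptions (most notably $\Sp_4(\bbF_2)$) that can be checked directly.  Since the Siegel-embedded $\GL_{g_i}(\bbF_p)$ contains both non-central elements and $-I$, its normal closure has nontrivial image in the simple quotient, so it equals the full preimage, which is $\Sp_{2g_i}(\bbF_p)$.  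For the inductive step from $d-1$ to $d$, let $K = \ker(\Sp_{2g_i}(\Z/p^d) \to \Sp_{2g_i}(\Z/p^{d-1}))$, which is isomorphic as an $\Sp_{2g_i}(\bbF_p)$-module to the adjoint representation $\fsp_{2g_i}(\bbF_p)$.  By induction the normal closure $N$ of $\GL_{g_i}(\Z/p^d)$ surjects onto $\Sp_{2g_i}(\Z/p^{d-1})$, so it remains to show $N \cap K = K$.  This follows because the congruence kernel of $\GL_{g_i}(\Z/p^d) \to \GL_{g_i}(\Z/p^{d-1})$ is nonzero and lies in $N \cap K$, combined with the irreducibility of $\fsp_{2g_i}(\bbF_p)$ as an $\Sp_{2g_i}(\bbF_p)$-representation for $g_i \geq 2$ (again with small-prime exceptions treated by hand).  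Once the normal-closure claim is established, Lemma~\ref{lemma:poison} immediately delivers the proposition.
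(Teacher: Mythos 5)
Your proposal takes exactly the same route as the paper: embed $\GL_g(\Z/\ell)$ into $\Sp_{2g}(\Z/\ell)$ as the Siegel Levi $M \mapsto \diag(M,(M^T)^{-1})$ and apply the poison subgroup lemma (Lemma~\ref{lemma:poison}), with Proposition~\ref{proposition:congruencemixed} furnishing the universal-mixing hypothesis.  The paper at this point simply asserts that the $\Sp_{2g}(\Z/\ell)$-normal closure of $\GL_{g_1}(\Z/\ell) \times \GL_{g_2}(\Z/\ell)$ is all of $\Sp_{2g}(\Z/\ell)$; you reduce to a per-factor normal-closure statement and sketch a proof of it.  The per-factor reduction is sound, and the architecture you set up (Chinese remainder theorem, induction on $d$, near-simplicity of $\mathrm{PSp}_{2g_i}(\bbF_p)$ for the base case, analysis of the congruence kernel $K \cong \fsp_{2g_i}(\bbF_p)$ for the inductive step) is the right one.

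The gap is in the inductive step.  You appeal to irreducibility of $\fsp_{2g_i}(\bbF_p)$ as an $\Sp_{2g_i}(\bbF_p)$-module ``with small-prime exceptions treated by hand,'' but in characteristic $2$ this fails for \emph{every} $g_i$: the scalar matrices span a trivial submodule of $\fsp_{2g_i}(\bbF_2)$, and in fact the $\Sp_{2g_i}(\bbF_2)$-submodule generated by the image of $\fgl_{g_i}(\bbF_2)$ (embedded as $\diag(B,B^T)$) is genuinely proper, so the mechanism you describe does not yield $N \cap K = K$ when $p = 2$, and this cannot be waved off as a finite list of exceptions.  The conclusion is still true, but for a different reason: $N \cap K$ also contains every element of the form $A - g^{-1}Ag$ with $g$ in the Levi and $A \in \fsp_{2g_i}(\bbF_p)$ arbitrary, since $[g, I + p^{d-1}A]$ is such a commutator and $g$ lies in the normal subgroup $N$; choosing $g$ to be a transvection in the Levi and $A$ supported in an off-diagonal block produces the symmetric matrices with nonzero diagonal that the adjoint-action computation misses, and this gives $N \cap K = K$ in characteristic $2$ as well.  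As for the base-case exception $\Sp_4(\bbF_2) \cong S_6$ that you flag: the check does work out, since under the exceptional isomorphism the Siegel Levi element $(a_1\,a_2)(b_1\,b_2)$ acts on the six quadratic refinements of nonzero Arf invariant as a product of three transpositions, hence is odd, so the normal closure of the Levi is all of $S_6$.
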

\begin{proof}
Proposition~\ref{proposition:congruencemixed} says that $\GL_{g_1}(\Z/\ell) \times \GL_{g_2}(\Z/\ell)$ is
universally mixed in $\GL_g(\Z/\ell)$.  We will prove the proposition by embedding $\GL_{g}(\Z/\ell)$ into
$\Sp_{2g}(\Z/\ell)$ and applying the poison subgroup lemma (Lemma~\ref{lemma:poison}).

Let $\{a_1,b_1,\ldots,a_g,b_g\}$ be a symplectic basis for $(\Z/\ell)^{2g}$.  Let
$I \cong (\Z/\ell)^{g}$ be the span of $\{a_1,\ldots,a_g\}$ and let
$J \cong (\Z/\ell)^{g}$ be the span of $\{b_1,\ldots,b_g\}$, so $(\Z/\ell)^{2g} = I \oplus J$.
Embed $\GL_g(\Z/\ell)$ into $\Sp_{2g}(\Z/\ell)$ by letting a matrix $M \in \GL_g(\Z/\ell)$ act
on $I \cong (\Z/\ell)^g$ via the action of $M \in \GL_g(\Z/\ell)$ on $(\Z/\ell)^g$ and on $J$
via the action of $(M^t)^{-1} \in \GL_g(\Z/\ell)$ on $(\Z/\ell)^g$.

Under this embedding, $\GL_{g_1}(\Z/\ell) \times 1$ maps to $\Sp_{2g_1}(\Z/\ell) \times 1$ and
$1 \times \GL_{g_2}(\Z/\ell)$ maps to $1 \times \Sp_{2g_2}(\Z/\ell)$.  Moreover, the normal closure
of $\GL_{g_1}(\Z/\ell) \times \GL_{g_2}(\Z/\ell)$ in $\Sp_{2g}(\Z/\ell)$ is $\Sp_{2g}(\Z/\ell)$.  The
conditions of the poison subgroup lemma (Lemma~\ref{lemma:poison}) are thus satisfied, so
using it we conclude that $\Sp_{2g_1}(\Z/\ell) \times \Sp_{2g_2}(\Z/\ell)$ is universally mixed in $\Sp_{2g}(\Z/\ell)$.
\end{proof}

\section{A criterion for algebraicity III: proof of algebraicity criterion}
\label{section:algebraicityproof}

We finally use our results to prove our algebraicity criterion (Theorem~\ref{theorem:algebraicity}):

\newtheorem*{theorem:algebraicity}{Theorem~\ref{theorem:algebraicity}}
\begin{theorem:algebraicity}[Algebraicity criterion]
Let $\bk$ be a field of characteristic $0$ and let $g \geq 3$.  Let $\bV_{g-2}$ be an $\Sp_{2(g-2)}(\Z)$-representation over $\bk$
and $\bV_g$ be an $\Sp_{2g}(\Z)$-representation over $\bk$.
Let $f\colon \bV_{g-2} \boxtimes \bk \rightarrow \bV_g$ be an $\Sp_{2(g-2)}(\Z) \times \Sp_4(\Z)$-equivariant map.
Assume:
\begin{itemize}
\item[(a)] the representation $\bV_g$ is finite dimensional; and
\item[(b)] the cokernel of $f$ is an algebraic representation of $\Sp_{2(g-2)}(\Z)$.
\end{itemize}
Then $\bV_g$ is an algebraic representation of $\Sp_{2g}(\Z)$.
\end{theorem:algebraicity}
\begin{proof}
Whether an $\Sp_{2g}(\Z)$-representation is algebraic is unchanged under field extensions, so we can assume
that $\bk$ is algebraically closed.  We now appeal to a theorem of Lubotzky \cite{LubotzkyProAlgebraic}
that says the following:\footnote{This uses the fact that $g \geq 2$.  See \cite{PutmanRepSLZ} for an expository account of the related case of $\SL_n(\Z)$.}
\begin{itemize}
\item the finite-dimensional representations of $\Sp_{2g}(\Z)$ over $\bk$ are semisimple, i.e., they decompose as
direct sums of irreducible representations; and
\item the finite-dimensional irreducible representations of $\Sp_{2g}(\Z)$ over $\bk$ are precisely those
of the form $\bU \otimes \bW$, where $\bU$ is an irreducible algebraic representation of $\Sp_{2g}(\Z)$ and
$\bW$ is an irreducible representation of\footnote{For $g \geq 2$, the congruence subgroup
property \cite{MennickeCSPSp} says that all finite quotients of $\Sp_{2g}(\Z)$ factor through $\Sp_{2g}(\Z/\ell)$ for some $\ell \geq 2$.}  $\Sp_{2g}(\Z/\ell)$ for some $\ell \geq 2$.
Here $\Sp_{2g}(\Z)$ acts on $\bW$ via the surjection $\Sp_{2g}(\Z) \twoheadrightarrow \Sp_{2g}(\Z/\ell)$.
\end{itemize}
Applying this to $\bV_g$, we can decompose it as a direct sum of irreducible representations.  By projecting
everything onto an irreducible subrepresentation of $\bV_g$, we can assume that $\bV_g$ is an irreducible
representation, and thus is of the form $\bU \otimes \bW$ with $\bU$ an irreducible algebraic representation of $\Sp_{2g}(\Z)$
and $\bW$ an irreducible representation of $\Sp_{2g}(\Z/\ell)$ for some $\ell \geq 2$.
Our goal is to prove that $\bW$ is a trivial representation.

We will prove below that the restriction
of $\bW$ to $\Sp_{2(g-2)}(\Z/\ell) \times \Sp_{4}(\Z/\ell)$ is unmixed.  Assuming this,
since by Proposition~\ref{proposition:spuniversal} the subgroup
$\Sp_{2(g-2)}(\Z/\ell) \times \Sp_4(\Z/\ell)$ is universally mixed in $\Sp_{2g}(\Z/\ell)$ we deduce
that  the restriction of $\bW$ to $\Sp_{2(g-2)}(\Z/\ell) \times \Sp_{4}(\Z/\ell)$ trivial.
Since the normal closure of $\Sp_{2(g-2)}(\Z/\ell) \times \Sp_4(\Z/\ell)$ in
$\Sp_{2g}(\Z/\ell)$ is $\Sp_{2g}(\Z/\ell)$, this implies that $\bW$ is a trivial
representation of $\Sp_{2g}(\Z/\ell)$, as desired.

It remains to prove that the restriction of $\bW$ to $\Sp_{2(g-2)}(\Z/\ell) \times \Sp_{4}(\Z/\ell)$ is unmixed.
Let
\[\Res^{\Sp_{2g}(\Z/\ell)}_{\Sp_{2(g-2)}(\Z/\ell) \times \Sp_4(\Z/\ell)} \bW = \bigoplus_{i=1}^n \bW_i\]
be a decomposition into irreducible representations of $\Sp_{2(g-2)}(\Z/\ell) \times \Sp_4(\Z/\ell)$ and let
\[\Res^{\Sp_{2g}(\Z)}_{\Sp_{2(g-2)}(\Z) \times \Sp_4(\Z)} \bU = \bigoplus_{j=1}^m \bU_j\]
be a decomposition into irreducible algebraic representations of $\Sp_{2(g-2)}(\Z) \times \Sp_4(\Z)$.  We thus have
\[\Res^{\Sp_{2g}(\Z)}_{\Sp_{2(g-2)}(\Z) \times \Sp_4(\Z)} \bV_g = \bigoplus_{i=1}^n \bigoplus_{j=1}^m \bU_j \otimes \bW_i.\]
Each $\bU_j \otimes \bW_i$ is an irreducible representation\footnote{This follows from 
the Jacobson density theorem (cf.\ the proof of the first Claim of \cite[Theorem C]{PutmanRepSLZ}).} 
of $\Sp_{2(g-2)}(\Z) \times \Sp_4(\Z/\ell)$.  

For each $1 \leq i \leq n$ and $1 \leq j \leq m$, assumption (b) implies that one of the following two things happens:
\begin{itemize}
\item $\bU_j \otimes \bW_i$ is in the image of $f\colon \bV_{g-2} \boxtimes \bk \rightarrow \bV_g$, so
$1 \times \Sp_4(\Z)$ acts trivially on $\bU_j \otimes \bW_i$ and hence on $\bW_i$; or
\item $\bU_j \otimes \bW_i$ survives in $\coker(f)$, so its restriction to $\Sp_{2(g-2)}(\Z)$ is an algebraic
representation of $\Sp_{2(g-2)}(\Z)$.  This implies that $\Sp_{2(g-2)}(\Z) \times 1$ acts trivially on $\bW_i$.
\end{itemize}
Let $\bW'$ be the direct sum of the $\bW_i$ such that $1 \times \Sp_4(\Z)$ acts trivially on $\bW_i$ and let
$\bW''$ be the direct sum of the $\bW_i$ such that $\Sp_{2(g-2)}(\Z) \times 1$ acts trivially on $\bW_i$.  The
restriction of $\bW$ to $\Sp_{2(g-2)}(\Z/\ell) \times \Sp_{4}(\Z/\ell)$ decomposes as
$\bW' \oplus \bW''$, showing that it is unmixed.
\end{proof}

\part{Homology of Torelli, step 1: reduction to curve stabilizers}
\label{part:step1}

Recall from \S \ref{section:introduction} that the Torelli group $\Torelli_{g,p}^b$ on a genus $g$ surface $\Sigma_{g,p}^b$ with
$p$ marked points and $b$ boundary components is the kernel of the action of the mapping class
group $\Mod_{g,p}^b$ on $\HH_1(\Sigma_g)$.
Each $\HH_d(\Torelli_{g,p}^b;\Q)$ is a representation
of $\Sp_{2g}(\Z)$.  Our goal is to prove Theorem~\ref{maintheorem:h2torelli}, which says
that $\HH_2(\Torelli_{g,p}^b;\Q)$ is finite dimensional for $g \geq 5$
and an algebraic representation of $\Sp_{2g}(\Z)$ for $g \geq 6$.  In this
part of the paper, we reduce this to a theorem about curve stabilizers.  This
reduction occurs in \S \ref{section:step1main}, which is preceded by the preliminary
\S \ref{section:step1prelim}.

\section{Step 1.1: preliminary results about the homology of Torelli}
\label{section:step1prelim}

This section contains some preliminary results about the homology of the Torelli group.

\subsection{Deleting boundary components and marked points}
\label{section:deleteboundary}

If $f\colon \bV \rightarrow \bW$ is an
equivariant map between $\Sp_{2g}(\Z)$-representations $\bV$ and $\bW$ over $\Q$, then say that
$f$ is an {\em isomorphism mod fin dim alg reps} if both $\ker(f)$ and
$\coker(f)$ are finite-dimensional algebraic representations of $\Sp_{2g}(\Z)$.  If this holds, then
$\bV$ is a finite-dimensional algebraic representation of $\Sp_{2g}(\Z)$ if and only if $\bW$ is.

The following two lemmas imply that for a fixed $g \geq 3$,
to prove that $\HH_2(\Torelli_{g,p}^b;\Q)$ is
either finite-dimensional or an algebraic representation of $\Sp_{2g}(\Z)$
for all $p,b \geq 0$ it is enough to prove this for any single choice
of $p$ or $b$.  This will reduce us to only considering
$\HH_2(\Torelli_g^1;\Q)$.

\begin{lemma}[Cap boundary]
\label{lemma:capboundary}
Let $b,p \geq 0$ and $g \geq 3$.  Let $\partial$ be a component of
$\partial \Sigma_{g,p}^{b+1}$ and let $\Torelli_{g,p}^{b+1} \rightarrow \Torelli_{g,p+1}^b$
be the map that glues a disc containing a marked point to $\partial$ and extends mapping
classes in $\Torelli_{g,p}^{b+1}$ over it by the identity.
Then the induced map $\HH_2(\Torelli_{g,p}^{b+1};\Q) \rightarrow \HH_2(\Torelli_{g,p+1}^b;\Q)$
is an isomorphism mod fin dim alg representations.
\end{lemma}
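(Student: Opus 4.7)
The plan is to realize the capping homomorphism as the projection in a central extension and then extract the $\HH_2$ information from the Lyndon--Hochschild--Serre spectral sequence. The key input beyond standard machinery is Johnson's formula for $\HH_1(\Torelli)$, which was already recalled in the introduction.

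First I would set up the central extension. Let $\partial$ be the capped boundary component, and let $T_\partial$ denote the Dehn twist about a curve parallel to $\partial$. The standard capping exact sequence for mapping class groups gives
\[
1 \longrightarrow \langle T_\partial \rangle \longrightarrow \Mod_{g,p}^{b+1} \longrightarrow \Mod_{g,p+1}^{b} \longrightarrow 1
\]
with $\langle T_\partial \rangle \cong \Z$ central. Since $T_\partial$ acts trivially on $\HH_1(\Sigma_g)$ (once one forgets punctures and caps the remaining boundaries, $\partial$ bounds a disc), and since lifting a mapping class along the capping map does not change how it acts on $\HH_1(\Sigma_g)$, restricting to Torelli gives a central extension $1 \to \Z \to \Torelli_{g,p}^{b+1} \to \Torelli_{g,p+1}^{b} \to 1$ that is equivariant under conjugation by $\Mod_{g,p}^{b+1}$, and hence under the induced action of $\Sp_{2g}(\Z)$.

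Next I would run the Lyndon--Hochschild--Serre spectral sequence for this central extension with rational coefficients. Writing $Q = \Torelli_{g,p+1}^b$, we have $E^2_{p,q} = \HH_p(Q; \HH_q(\Z;\Q))$, and since $\HH_q(\Z;\Q)$ is $\Q$ for $q \in \{0,1\}$ and $0$ otherwise (with trivial $Q$-action by centrality), only two rows are nonzero. The sole possibly nonzero differential is the transgression $d_2\colon \HH_p(Q;\Q) \to \HH_{p-2}(Q;\Q)$, given by cap product with the Euler class of the circle bundle, and the sequence degenerates at $E^3$. A direct edge-map analysis of the $(p+q=2)$-column yields a short exact sequence
\[
0 \to \coker\bigl(\HH_3(Q;\Q) \xrightarrow{d_2} \HH_1(Q;\Q)\bigr) \to \HH_2(\Torelli_{g,p}^{b+1};\Q) \to \ker\bigl(\HH_2(Q;\Q) \xrightarrow{d_2} \HH_0(Q;\Q)\bigr) \to 0,
\]
in which the second map is, after identifying the target with a submodule of $\HH_2(Q;\Q)$, the map induced by the capping homomorphism. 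All maps are $\Sp_{2g}(\Z)$-equivariant by the previous paragraph.

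Finally I would read off the conclusion. The kernel of $\HH_2(\Torelli_{g,p}^{b+1};\Q) \to \HH_2(\Torelli_{g,p+1}^b;\Q)$ is a quotient of $\HH_1(\Torelli_{g,p+1}^b;\Q)$, which by Johnson's theorem is isomorphic to $H^{\oplus(p+1+b)} \oplus (\wedge^3 H)/H$ and so is a finite-dimensional algebraic representation of $\Sp_{2g}(\Z)$. The cokernel embeds into $\HH_0(Q;\Q) \cong \Q$, which is the trivial representation and is likewise finite-dimensional algebraic. The hardest step, honestly, is just being careful about equivariant bookkeeping in the spectral sequence and about the (standard but slightly delicate) fact that the capping map really is surjective on Torelli; neither presents a genuine obstacle given the central-extension setup.
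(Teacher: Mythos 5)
Your argument is correct and is essentially the paper's own: both proceed via the central extension $1 \to \Z \to \Torelli_{g,p}^{b+1} \to \Torelli_{g,p+1}^b \to 1$ obtained by restricting the capping sequence, use the two-row Hochschild--Serre (Gysin) spectral sequence to bound the kernel by a quotient of $\HH_1(\Torelli_{g,p+1}^b;\Q)$ and the cokernel by a subspace of $\HH_0(\Torelli_{g,p+1}^b;\Q)\cong\Q$, and then invoke Johnson's computation of $\HH_1$ together with the Serre-class stability of finite-dimensional algebraic representations. The only differences are cosmetic: you spell out the $E^2$-page explicitly where the paper quotes the Gysin segment directly, and the paper is a bit more explicit about why the Torelli restriction remains surjective (namely $\Torelli_{g,p}^{b+1}=f^{-1}(\Torelli_{g,p+1}^b)$) and about explicitly citing the closure-under-subquotients fact $(\spadesuit)$.
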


The proofs of this and many other results will use the following fact:
\begin{align*}
\tag{$\spadesuit$}\label{eqn:algclosed} &\text{the collection of finite-dimensional algebraic representations of $\Sp_{2g}(\Z)$ is a Serre}\\ &\text{class, i.e., it is closed under subquotients and extensions.}
\end{align*}

\begin{proof}
By \cite[Proposition 3.19]{FarbMargalitPrimer}, there is a central extension
\begin{equation}
\label{eqn:cap}
\begin{tikzcd}
1 \arrow{r} & \Z \arrow{r} & \Mod_{g,p}^{b+1} \arrow{r}{f} & \Mod_{g,p+1}^b \arrow{r} & 1,
\end{tikzcd}
\end{equation}
where the central $\Z$ is generated by the Dehn twist $T_{\partial}$ and $f$
glues a disc containing a marked point to $\partial$ and extends mapping classes
over it by the identity.  Since the action of $\Mod_{g,p}^{b+1}$ on $\HH_1(\Sigma_g)$ factors through
$\Mod_{g,p+1}^b$, an element $\phi \in \Mod_{g,p}^{b+1}$ acts trivially on $\HH_1(\Sigma_g)$ if and only
if $f(\phi)$ does.  It follows that \eqref{eqn:cap} restricts to a similar central extension
\[\begin{tikzcd}
1 \arrow{r} & \Z \arrow{r} & \Torelli_{g,p}^{b+1} \arrow{r} & \Torelli_{g,p+1}^b \arrow{r} & 1
\end{tikzcd}\]
of Torelli groups.  The Hochschild--Serre spectral sequence of this extension induces a long exact Gysin sequence that contains the segment
\[\begin{tikzcd}
\HH_1(\Torelli_{g,p+1}^b;\Q) \arrow{r} & \HH_2(\Torelli_{g,p}^{b+1};\Q) \arrow{r} & \HH_2(\Torelli_{g,p+1}^b;\Q) \arrow{r} & \HH_0(\Torelli_{g,p+1}^b;\Q)
\end{tikzcd}\]
It follows from Johnson's work \cite{JohnsonAbel} that $\HH_1(\Torelli_{g,p+1}^b;\Q)$ is a finite-dimensional
algebraic representation of $\Sp_{2g}(\Z)$ for $g \geq 3$.  Also, $\HH_0(\Torelli_{g,p+1}^n;\Q) = \Q$ is an
algebraic representation of $\Sp_{2g}(\Z)$.  The lemma now follows from \eqref{eqn:algclosed}.
\end{proof}

\begin{lemma}[Delete marked point]
\label{lemma:deletepuncture}
Let $b,p \geq 0$ and $g \geq 3$.  Let $p_0$ be a marked point of $\Sigma_{g,p+1}^b$ and let
$\Torelli_{g,p+1}^b \rightarrow \Torelli_{g,p}^b$ be the map that deletes $p_0$.  Then
the induced map $\HH_2(\Torelli_{g,p+1}^b;\Q) \rightarrow \HH_2(\Torelli_{g,p}^b;\Q)$
is an isomorphism mod fin dim alg reps.
\end{lemma}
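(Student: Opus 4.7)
The plan is to mirror Lemma~\ref{lemma:capboundary}, replacing the central $\Z$-extension used there with the Birman point-pushing exact sequence. The first step is to produce the short exact sequence of Torelli groups
\[1 \to \pi \to \Torelli_{g,p+1}^b \to \Torelli_{g,p}^b \to 1,\]
where $\pi = \pi_1(\Sigma_{g,p}^b \setminus \{q_1,\ldots,q_p\}, p_0)$ is the Birman point-pushing subgroup. Every element of $\pi$ lies in $\Torelli_{g,p+1}^b$ because forgetting $p_0$ makes each point-pushing map isotopic to the identity, so its action on $\HH_1(\Sigma_g)$ is trivial; surjectivity holds because any lift of a Torelli element to $\Mod_{g,p+1}^b$ still acts trivially on $\HH_1(\Sigma_g)$.

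Next I would apply the Hochschild--Serre spectral sequence
\[E^2_{i,j} = \HH_i(\Torelli_{g,p}^b; \HH_j(\pi;\Q)) \Rightarrow \HH_{i+j}(\Torelli_{g,p+1}^b;\Q).\]
The group $\pi$ is free when $p+b \geq 1$ and a closed surface group when $p=b=0$, so $\HH_j(\pi;\Q) = 0$ for $j \geq 3$, and $\HH_2(\pi;\Q)$ is either $0$ or a trivial $\Torelli_{g,p}^b$-module $\Q$. The map $\HH_2(\Torelli_{g,p+1}^b;\Q) \to \HH_2(\Torelli_{g,p}^b;\Q)$ is the edge homomorphism; its kernel is an extension of $E^\infty_{1,1}$ (a quotient of $\HH_1(\Torelli_{g,p}^b; \HH_1(\pi;\Q))$) by $E^\infty_{0,2}$ (a quotient of $\HH_0(\Torelli_{g,p}^b; \HH_2(\pi;\Q))$), and its cokernel is a subquotient of $\HH_0(\Torelli_{g,p}^b; \HH_1(\pi;\Q))$. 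By \eqref{eqn:algclosed} it therefore suffices to show that $\HH_i(\Torelli_{g,p}^b; \HH_1(\pi;\Q))$ is a finite-dimensional algebraic $\Sp_{2g}(\Z)$-representation for $i = 0,1$.

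To this end, set $A = \HH_1(\pi;\Q)$ and note that $A$ fits in a $\Mod_{g,p}^b$-equivariant short exact sequence
\[0 \to A_{\mathrm{per}} \to A \to H \to 0,\]
where $A_{\mathrm{per}}$ is spanned by peripheral classes and $H = \HH_1(\Sigma_g;\Q)$ is the standard symplectic representation. Both $A_{\mathrm{per}}$ and $H$ are finite-dimensional algebraic $\Sp_{2g}(\Z)$-representations on which $\Torelli_{g,p}^b$ acts trivially (trivially on $H$ by definition of Torelli, and trivially on $A_{\mathrm{per}}$ since each marked point and boundary component is fixed by the pure mapping class group). The long exact sequence in $\Torelli_{g,p}^b$-homology attached to the filtration $0 \subset A_{\mathrm{per}} \subset A$, together with Johnson's theorem \cite{JohnsonAbel} that $\HH_1(\Torelli_{g,p}^b;\Q)$ is a finite-dimensional algebraic $\Sp_{2g}(\Z)$-representation for $g \geq 3$, yields the required control on $\HH_i(\Torelli_{g,p}^b; A)$ via \eqref{eqn:algclosed}.

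The main technical subtlety I anticipate is that $\Torelli_{g,p}^b$ need not act trivially on $A$ itself: the extension above may give a cocycle of Johnson type. This is harmless, however, because the long exact sequence associated to a short exact sequence of $\Torelli_{g,p}^b$-modules depends only on the outer terms as abstract representations. Since those are finite-dimensional algebraic, the Serre class property \eqref{eqn:algclosed} forces the same for the middle, completing the reduction.
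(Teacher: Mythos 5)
Your proof is correct and follows essentially the same route as the paper: the Birman exact sequence restricted to Torelli, the Hochschild--Serre spectral sequence, and the filtration of $\HH_1(\pi;\Q)$ by peripheral classes plus Johnson's theorem and the Serre-class property. One small caution on phrasing: it is not literally true that the long exact sequence ``depends only on the outer terms as abstract representations'' (the connecting maps do see the extension class), but what you actually use --- that $\HH_i(\Torelli_{g,p}^b;A)$ sits between a quotient of $\HH_i(\Torelli_{g,p}^b;A_{\mathrm{per}})$ and a subobject of $\HH_i(\Torelli_{g,p}^b;H)$, both finite-dimensional algebraic --- is exactly right and suffices via \eqref{eqn:algclosed}.
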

\begin{proof}
There is a Birman exact sequence \cite[Theorem 4.6]{FarbMargalitPrimer}
\begin{equation}
\label{eqn:birman}
\begin{tikzcd}
1 \arrow{r} & \pi_1(\Sigma_{g,p}^b,p_0) \arrow{r} & \Mod_{g,p+1}^b \arrow{r}{f} & \Mod_{g,p}^b \arrow{r} & 1,
\end{tikzcd}
\end{equation}
where\footnote{Here by $\pi_1(\Sigma_{g,p}^b,p_0)$ we mean the fundamental group of a genus $g$ surface with $p$ punctures (not marked points)
and $b$ boundary components.  Throughout this proof, we will continue to let context indicate whether $p$
means ``punctures'' or ``marked points''.} $\pi_1(\Sigma_{g,p}^b,p_0)$ is the point-pushing subgroup of
$\Mod_{g,p+1}^b$ and $f$ deletes $p_0$.
Since the action of $\Mod_{g,p+1}^b$ on $\HH_1(\Sigma_g)$ factors through $\Mod_{g,p}^b$, an element
$\phi \in \Mod_{g,p+1}^b$ acts trivially on $\HH_1(\Sigma_g)$ if and only if $f(\phi)$ does.  It follows that
\eqref{eqn:birman} restricts to an exact sequence
\[\begin{tikzcd}
1 \arrow{r} & \pi_1(\Sigma_{g,p}^b,p_0) \arrow{r} & \Torelli_{g,p+1}^b \arrow{r}{f} & \Torelli_{g,p}^b \arrow{r} & 1.
\end{tikzcd}\]
The associated Hochschild--Serre spectral sequence takes the form
\begin{equation}
\label{eqn:birmanss}
\ssE^2_{pq} = \HH_p(\Torelli_{g,p}^b;\HH_q(\pi_1(\Sigma_{g,p}^b);\Q)) \Rightarrow \HH_{p+q}(\Torelli_{g,p+1}^b;\Q).
\end{equation}
The terms of this spectral sequence are representations of $\Sp_{2g}(\Z)$, and the differentials are
$\Sp_{2g}(\Z)$-equivariant.
We will use this spectral sequence to prove that the kernel and cokernel of $\HH_2(\Torelli_{g,p+1}^b;\Q) \rightarrow \HH_2(\Torelli_{g,p}^b;\Q)$
are finite-dimensional algebraic representations of $\Sp_{2g}(\Z)$.

We start with the cokernel.  We have $\ssE^2_{20} = \HH_2(\Torelli_{g,p}^b;\Q)$, and 
the image of the map $\HH_2(\Torelli_{g,p+1}^b;\Q) \rightarrow \HH_2(\Torelli_{g,p}^b;\Q)$ is
\[\ssE^{\infty}_{20} = \ssE^3_{20} = \ker(\ssE^2_{20} \rightarrow \ssE^2_{01}) = \ker(\HH_2(\Torelli_{g,p}^b;\Q) \rightarrow \ssE^2_{01}).\]
This implies that cokernel of $\HH_2(\Torelli_{g,p+1}^b;\Q) \rightarrow \HH_2(\Torelli_{g,p}^b;\Q)$ embeds
into $\ssE^2_{01}$.  Using \eqref{eqn:algclosed}, to prove
that the cokernel of $\HH_2(\Torelli_{g,p+1}^b;\Q) \rightarrow \HH_2(\Torelli_{g,p}^b;\Q)$ is
a finite dimensional algebraic representation of $\Sp_{2g}(\Z)$ it is enough to prove that
\[\ssE^2_{01} = \HH_0(\Torelli_{g,p}^b;\HH_1(\Sigma_{g,p}^b;\Q)) = \HH_1(\Sigma_{g,p}^b;\Q)_{\Torelli_{g,p}^b}\]
is a finite-dimensional algebraic representation of $\Sp_{2g}(\Z)$, where the subscript indicates that we are
taking coinvariants.  
Though $\Torelli_{g,p}^b$ acts trivially on $\HH_1(\Sigma_g;\Q)$, it might
not act trivially on $\HH_1(\Sigma_{g,p}^b;\Q)$.  However, letting
$m = \max(p+q-1,0)$ we do have an extension
\begin{equation}
\label{eqn:punctureextension}
\begin{tikzcd}
0 \arrow{r} & \Q^{m} \arrow{r} & \HH_1(\Sigma_{g,p}^b;\Q) \arrow{r} & \HH_1(\Sigma_g;\Q) \arrow{r} & 0
\end{tikzcd}
\end{equation}
with $\Torelli_{g,p}^b$ acting trivially on the kernel and cokernel.  This induces a right-exact sequence
\[\begin{tikzcd}
\Q^{m} \arrow{r} & \HH_1(\Sigma_{g,p}^b;\Q)_{\Torelli_{g,p}^b} \arrow{r} & \HH_1(\Sigma_g;\Q) \arrow{r} & 0.
\end{tikzcd}\]
Since $\Q^{m}$ is a trivial representation of $\Sp_{2g}(\Z)$ and $\HH_1(\Sigma_g;\Q)$ is a finite-dimensional algebraic representation of $\Sp_{2g}(\Z)$,
using \eqref{eqn:algclosed}
it follows that $\ssE^2_{01}=\HH_1(\Sigma_{g,p}^b;\Q)_{\Torelli_{g,p}^b}$ is a finite-dimensional
algebraic representation of $\Sp_{2g}(\Z)$, as desired.

We now handle the kernel.  In terms of the spectral sequence \eqref{eqn:birmanss}, the kernel
of the map $\HH_2(\Torelli_{g,p+1}^b;\Q) \rightarrow \HH_2(\Torelli_{g,p}^b;\Q)$ has a filtration
whose associated graded terms are $\ssE^{\infty}_{02}$ and $\ssE^{\infty}_{11}$.  These are subquotients
of $\ssE^2_{02}$ and $\ssE^2_{11}$.
Using \eqref{eqn:algclosed}, it
is enough to prove that $\ssE^2_{02}$ and $\ssE^2_{11}$ are finite-dimensional algebraic representations
of $\Sp_{2g}(\Z)$.

The term $\ssE^2_{02}$ is a finite-dimensional algebraic representation of $\Sp_{2g}(\Z)$ since
\[\ssE^2_{02} = \HH_0(\Torelli_{g,p}^b;\HH_2(\Sigma_{g,p}^b;\Q)) = \HH_2(\Sigma_{g,p}^b;\Q)_{\Torelli_{g,p}^b} = \begin{cases} \Q & \text{if $p=b=0$}, \\ 0 & \text{otherwise}. \end{cases}\]
For $\ssE^2_{11} = \HH_1(\Torelli_{g,p}^b;\HH_1(\Sigma_{g,p}^b;\Q))$, note that
the long exact sequence in homology associated to the extension \eqref{eqn:punctureextension} of $\Torelli_{g,p}^b$-representations
contains the segment
\[\begin{tikzcd}
\HH_1(\Torelli_{g,p}^b;\Q^{m}) \arrow{r} & \HH_1(\Torelli_{g,p}^b;\HH_1(\Sigma_{g,p}^b;\Q)) \arrow{r} & \HH_1(\Torelli_{g,p}^b;\HH_1(\Sigma_{g};\Q)).
\end{tikzcd}\]
This can be rewritten as
\[\begin{tikzcd}
\HH_1(\Torelli_{g,p}^b;\Q)^{m} \arrow{r} & \ssE^2_{11} \arrow{r} & \HH_1(\Torelli_{g,p}^b;\Q) \otimes \HH_1(\Sigma_g;\Q).
\end{tikzcd}\]
It follows from Johnson's work \cite{JohnsonAbel} that $\HH_1(\Torelli_{g,p+1}^b;\Q)$ is a finite-dimensional
algebraic representation of $\Sp_{2g}(\Z)$ for $g \geq 3$.  Using \eqref{eqn:algclosed}
it follows that $\ssE^2_{11}$ is a finite-dimensional
algebraic representation of $\Sp_{2g}(\Z)$, as desired.
\end{proof}

\subsection{Curve stabilizers}
\label{section:curvestabilizers}

Embed $\Sigma_{g-1}^1$ in $\Sigma_g$ and let $\gamma$ be an oriented nonseparating simple
closed curve on $\Sigma_g$ that is disjoint from $\Sigma_{g-1}^1$:\\
\Figure{Gamma}
Let $(\Torelli_g)_{\gamma}$ be the $\Torelli_g$-stabilizer of the isotopy class of $\gamma$.  The image of the
map $\Torelli_{g-1}^1 \hookrightarrow \Torelli_g$ induced by our embedding lies
in $(\Torelli_g)_{\gamma}$.  Then:

\begin{lemma}
\label{lemma:curvestabilizer}
Let $g \geq 5$.  Embed $\Sigma_{g-1}^1$ in $\Sigma_g$ and let $\gamma$ be a nonseparating simple
closed curve on $\Sigma_g$ that is disjoint from $\Sigma_{g-1}^1$.  Let
$h\colon \HH_2(\Torelli_{g-1}^1;\Q) \rightarrow \HH_2((\Torelli_g)_{\gamma};\Q)$ be the map
induced by the inclusion $\Torelli_{g-1}^1 \hookrightarrow (\Torelli_g)_{\gamma}$.  Regard
$h$ as a map of $\Sp_{2(g-1)}(\Z)$-representations.  Then $h$ is an isomorphism mod fin dim alg reps.
\end{lemma}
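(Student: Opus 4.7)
The plan is to factor the map $h$ through an intermediate Torelli group on the cut surface and analyze each factor using a Gysin-style long exact sequence together with Lemma~\ref{lemma:deletepuncture}.

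\textbf{Setup and key identification.} Let $\Sigma_{g-1}^2 = \Sigma_g \setminus \Int(N(\gamma))$ be the cut surface with boundary components $\partial_1, \partial_2$ (the two sides of $\gamma$), and let $P = \Sigma_{g-1}^2 \setminus \Int(\Sigma_{g-1}^1)$ be the pair of pants complementary to our subsurface. Note $(\Torelli_g)_\gamma = (\Torelli_g)_{\vec\gamma}$ since Torelli fixes $[\gamma] \in \HH_1(\Sigma_g)$. The standard cut-surface extension $1 \to \langle T_{\partial_1} T_{\partial_2}^{-1}\rangle \to \Mod(\Sigma_{g-1}^2) \to (\Mod_g)_{\vec\gamma} \to 1$ restricts to a central extension $1 \to \Z \to \tilde T \to (\Torelli_g)_\gamma \to 1$, where $\tilde T \subset \Mod(\Sigma_{g-1}^2)$ is the preimage of $(\Torelli_g)_\gamma$. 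Extending mapping classes by the identity over $P$ gives $\Torelli_{g-1}^1 \hookrightarrow \tilde T$, and composing with the quotient yields the inclusion inducing $h$. Now the cap map $\Mod(\Sigma_{g-1}^2) \to \Mod(\Sigma_{g-1,1}^1)$ capping $\partial_2$ with a disc containing a marked point restricts to an \emph{isomorphism} $\tilde T \cong \Torelli_{g-1,1}^1$: injectivity follows because $T_{\partial_2} \notin \tilde T$ (since it glues to $T_\gamma$, which acts nontrivially on $\HH_1(\Sigma_g)$ by $[\delta] \mapsto [\delta]+[\gamma]$ for $\delta$ dual to $\gamma$), and surjectivity follows because the homomorphism $\Torelli_{g-1}^2 \to \Z$ measuring the $[\gamma]$-coefficient of $\phi_\ast[\delta] - [\delta]$ has kernel exactly $\tilde T$ and sends $T_{\partial_2} \mapsto 1$. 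Under this identification, $\Torelli_{g-1}^1 \hookrightarrow \tilde T$ becomes the splitting of the Birman forgetful map $\Torelli_{g-1,1}^1 \twoheadrightarrow \Torelli_{g-1}^1$ obtained by placing the marked point inside $P$.

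\textbf{Two factor maps.} The map $h$ thus factors as $\HH_2(\Torelli_{g-1}^1;\Q) \xrightarrow{h_1} \HH_2(\Torelli_{g-1,1}^1;\Q) \xrightarrow{h_2} \HH_2((\Torelli_g)_\gamma;\Q)$. For $h_1$: Lemma~\ref{lemma:deletepuncture} says the Birman forgetful map induces an iso mod fin-dim alg reps, and since $h_1$ is a splitting of it, the same holds for $h_1$ by the evident $2$-out-of-$3$ property of this class of maps. For $h_2$: the LHS spectral sequence of the central $\Z$-extension defining $\tilde T$ collapses to a Gysin-style long exact sequence
\[
\HH_1((\Torelli_g)_\gamma;\Q) \to \HH_2(\tilde T;\Q) \xrightarrow{h_2} \HH_2((\Torelli_g)_\gamma;\Q) \to \HH_0((\Torelli_g)_\gamma;\Q) = \Q,
\]
so $\ker(h_2)$ is a quotient of $\HH_1((\Torelli_g)_\gamma;\Q)$ and $\coker(h_2)$ embeds into a trivial representation. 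It therefore suffices to show that $\HH_1((\Torelli_g)_\gamma;\Q)$ is a finite-dimensional algebraic representation of $\Sp_{2(g-1)}(\Z)$.

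\textbf{The $\HH_1$ computation.} By Shapiro's lemma, $\HH_1((\Torelli_g)_\gamma;\Q) \cong \HH_1(\Torelli_g; \Q[\Torelli_g/(\Torelli_g)_\gamma])$. The coset space $\Torelli_g/(\Torelli_g)_\gamma$ is the $\Torelli_g$-orbit of $\gamma$ among oriented isotopy classes of simple closed curves, which coincides with the vertex set of the $a$-curve complex $\cC_a(\Sigma_g)$ (with $a = [\gamma]$) introduced in \S\ref{section:step1reduction}. Thus the right-hand side is precisely $\HH_1$ of $\Torelli_g$ with an infinite-dimensional permutation module, exactly the kind of computation carried out in the authors' recent \cite[Theorem A]{MinahanPutmanAbelian}, which yields the required finite-dimensional algebraicity. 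This $\HH_1$ computation is the main technical obstacle; once it is in hand, the rest of the argument reduces to routine spectral sequence manipulation combined with the previously established capping/deletion lemmas, together with verifying $\Sp_{2(g-1)}(\Z)$-equivariance of all the identifications above.
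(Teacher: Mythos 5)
Your factorization of $h$ through the cut group $\tilde T = \tTorelli_{g-1}^2 = \phi^{-1}(\Torelli_g)$ is exactly what the paper does, and your Gysin-sequence analysis of the ``glue'' map ($h_2$, the paper's $\tphi$) is in the right spirit. However, the argument breaks at the claimed isomorphism $\tilde T \cong \Torelli_{g-1,1}^1$.

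\textbf{The isomorphism is false.} Injectivity of the cap map on $\tilde T$ is fine, but the surjectivity claim has a gap. You assert that $\tilde T$ is the kernel of a single $\Z$-valued homomorphism on $\Torelli_{g-1}^2$ (the ``$[\gamma]$-coefficient of $\phi_*[\delta]-[\delta]$''), with $T_{\partial_2}$ mapping to a generator. In fact, for $\psi \in \Torelli_{g-1}^2$ the class $\phi(\psi)_*[\delta]-[\delta]$ lies in $[\gamma]^\perp \cong \HH_1(\Sigma_{g-1}^2)$ but need not be a multiple of $[\gamma]$: an element of $\Torelli_{g-1}^2$ acts trivially on $\HH_1(\Sigma_{g-1})$ but only acts on $\HH_1(\Sigma_{g-1}^2)$ as $x \mapsto x + \mu(\psi)(x)\,[\partial_1]$ for a possibly nonzero functional $\mu(\psi) \in \Hom(\HH_1(\Sigma_{g-1}),\Z)$. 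Concretely, a bounding pair $T_aT_b^{-1}$ with $a\cup b$ separating $\partial_1$ from $\partial_2$ lies in $\Torelli_{g-1}^2$ and has $\mu\neq 0$, so $\phi(T_a T_b^{-1})\notin\Torelli_g$ even when the $[\gamma]$-component of $\phi_*[\delta]-[\delta]$ happens to vanish. Thus $\tilde T$ sits inside $\Torelli_{g-1}^2$ with quotient roughly $\Hom(\HH_1(\Sigma_{g-1}),\Z)\oplus\Z$, not just $\Z$, and $c(\tilde T)$ is a proper subgroup of $\Torelli_{g-1,1}^1$. This is precisely why the paper warns that $\tilde T \subsetneq \Torelli_{g-1}^2$ (citing \cite{PutmanCutPaste}): the Birman exact sequence for $\tilde T$ over $\Torelli_{g-1}^1$ has kernel $[\pi,\pi]$ (infinite-rank free), whereas $\Torelli_{g-1,1}^1$ has kernel $\pi$ (free of rank $2(g-1)$); no isomorphism compatible with the projections can exist.

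\textbf{Consequences.} Because the identification fails, the ``2-out-of-3'' treatment of $h_1$ does not apply: $h_1$ is a splitting of the extension $1\to[\pi,\pi]\to\tilde T\to\Torelli_{g-1}^1\to 1$, not of the Birman forgetful map, so Lemma~\ref{lemma:deletepuncture} is not the relevant input. Analyzing this splitting requires controlling $\HH_1(\Torelli_{g-1}^1;\HH_1([\pi,\pi];\Q))$, and this is exactly the nontrivial appeal to \cite[Theorem A]{MinahanPutmanAbelian} that the paper makes. Separately, your Shapiro-plus-\cite{MinahanPutmanAbelian} treatment of $\HH_1((\Torelli_g)_\gamma;\Q)$ for $h_2$ is heavier than needed (and it is not clear the cited theorem handles this permutation-module coefficient system); the paper instead uses Theorem~\ref{theorem:putmaninjective} to embed $\HH_1((\Torelli_g)_\gamma;\Q)$ into $\HH_1(\Torelli_g;\Q)$, which is finite-dimensional and algebraic by Johnson's work.
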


The proof uses a result that we will also use several other times.
For a subsurface $S$ of $\Sigma_g$, let $\Torelli_g(S)$ be the subgroup
of $\Torelli_g$ consisting of mapping classes supported on $S$.  For instance, if
$\gamma_1,\ldots,\gamma_k$ are simple closed curves on $\Sigma_g$ that intersect
transversely, then
$(\Torelli_g)_{\gamma_1,\ldots,\gamma_k} = \Torelli_g(S)$ 
for $S$ the complement of an open regular neighborhood of
$\gamma_1 \cup \cdots \cup \gamma_k$.  Then:

\begin{theorem}[{Putman, \cite[Theorem B]{PutmanJohnson}}]
\label{theorem:putmaninjective}
Let $S \subset \Sigma_g$ be a connected subsurface of genus at least $3$.
Then the map $\HH_1(\Torelli_g(S);\Q) \rightarrow \HH_1(\Torelli_g;\Q)$
is injective.
\end{theorem}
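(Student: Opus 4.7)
The strategy is to reduce the statement to a concrete fact about the Johnson homomorphism by computing both sides explicitly. Set $H = \HH_1(\Sigma_g;\Q)$. Johnson's theorem gives $\HH_1(\Torelli_g;\Q) \cong (\wedge^3 H)/H$ realized by the Johnson homomorphism $\tau_g \colon \Torelli_g \to (\wedge^3 H)/H$. Since $\Torelli_g(S) \subseteq \Torelli_g$, we can restrict $\tau_g$ to obtain $\tau_g|_S$, and the induced map on $\HH_1$ factors the map under consideration:
\[
\HH_1(\Torelli_g(S);\Q) \to (\wedge^3 H)/H \cong \HH_1(\Torelli_g;\Q).
\]
It thus suffices to show that the first arrow is injective.

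Step 1 (compute the source). Identify $\Torelli_g(S)$ as an extension built from the intrinsic Torelli group $\Torelli(S)$ of $S$ viewed as a surface with boundary, together with Dehn twists around boundary components of $S$ that are separating in $\Sigma_g$ (such twists lie in $\Torelli_g$, while twists around nonseparating boundary components do not). Use Johnson's parallel computation of $\HH_1(\Torelli(S);\Q)$ for bordered surfaces of genus $h \geq 3$ in terms of $H_S = \HH_1(S;\Q)$ (which is $\wedge^3 H_S$ modulo classes involving the symplectic form $\omega_S$), combined with the long exact sequence in homology associated to this extension, to obtain an explicit description of $\HH_1(\Torelli_g(S);\Q)$ as an algebraic representation of $\Sp(H_S)$.

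Step 2 (naturality of the Johnson homomorphism). Show that $\tau_g$ is natural with respect to subsurface inclusions: on a mapping class supported in $S$, the value of $\tau_g$ depends only on its image in $\Mod(S)$ and is computed by the Johnson homomorphism of $S$, postcomposed with the natural inclusion $\wedge^3 H_S \hookrightarrow \wedge^3 H \to (\wedge^3 H)/H$, plus an explicit correction term recording contributions from separating-boundary twists. Match the result with the extension from Step 1.

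Step 3 (injectivity by symplectic representation theory). Reduce injectivity to a linear-algebra statement: the map $\wedge^3 H_S \to (\wedge^3 H)/H$ has kernel exactly $\omega_S \wedge H_S \subseteq \wedge^3 H_S$, which is precisely the kernel already quotiented out in Johnson's computation on $S$. Verify this directly, using that $\mathrm{genus}(S) \geq 3$ provides enough symplectic rank in $H_S$ for the obvious candidate map on quotients to be injective; handle the separating-boundary-twist classes by noting their images are nonzero separating-twist classes in $(\wedge^3 H)/H$ and are linearly independent from the $\wedge^3 H_S$ contribution.

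The main obstacle is Step 1: the group $\Torelli_g(S)$ is subtly different from $\Torelli(S)$, since membership in $\Torelli_g$ depends on the action on $\HH_1(\Sigma_g)$ rather than $\HH_1(S)$, and this discrepancy is governed by which boundary components of $S$ separate $\Sigma_g$. Setting up the correct extension and then applying Johnson's bordered-surface abelianization formula uniformly across these cases requires careful bookkeeping. Once the abelianization of $\Torelli_g(S)$ is expressed explicitly in terms of Johnson-type data, Steps 2 and 3 are routine representation-theoretic computations using only the hypothesis $\mathrm{genus}(S) \geq 3$.
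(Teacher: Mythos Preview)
This theorem is not proved in the paper at all---it is quoted from \cite[Theorem~B]{PutmanJohnson} and used as a black box.  So there is no proof in the present paper to compare your proposal against.  That said, your overall strategy (compute both rational abelianizations via the Johnson homomorphism and reduce to linear algebra) is indeed the framework of the cited paper, so on that level you are on the right track.

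There is, however, a concrete error in your Step~3 (and a matching one in Step~1).  You assert that the kernel of $\wedge^3 H_S \to (\wedge^3 H)/H$ is $\omega_S \wedge H_S$.  It is not.  Write $H = H_S \oplus H_S^{\perp}$ and $\omega = \omega_S + \omega_{\perp}$.  If $h \wedge \omega$ lies in $\wedge^3 H_S$, then decomposing $h = h_S + h_{\perp}$ and comparing components in the direct-sum decomposition of $\wedge^3 H$ forces $h_S \wedge \omega_{\perp} = 0$ and $h_{\perp} \wedge \omega_S = 0$, hence (once both pieces have positive genus) $h = 0$.  So the kernel is actually trivial; in particular $\omega_S \wedge h_S$ is \emph{not} in the image of $H \hookrightarrow \wedge^3 H$, since $\omega \wedge h_S = \omega_S \wedge h_S + \omega_{\perp} \wedge h_S$ and the second term is nonzero.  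Correspondingly, in Step~1 you describe Johnson's bordered-surface computation as $\wedge^3 H_S$ modulo $\omega_S \wedge H_S$; but for a surface with boundary the Johnson homomorphism already lands in $\wedge^3 H_S$ with no quotient (it is only the closed case where one quotients by $H$).  Your two errors cancel, but neither intermediate claim is correct, and the actual argument is simpler: $\HH_1(\Torelli_g(S);\Q) \cong \wedge^3 \HH_1(S;\Q)$ (this is what \cite{PutmanJohnson} proves and what the present paper invokes at \eqref{eqn:torelliabel}), and the map $\wedge^3 H_S \to (\wedge^3 H)/H$ is injective by the calculation above.  The genuine work, as you correctly flag, is sorting out $\Torelli_g(S)$ versus the intrinsic Torelli group of $S$ across the various boundary configurations; your Step~1 would need to be made precise here, and the formula depends on how the components of $\partial S$ sit in $\Sigma_g$.
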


\begin{proof}[{Proof of Lemma~\ref{lemma:curvestabilizer}}]
Let $\partial_1$ and $\partial_2$ be the components of $\partial \Sigma_{g-1}^2$.  Embed $\Sigma_{g-1}^1$ in $\Sigma_{g-1}^2$:\\
\Figure{CutGamma}
Let $\Sigma_{g-1}^2 \rightarrow \Sigma_g$
be the map that glues $\partial_1$ to $\partial_2$ and maps those components to $\gamma$
and $\Sigma_{g-1}^1$ to $\Sigma_{g-1}^1$.  Let $\phi\colon \Mod_{g-1}^2 \rightarrow \Mod_g$ be the induced
map on mapping class groups, so $\phi(T_{\partial_1}) = \phi(T_{\partial_2}) = T_{\gamma}$.
The image of $\phi$ is $(\Mod_g)_{\gamma}$, and $\phi$ takes $\Mod_{g-1}^1 < \Mod_{g-1}^2$
isomorphically onto $\Mod_{g-1}^1 < \Mod_g$.  Define
$\tTorelli_{g-1}^2 = \phi^{-1}(\Torelli_g)$.
Be warned that this is a proper subgroup of $\Torelli_{g-1}^2$; see \cite{PutmanCutPaste}.  The
map $h$ factors as
\[\begin{tikzcd}
\HH_2(\Torelli_{g-1}^1;\Q) \arrow{r}{f} & \HH_2(\tTorelli_{g-1}^2;\Q) \arrow{r}{\tphi} & \HH_2((\Torelli_g)_{\gamma};\Q),
\end{tikzcd}\]
where $f$ is induced by the inclusion $\Torelli_{g-1}^1 \hookrightarrow \tTorelli_{g-1}^2$ and $\tphi$ is induced by
the restriction of $\phi\colon \Mod_{g-1}^2 \rightarrow (\Mod_g)_{\gamma}$ to $\tTorelli_{g-1}^2$.  To prove that
$h$ is an isomorphism mod fin dim alg reps, it is enough to prove this for $f$ and $\tphi$:

\begin{step}{1}
The map $\tphi\colon \HH_2(\tTorelli_{g-1}^2;\Q) \longrightarrow \HH_2((\Torelli_g)_{\gamma};\Q)$ is
an isomorphism mod fin dim alg reps.
\end{step}

Since $\phi(T_{\partial_1}) = \phi(T_{\partial_2}) = T_{\gamma}$, we have $T_{\partial_1} T_{\partial_2}^{-1} \in \ker(\phi)$.
In fact, we have a central extension
\[\begin{tikzcd}
1 \arrow{r} & \Z \arrow{r} & \Mod_{g-1}^2 \arrow{r}{\phi} & (\Mod_g)_{\gamma} \arrow{r} & 1
\end{tikzcd}\]
whose central $\Z$ is generated by $T_{\partial_1} T_{\partial_2}^{-1}$.  We have
$T_{\partial_1} T_{\partial_2}^{-1} \in \tTorelli_{g-1}^2$, so this restricts to a central extension
\[\begin{tikzcd}
1 \arrow{r} & \Z \arrow{r} & \tTorelli_{g-1}^2 \arrow{r} & (\Torelli_g)_{\gamma} \arrow{r} & 1.
\end{tikzcd}\]
The Hochschild--Serre spectral sequence of this extension induces a long exact Gysin sequence that contains the segment
\[\begin{tikzcd}
\HH_1((\Torelli_g)_{\gamma};\Q) \arrow{r} & \HH_2(\tTorelli_{g-1}^2;\Q) \arrow{r}{\tphi} & \HH_2((\Torelli_g)_{\gamma};\Q) \arrow{r} & \HH_0((\Torelli_g)_{\gamma};\Q).
\end{tikzcd}\]
Since $\HH_0((\Torelli_g)_{\gamma};\Q) = \Q$ is an
algebraic representation of $\Sp_{2(g-1)}(\Z)$, by \eqref{eqn:algclosed}
it is enough to prove that $\HH_1((\Torelli_g)_{\gamma};\Q)$ is a finite-dimensional
algebraic representation of $\Sp_{2(g-1)}(\Z)$.

Theorem~\ref{theorem:putmaninjective} implies that the map
$\HH_1((\Torelli_g)_{\gamma};\Q) \rightarrow \HH_1(\Torelli_g;\Q)$
is injective.  Johnson \cite{JohnsonAbel} proved that $\HH_1(\Torelli_g;\Q)$
is a finite-dimensional algebraic representation of $\Sp_{2g}(\Z)$.  Its restriction
to $\Sp_{2(g-1)}(\Z)$ is also algebraic, so by \eqref{eqn:algclosed} its
subrepresentation $\HH_1((\Torelli_g)_{\gamma};\Q)$ is a finite-dimensional
algebraic representation of $\Sp_{2(g-1)}(\Z)$, as desired.

\begin{step}{2}
\label{step:curvestabilizer2}
The map $f\colon \HH_2(\Torelli_{g-1}^1;\Q) \rightarrow \HH_2(\tTorelli_{g-1}^2;\Q)$ is an isomorphism
mod fin dim alg reps.
\end{step}

Putman \cite{PutmanCutPaste} constructed a version of the Birman exact sequence for\footnote{In the notation
of \cite{PutmanCutPaste}, the group $\tTorelli_{g-1}^2$ is $\Torelli(\Sigma_{g-1}^2,\{\{\partial_1,\partial_2\}\})$.} $\tTorelli_{g-1}^2$.  
Letting $\pi = \pi_1(\Sigma_{g-1}^1)$, it takes the form
\begin{equation}
\label{eqn:birmantorelli}
\begin{tikzcd}
1 \arrow{r} & {[\pi,\pi]} \arrow{r} & \tTorelli_{g-1}^2 \arrow{r} & \Torelli_{g-1}^1 \arrow{r} & 1.
\end{tikzcd}
\end{equation}
Here $\tTorelli_{g-1}^2 \rightarrow \Torelli_{g-1}^1$ is the map induced by gluing a disc to $\partial_1$ and
extending mapping classes by the identity, and $[\pi,\pi]$ is an appropriate subgroup of the ``disc pushing group''.  The
extension \eqref{eqn:birmantorelli} splits via the map $\Torelli_{g-1}^1 \rightarrow \tTorelli_{g-1}^2$ induced by the embedding $\Sigma_{g-1}^1 \hookrightarrow \Sigma_{g-1}^2$
discussed above.

Since $[\pi,\pi]$ is a free group, the Hochschild--Serre spectral sequence of \eqref{eqn:birmantorelli} has two rows, and
since this extension is split all the differentials coming out of the bottom row of this spectral sequence vanish.  We
deduce that this spectral sequence degenerates to give an extension
\[\begin{tikzcd}
0 \arrow{r} & \HH_1(\Torelli_{g-1}^1;\HH_1([\pi,\pi];\Q)) \arrow{r} & \HH_2(\tTorelli_{g-1}^2;\Q) \arrow{r} & \HH_2(\Torelli_{g-1}^1;\Q) \arrow{r} & 0.
\end{tikzcd}\]
The $f\colon \HH_2(\Torelli_{g-1}^1;\Q) \rightarrow \HH_2(\tTorelli_{g-1}^2;\Q)$ we are trying to prove is an isomorphism
mod fin dim alg reps is the splitting of this exact sequence coming from the splitting of \eqref{eqn:birmantorelli}.
We conclude that $\ker(f) = 1$ and that $\coker(f) \cong \HH_1(\Torelli_{g-1}^1;\HH_1([\pi,\pi];\Q))$.
For $g \geq 5$, the authors proved in \cite{MinahanPutmanAbelian}\footnote{Earlier 
Putman \cite{PutmanAbelian}
proved an analogous result for the level-$\ell$ subgroup of $\Mod_{g-1}^1$.  This played an important role in
Putman's computation of the second homology of the level-$\ell$ subgroup in \cite{PutmanH2Level, PutmanPicard}.}  that $\HH_1(\Torelli_{g-1}^1;\HH_1([\pi,\pi];\Q))$ is
a finite-dimensional algebraic representation of $\Sp_{2(g-1)}(\Z)$.  The step follows.
\end{proof}

\section{Step 1.1: reduction to curve stabilizers}
\label{section:step1main}

Let $\alpha$ and $\beta$ be the following curves on $\Sigma_{g}$:\\
\Figure{AandBcurves}
Let 
\[\lambda\colon \HH_2((\Torelli_g)_{\alpha};\Q) \oplus \HH_2((\Torelli_g)_{\beta};\Q) \longrightarrow \HH_2(\Torelli_g;\Q)\]
be the sum of the maps induced by the inclusions $(\Torelli_g)_{\alpha} \hookrightarrow \Torelli_g$
and $(\Torelli_g)_{\beta} \hookrightarrow \Torelli_g$ and let $\Lambda_g$ be the cokernel of $\lambda$.  The group
$\Mod_g$ does not act on $\Lambda_g$ since it does not fix $\alpha$ and $\beta$, but the subgroup $\Mod_{g-1}^1$ of $\Mod_g$ does
act on $\Lambda_g$.  This factors through $\Sp_{2(g-1)}(\Z)$, making $\Lambda_g$ into a representation of $\Sp_{2(g-1)}(\Z)$.
The rest of this paper is devoted to the proof of:

\setcounter{maintheoremprime}{1}
\begin{maintheoremprime}
\label{maintheorem:cokernel}
The representation $\Lambda_g$ is a finite-dimensional algebraic representation of $\Sp_{2(g-1)}(\Z)$
for $g \geq 5$.
\end{maintheoremprime}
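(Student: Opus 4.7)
The plan is to follow the two-step outline from Section~\ref{section:step1reduction}: first produce a tractable set of generators for $\Lambda_g$ by studying the action of $\Torelli_g$ on the handle complex $\cC_{ab}(\Sigma_g)$, and second identify enough relations among those generators to embed $\Lambda_g$ into a finite-dimensional algebraic $\Sp_{2(g-1)}(\Z)$-representation, from which both finite dimensionality and algebraicity will follow.

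For the generator step, the first order of business is to establish the two topological properties of $\cC_{ab}(\Sigma_g)$ advertised in the introduction: (a) it is $1$-acyclic for $g \geq 4$, and (b) the quotient $\cC_{ab}(\Sigma_g)/\Torelli_g$ is contractible. Property (a) should follow from Minahan's $(g-3)$-acyclicity of $\cC_a$ and $\cC_b$ together with a nerve-type argument in which the mixed simplices play the role of the glue between the two subcomplexes; a naive Mayer--Vietoris fails because $\cC_a$ and $\cC_b$ have empty intersection inside $\cC_{ab}$, so the connectivity of the gluing must be carried entirely by the mixed cells. Property (b) should follow from the transitivity of $\Torelli_g$ on $a$-curves, on $b$-curves, and on pairs of disjoint $a$/$b$-curves of each fixed algebraic-intersection number, which leaves a small, explicitly contractible skeleton downstairs.

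With (a) and (b) in hand, I would run the Borel equivariant homology spectral sequence for $\Torelli_g \curvearrowright \cC_{ab}(\Sigma_g)$. Combined with (a), it identifies $\HH_2(\Torelli_g;\Q)$ with a quotient of
\[ \bigoplus_{v} \HH_2((\Torelli_g)_v;\Q) \;\oplus\; \bigoplus_{e} \HH_1((\Torelli_g)_e;\Q) \;\oplus\; \bigoplus_{\tau} \HH_0((\Torelli_g)_\tau;\Q), \]
where $v$, $e$, $\tau$ range over $\Torelli_g$-orbit representatives of vertices, edges, and $2$-simplices. By transitivity, the vertex piece is precisely $\HH_2((\Torelli_g)_\alpha;\Q) \oplus \HH_2((\Torelli_g)_\beta;\Q)$, i.e.\ the domain of $\lambda$, so $\Lambda_g$ is already a quotient of the edge and $2$-simplex contributions. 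Each edge stabilizer is the Torelli group of a subsurface of genus at least $g-2 \geq 3$ cut along two curves; its $\HH_1$ is a finite-dimensional algebraic representation of the relevant smaller symplectic group by Johnson's calculation together with Theorem~\ref{theorem:putmaninjective} and Lemma~\ref{lemma:curvestabilizer}. Each $2$-simplex contributes only a copy of $\Q$, and property (b) controls the number of such orbits. This already bounds $\Lambda_g$ above by a finite-type object assembled from finite-dimensional algebraic representations, yielding finite dimensionality.

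For the algebraicity step (Section~\ref{section:step3relations}), I would promote the presentation above to an $\Sp_{2(g-1)}(\Z)$-equivariant embedding of $\Lambda_g$ into the finite-dimensional algebraic representation $V = ((\wedge^2 H)/\Q)^{\otimes 2}$ with $H = \HH_1(\Sigma_{g-1}^1;\Q)$. The map is motivated by the fact that the generators of $\Lambda_g$ coming from a mixed edge are indexed by a pair of disjoint $a$- and $b$-cycles, which via the $\HH_1$-identification of curve stabilizers gets recorded as an element of $\wedge^2 H$ on each factor. I would then read off topologically the relations coming from the $d^1$-differentials on $2$-simplex stabilizers and from comparisons between different mixed configurations, and invoke \cite[Theorem A.6]{MinahanPutmanRepPresentations} to conclude that the span of the generators modulo these relations embeds equivariantly into $V$. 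The main obstacle I expect is property (a): establishing $1$-acyclicity of $\cC_{ab}$ is genuinely delicate, since the $a$- and $b$-strata cannot be decoupled, and a Quillen-style bad simplex argument tailored to the mixed simplices will almost certainly be needed. A secondary difficulty is writing down enough explicit relations to actually embed (rather than merely map to a quotient of) $V$, which is the step where the generators-and-relations machinery must be brought to bear most carefully.
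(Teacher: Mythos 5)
Your high-level blueprint is correct: the handle complex $\cC_{ab}(\Sigma_g)$, the Borel equivariant homology spectral sequence, and Theorem A.6 of \cite{MinahanPutmanRepPresentations} are exactly the ingredients the paper uses. Establishing the $1$-acyclicity of $\cC_{ab}(\Sigma_g)$ and the contractibility of the quotient are indeed the topological inputs, and you are right that the former is delicate (the paper uses ``$ab$-strips'' and the Putman trick rather than a nerve argument, but that is a less consequential mismatch). However, there are several genuine gaps in the interior of your argument.

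First, your finite-dimensionality step does not go through as sketched. You claim that $\Lambda_g$ is bounded above by a sum of edge-stabilizer $\HH_1$'s and $2$-simplex contributions, and that this is a ``finite-type object.'' But the quotient $\cC_{ab}(\Sigma_g)/\Torelli_g$, though contractible, has \emph{infinitely many} $1$-cell orbits: its $1$-skeleton has infinitely many loops at each of $v_a$ and $v_b$ (one for each topological type of disjoint pair of homologous curves). So the sum over orbits $\bigoplus_e \HH_1((\Torelli_g)_{\te};\Q)$ is infinite-dimensional, and no dimension count falls out. In the paper, finite-dimensionality and algebraicity of $\Lambda_g$ are proved simultaneously, both coming from the surjection $\fK(H) \twoheadrightarrow \Lambda_g$; there is no separate ``easy'' finiteness argument. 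Moreover, your framing of each edge stabilizer's $\HH_1$ as a finite-dimensional algebraic representation ``of the relevant smaller symplectic group'' is misleading: the individual stabilizers of cells not fixed by $(\Mod_g)_{\alpha,\beta}$ are not $\Sp_{2(g-1)}(\Z)$-stable, so they do not individually carry the relevant representation structure; only the assembled $\Lambda_g$ does.

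Second, you misidentify where the generators of $\Lambda_g$ come from. You say the generators ``coming from a mixed edge are indexed by a pair of disjoint $a$- and $b$-cycles,'' but in the paper's spectral sequence the mixed-edge contribution $\HH_1((\Torelli_g)_{\alpha,\beta};\Q)$ maps \emph{injectively} under $d^1$ into the vertex term (via Theorem~\ref{theorem:putmaninjective}), so it contributes nothing to $\ssE^2_{11}$. The generators actually come from the infinitely many \emph{pure} loops in $\cC_a/\Torelli_g$ and $\cC_b/\Torelli_g$, on which the differential vanishes; these give the elements $\LLComm{A,B}$ of Proposition~\ref{proposition:generators}. Your proposed embedding map is therefore aimed at the wrong source. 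Relatedly, the role of Proposition~\ref{proposition:quotientcontractible} is not just to ``control the number of orbits'' --- it forces $\ssE^2_{p0}=0$ for $p\geq 1$, so the $2$-simplex contributions disappear entirely and $\Lambda_g$ becomes a quotient of $\ssE^2_{11}$ alone.

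Third, the most substantial part of the paper's proof is missing from your outline: converting the generators $\LLComm{A,B}$ (parametrized by elements of curve stabilizers) into the $\wedge^2 H$-indexed generators of $\fK(H)$ requires the machinery of $\alpha$- and $\beta$-shifters, compatibility, induced symplectic splittings, the Johnson homomorphism computation that identifies the compatible quotient with $\wedge^2 V_{\Q}$, the reduction to genus-$1$ summands, and the explicit relations among the resulting $\LLBComm{V,\kappa}$ and $\LLAComm{\kappa,V}$. This is not a matter of ``reading off topologically the relations from the $d^1$-differentials on $2$-simplex stabilizers'' --- those simply vanish --- and it is where the bulk of the hard work lies. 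Until you supply this translation, the invocation of Theorem A.6 has no content.
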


Here we will assume Theorem~\ref{maintheorem:cokernel} and
use it to prove Theorem~\ref{maintheorem:h2torelli}.

\newtheorem*{maintheorem:h2torelli}{Theorem~\ref{maintheorem:h2torelli}}
\begin{maintheorem:h2torelli}
Let $b,p \geq 0$.  Then $\HH_2(\Torelli_{g,p}^b;\Q)$ is finite dimensional for $g \geq 5$
and an algebraic representation of $\Sp_{2g}(\Z)$ for $g \geq 6$.
\end{maintheorem:h2torelli}
\begin{proof}[Proof of Theorem~\ref{maintheorem:h2torelli}, assuming Theorem~\ref{maintheorem:cokernel}]
By Lemmas \ref{lemma:capboundary} and \ref{lemma:deletepuncture}, it is enough to prove Theorem
\ref{maintheorem:h2torelli} for $\HH_2(\Torelli_g^1;\Q)$.
Embed $\Sigma_{g-1}^1$ into $\Sigma_{g}^1$ as in the following figure:\\
\Figure{TorelliStability}
Extending mapping classes in $\Torelli_{g-1}^1$ to $\Sigma_{g}^1$ by the identity,
we get an induced map $\Torelli_{g-1}^1 \rightarrow \Torelli_{g}^1$.  Passing
to $\HH_2$ gives a coherent sequence\footnote{See \S \ref{section:coherent} for the definition of a coherent sequence.}
\[\begin{tikzcd}
\HH_2(\Torelli_1^1;\Q) \arrow{r}{f_1} & \HH_2(\Torelli_2^1;\Q) \arrow{r}{f_2} & \HH_2(\Torelli_3^1;\Q) \arrow{r}{f_3} & \cdots
\end{tikzcd}\]
of representations of $\Sp_{2g}(\Z)$.

We would like to apply our stability theorem (Theorem~\ref{maintheorem:stability})
to this coherent sequence with $g_0=5$.  If we can do this, that theorem
will imply that $\HH_2(\Torelli_g^1;\Q)$ is finite dimensional for $g \geq g_0$
and an algebraic representation of $\Sp_{2g}(\Z)$ for $g \geq g_0+1$, just like we want.
Theorem~\ref{maintheorem:stability} has two hypotheses (i) and (ii) we must verify:

\begin{step}{1}
Hypothesis (i) holds: $\coker(f_{g-1})$ is a finite-dimensional algebraic representation
of $\Sp_{2(g-1)}(\Z)$ for $g \geq g_0=5$.
\end{step}

Let $f'_{g-1}\colon \HH_2(\Torelli_{g-1}^1;\Q) \rightarrow \HH_2(\Torelli_g;\Q)$ be the composition
\[\begin{tikzcd}
\HH_2(\Torelli_{g-1}^1;\Q) \arrow{r}{f_{g-1}} & \HH_2(\Torelli_g^1;\Q) \arrow{r}{\pi} & \HH_2(\Torelli_g;\Q),
\end{tikzcd}\]
where $\pi$ is the map induced by gluing a disc to $\partial \Sigma_g^1$ 
and extending mapping classes by the identity.  We can factor $\pi$ as
\[\begin{tikzcd}
\HH_2(\Torelli_g^1;\Q) \arrow{r} & \HH_2(\Torelli_{g,1};\Q) \arrow{r} & \HH_2(\Torelli_g;\Q).
\end{tikzcd}\]
Lemmas \ref{lemma:capboundary} and \ref{lemma:deletepuncture} say that these two maps are isomorphisms
mod fin dim alg reps for $g \geq 3$, so using \eqref{eqn:algclosed} the map $\pi$ is as well.  Again using
\eqref{eqn:algclosed}, this implies that to prove that
$\coker(f_{g-1})$ is a finite-dimensional algebraic representation of $\Sp_{2(g-1)}(\Z)$, it is
enough to prove the same result for $\coker(f'_{g-1})$.

Let $\alpha$ and $\beta$ be the curves on $\Sigma_g$ from Theorem~\ref{maintheorem:cokernel}.  Let
\[h_{\alpha}\colon \HH_2(\Torelli_{g-1}^1;\Q) \longrightarrow \HH_2((\Torelli_g)_{\alpha};\Q) \quad \text{and} \quad h_{\beta}\colon \HH_2(\Torelli_{g-1}^1;\Q) \longrightarrow \HH_2((\Torelli_g)_{\beta};\Q)\]
be the maps induced by the inclusions $\Torelli_{g-1}^1 \hookrightarrow (\Torelli_g)_{\alpha}$
and $\Torelli_{g-1}^1 \hookrightarrow (\Torelli_g)_{\beta}$ and let
\[\lambda_{\alpha}\colon \HH_2((\Torelli_g)_{\alpha};\Q) \longrightarrow \HH_2(\Torelli_g;\Q) \quad \text{and} \quad \lambda_{\beta}\colon \HH_2((\Torelli_g)_{\beta};\Q) \longrightarrow \HH_2(\Torelli_g;\Q)\]
be the maps induced by the inclusions $(\Torelli_g)_{\alpha} \hookrightarrow \Torelli_g$ and
$(\Torelli_g)_{\beta} \hookrightarrow \Torelli_g$.  Consider the sum of two copies of $f'_{g-1}$:
\[f'_{g-1} + f'_{g-1} \colon \HH_2(\Torelli_{g-1}^1;\Q) \oplus \HH_2(\Torelli_{g-1}^1;\Q) \longrightarrow \HH_2(\Torelli_g;\Q).\]
We can factor $f'_{g-1}+f'_{g-1}$ as
\begin{equation}
\label{eqn:factorf}
\begin{tikzcd}
\HH_2(\Torelli_{g-1}^1;\Q) \oplus \HH_2(\Torelli_{g-1}^1;\Q) \arrow{r}{h_{\alpha} \oplus h_{\beta}} & \HH_2((\Torelli_g)_{\alpha};\Q) \oplus \HH_2((\Torelli_g)_{\beta};\Q) \arrow{r}{\lambda_{\alpha} + \lambda_{\beta}} & \HH_2(\Torelli_g;\Q).
\end{tikzcd}
\end{equation}
Lemma~\ref{lemma:curvestabilizer} says that $h_{\alpha}$ and $h_{\beta}$ are both isomorphisms mod fin dim alg reps for $g \geq 5$, so
by \eqref{eqn:algclosed} the map
$h_{\alpha} \oplus h_{\beta}$ is as well.  Theorem~\ref{maintheorem:cokernel} 
says that $\Lambda_g = \coker(\lambda_{\alpha} + \lambda_{\beta})$ is a finite-dimensional algebraic
representation of $\Sp_{2(g-1)}(Z)$ for $g \geq 5$.  Using the factorization \eqref{eqn:factorf} and \eqref{eqn:algclosed}, 
we deduce that $\coker(f'_{g-1}+f'_{g-1})$ is also
a finite-dimensional algebraic representation of $\Sp_{2(g-1)}(\Z)$ for $g \geq 5$.  Since
$f'_{g-1}$ and $f'_{g-1}+f'_{g-1}$ have the same image, the same is true for $\coker(f'_{g-1})$,
as desired.

\begin{step}{2}
Hypothesis (ii) holds: the coinvariants $\HH_2(\Torelli_g^1;\Q)_{\Sp_{2g}(\Z)}$ are finite-dimensional
for $g \geq g_0=5$.  In fact, this holds for $g \geq 3$.
\end{step}

One quick way to see this is to appeal to \cite{KassabovPutman}, which says that $\HH_2(\Torelli_g^1;\Q)$ is
finitely generated as a module over the group ring $\Q[\Sp_{2g}(\Z)]$.  Another more elementary
approach is to use the Hochschild--Serre spectral sequence of the extension
\[\begin{tikzcd}
1 \arrow{r} & \Torelli_g^1 \arrow{r} & \Mod_g^1 \arrow{r} & \Sp_{2g}(\Z) \arrow{r} & 1,
\end{tikzcd}\]
which takes the form
\[\ssE^2_{pq} = \HH_p(\Sp_{2g}(\Z);\HH_q(\Torelli_g^1;\Q)) \Rightarrow \HH_{p+q}(\Mod_g^1;\Q).\]
We want to show that
\[\ssE^2_{02} = \HH_0(\Sp_{2g}(\Z);\HH_2(\Torelli_g^1;\Q)) = \HH_2(\Torelli_g^1;\Q)_{\Sp_{2g}(\Z)}\]
is finite-dimensional.
Since the homology groups of $\Mod_g^1$ are all finite-dimensional, we know that
$\ssE^{\infty}_{02}$ is finite dimensional.  To go from $\ssE^2_{02}$ to $\ssE^{\infty}_{02}$,
we must kill the images of differentials coming from
\begin{align*}
\ssE^2_{21} &= \HH_2(\Sp_{2g}(\Z);\HH_1(\Torelli_g^1;\Q)) \quad \text{and} \\
\ssE^3_{30} &\subset \ssE^2_{30} = \HH_3(\Sp_{2g}(\Z);\Q).
\end{align*}
In light of Johnson's theorem \cite{JohnsonAbel} saying that $\HH_1(\Torelli_g^1;\Q)$ is a finite-dimensional
algebraic representation of $\Sp_{2g}(\Z)$ and Borel--Serre's theorem \cite{BorelSerreCorners} saying
that $\Sp_{2g}(\Z)$ has a finite-index subgroup with a compact classifying space, these are both
finite-dimensional.  It follows that $\ssE^2_{02}$ is finite-dimensional, as desired.
\end{proof}

\part{Homology of Torelli, step 2: generators for cokernel}
\label{part:step2}

It remains to prove Theorem~\ref{maintheorem:cokernel}, which
says that $\Lambda_g$ is a finite-dimensional algebraic representation of $\Sp_{2(g-1)}(\Z)$ for $g \geq 5$.
In this part of the paper, we find generators for $\Lambda_g$.  
In \S \ref{section:equivarianthomology} we introduce equivariant homology,
in \S \ref{section:handlecomplex} we introduce the handle complex $\cC_{ab}(\Sigma_g)$,
and then in \S \ref{section:generators} we find our generators.

\section{Step 2.1: Preliminaries on equivariant homology}
\label{section:equivarianthomology}

We start with some preliminaries on (Borel) equivariant homology.
See \cite[\S VII.7]{BrownCohomology} for a textbook reference.  Fix
a group $G$ and a commutative ring $\bk$.

\subsection{Basic definitions}
\label{section:basicdef}

A {\em $G$-CW complex} is a CW complex equipped with a cellular action of $G$.  Fix a contractible $G$-CW complex $EG$
on which $G$ acts freely.  For a $G$-CW complex $X$, the group $G$ acts freely on $EG \times X$.  Define
\[EG \times_{G} X = (EG \times X)/G.\]
This is known as the {\em Borel construction}.  The {\em $G$-equivariant homology groups} of $X$, denoted $\HH^G_{\bullet}(X;\bk)$, are $\HH_{\bullet}(EG \times_G X;\bk)$.
They do not depend on the choice of $EG$ and are functorial under $G$-equivariant cellular maps.  They
are also functorial under group homomorphisms in the following sense: if
$f\colon G_1 \rightarrow G_2$ is a group homomorphism and $\phi\colon X_1 \rightarrow X_2$ is a cellular map from a
$G_1$-CW complex $X_1$ to a $G_2$-CW complex $X_2$ such that
\[\phi(g \Cdot x) = f(g) \Cdot \phi(x) \quad \text{for all $g \in G_1$ and $x \in X_1$},\]
then there is an induced map
$(f,\phi)_{\ast}\colon \HH^{G_1}_{\bullet}(X_1;\bk) \rightarrow \HH^{G_2}_{\bullet}(X_2;\bk)$.

\subsection{Relation to group homology}
If $X$ is a contractible $G$-CW complex, then $EG \times X$ is also contractible.  Since $G$ acts
freely on $EG \times X$, we deduce that $EG \times_G X$ is a $K(G,1)$, so by definition\footnote{We use $=$ rather than
$\cong$ to indicate that this isomorphism is canonical.}
$\HH^G_{\bullet}(X;\bk) = \HH_{\bullet}(G;\bk)$.
A basic example is $X = \pt$ equipped with the trivial $G$-action, so
\[\HH^G_{\bullet}(\pt;\bk) = \HH_{\bullet}(G;\bk).\]
For an arbitrary $G$-CW complex $X$, the map $X \rightarrow \pt$ induces a canonical map
\begin{equation}
\label{eqn:maptopt}
\HH^G_{\bullet}(X;\bk) \longrightarrow \HH^G_{\bullet}(\pt;\bk) = \HH_{\bullet}(G;\bk).
\end{equation}
We then have the following:

\begin{lemma}
\label{lemma:connected}
Let $G$ be a group and let $X$ be an $n$-connected $G$-CW complex.  For all commutative rings $\bk$, the map
$\HH^G_{d}(X;\bk) \rightarrow \HH_{d}(G;\bk)$ from \eqref{eqn:maptopt} is an isomorphism for
$d \leq n$ and a surjection for $d = n+1$.
\end{lemma}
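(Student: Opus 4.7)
The plan is to use the Serre spectral sequence of the Borel fibration $X \to EG \times_G X \to BG$, which yields
\[E^2_{pq} = \HH_p(G;\HH_q(X;\bk)) \Longrightarrow \HH^G_{p+q}(X;\bk).\]
The map $\HH^G_d(X;\bk) \to \HH_d(G;\bk)$ from \eqref{eqn:maptopt} is induced by the projection $EG \times_G X \to EG \times_G \pt = BG$, which is precisely the bottom edge homomorphism of this spectral sequence, factoring as
\[\HH^G_d(X;\bk) \twoheadrightarrow E^\infty_{d,0} \hookrightarrow E^2_{d,0} = \HH_d(G;\bk).\]
So it suffices to check two things: (a) $E^\infty_{p,q} = 0$ whenever $p + q \leq n$ and $q > 0$, and (b) $E^\infty_{d,0} = E^2_{d,0}$ for all $d \leq n+1$.

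The hypothesis that $X$ is $n$-connected gives $\HH_0(X;\bk) = \bk$ (with trivial $G$-action) and $\HH_q(X;\bk) = 0$ for $1 \leq q \leq n$, so rows $1, \ldots, n$ of the $E^2$ page are identically zero. Claim (a) is then immediate: if $p+q \leq n$ with $q \geq 1$, then $1 \leq q \leq n$, so $E^2_{p,q} = 0$ and hence $E^\infty_{p,q} = 0$.

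For (b), I would analyze the differentials $d_r\colon E^r_{d,0} \to E^r_{d-r,\,r-1}$ leaving the bottom row, with $d \leq n+1$. When $2 \leq r \leq n+1$, the target lies on row $r-1 \in \{1, \ldots, n\}$, which is zero. When $r \geq n+2$, the target has first coordinate $d - r \leq (n+1) - (n+2) = -1$, so again vanishes. Hence no nontrivial differential ever leaves $E^r_{d,0}$ in this range, so $E^\infty_{d,0} = E^2_{d,0} = \HH_d(G;\bk)$.

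There is no real obstacle here — this is a standard spectral sequence manipulation. The mildest subtlety is that $G$ may act nontrivially on $\HH_q(X;\bk)$ for $q > n$, which is why one cannot simplify using an untwisted K\"unneth-style splitting; but this never matters in the range of interest because only the $q = 0$ row contributes, and there the coefficients are the trivial module $\bk$.
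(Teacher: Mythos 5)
Your proof is correct, but it takes a genuinely different route from the paper. You invoke the Serre spectral sequence with local coefficients for the Borel fibration $X \to EG \times_G X \to BG$ and then run the standard edge-homomorphism analysis: the $n$-connectivity of $X$ kills rows $1,\dots,n$ of the $E^2$-page, so nothing below the bottom row can interfere with $\HH^G_d(X;\bk) \to \HH_d(G;\bk)$ in the stated range. The paper instead argues geometrically: since $X$ is $n$-connected one may equivariantly attach cells of dimension at least $n+2$ to produce a contractible $G$-CW complex $Y \supset X$; then $EG \times_G Y$ is obtained from $EG \times_G X$ by attaching cells of dimension at least $n+2$, the inclusion induces an isomorphism on $\HH_d$ for $d \leq n$ and a surjection for $d = n+1$, and $\HH^G_d(Y;\bk) = \HH_d(G;\bk)$ because $Y$ is contractible. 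The paper's argument is a bit more elementary in that it avoids local-coefficient spectral sequences entirely and uses only cellular chain-level facts; your argument is the more conventional one and makes explicit exactly which $E^2$-terms could obstruct the conclusion, which is useful if one ever wants a sharper statement about the failure of injectivity in degree $n+1$. Both are complete proofs.
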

\begin{proof}
We can build a contractible $G$-CW complex $Y$ from $X$ by equivariantly attaching cells of dimension at least $n+2$.  The
CW complex $EG \times_G Y$ is then built from $EG \times_G X$ by attaching cells of dimension at least $n+2$.
Letting $\iota\colon X \rightarrow Y$ be the inclusion, the map we are studying factors as
\[\begin{tikzcd}
\HH^G_d(X;\bk) \arrow{r}{\iota_{\ast}} & \HH^G_d(Y;\bk) \arrow{r} & \HH_d(G;\bk).
\end{tikzcd}\]
Since $Y$ is contractible the canonical map $\HH^G_d(Y;\bk) \rightarrow \HH_d(G;\bk)$ 
is an isomorphism for all $d$, and by construction $\iota_{\ast}$ is an isomorphism for
$d \leq n$ and a surjection for $d = n+1$.  The lemma follows.
\end{proof}

\subsection{Spectral sequence}
\label{section:spectralsequence}

Let $X$ be a $G$-CW complex.  Assume that $G$ acts on $X$ {\em without rotations}, i.e., for all cells $\sigma$ of $X$
the stabilizer $G_{\sigma}$ fixes $c$ pointwise.  This ensures
 that $X/G$ is a CW complex whose $p$-cells are in bijection with the $G$-orbits of $p$-cells of $X$.  

For a $p$-cell $\sigma \in (X/G)^{(p)}$, let $\tsigma$ be a lift of $\sigma$ to $X$.  Consider
$\HH_q(G_{\tsigma};\bk)$.  This appears to depend on the choice of $\tsigma$.  However, if $\tsigma'$ is another lift
of $\sigma$ to $X$ then there exists some $g \in G$ with $g \Cdot \tsigma = \tsigma'$ and hence $g G_{\tsigma} g^{-1} = G_{\tsigma'}$.
Conjugation by $g$ thus gives an isomorphism
\begin{equation}
\label{eqn:isostab}
\HH_q(G_{\tsigma};\bk) \cong \HH_q(G_{\tsigma'};\bk).
\end{equation}
This isomorphism does not depend on the choice of $g$; indeed, any other choice of $g \in G$ with $g \Cdot \tsigma = \tsigma'$ is of the form $g h$ with 
$h \in G_{\tsigma}$, and conjugation by $h$ induces the trivial automorphism of $\HH_q(G_{\tsigma};\bk)$.
Since the isomorphism \eqref{eqn:isostab} is canonical, we can 
unambiguously write $\HH_q(G_{\tsigma};\bk)$ for $\sigma \in (X/G)^{(p)}$.  With this convention, we have:

\begin{proposition}[{\cite[VII.7.7]{BrownCohomology}}]
\label{proposition:spectralsequence}
Let $G$ be a group, let $X$ be a $G$-CW complex, and let $\bk$ be a commutative ring.  Assume that $G$ acts
on $X$ without rotations.  There is then a functorial first quadrant spectral sequence
\[\ssE^1_{pq} = \bigoplus_{\sigma \in (X/G)^{(p)}} \HH_q(G_{\tsigma};\bk) \Rightarrow \HH^G_{p+q}(X;\bk).\]
\end{proposition}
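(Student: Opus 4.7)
The plan is to filter the Borel construction $EG \times_G X$ by skeleta of $X$ and extract the spectral sequence of that filtration. Concretely, the skeletal filtration $X^{(0)} \subset X^{(1)} \subset \cdots$ consists of $G$-invariant subcomplexes since the $G$-action is cellular, so it gives rise to a filtration
\[EG \times_G X^{(0)} \subset EG \times_G X^{(1)} \subset EG \times_G X^{(2)} \subset \cdots\]
of $EG \times_G X$ whose union is all of $EG \times_G X$. The standard spectral sequence of a filtered space then has the form
\[\ssE^1_{pq} = \HH_{p+q}(EG \times_G X^{(p)}, EG \times_G X^{(p-1)}; \bk) \Rightarrow \HH^G_{p+q}(X;\bk),\]
and it is first quadrant because $X^{(-1)} = \emptyset$ and the pairs are $(p-1)$-connected in the $p$th filtration step.

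The heart of the argument is the identification of $\ssE^1_{pq}$. Relative homology sees only the $p$-cells, and $X^{(p)}/X^{(p-1)}$ decomposes $G$-equivariantly as a one-point union indexed by $G$-orbits of $p$-cells. Fix an orbit with lift $\tsigma$ of $\sigma \in (X/G)^{(p)}$. The no-rotations hypothesis forces $G_{\tsigma}$ to fix $\tsigma$ pointwise, so the characteristic map of $\tsigma$ is $G_{\tsigma}$-equivariant with respect to the trivial action on $D^p$, and the closed orbit is modelled by the balanced product $G \times_{G_{\tsigma}} D^p$. Applying the Borel construction yields
\[EG \times_G \bigl(G \times_{G_{\tsigma}} D^p\bigr) \;=\; EG \times_{G_{\tsigma}} D^p \;\simeq\; BG_{\tsigma},\]
and analogously for the boundary $S^{p-1}$. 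Taking the relative homology (i.e.\ the reduced homology of the cofibre, which is a $p$-fold suspension of $(BG_{\tsigma})_+$), one reads off a contribution of $\HH_q(G_{\tsigma};\bk)$ in bidegree $(p,q)$. Summing over orbits gives the claimed description of $\ssE^1_{pq}$.

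The main bookkeeping obstacle, and what makes the no-rotations hypothesis essential, is ensuring that the identification $\ssE^1_{pq} \cong \bigoplus_{\sigma} \HH_q(G_{\tsigma};\bk)$ is independent of the choice of lifts $\tsigma$. This is exactly the issue addressed in the discussion preceding the proposition: two different lifts $\tsigma, \tsigma'$ of the same $\sigma$ give stabilizers conjugate by some $g \in G$ with $g \cdot \tsigma = \tsigma'$, and conjugation by $g$ induces a canonical isomorphism $\HH_q(G_{\tsigma};\bk) \cong \HH_q(G_{\tsigma'};\bk)$ (independent of $g$, since inner automorphisms act trivially on group homology). One checks that the $\ssE^1$-term identification respects this canonical isomorphism, so the spectral sequence is well defined independently of the choices. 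Functoriality, both under $G$-equivariant cellular maps and under compatible pairs $(f,\phi)$ as in \S\ref{section:basicdef}, is inherited from the naturality of the skeletal filtration and of the Borel construction. No new ideas beyond those in \cite[VII.7]{BrownCohomology} are required; the task is essentially to verify that the book's setup applies in the generality stated here.
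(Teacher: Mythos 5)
Your proposal is correct, and it is essentially the argument underlying the cited reference: the paper offers no proof of its own, simply pointing to Brown \cite[VII.7.7]{BrownCohomology}, and what you have written is a topological rendering of Brown's algebraic construction (Brown filters the double complex $F_\bullet \otimes_G C_\bullet(X;\bk)$ by $X$-degree and identifies the $\ssE^1$-term via Shapiro's lemma, which is exactly your identification of the filtration quotients $EG_+ \wedge_G (X^{(p)}/X^{(p-1)})$ as wedges of $\Sigma^p(BG_{\tsigma})_+$, stated at the space level). The one place worth a slight caveat is the phrase ``the closed orbit is modelled by the balanced product $G \times_{G_{\tsigma}} D^p$'': the closed cell need not embed this way, but since you pass to the cofibre $X^{(p)}/X^{(p-1)}$ anyway this imprecision does not affect the computation.
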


In the rest of this section, we will fix a $G$-CW complex $X$ on which $G$ acts without rotations and
discuss properties of the spectral sequence $\ssE$ given by Proposition~\ref{proposition:spectralsequence}.

\subsection{Left column}
\label{section:leftcol}
Since $0$-cells are vertices, we will denote them with $v$ instead of $\sigma$.  For a fixed $q$, consider the composition
\begin{equation}
\label{eqn:leftcol}
\begin{tikzcd}
\bigoplus\limits_{v \in (X/G)^{(0)}} \HH_q(G_{\tv};\bk) = \ssE^1_{0q} \arrow[two heads]{r} & \ssE^{\infty}_{0q} \arrow[hook]{r} & \HH^G_q(X;\bk) \arrow{r} & \HH_q(G;\bk)
\end{tikzcd}
\end{equation}
whose maps are as follows:
\begin{itemize}
\item the surjection $\ssE^1_{0q} \twoheadrightarrow \ssE^{\infty}_{0q}$ comes from the fact that there are no nonzero differentials coming out of $\ssE^1_{0q}$; and
\item the inclusion $\ssE^{\infty}_{0q} \hookrightarrow \HH^G_q(X;\bk)$ comes from the fact that $\ssE^{\infty}_{0q}$ is the first term in
the filtration of $\HH^G_q(X;\bk)$ coming from our spectral sequence; and
\item the map $\HH^G_q(X;\bk) \rightarrow \HH_q(G;\bk)$ is the canonical map.
\end{itemize}
We claim that \eqref{eqn:leftcol} equals the sum of the maps induced by the inclusions $G_{\tv} \hookrightarrow G$ of vertex stabilizers.
This can be proved easily from the construction of the spectral sequence in \cite[VII.7.7]{BrownCohomology}, but we
prefer the following less computational proof.

Consider the $G$-equivariant map $X \rightarrow \pt$.  Letting
$\ssF$ be the spectral sequence obtained by applying Proposition~\ref{proposition:spectralsequence} to $\pt$, we get a map
$\ssE \rightarrow \ssF$ of spectral sequences converging to the canonical map
\[\HH^G_{\bullet}(X;\bk) \longrightarrow \HH^G_{\bullet}(\pt;\bk) = \HH_{\bullet}(G;\bk).\]
The spectral sequence $\ssF$ degenerates at $\ssF^1$, which is of the form
\[\ssF^1_{pq} = \begin{cases}
\HH_q(G;\bk) & \text{if $p=0$},\\
0            & \text{if $p \neq 0$}.
\end{cases}\]
Identifying $\ssE^1_{0q}$ with
\[\bigoplus_{v \in (X/G)^{(0)}} \HH_q(G_{\tv};\bk),\]
the map $\ssE^1_{0q} \rightarrow \ssF^1_{0q} = \HH_q(G;\bk)$ is exactly the sum of the maps
induced by the inclusions $G_{\tv} \hookrightarrow G$ of vertex stabilizers.  The claim follows.

\subsection{Bottom row}
\label{section:bottomrow}

We will next need a description of the differentials of our spectral sequence in two special case.  The
first is when $q=0$.  Observe that
\[\ssE^1_{p0} = \bigoplus_{\sigma \in (X/G)^{(p)}} \HH_0(G_{\tsigma};\bk) = \bigoplus_{\sigma \in (X/G)^{(p)}} \bk = \CC_p(X/G;\bk),\]
where $\CC_p(X/G;\bk)$ is the $p^{\text{th}}$ term of the cellular chain complex for $X/G$.  The $\ssE^1$-differentials
$\ssE^1_{p0} \rightarrow \ssE^1_{p-1,0}$ thus fit into a chain complex of the form
\[\begin{tikzcd}
\CC_0(X/G;\bk) & \CC_1(X/G;\bk) \arrow{l} & \CC_2(X/G;\bk) \arrow{l} & \cdots \arrow{l}.
\end{tikzcd}\]
This is exactly the cellular chain complex of $X/G$; see \cite[\S VII.8]{BrownCohomology}.  It follows that
$\ssE^2_{p0} = \HH_p(X/G;\bk)$.

\subsection{1-cell differentials}
\label{section:differentials}

We also need a description of the differentials when $p=1$.  By our description of the $\ssE^1$-page, these differentials
$\partial\colon \ssE^1_{1q} \rightarrow \ssE^1_{0q}$ are of the form
\[\begin{tikzcd}
\bigoplus_{e \in (X/G)^{(1)}} \HH_q(G_{\te};\bk) \arrow{r}{\partial} & \bigoplus_{v \in (X/G)^{(0)}} \HH_q(G_{\tv};\bk).
\end{tikzcd}\]
Consider some $e \in (X/G)^{(1)}$.  For our differential, we must fix some (arbitrary) orientation on $e$.  
Let $\te \in X^{(1)}$ be our lift to $X$, which has an orientation coming from the orientation on $e$.  
Let $w_0 \in X^{(0)}$ and $w_1 \in X^{(0)}$ be the initial and terminal vertices of $\te$, respectively.  We
have inclusions $G_{\te} \hookrightarrow G_{w_0}$ and $G_{\te} \hookrightarrow G_{w_1}$.  On
the summand $\HH_q(G_{\te};\bk)$ of $\ssE^1_{1q}$, the differential $\partial$ is then the difference between the
two induced maps
\begin{equation}
\label{eqn:1celldiff1}
\begin{tikzcd}
\HH_q(G_{\te};\bk) \arrow{r} & \HH_q(G_{w_1};\bk) \arrow[hook]{r} & \bigoplus_{v \in (X/G)^{(0)}} \HH_q(G_{\tv};\bk)
\end{tikzcd}
\end{equation}
and
\begin{equation}
\label{eqn:1celldiff2}
\begin{tikzcd}
\HH_q(G_{\te};\bk) \arrow{r} & \HH_q(G_{w_0};\bk) \arrow[hook]{r} & \bigoplus_{v \in (X/G)^{(0)}} \HH_q(G_{\tv};\bk).
\end{tikzcd}
\end{equation}
See \cite[\S VII.8]{BrownCohomology} for a proof.  

If $e$ is a loop, then $w_0$ and $w_1$
are in the same $G$-orbit, so there exists some $s \in G$ with $s(w_0) = w_1$.  The
terms $\HH_q(G_{w_1};\bk)$ and $\HH_q(G_{w_0};\bk)$ in \eqref{eqn:1celldiff1} and
\eqref{eqn:1celldiff2} go to the same term in the indicated direct sum, and
$\HH_q(G_{w_1};\Q)$ is identified with
$\HH_q(G_{w_0};\bk)$ via conjugation by $s$.  

Consider the special case $q=1$.  For $g \in G_{w_0}$, let
$\overline{g} \in \HH_1(G_{w_0};\bk)$ be its homology class.  
Our differential is the composition 
\[\begin{tikzcd}
\HH_1(G_{\te};\bk) \arrow{r} & \HH_1(G_{w_0};\bk) \arrow[hook]{r} & \bigoplus_{v \in (X/G)^{(0)}} \HH_1(G_{\tv};\bk),
\end{tikzcd}\]
where the first map take the homology class of $g \in G_{\te}$ to
\[\overline{s^{-1} g s} - \overline{g} = \overline{g^{-1} s^{-1} g s} = \overline{[g,s]} \in \HH_1(G_{w_0};\bk).\]

\section{Step 2.2: the handle complex}
\label{section:handlecomplex}

We now introduce a space on which $\Torelli_g$ acts.

\subsection{Complex of homologous curves}

Let $v \in \HH_1(\Sigma_g)$ be a primitive element, i.e., one that is only divisible by $\pm 1$.  
Let $\cC_v(\Sigma_g)$ be the following simplicial complex:
\begin{itemize}
\item {\bf vertices}: isotopy classes
of oriented simple closed curves $\gamma$ on $\Sigma_g$ with $[\gamma] = v$.
\item {\bf $\mathbf{p}$-simplices}: sets $\sigma = \{\gamma_0,\ldots,\gamma_p\}$ of 
distinct vertices such that the $\gamma_i$ can
be isotoped to be pairwise disjoint.
\end{itemize}
For instance, the following is a $2$-simplex of $\cC_v(\Sigma_g)$ for $v = [\gamma_0]$:\footnote{To avoid cluttering our figures, they will often not indicate the orientations on the curves.}\\
\Figure{UncoloredHomologousCurves}
Putman \cite{PutmanTrick} introduced $\cC_v(\Sigma_g)$ and proved it was connected for $g \geq 3$.
Hatcher--Margalit \cite{HatcherMargalitGenerators} gave an alternate proof of this that Minahan \cite{MinahanComplex}
generalized to show:\footnote{It is not known if $\pi_1(\cC_v(\Sigma_g))=1$ for $g \geq 4$, which would let us conclude it is $(g-3)$-connected.}

\begin{theorem}[{Minahan, \cite{MinahanComplex}}]
\label{theorem:cvacyclic}
Let $g \geq 2$ and let $v \in \HH_1(\Sigma_g)$ be a primitive element.  Then
$\cC_v(\Sigma_g)$ is $(g-3)$-acyclic, i.e., $\RH_k(\cC_v(\Sigma_g)) = 0$ for $k \leq g-3$.
\end{theorem}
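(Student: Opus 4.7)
\medskip

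\noindent\textbf{Proof proposal.} I would argue by induction on the genus $g$. When $g = 2$, the bound $(g-3)$-acyclic is vacuous beyond non-emptiness, and $\cC_v(\Sigma_2) \neq \emptyset$ since every primitive class is represented by an oriented simple closed curve. For $g = 3$, we only need $\cC_v(\Sigma_3)$ to be connected, which is Putman's theorem already cited. So assume $g \geq 4$ and the result holds in all smaller genera.

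Fix a ``reference'' vertex $\delta \in \cC_v(\Sigma_g)$. Since the star $\operatorname{Star}(\delta)$ is a cone (hence contractible), it suffices to show that every cycle $z \in \widetilde{\CC}_k(\cC_v(\Sigma_g))$ with $k \leq g - 3$ is homologous to one supported in $\operatorname{Star}(\delta)$. I would run a Hatcher--Margalit-style surgery argument, inducting on the complexity $c(z) = \sum_\gamma i(\gamma,\delta)$, where the sum is over vertices $\gamma$ in the support of $z$ and $i$ denotes geometric intersection number (realized minimally). When $c(z) = 0$, every curve in $z$ is disjoint from $\delta$, so $z \subset \operatorname{Star}(\delta)$ and we are done.

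If $c(z) > 0$, pick a vertex $\gamma$ in the support of $z$ with $\gamma \cap \delta \neq \emptyset$. Because $[\gamma] = [\delta] = v$, the algebraic intersection $\widehat{i}(\gamma,\delta)$ vanishes, so geometric intersection points occur in oppositely-signed pairs. Resolving an innermost such pair by oriented surgery yields either a single curve or two curves, each homologous to $v$, with strictly fewer intersections with $\delta$. Moreover one can check that the original $\gamma$ together with the surgered curve(s) fit into a simplex of $\cC_v(\Sigma_g)$ (they can be realized disjointly after a small isotopy at the surgery site), and any simplex $\sigma$ in $z$ containing $\gamma$ can be coned off through this new simplex. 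Assembling these cones over all simplices in the support of $z$ containing $\gamma$ produces a $(k+1)$-chain whose boundary differs from $z$ by a cycle $z'$ in which $\gamma$ has been replaced by curves of lower intersection with $\delta$; the ``closure'' of this construction at the links of faces uses the inductive hypothesis applied to $\cC_{v'}(S)$, where $S$ is a subsurface of genus $g' \leq g-1$ obtained by cutting along the relevant curves (so $(g'-3)$-acyclic with $g'-3 \geq k - 1$, which is exactly what is needed to fill in the coherences at codimension $\leq k+1$).

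The main obstacle will be the bookkeeping at stage three: one must execute the surgery as a genuine chain-level operation, verifying (a) that each surgery preserves the homology class $v$ with the correct orientation, (b) that the local cone-construction on a simplex $\sigma \ni \gamma$ really has boundary matching the naive ``swap $\gamma$ for the surgered curve(s)'' in $\sigma$ plus terms supported in links, and (c) that the inductive hypothesis applies to those link contributions in precisely the range $k \leq g-3$. Matching the bound $k \leq g - 3$ exactly is the whole point of the induction: cutting along $\delta$ drops the genus to $g-1$ with two boundary components, and the inductive hypothesis $(g-1)-3 = g-4$-acyclicity of the link complex suffices to fill in chains of dimension at most $g-3$. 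Once these verifications are in place, iterating the complexity-reduction terminates and the theorem follows.
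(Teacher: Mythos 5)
This theorem is quoted from \cite{MinahanComplex} and is not reproved in the present paper, so there is no in-paper argument to compare your proposal against; I will evaluate it on its own terms.

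Your overall strategy---a Hatcher--Margalit-style surgery/filling argument, inducting on intersection number with a fixed reference vertex $\delta$---is indeed the right family of ideas, and this is the lineage that Minahan's proof belongs to. However, there are two substantive gaps.

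\textbf{The surgery does not preserve the vertex condition.} You assert that resolving an oppositely-signed pair of intersection points of $\gamma$ with $\delta$ ``yields either a single curve or two curves, each homologous to $v$.'' This is false. Surgering $\gamma$ along a subarc of $\delta$ joining two adjacent intersection points of opposite sign produces two disjoint simple closed curves $c_1, c_2$ with $[c_1]+[c_2] = v$, but in general neither $c_1$ nor $c_2$ individually represents the class $v$; typically one is a proper ``piece'' of $v$ and the other its complementary piece. So the surgered curves need not be vertices of $\cC_v(\Sigma_g)$ at all, and the cone-off you describe is not a chain in the complex. Repairing this requires genuine extra input: Putman's connectivity proof in \cite{PutmanTrick} circumvents it by a trick with the $\Torelli_g$-action rather than by naive surgery, and Hatcher--Margalit \cite{HatcherMargalitGenerators} circumvent it by passing to an auxiliary complex of \emph{arcs} on the surface cut along $\delta$, where surgery does stay in the complex. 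Your sketch does not include either fix.

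\textbf{The inductive hypothesis does not apply where you invoke it.} You appeal to the theorem for $\cC_{v'}(S)$ where $S$ is ``a subsurface of genus $g' \leq g-1$ obtained by cutting along the relevant curves.'' But cutting $\Sigma_g$ along $\delta$ yields $\Sigma_{g-1}^2$, a surface with boundary, and the link of $\delta$ in $\cC_v(\Sigma_g)$ is a complex of curves on $\Sigma_{g-1}^2$ subject to a relative homology constraint---it is \emph{not} $\cC_{v'}(\Sigma_{g'})$ for any closed surface $\Sigma_{g'}$. The theorem as you have stated it only concerns closed surfaces, so the induction as written does not close. What is actually needed (and what \cite{MinahanComplex} proves) is high acyclicity of a separate complex of homologous arcs on the cut surface, and a comparison between that arc complex and the links in $\cC_v(\Sigma_g)$. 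Your genus bookkeeping ($g'-3 = g-4 \geq k-1$) is numerically consistent with this, but the object it needs to apply to is not the one your inductive hypothesis covers. Until both of these points are addressed, the argument does not go through.
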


\subsection{Handle complex}
Let $a,b \in \HH_1(\Sigma_g)$ be primitive elements with algebraic intersection number $1$.  An {\em $a$-curve}
(resp.\ a {\em $b$-curve}) is an oriented simple closed curve $\gamma$ with $[\gamma] = a$ (resp.\ $[\gamma] = b$).
The {\em handle complex}, denoted $\cC_{ab}(\Sigma_g)$, is the following simplicial complex:
\begin{itemize}
\item {\bf vertices}: isotopy classes of $a$-curves and $b$-curves.
\item {\bf $\mathbf{p}$-simplices}: sets $\sigma = \{\gamma_0,\ldots,\gamma_p\}$ of distinct vertices such that either:
\begin{itemize}
\item $\sigma$ is a $p$-simplex of $\cC_a(\Sigma_g)$ or $\cC_b(\Sigma_g)$; or
\item for some $\gamma_{i_0} \in \sigma$, the set $\sigma \setminus \{\gamma_{i_0}\}$ is a $(p-1)$-simplex of $\cC_a(\Sigma_g)$ and
$\gamma_{i_0}$ is a $b$-curve that can be isotoped to intersect each curve in $\sigma \setminus \{\gamma_{i_0}\}$ once; or
\item for some $\gamma_{i_0} \in \sigma$, the set $\sigma \setminus \{\gamma_{i_0}\}$ is a $(p-1)$-simplex of $\cC_b(\Sigma_g)$ and
$\gamma_{i_0}$ is an $a$-curve that can be isotoped to intersect each curve in $\sigma \setminus \{\gamma_{i_0}\}$ once.
\end{itemize}
\end{itemize}
We will call the simplices of $\cC_a(\Sigma_g)$ and $\cC_b(\Sigma_g)$ the {\em pure simplices} of $\cC_{ab}(\Sigma_g)$ and
the other simplices the {\em mixed simplices}.  If in the following
figure the orange curves are $a$-curves
and the blue curves are $b$-curves,\footnote{We will use this coloring convention in the rest
of the paper.} then the indicated curves form mixed simplices of $\cC_{ab}(\Sigma_g)$:\\
\Figure{MixedCells}
The $1$-skeleton of $\cC_{ab}(\Sigma_g)$ is the {\em handle graph} defined by Putman \cite{PutmanSmallGenset}, who
proved it is connected for $g \geq 3$.  Proposition~\ref{proposition:mixedacyclic} below says that $\cC_{ab}(\Sigma_g)$ is $1$-acyclic\footnote{Presumably it could
be made more highly acyclic by allowing mixed simplices that contain more $a$- and $b$-curves.  We
defined $\cC_{ab}(\Sigma_g)$ like we did to ensure that $\cC_{ab}(\Sigma_g) / \Torelli_g$ is contractible; see
Proposition~\ref{proposition:quotientcontractible}.} for $g \geq 4$.

\subsection{Rotations}

The group $\Torelli_g$ acts on $\cC_{ab}(\Sigma_g)$.  This action is without rotations:

\begin{lemma}
\label{lemma:withoutrotations}
Let $g \geq 1$ and let $a,b \in \HH_1(\Sigma_g)$ be primitive elements with 
algebraic intersection number $1$.  Then $\Torelli_g$ acts on $\cC_{ab}(\Sigma_g)$
without rotations.
\end{lemma}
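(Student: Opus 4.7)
The plan is to verify that for every simplex $\sigma$ of $\cC_{ab}(\Sigma_g)$, the setwise stabilizer of $\sigma$ in $\Torelli_g$ coincides with the pointwise stabilizer of its vertices. First I would exploit the distinction between the two homology classes $a$ and $b$: since $a \neq b$ (they have algebraic intersection number $1$, while $a \cdot a = 0$), and elements of $\Torelli_g$ preserve oriented homology classes of curves, no element of $\Torelli_g$ can send an $a$-curve to a $b$-curve or vice versa. Consequently, any $\phi \in \Torelli_g$ stabilizing $\sigma$ must separately permute its $a$-vertices among themselves and its $b$-vertices among themselves.

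For mixed simplices, the definition of $\cC_{ab}(\Sigma_g)$ is asymmetric: a mixed simplex contains either a unique $b$-curve among $a$-curves or a unique $a$-curve among $b$-curves. This minority vertex is distinguished by its homology class alone, so any stabilizing $\phi$ must fix it. This reduces the question for mixed simplices to the question for pure simplices of $\cC_a(\Sigma_g)$ or $\cC_b(\Sigma_g)$.

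The main step is then to show that if $\phi \in \Torelli_g$ stabilizes a pure simplex $\{\gamma_0, \ldots, \gamma_p\}$ of $\cC_a(\Sigma_g)$, then $\phi$ fixes each $\gamma_i$ individually (the $\cC_b(\Sigma_g)$ case is symmetric). Here I would analyze the cut surface $\Sigma_g \setminus (\gamma_0 \cup \cdots \cup \gamma_p)$: its components, together with the labeling of their boundary components by oriented copies of the $\gamma_i$'s, carry a canonical combinatorial structure (a graph whose vertices are the components and whose edges are the $\gamma_i$'s), and any orientation-preserving diffeomorphism stabilizing $\sigma$ induces an automorphism of this graph. The hard part will be showing that the Torelli condition rules out every nontrivial such automorphism. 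My approach would be to attach to each putative nontrivial permutation a specific homology class in $\HH_1(\Sigma_g;\bbZ)$ that $\phi$ is forced to move---for instance, by choosing an arc in one component of the cut surface connecting two boundary components that are mapped asymmetrically by $\phi$, closing it into a loop in $\Sigma_g$ via the gluings, and comparing its homology class with that of its $\phi$-image---thereby contradicting $\phi \in \Torelli_g$.
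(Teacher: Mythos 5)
Your reduction to pure simplices is correct and parallels the paper's: because $\Torelli_g$ preserves homology classes, every simplex of $\cC_{ab}(\Sigma_g)$ splits canonically into its $a$-vertices (a simplex of $\cC_a(\Sigma_g)$) and its $b$-vertices (a simplex of $\cC_b(\Sigma_g)$), and the stabilizer respects this split. The paper phrases this as ``every simplex of $\cC_{ab}(\Sigma_g)$ is the join of a simplex of $\cC_a(\Sigma_g)$ and a simplex of $\cC_b(\Sigma_g)$''; your separate treatment of the minority vertex in mixed simplices is just a special case of the same observation and is not logically needed.

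Where you genuinely diverge from the paper is the final step. The paper disposes of the pure case by citing Ivanov's Theorem 1.2, which gives rotation-freeness of the $\Torelli_g$-action on $\cC_a(\Sigma_g)$ and $\cC_b(\Sigma_g)$ directly. You instead propose a hands-on homological argument, which can indeed be made to work, but your sketch leaves the crux unproven and also misses a small point. To complete it: a pure simplex $\{\gamma_0,\dots,\gamma_p\}$ of $\cC_a(\Sigma_g)$ cuts $\Sigma_g$ into pieces $T_0,\dots,T_p$ arranged in a cycle, each of positive genus (positive because distinct vertices are non-isotopic). Since any $\phi \in \Torelli_g$ preserves the orientation of each $\gamma_i$ (all carrying homology class $a$) and the orientation of $\Sigma_g$, it must carry the right side of $\gamma_i$ to the right side of $\phi(\gamma_i)$, so the induced permutation of the $T_i$ is necessarily a cyclic shift $T_i \mapsto T_{i+j}$ --- a reversal of the cycle is impossible, which you do not rule out. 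Now pick an auxiliary curve $\zeta$ meeting each $\gamma_i$ once, giving a decomposition $\HH_1(\Sigma_g) = \Span{a,[\zeta]} \oplus V_0 \oplus \cdots \oplus V_p$ with each $V_i \neq 0$; the image $W_i$ of $\HH_1(T_i)$ in $\HH_1(\Sigma_g)$ is $\Z a \oplus V_i$, and $\phi_{\ast} = \id$ together with $\phi(T_i) = T_{i+j}$ forces $W_i = W_{i+j}$, hence $j = 0$ by directness of the sum. This is essentially what is needed, and it is the content that Ivanov's cited theorem supplies wholesale.
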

\begin{proof}
This is immediate from the fact that every simplex of $\cC_{ab}(\Sigma_g)$ is the join
of a simplex of $\cC_a(\Sigma_g)$ and a simplex of $\cC_{b}(\Sigma_g)$ along
with the fact that $\Torelli_g$ acts on $\cC_a(\Sigma_g)$ and $\cC_b(\Sigma_g)$
without rotations (see \cite[Theorem 1.2]{IvanovBook} for a more general result).
\end{proof}

\subsection{Mixing pure simplices}

The following shows that every pure simplex of $\cC_{ab}(\Sigma_g)$ can be extended to a mixed simplex, and this
extension is unique up to the action of $\Torelli_g$:

\begin{lemma}
\label{lemma:mixpure}
Let $g \geq 1$ and let $a,b \in \HH_1(\Sigma_g)$ be primitive elements with 
algebraic intersection number $1$.  Let $\sigma$ be a pure simplex of
$\cC_{ab}(\Sigma_g)$.  Then there is a vertex $\delta$ of $\cC_{ab}(\Sigma_g)$
such that $\sigma \cup \{\delta\}$ is a mixed simplex.  Moreover, if
$\delta'$ is another vertex of $\cC_{ab}(\Sigma_g)$ such that $\sigma \cup \{\delta'\}$
is a mixed simplex, then there exists $f \in \Torelli_g$ with
$f(\sigma) = \sigma$ and $f(\delta) = \delta'$.
\end{lemma}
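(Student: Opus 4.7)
The plan is to handle existence and uniqueness separately, then upgrade the uniqueness step from $\Mod_g$ to $\Torelli_g$. Assume without loss of generality that $\sigma = \{\gamma_0,\ldots,\gamma_p\}$ is a simplex of $\cC_a(\Sigma_g)$; the $\cC_b$ case is symmetric. The task is then to produce a $b$-curve $\delta$ meeting each $\gamma_i$ transversely once, unique up to the $\Torelli_g$-stabilizer of $\sigma$.

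For \emph{existence}, the approach is the change of coordinates principle. Any tuple of $p+1$ disjoint oriented simple closed curves representing a fixed primitive homology class $a$ on $\Sigma_g$ is topologically equivalent to a standard model, and on the model one can exhibit an explicit $b$-curve $\delta_0$ intersecting each model curve in exactly one point --- built arc-by-arc in the cut surface $\Sigma_g \setminus \sigma$ so as to hit each boundary component the right number of times and have total homology class $b$. Pulling $\delta_0$ back by the homeomorphism realizing the equivalence with $\sigma$ yields the required $\delta$.

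For \emph{uniqueness} up to $\Mod_g$, given another valid $\delta'$, the pairs $(\sigma,\delta)$ and $(\sigma,\delta')$ have the same topological type in $\Sigma_g$: a regular neighborhood of each deformation retracts onto a graph with $p+1$ vertices and $2(p+1)$ edges, and the combinatorics of how this neighborhood embeds in $\Sigma_g$ are forced by the homology classes together with the intersection pattern. Change of coordinates therefore yields some $\phi \in \Mod_g$ with $\phi(\sigma) = \sigma$ and $\phi(\delta) = \delta'$. Since $\phi$ fixes both $a = [\gamma_i]$ and $b = [\delta]$, the induced map $\phi_* \in \Sp_{2g}(\Z)$ acts trivially on the symplectic plane $P = \Span{a,b}$ and restricts to an element of $\Sp(V)$ on the complement $V = P^{\perp}$.

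Finally, to \emph{correct $\phi$ into $\Torelli_g$}, let $N$ be a regular neighborhood of $\sigma \cup \delta$. A Mayer--Vietoris calculation shows that the image of $\HH_1(N) \to \HH_1(\Sigma_g)$ is exactly $P$, so dually the complementary surface $S = \Sigma_g \setminus \Int(N)$ has image $V$ in $\HH_1(\Sigma_g)$. Using the classical surjection from the mapping class group of a compact bordered surface onto the symplectic group of its homology, one produces $\psi$ supported on $S$ with $\psi_*|_V = (\phi_*|_V)^{-1}$. Since $\psi$ is supported away from $\sigma \cup \delta$, the composition $f = \psi \phi$ still satisfies $f(\sigma) = \sigma$ and $f(\delta) = \delta'$, and now $f_* = \id$ on $\HH_1(\Sigma_g)$, so $f \in \Torelli_g$. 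The main obstacle lies here: $S$ may be disconnected, and one must check that the symplectic form on $V$ splits correctly across the components so that their mapping class groups jointly surject onto $\Sp(V)$. This reduces to bookkeeping with the graph $\sigma \cup \delta$ and the subsurfaces cobounded by successive $\gamma_i$, and is the least automatic part of the argument.
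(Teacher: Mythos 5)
Your overall strategy matches the paper's: build $\delta$ arc-by-arc after cutting along $\sigma$, then use change of coordinates to get $\phi \in \Mod_g$ carrying $(\sigma,\delta)$ to $(\sigma,\delta')$, then correct $\phi$ by mapping classes supported away from $\sigma \cup \delta$. However, the final correction step has a genuine gap that you half-notice but do not resolve, and your proposed resolution cannot work.

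You write that since $S = \Sigma_g \setminus \Int(N)$ may be disconnected, ``one must check that the symplectic form on $V$ splits correctly across the components so that their mapping class groups jointly surject onto $\Sp(V)$.'' This surjection is false whenever $\sigma$ has more than one vertex: if $S = S_0 \sqcup \cdots \sqcup S_p$ with $V_i = \HH_1(S_i)$, then mapping classes supported on $S$ realize at most $\prod_i \Sp(V_i)$, which is a proper subgroup of $\Sp(V)$ as soon as $p > 0$. So one cannot in general find $\psi$ supported on $S$ with $\psi_*|_V = (\phi_*|_V)^{-1}$ for an arbitrary $\phi_*|_V \in \Sp(V)$. What actually saves the argument, and what the paper observes, is that $\phi_*|_V$ automatically lies in the subgroup $\prod_i \Sp(V_i)$: since $\phi$ fixes each $\gamma_i$ it takes the component $S_i$ of $\Sigma_g$ cut along $\sigma \cup \delta$ to the corresponding component $S'_i$ of $\Sigma_g$ cut along $\sigma \cup \delta'$, and one checks that $\HH_1(S_i) = \HH_1(S'_i)$ as \emph{subgroups of} $\HH_1(\Sigma_g)$ --- both equal $\HH_1(T_i) \cap b^\perp$, where $T_i$ is the piece of $\Sigma_g$ cut along $\sigma$ alone. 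Hence $\phi_*$ preserves each $V_i$, and one corrects piece by piece with $\psi_i$ supported on $S_i$ realizing $\phi_*|_{V_i}$. Without this observation the proof does not close.

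Secondarily, your existence step is underspecified. You cannot simply pick a curve in a ``standard model'' and pull it back: $b$ is a specific homology class, and the model homeomorphism need not carry a model $b$-curve to a curve representing $b$. The paper instead starts from an \emph{arbitrary} $\zeta$ hitting each $\gamma_i$ once positively, writes $b = c[\gamma_0] + [\zeta] + \sum_i x_i$ with $x_i \in \HH_1(S_i)$, kills the $c[\gamma_0]$ term by replacing $\zeta$ with $T_{\gamma_0}^{-c}(\zeta)$, and then modifies each subarc of $\zeta$ inside $T_i$ to change its relative homology class by $x_i$, appealing to a concrete arc-realization lemma. That reduction (and the lemma it relies on) is real content that your ``built arc-by-arc ... so as to ... have total homology class $b$'' glosses over.
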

\begin{proof}
We will give the proof for $\sigma$ a simplex of $\cC_{a}(\Sigma_g)$.  The case
where it is a simplex of $\cC_b(\Sigma)$ is identical.  Let
$\sigma = \{\gamma_0,\ldots,\gamma_p\}$, so the $\gamma_i$ are disjoint $a$-curves.
They divide $\Sigma_g$ into $(p+1)$ 
components $T_0,\ldots,T_{p}$ with $T_i \cong \Sigma_{g_i}^2$
for some $g_i \geq 1$ with $1+\sum g_i = g$:\\
\Figure{SigmaDivides}
As in this figure, we order the $\gamma_i$ and $T_i$ such that 
$\partial T_i = \gamma_{i} \sqcup \gamma_{i+1}$,
where the indices are taken modulo $p$.  We now divide the proof into two steps.

\begin{step}{1}
There exists a $b$-curve $\delta$ that intersects each $\gamma_i$ once, so
$\sigma \cup \{\delta\}$ is a mixed simplex.
\end{step}

Let $\zeta$ be an arbitrary oriented simple closed curve that intersects each $\gamma_i$ once
such that the intersection number of $\gamma_i$ with $\zeta$ is $+1$.
Let $S_0,\ldots,S_{p}$ be the components of the complement
of a regular neighborhood of $\zeta \cup \gamma_0 \cup \cdots \cup \gamma_p$, ordered
such that $S_i \subset T_i$:\\
\Figure{SigmaDivides2}
We then have
\[\HH_1(\Sigma_g) = \Span{[\gamma_0],[\zeta]} \oplus \bigoplus_{i=0}^{p} \HH_1(S_i).\]
Write
\[b = c [\gamma_0] + d [\zeta] + \sum_{i=0}^{p} x_i \quad \text{with $c,d \in \Z$ and $x_i \in \HH_1(S_i)$}.\]
Since the algebraic intersection numbers of $b$ and $[\zeta]$ with $a = [\gamma_0]$ are $1$,
we have $d = 1$.  Replacing $\zeta$ with $T^{-c}_{\gamma_0}(\zeta)$, 
we can also assume that $c = 0$, so $b = [\zeta] + \sum_{i=0}^p x_i$.
For $0 \leq i \leq p$, let $q_i$ be the intersection point of $\zeta$ with $\gamma_i$
and let $\zeta_i$ be the subarc of $\zeta$ lying in $T_i$, so $\zeta_i$ goes
from $q_i$ to $q_{i+1}$.  By
\cite[Lemma 3.2]{PutmanSmallGenset}, there exist a properly embedded arc
$\delta_i$ in $T_i$ going from $q_i$ to $q_{i+1}$ such that in the relative
homology group $\HH_1(T_i,\{q_i,q_{i+1}\})$, we have $[\delta_i] = [\zeta_i]+x_i$.
We can then take $\delta$ to be the loop made up of the $\delta_i$.

\begin{step}{2}
Let $\delta$ and $\delta'$ be $b$-curves such that $\sigma \cup \{\delta\}$
and $\sigma \cup \{\delta'\}$ are mixed simplices.  Then there exists
$f \in \Torelli_g$ with
$f(\sigma) = \sigma$ and $f(\delta) = \delta'$.
\end{step}

Let $S_0,\ldots,S_{p}$ 
be the components of the complement
of a regular neighborhood of $\delta \cup \gamma_0 \cup \cdots \cup \gamma_p$,
ordered such that $S_i \subset T_i$.  
The span of $\HH_1(S_i)$ and $a = [\gamma_i]$ equals $\HH_1(T_i)$, so
$\HH_1(S_i)$ is the intersection of $\HH_1(T_i)$ with the orthogonal complement
of $b = [\delta]$.

Similarly, let $S'_1,\ldots,S'_p$
be the components of the complement
of a regular neighborhood of $\delta' \cup \gamma_0 \cup \cdots \cup \gamma_p$,
ordered such that $S'_i \subset T_i$.  Just like above, $\HH_1(S'_i)$ is
the intersection of $\HH_1(T_i)$ with the orthogonal complement of $b = [\delta']$.
In other words, as subgroups of $\HH_1(\Sigma_g)$ we have $\HH_1(S_i) = \HH_1(S'_i)$
for $0 \leq i \leq p$.  Let $V_i = \HH_1(S_i) = \HH_1(S'_i)$.

By the change of coordinates principle from \cite{FarbMargalitPrimer}, we
can find $\phi \in \Mod_g$ with $\phi(\delta) = \delta'$ and $\phi(\gamma_i) = \gamma_i$
for $0 \leq i \leq p$.  By construction $\phi$ fixes $a = [\gamma_0]$ and $b = [\delta]$, and
it takes $V_i = \HH_1(S_i)$ to $V_i = \HH_1(S'_i)$ for $0 \leq i \leq p$.  Since mapping
classes on the $1$-holed surface $S_i$ can realize any symplectic automorphism of $V_i = \HH_1(S_i)$,
we can find some $\psi_i \in \Mod_g$ supported on $S_i$ such that $\psi_i$ induces
$\phi_{\ast}|_{V_i} \colon V_i \rightarrow V_i$ on $V_i = \HH_1(S_i)$.  Define
\[f = \phi \psi_0^{-1} \cdots \psi_p^{-1} \in \Mod_g.\]
We have $f(\sigma) = \sigma$ and $f(\delta) = \delta'$.  By 
construction $f$ fixes $a = [\gamma_0]$ and $b = [\delta]$ as well as each $V_i$.
Since these span $\HH_1(\Sigma_g)$, we conclude that $f$ acts trivially on $\HH_1(\Sigma_g)$,
i.e., $f \in \Torelli_g$.
\end{proof}

\subsection{Description of action}
\label{section:quotient}

We now prove several results about the action of $\Torelli_g$ on $\cC_{ab}(\Sigma)$.

\begin{lemma}
\label{lemma:transitive}
Let $g \geq 1$ and let $a,b \in \HH_1(\Sigma_g)$ be primitive elements with algebraic intersection number $1$.
Then $\Torelli_g$ acts transitively on:
\begin{itemize}
\item[(i)] vertices of $\cC_{ab}(\Sigma_g)$ that are $a$-curves; and
\item[(ii)] vertices of $\cC_{ab}(\Sigma_g)$ that are $b$-curves; and
\item[(iii)] mixed $1$-simplices of $\cC_{ab}(\Sigma_g)$.
\end{itemize}
\end{lemma}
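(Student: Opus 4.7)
The plan is to prove (i) and (ii) by combining the change of coordinates principle with a surjectivity statement for curve stabilizers, and then to deduce (iii) from (i) together with Lemma~\ref{lemma:mixpure}.

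For (i) (and (ii), which is identical up to swapping the roles of $a$ and $b$), let $\gamma$ and $\gamma'$ be two $a$-curves. The change of coordinates principle from \cite{FarbMargalitPrimer} provides some $\phi \in \Mod_g$ with $\phi(\gamma) = \gamma'$ as oriented curves. The induced symplectic automorphism $\phi_{\ast} \in \Sp_{2g}(\Z)$ then fixes $a$. To promote $\phi$ to an element of $\Torelli_g$, I will use that the $\Mod_g$-stabilizer of the oriented curve $\gamma'$ surjects onto the $\Sp_{2g}(\Z)$-stabilizer of $a$. Cutting $\Sigma_g$ along $\gamma'$ produces $\Sigma_{g-1}^2$, and the standard surjection $\Mod_{g-1}^2 \twoheadrightarrow \Sp_{2(g-1)}(\Z)$ realizes every symplectic automorphism of $a^\perp/\langle a\rangle$ by a mapping class supported on the cut surface; the Dehn twist $T_{\gamma'}$ acts as the symplectic transvection by $a$ and generates the kernel of $\Stab_{\Sp_{2g}(\Z)}(a) \to \Sp(a^\perp/\langle a\rangle)$. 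Combining these, I can find $\psi \in (\Mod_g)_{\gamma'}$ with $\psi_{\ast} = \phi_{\ast}$, and then $\psi^{-1}\phi \in \Torelli_g$ sends $\gamma$ to $\gamma'$.

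For (iii), let $\tau_i = \{\alpha_i,\beta_i\}$ for $i=1,2$ be two mixed $1$-simplices, where the $\alpha_i$ are $a$-curves and the $\beta_i$ are $b$-curves. By part (i), there exists $\phi \in \Torelli_g$ with $\phi(\alpha_1) = \alpha_2$. Then $\phi(\beta_1)$ and $\beta_2$ are both $b$-curves intersecting $\alpha_2$ once, so $\{\alpha_2\} \cup \{\phi(\beta_1)\}$ and $\{\alpha_2\} \cup \{\beta_2\}$ are both mixed simplices extending the pure simplex $\{\alpha_2\}$. Lemma~\ref{lemma:mixpure} then yields $f \in \Torelli_g$ fixing $\alpha_2$ and sending $\phi(\beta_1)$ to $\beta_2$, so $f\phi \in \Torelli_g$ takes $\tau_1$ to $\tau_2$.

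The only nonroutine ingredient is the surjection $(\Mod_g)_{\gamma'} \twoheadrightarrow \Stab_{\Sp_{2g}(\Z)}(a)$ used in the proof of (i); everything else follows formally from the change of coordinates principle and Lemma~\ref{lemma:mixpure}. This surjection is standard, but I would want to be careful about orientations (since we are working with oriented curves and homology classes, not isotopy classes of unoriented curves) and about making sure the stabilizer we consider is the one that preserves the orientation of $\gamma'$.
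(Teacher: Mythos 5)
Your proof of (iii) is exactly the paper's: use (i) to match up the $a$-curves, then use Lemma~\ref{lemma:mixpure} to match up the $b$-curves while fixing the $a$-curve.

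For (i) and (ii), the paper simply cites Johnson's Lemma~5 from \cite{JohnsonConjugacy}, which is precisely the statement that $\Torelli_g$ acts transitively on oriented nonseparating simple closed curves in a fixed primitive homology class. You instead reprove this from scratch via the change of coordinates principle plus the surjection $(\Mod_g)_{\gamma'} \twoheadrightarrow \operatorname{Stab}_{\Sp_{2g}(\Z)}(a)$. That outline is sound, and the surjection is true, but your justification of it contains an error: you assert that $T_{\gamma'}$ ``generates the kernel of $\operatorname{Stab}_{\Sp_{2g}(\Z)}(a) \to \Sp(a^{\perp}/\langle a\rangle)$.'' That kernel is not cyclic once $g \geq 2$; it is the unipotent radical of the stabilizer of the vector $a$, a Heisenberg group of rank $2g-1$ whose center is the $\Z$ generated by the transvection along $a$. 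To finish the surjectivity you also need to realize the off-center part of this unipotent group, for instance by Dehn twists about simple closed curves disjoint from $\gamma'$ whose homology classes run over $a^{\perp}$. With that repair the argument reproves Johnson's lemma, though given that the fact is available off the shelf, the direct citation is cleaner and is what the paper does.
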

\begin{proof}
Johnson \cite[Lemma 5]{JohnsonConjugacy} proved that for any oriented nonseparating simple closed curves
$\gamma$ and $\gamma'$ on $\Sigma_g$ with $[\gamma] = [\gamma']$,
there exists $f \in \Torelli_g$ with $f(\gamma) = \gamma'$.  This implies that $\Torelli_g$ acts
transitively on $a$-curves and $b$-curves, as in (i) and (ii).

To prove (iii), for $i=1,2$ let $e_i$ be a mixed $1$-simplex joining an $a$-curve $\alpha_i$ to a $b$-curve $\beta_i$.
By the previous paragraph, there exists $f \in \Torelli_g$ with $f(\alpha_1) = \alpha_2$.  Both
$f(\beta_1)$ and $\beta_2$ are $b$-curves intersecting $f(\alpha_1) = \alpha_2$ once, so by 
Lemma~\ref{lemma:mixpure} there exists $f' \in \Torelli_g$ with 
$f'(f(\alpha_1)) = f(\alpha_1) = \alpha_2$
and
$f'(f(\beta_1)) = \beta_2$.
It follows that $f' f$ takes $e_1$ to $e_2$, as desired.
\end{proof}

This immediately implies:

\begin{corollary}
\label{corollary:quotient}
Let $g \geq 1$ and let $a,b \in \HH_1(\Sigma_g)$ be primitive elements with algebraic intersection number $1$.
Then the $1$-skeleton of $\cC_{ab}(\Sigma_g)/\Torelli_g$ consists of:
\begin{itemize}
\item two vertices $v_a$ and $v_b$, with $v_a$ (resp.\ $v_b$) the image of any $a$-curve (resp.\ $b$-curve).
\item loops based at $v_a$ and $v_b$, each the image of a pure $1$-simplex. 
\item a single $1$-simplex $e_{ab}$ joining $v_a$ and $v_b$, with $e_{ab}$ the image of any mixed $1$-simplex.
\end{itemize}
\end{corollary}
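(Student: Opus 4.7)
The corollary is essentially an immediate unpacking of Lemma~\ref{lemma:transitive}, so the plan is just to read off each of the three bullet points from the three parts of that lemma. The only point that requires care is checking that the listed cells exhaust the $1$-skeleton of the quotient, i.e.\ that every vertex and every $1$-cell of $\cC_{ab}(\Sigma_g)$ falls into one of the stated orbit types.

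First I would handle the $0$-skeleton. By definition every vertex of $\cC_{ab}(\Sigma_g)$ is either an $a$-curve or a $b$-curve. Parts (i) and (ii) of Lemma~\ref{lemma:transitive} say that $\Torelli_g$ acts transitively on each of these two sets, so exactly two vertices survive in the quotient, giving $v_a$ and $v_b$ as described.

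Next I would describe the $1$-cells. Every $1$-simplex of $\cC_{ab}(\Sigma_g)$ is either a pure simplex of $\cC_a(\Sigma_g)$, a pure simplex of $\cC_b(\Sigma_g)$, or a mixed simplex, and these cases are determined by the homology classes of the endpoints. A pure $1$-simplex in $\cC_a(\Sigma_g)$ has both endpoints $a$-curves, so its image in $\cC_{ab}(\Sigma_g)/\Torelli_g$ has both endpoints equal to $v_a$, i.e.\ it is a loop at $v_a$; similarly pure $1$-simplices of $\cC_b(\Sigma_g)$ yield loops at $v_b$. A mixed $1$-simplex has one $a$-curve endpoint and one $b$-curve endpoint, so its image is a $1$-cell joining $v_a$ to $v_b$, and by part (iii) of Lemma~\ref{lemma:transitive} all mixed $1$-simplices lie in a single $\Torelli_g$-orbit, so they project to a single edge $e_{ab}$.

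There is no real obstacle here; the only subtlety worth flagging is that I am using Lemma~\ref{lemma:withoutrotations}, which ensures that the quotient $\cC_{ab}(\Sigma_g)/\Torelli_g$ is itself a CW complex whose $p$-cells correspond bijectively to $\Torelli_g$-orbits of $p$-simplices (so the counts above really do describe the $1$-skeleton and we are not secretly identifying additional cells via rotations).
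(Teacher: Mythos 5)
Your proposal is correct and takes the same route as the paper, which dispatches the corollary with the single remark ``This immediately implies:'' after Lemma~\ref{lemma:transitive}. Your unpacking — classifying $1$-simplices as pure or mixed, applying the three transitivity statements, and flagging Lemma~\ref{lemma:withoutrotations} so that cells of the quotient correspond to orbits — is exactly what the paper leaves implicit.
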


The following puts edges of $\cC_{ab}(\Sigma_g)$ into a normal form up
to the action of $\Torelli_g$:

\begin{lemma}
\label{lemma:normalform}
Let $g \geq 1$, let $a,b \in \HH_1(\Sigma_g)$ be primitive elements with algebraic intersection number $1$, and
let $\{\alpha,\beta\}$ be a mixed $1$-simplex of $\cC_{ab}(\Sigma_g)$ with $[\alpha]=a$ and
$[\beta]=b$.  Then:
\begin{itemize}
\item[(i)] for all oriented $1$-simplices $e$ of $\cC_a(\Sigma_g)$, there exists some
$f,B \in \Torelli_g$ with $B(\beta) = \beta$ such that
$f(e)$ goes from $\alpha$ to $B(\alpha)$; and
\item[(ii)] for all oriented $1$-simplices $e$ of $\cC_b(\Sigma_g)$, there exists some
$f,A \in \Torelli_g$ with $A(\alpha) = \alpha$ such that
$f(e)$ goes from $\beta$ to $A(\beta)$.
\end{itemize}
\end{lemma}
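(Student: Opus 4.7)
The plan is to prove part (i); part (ii) follows by the symmetric argument with the roles of $a$- and $b$-curves reversed and $\alpha$ exchanged with $\beta$. Write $e = (\gamma_0, \gamma_1)$ as an ordered pair of disjoint oriented $a$-curves. The strategy is to first produce $f \in \Torelli_g$ with $f(\gamma_0) = \alpha$ and such that $f(\gamma_1)$ also intersects $\beta$ exactly once; the desired $B$ will then be supplied by the uniqueness clause of Lemma \ref{lemma:mixpure}.

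First, extend the pure $1$-simplex $\{\gamma_0, \gamma_1\}$ of $\cC_a(\Sigma_g)$ to a mixed $2$-simplex using Lemma \ref{lemma:mixpure}: there exists a $b$-curve $\zeta$ such that $\{\gamma_0, \gamma_1, \zeta\}$ is mixed, which means $\zeta$ intersects each of $\gamma_0$ and $\gamma_1$ exactly once. In particular, both $\{\gamma_0, \zeta\}$ and $\{\alpha, \beta\}$ are mixed $1$-simplices.

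Second, arrange $f(\gamma_0) = \alpha$ and $f(\zeta) = \beta$ as oriented curves. By Johnson's transitivity theorem for oriented nonseparating simple closed curves with a fixed homology class (the same input used to prove Lemma \ref{lemma:transitive}), pick $f_0 \in \Torelli_g$ with $f_0(\gamma_0) = \alpha$. Then $f_0(\zeta)$ and $\beta$ are oriented $b$-curves each forming a mixed $1$-simplex with $\alpha$; applying the uniqueness clause of Lemma \ref{lemma:mixpure} to the pure $0$-simplex $\{\alpha\}$ yields $f_1 \in \Torelli_g$ with $f_1(\alpha) = \alpha$ and $f_1(f_0(\zeta)) = \beta$. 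Set $f = f_1 f_0$, so $f(\gamma_0) = \alpha$ and $f(\zeta) = \beta$.

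Since $\gamma_1$ is disjoint from $\gamma_0$ and meets $\zeta$ once, its image $f(\gamma_1)$ is an $a$-curve disjoint from $\alpha$ and meeting $\beta$ once, so $\{f(\gamma_1), \beta\}$ is a mixed $1$-simplex. Applying Lemma \ref{lemma:mixpure} a final time to the pure $0$-simplex $\{\beta\}$ with the two extensions $\alpha$ and $f(\gamma_1)$ yields $B \in \Torelli_g$ with $B(\beta) = \beta$ and $B(\alpha) = f(\gamma_1)$. Then $f(e)$ is the oriented edge from $f(\gamma_0) = \alpha$ to $f(\gamma_1) = B(\alpha)$, as required. No single step is expected to present a real obstacle; the argument is essentially a ping-pong between Lemmas \ref{lemma:mixpure} and \ref{lemma:transitive}, with the mixed $2$-simplex $\{\gamma_0, \gamma_1, \zeta\}$ serving as the bridge that links an arbitrary edge of $\cC_a(\Sigma_g)$ to the fixed reference mixed edge $\{\alpha, \beta\}$.
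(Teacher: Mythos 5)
Your proposal is correct and takes essentially the same approach as the paper's proof: both arguments ping-pong between Lemma~\ref{lemma:transitive} and Lemma~\ref{lemma:mixpure} to first align one $a$-vertex of $e$ with $\alpha$ and a mediating $b$-curve with $\beta$, and then extract $B$ from the uniqueness clause of Lemma~\ref{lemma:mixpure}. The only cosmetic difference is that you produce the mixed $2$-simplex extension $\{\gamma_0,\gamma_1,\zeta\}$ before applying any mapping class, whereas the paper first moves $\gamma_0$ to $\alpha$ and then extends; this reordering does not change the argument.
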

\Figure{NormalForm}
\begin{proof}
The proofs of (i) and (ii) are similar, so we prove (i) and leave (ii) to the reader.  Write $e = \{\alpha_1,\alpha_2\}$.  Our
goal is to find $f,B \in \Torelli_g$ such that $B(\beta) = \beta$ and
$f(e) = \{f(\alpha_1),f(\alpha_2)\} = \{\alpha,B(\alpha)\}$.

By Lemma~\ref{lemma:transitive}, there is an $f_1 \in \Torelli_g$ with
$f_1(\alpha_1) = \alpha$, so $f_1(e) = \{\alpha,f_1(\alpha_2)\}$.  By Lemma~\ref{lemma:mixpure},
there is a $b$-curve $\beta'$ with $\{\alpha,f_1(\alpha_2),\beta'\}$ a mixed simplex.
Since $\{\alpha,\beta\}$ and $\{\alpha,\beta'\}$ are mixed $1$-simplices, 
by Lemma~\ref{lemma:transitive} there is an $f_2 \in \Torelli_g$ with $f_2(\alpha) = \alpha$ and
$f_2(\beta') = \beta$.  Let $f = f_2 f_1$, so $f(e) = \{\alpha,f(\alpha_2)\}$
and $\{\alpha,f(\alpha_2),\beta\}$ is a mixed simplex.
Since $\{\alpha,\beta\}$ and $\{f(\alpha_2),\beta\}$ are mixed $1$-simplices, by Lemma~\ref{lemma:mixpure}
there is a $B \in \Torelli_g$ with $B(\alpha) = f(\alpha_2)$ and $B(\beta) = \beta$, so $f(e) = \{\alpha,B(\alpha)\}$,
as desired.
\end{proof}

\subsection{Contractability of quotient}

Below we will prove that $\cC_{ab}(\Sigma_g)$ is $1$-acyclic.  First, however, we note that our results
quickly imply that $\cC_{ab}(\Sigma_g) / \Torelli_g$ is contractible:

\begin{proposition}
\label{proposition:quotientcontractible}
Let $g \geq 1$ and let $a,b \in \HH_1(\Sigma_g)$ be primitive elements with algebraic intersection number $1$.
Then $\cC_{ab}(\Sigma_g) / \Torelli_g$ is contractible.
\end{proposition}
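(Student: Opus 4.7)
The plan is to recognize $\cC_{ab}(\Sigma_g)/\Torelli_g$ as the union of two cones meeting in a single edge, and to conclude contractibility directly from this decomposition.

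First, I would describe the cells of the quotient. Let $K_a$ (respectively $K_b$) denote the image in $\cC_{ab}(\Sigma_g)/\Torelli_g$ of $\cC_a(\Sigma_g)$ (respectively $\cC_b(\Sigma_g)$). By Corollary~\ref{corollary:quotient}, the vertex set is $\{v_a,v_b\}$ with $v_a \in K_a$ and $v_b \in K_b$, connected by a single edge $e_{ab}$. Every cell of $\cC_{ab}(\Sigma_g)/\Torelli_g$ is the $\Torelli_g$-orbit of either a pure simplex (lying in $K_a$ or $K_b$) or a mixed simplex. By Lemma~\ref{lemma:mixpure}, given any pure simplex $\sigma$ the possible mixed extensions $\sigma\cup\{\delta\}$ form a single $(\Torelli_g)_{\sigma}$-orbit, so in the quotient each mixed cell is determined by its pure face. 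Concretely, the mixed cells of the quotient whose pure face lies in $K_a$ (respectively $K_b$) are in bijection with the cells of $K_a$ (respectively $K_b$), the extra vertex being $v_b$ (respectively $v_a$).

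Next, I would identify this structure with a cone decomposition. The subcomplex consisting of $K_a$ together with all mixed cells obtained by adjoining $v_b$ to pure $a$-cells is precisely the cone $v_b * K_a$: the bijection above assigns to each cell $\bar\sigma$ of $K_a$ the cone cell $v_b*\bar\sigma$, and in particular $v_b*v_a = e_{ab}$. Symmetrically, $K_b$ together with the mixed extensions by $v_a$ is the cone $v_a * K_b$, which again contains $e_{ab}$ as the cone on $v_b\in K_b$. These two subcomplexes cover every cell of $\cC_{ab}(\Sigma_g)/\Torelli_g$, and a cell lies in both if and only if all of its vertices lie in $\{v_a,v_b\}$; the only such cells are $v_a$, $v_b$, and $e_{ab}$, so the intersection is exactly $e_{ab}$.

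Finally, the cones $v_b*K_a$ and $v_a*K_b$ are contractible, as is their intersection $e_{ab}$, so the quotient is the union of two contractible CW-subcomplexes meeting along a contractible subcomplex. Standard CW arguments (for instance, Mayer--Vietoris and van Kampen combined with Whitehead's theorem, or a direct deformation retraction of one cone onto the apex of the other along $e_{ab}$) then yield contractibility of $\cC_{ab}(\Sigma_g)/\Torelli_g$. The only genuinely nontrivial input is the verification that mixed cells assemble into a cone without extra identifications, which is exactly what Lemma~\ref{lemma:mixpure} provides; everything else is a routine unpacking of the quotient CW structure guaranteed by the ``without rotations'' property of the action (Lemma~\ref{lemma:withoutrotations}).
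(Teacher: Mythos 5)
Your proof is correct and follows essentially the same route as the paper's: you exhibit $\cC_{ab}(\Sigma_g)/\Torelli_g$ as the union of the cones $\cone_{v_b}(K_a)$ and $\cone_{v_a}(K_b)$ meeting along $e_{ab}$, with Lemma~\ref{lemma:mixpure} supplying the fact that each pure cell has a unique mixed extension in the quotient. The paper's proof is identical in substance, just slightly terser about why the intersection is exactly $e_{ab}$.
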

\begin{proof}
Let $X = \cC_{ab}(\Sigma_g) / \Torelli_g$.  Let $A$ and $B$ be the images in $X$ of
$\cC_a(\Sigma_g)$ and $\cC_b(\Sigma_g)$, respectively.  Let
$v_a$ and $v_b$ be the vertices of $\cC_{ab}(\Sigma_g)/\Torelli_g$ discussed in Corollary \ref{corollary:quotient},
so $v_a \in A$ and $v_b \in B$.  Let $e_{ab}$ be the $1$-simplex of $X$ connecting
$v_a$ to $v_b$ from Corollary \ref{corollary:quotient}.
Lemma~\ref{lemma:mixpure} implies that for every cell $\sigma$ of $A$, there is a unique
cell of $X$ obtained by coning $\sigma$ off with $v_b$.  It follows that $X$ contains the cone $\cone_{v_b}(A)$ of
$A$ with cone point $v_b$.  Similarly, $X$ also contains the cone $\cone_{v_a}(B)$ of $B$ with cone point $v_a$.  Thus 
\[X = \cone_{v_a}(B) \cup \cone_{v_b}(A) \quad \text{and} \quad \cone_{v_a}(B) \cap \cone_{v_b}(A) = e_{ab}.\]
Since $\cone_{v_a}(B)$ and $\cone_{v_b}(A)$ and $e_{ab}$ are contractible, $X$ is also contractible.
\end{proof}

\subsection{Strips}

To understand the topology of $\cC_{ab}(\Sigma_g)$ itself, we need some preliminaries.  For mixed $1$-simplices $e_0$ and $e_1$ of $\cC_{ab}(\Sigma_g)$,
an {\em $ab$-strip} connecting $e_0$ to $e_1$ is a triangulation $S$ of $[0,1]^2$ equipped with a
simplicial map $f\colon S \rightarrow \cC_{ab}(\Sigma_g)$ such that:
\begin{itemize}
\item for $i=0,1$, the subspace $i \times [0,1]$ of $S$ is a $1$-simplex mapping to $e_i$; and
\item $f$ maps each vertex in $[0,1] \times 0$ to an $a$-curve and each vertex in $[0,1] \times 1$ to a $b$-curve.
\end{itemize}
\Figure{Strip}

Every two mixed $1$-simplices can be connected by an $ab$-strip:

\begin{lemma}
\label{lemma:strip}
Let $g \geq 3$ and let $a,b \in \HH_1(\Sigma_g)$ be primitive elements with algebraic intersection number $1$.
Let $e_0$ and $e_1$ be mixed $1$-simplices of $\cC_{ab}(\Sigma_g)$.  Then there exists an $ab$-strip
$f\colon S \rightarrow \cC_{ab}(\Sigma_g)$ connecting $e_0$ to $e_1$.
\end{lemma}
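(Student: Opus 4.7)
The plan is to build the strip using the connectivity of $\cC_a(\Sigma_g)$ and $\cC_b(\Sigma_g)$ (a special case of Theorem~\ref{theorem:cvacyclic} for $g \geq 3$) together with Lemma~\ref{lemma:mixpure}. I would first reduce to producing a chain of mixed 2-simplices $\tau_1, \ldots, \tau_k$ of $\cC_{ab}(\Sigma_g)$ with $e_0 \subset \tau_1$, $e_1 \subset \tau_k$, and consecutive $\tau_i, \tau_{i+1}$ sharing a mixed 1-simplex; such a chain can then be laid out sequentially along its shared mixed edges and realized as a simplicial map from a triangulation of $[0,1]^2$ to $\cC_{ab}(\Sigma_g)$ (possibly collapsing some 1-simplices of the domain to vertices of $\cC_{ab}(\Sigma_g)$ when a block of triangles in the chain shares a common top or bottom vertex), giving the required $ab$-strip. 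Writing $e_0 = \{\alpha_0, \beta_0\}$ and $e_1 = \{\alpha_1, \beta_1\}$, I would begin by choosing a path $\beta_0 = \beta^{(0)}, \ldots, \beta^{(N)} = \beta_1$ in $\cC_b(\Sigma_g)$. For each $i$, Lemma~\ref{lemma:mixpure} applied to the pure 1-simplex $\{\beta^{(i)}, \beta^{(i+1)}\}$ of $\cC_b(\Sigma_g)$ gives an $a$-curve $\alpha^{(i)}$ such that $\{\alpha^{(i)}, \beta^{(i)}, \beta^{(i+1)}\}$ is a mixed 2-simplex; this ``down-triangle'' bridges $\{\alpha^{(i)}, \beta^{(i)}\}$ and $\{\alpha^{(i)}, \beta^{(i+1)}\}$ in the chain.

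To close up the chain, I still need to bridge $\{\alpha^{(i-1)}, \beta^{(i)}\}$ to $\{\alpha^{(i)}, \beta^{(i)}\}$ for $1 \leq i \leq N - 1$ (these share the $b$-curve $\beta^{(i)}$ but have different $a$-curves), along with the analogous transitions $\{\alpha_0, \beta_0\} \to \{\alpha^{(0)}, \beta_0\}$ and $\{\alpha^{(N-1)}, \beta_1\} \to \{\alpha_1, \beta_1\}$ at the endpoints. Each such bridging reduces to the following sub-claim: for any $b$-curve $\beta$ and any two $a$-curves $\alpha, \alpha'$ each meeting $\beta$ in one point, there exists a sequence $\alpha = \alpha^{(0)}, \ldots, \alpha^{(M)} = \alpha'$ of $a$-curves meeting $\beta$ once with consecutive members disjoint; the mixed 2-simplices $\{\alpha^{(j)}, \alpha^{(j+1)}, \beta\}$ (``up-triangles'') then furnish the missing bridges.

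This sub-claim is the main obstacle. I would establish it by cutting $\Sigma_g$ along $\beta$ to obtain $\Sigma_{g-1}^2$ with two boundary components; an $a$-curve meeting $\beta$ once corresponds to a properly embedded arc between these boundary components whose relative homology class pushes forward to $a \in \HH_1(\Sigma_g)$ under the gluing map. Standard surgery arguments for arc complexes on surfaces, in the spirit of Putman's and Hatcher--Margalit's connectedness proofs for $\cC_v(\Sigma_g)$, then show that any two such arcs can be connected by a sequence of pairwise-disjoint arcs of the same type, with the hypothesis $g \geq 3$ (so $g - 1 \geq 2$) providing enough genus in the cut surface for the necessary surgeries; reassembling closed curves from these arcs produces the desired chain and completes the proof.
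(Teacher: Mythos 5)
Your approach is genuinely different from the paper's. The paper uses Putman's trick: since $\Torelli_g$ acts transitively on mixed $1$-simplices (Lemma~\ref{lemma:transitive}), write $e_1 = f(e_0)$ for $f \in \Torelli_g$, factor $f$ over Johnson's finite generating set $\Delta$ of bounding pair maps, and verify by an explicit geometric case analysis that each generator $s \in \Delta^{\pm 1}$ moves the mixed edge $e_0$ by a short strip, so that $e_0 \sim s(e_0)$; the strip from $e_0$ to $e_1$ is then concatenated from these pieces. No auxiliary connectivity result is needed beyond what is already available. Your approach instead builds the strip incrementally from a path in $\cC_b(\Sigma_g)$ (which requires Theorem~\ref{theorem:cvacyclic}), stitching ``down-triangles'' from Lemma~\ref{lemma:mixpure} and ``up-triangles'' from a sub-claim. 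The structural layout of the strip---alternating blocks of down- and up-triangles, with some domain edges collapsed under the simplicial map---is fine.

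The gap is the sub-claim itself, and your justification for it is not convincing as stated. The sub-claim asserts that for a fixed $b$-curve $\beta$, the complex of $a$-curves meeting $\beta$ once (edges being disjoint pairs) is connected. After cutting along $\beta$ this becomes connectivity of a complex of arcs in $\Sigma_{g-1}^2$ joining the two boundary components \emph{and having a prescribed relative homology class}. You invoke ``standard surgery arguments'' for arc complexes, but the standard surgery move---cutting one arc at an intersection point and re-routing along the other---does \emph{not} preserve the homology class, so the homologically constrained arc complex is not amenable to the standard argument. What you actually need is an arc analogue of the Putman/Hatcher--Margalit connectivity theorems for $\cC_v$, which is a substantial statement requiring its own proof (likely another instance of Putman's trick, using a generating set for the relevant stabilizer). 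There is also a genuine genus concern: the cut surface has genus $g-1 \geq 2$, whereas the closed-surface analogues of these connectivity results require genus at least $3$; it is not clear that genus $2$ plus two boundary components is enough, and the lemma is claimed for $g \geq 3$. So as written there is a real gap: you have replaced the lemma by a homologically constrained arc-connectivity result that you have not established and whose genus requirements you have not checked, whereas the paper's route via Johnson's generating set and a direct bounding-pair case analysis avoids any such appeal.
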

\begin{proof}~\hspace{-5pt}\footnote{This proof is similar to the proof in \cite{PutmanTrick} that
$\cC_v(\Sigma_g)$ is connected for $g \geq 3$.}
For mixed $1$-simplices $e'_0$ and $e'_1$ of $\cC_{ab}(\Sigma_g)$, write $e'_0 \sim e'_1$ if there exists an $ab$-strip
$f\colon S \rightarrow \cC_{ab}(\Sigma_g)$ connecting $e'_0$ and $e'_1$.  This is
an equivalence relation, and our goal is to prove that $e_0 \sim e_1$.
Using the change of coordinates principle from \cite{FarbMargalitPrimer}, we can assume that
$e_0$ is the edge connecting the curves $\alpha$ and $\beta$ in the following:\\
\Figure{E0}
Lemma~\ref{lemma:transitive} says that the group $\Torelli_g$ acts transitively on the set of mixed $1$-simplices of $\cC_{ab}(\Sigma_g)$,
so we can find $f \in \Torelli_g$ with $f(e_0) = e_1$.  

Let $\Delta \subset \Torelli_g$ be the finite generating set for $\Torelli_g$ constructed by Johnson \cite{JohnsonFinite}.
Using the notation from \cite{JohnsonFinite}, this generating set is defined using 
a set of curves\footnote{Be warned that $c_{\beta}$ has nothing to do with our curve $\beta$.} $\{c_1,\ldots,c_{2g},c_{\beta}\}$, and matching up our figures these satisfy
$c_1 = \alpha$ and $c_2 = \beta$.  All the other curves in $\{c_1,\ldots,c_{2g},c_{\beta}\}$ are
disjoint from $\alpha$ and $\beta$.  We will say more about $\Delta$ during the proof of the claim below.

Below we will prove that for $s \in \Delta^{\pm 1}$ we have $e_0 \sim s(e_0)$.  This implies the lemma via
the trick from the second author's paper \cite{PutmanTrick}.  Here are more details:
writing $f = s_1 \cdots s_k$ with $s_i \in \Delta^{\pm 1}$, the fact that $e_0 \sim s_i(e_0)$ for all
$1 \leq i \leq k$ implies that
\[e_0 \sim s_1(e_0) \sim s_1 s_2(e_0) \sim s_1 s_2 s_3(e_0) \sim \cdots \sim s_1 \cdots s_k(e_0) = e_1.\]
Here we are using the fact that $\sim$ is invariant under $\Torelli_g$, and thus for instance
if $e_0 \sim s_2(e_0)$ then $s_1(e_0) \sim s_1 s_2(e_0)$.
It remains to prove:

\begin{unnumberedclaim}
For $s \in \Delta^{\pm 1}$, we have $e_0 \sim s(e_0)$.
\end{unnumberedclaim}

The generators for $\Torelli_g$ from \cite{JohnsonFinite} are all bounding pair maps, i.e., products
$T_x T_y^{-1}$ with $x$ and $y$ disjoint simple closed curves on $\Sigma_g$ such that $x \cup y$
separates $\Sigma_g$.  Since the inverse of a bounding pair map is a bounding pair map, we can therefore
write $s = T_x T_y^{-1}$ with $x$ and $y$ as above.  If $x \cup y$ is disjoint from $\alpha \cup \beta$,
then $s(e_0) = e_0$ and there is nothing to prove.  We can therefore assume that $x \cup y$ intersects
$\alpha \cup \beta$.

Let $T$ be a regular neighborhood of $\alpha \cup \beta$, so $T$ is a torus with one boundary component.
Since $\alpha$ and $\beta$ are the curves pictured above and $x \cup y$ intersects $\alpha \cup \beta$,
it is immediate from the construction in \cite{JohnsonFinite} that:
\begin{itemize}
\item both $x$ and $y$ intersect $T$ in single arcs $x_1$ and $y_1$; and
\item both $x_1$ and $y_1$ intersect $\alpha$ at most once and $\beta$ at most once.
\end{itemize}
Since $x$ is homologous to $y$, it follows\footnote{This uses the fact that $\partial T$ is a separating curve,
and in particular is null-homologous.} that $x_1$ is homologous to $y_1$ in $\HH_1(T,\partial T)$.
This implies that $x_1$ and $y_1$ are parallel properly embedded arcs in $T$.  There thus
exists an embedded $U \hookrightarrow T$ with $U \cong [0,1]^2$ such that $\partial U$ consists of
$x_1 \cup y_1$ along with two subarcs of $\partial T$.

There are now two possibilities.  The first is that $x$ and $y$ intersect one of $\alpha$ or $\beta$
and are disjoint from the other.  For concreteness, assume that they intersect $\alpha$.  We can then
find a small ball around these intersections that looks like the following figure:\\
\Figure{IntersectAlpha}
Depending on whether $x$ is the top or bottom arc of the depicted portion of $U$, the loop $s(\alpha) = T_x T_y^{-1}(\alpha)$
looks like the following figure:\\
\Figure{IntersectAlphaTwist}
In either case, $s(\alpha)$ is disjoint from $\alpha$ and intersects $\beta$ once.
Using this, we can then use the $ab$-strip\\
\Figure{IntersectAlphaStrip}
to show that $e_0 \sim s(e_0)$.

The second possibility is that $x$ and $y$ intersect both $\alpha$ and $\beta$.  We can then find
a small open ball in $T$ that looks like the following figure:\footnote{This picture might not preserve orientations, but
this does not matter for our argument.}\\
\Figure{IntersectBoth}
Depending on whether $x$ is the top or bottom arc of the depicted portion of $U$, the loop $s(\alpha) = T_x T_y^{-1}(\alpha)$
looks like the following figure:\\
\Figure{IntersectBothTwist}
In either case, $s(\alpha)$ is disjoint from
$\alpha$ and intersects $\beta$ once.  Similarly, $s(\beta)$ is disjoint from $\beta$, and
since $\alpha$ and $\beta$ intersect once, $s(\alpha)$ intersects $s(\beta)$ once.
The $ab$-strip\\
\Figure{IntersectBothStrip}
then witnesses the fact that $e_0 \sim s(e_0)$.
\end{proof}

\subsection{Connectivity}
We can now prove:

\begin{proposition}
\label{proposition:mixedacyclic}
Let $g \geq 4$ and let $a,b \in \HH_1(\Sigma_g)$ be primitive elements with algebraic intersection number $1$.
Then $\cC_{ab}(\Sigma_g)$ is $1$-acyclic.
\end{proposition}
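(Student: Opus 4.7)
The plan is to show both $\widetilde{H}_0(\cC_{ab}(\Sigma_g)) = 0$ and $H_1(\cC_{ab}(\Sigma_g)) = 0$ for $g \geq 4$. Connectedness follows immediately from what is already available: Theorem~\ref{theorem:cvacyclic} (the $k = 0$ case) says $\cC_a(\Sigma_g)$ and $\cC_b(\Sigma_g)$ are each connected for $g \geq 3$, and Lemma~\ref{lemma:mixpure} produces, starting from any $a$-curve vertex, a mixed edge to some $b$-curve. These two facts together connect every vertex of $\cC_{ab}(\Sigma_g)$.

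For the vanishing of $H_1$, I would take a $1$-cycle $z$ and decompose it as $z = z_m + z_a + z_b$, where $z_m$ is the sum of its mixed edges (with multiplicities) and $z_a, z_b$ are the pure parts living in $\cC_a(\Sigma_g)$ and $\cC_b(\Sigma_g)$ respectively. Fix once and for all a mixed edge $e_0$ with vertices $\alpha_0$ (an $a$-curve) and $\beta_0$ (a $b$-curve). For each mixed edge $e$ appearing in $z_m$, Lemma~\ref{lemma:strip} produces an $ab$-strip connecting $e$ to $e_0$; viewing the triangulated square as a simplicial $2$-chain in $\cC_{ab}(\Sigma_g)$, the interior $1$-faces cancel, so the boundary is exactly the boundary of the square, namely an expression of the form $e - e_0 + p_a(e) - p_b(e)$, where $p_a(e)$ is a path of $a$-curves (the bottom side) and $p_b(e)$ is a path of $b$-curves (the top side).

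Subtracting an appropriate integer combination of these strip $2$-chains from $z$ replaces $z$ with a homologous cycle of the form $z' = n e_0 + z'_a + z'_b$, with $n \in \Z$ and $z'_a, z'_b$ chains in $\cC_a(\Sigma_g), \cC_b(\Sigma_g)$. A degree argument now forces $n = 0$: since $\partial z'_a$ is supported on $a$-vertices and $\partial z'_b$ on $b$-vertices, the equation $\partial z' = 0$ forces $\partial z'_a = n \alpha_0$, and the total coefficient of the boundary of any $1$-chain is zero, so $n = 0$. Then $z'_a$ and $z'_b$ are themselves $1$-cycles in $\cC_a(\Sigma_g)$ and $\cC_b(\Sigma_g)$, and Theorem~\ref{theorem:cvacyclic} applied with $g \geq 4$ (so $g - 3 \geq 1$) gives that each of them bounds. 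Hence $z$ bounds in $\cC_{ab}(\Sigma_g)$.

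The substantive step is really the strip boundary computation, which is why Lemma~\ref{lemma:strip} was set up in its precise form (forcing the top side to be pure $b$-edges and the bottom to be pure $a$-edges): it is exactly what allows one to exchange an arbitrary mixed edge for a fixed one modulo pure chains. Everything else is formal, and the genus bound $g \geq 4$ enters only through Theorem~\ref{theorem:cvacyclic} applied to $\cC_a$ and $\cC_b$.
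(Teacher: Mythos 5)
Your proof is correct, and it uses the same essential engine as the paper's proof — Lemma~\ref{lemma:strip} (the $ab$-strip) plus the acyclicity of $\cC_a$ and $\cC_b$ from Theorem~\ref{theorem:cvacyclic} — but it organizes the reduction differently, and arguably more cleanly. The paper works with simplicial loops: it first invokes Lemma~\ref{lemma:mixpure} to ensure no two consecutive vertices of a loop map to $b$-curves, and then iteratively eliminates $b$-vertices one at a time, at each step splitting off a loop in $\cC_b$ that dies by Theorem~\ref{theorem:cvacyclic}. You instead decompose an arbitrary $1$-cycle by edge type, use the strip $2$-chains to trade every mixed edge for a fixed reference edge $e_0$, and then kill the residual coefficient $n$ of $e_0$ with an augmentation argument (the coefficient sum of a boundary vanishes, so $\partial z'_a = n\alpha_0$ forces $n = 0$). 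This buys you two small things: the argument treats $a$ and $b$ symmetrically, and it sidesteps the adjacent-$b$-vertex reduction that the paper needs as a preliminary step. Two wording nits: you subtract the \emph{boundaries} of the strip $2$-chains, not the $2$-chains themselves; and the boundary of the pushed-forward strip is $\pm(e - e_0) + p_a(e) - p_b(e)$ with signs depending on orientations, though the signs are harmless since they are absorbed into $n$. Everything else is sound.
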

\begin{proof}
Let $\gamma$ be a loop in $\cC_{ab}(\Sigma_g)$.  We must prove that $\gamma$ is homologous
to a constant loop.  Homotoping $\gamma$, we can assume that it is a simplicial loop in the
$1$-skeleton.  Theorem~\ref{theorem:cvacyclic} says that $\cC_a(\Sigma_g)$ is $1$-acyclic,
so it is enough to prove that $\gamma$ is homologous to a loop lying in $\cC_a(\Sigma_g)$, i.e.,
to a loop taking all vertices to $a$-curves.

Assume that some vertices of $\gamma$ are mapped to $b$-curves.  If two adjacent vertices
are mapped to $b$-curves $\beta$ and $\beta'$, then by Lemma~\ref{lemma:mixpure} we can find
an $a$-curve $\alpha$ that intersects $\beta$ and $\beta'$ once.  This can be used to homotope
$\gamma$ to make $\beta$ and $\beta'$ the images of non-adjacent vertices:\\
\Figure{PushOff}
Repeating this, we can can ensure that no adjacent vertices of $\gamma$ are mapped to $b$-curves.

To complete the proof, we must show how to eliminate vertices mapping to $b$-curves.  Consider
such a vertex, and let $e_0$ and $e_1$ be the images of the edges on either side of it.  Both
$e_0$ and $e_1$ are mixed $1$-simplices, so by Lemma~\ref{lemma:strip} there exists an $ab$-strip
connecting $e_0$ to $e_1$.  Attach this strip to $\gamma$ as follows:\\
\Figure{GlueStrip}
As is shown in this figure, the result is that $\gamma$ is homologous to a sum of two loops: a
loop $\gamma'$ with one fewer vertex mapping to a $b$-curve, and another loop $\gamma''$ mapping
entirely to $\cC_b(\Sigma_g)$.  Another application of Theorem~\ref{theorem:cvacyclic} shows
that $\gamma''$ is null-homologous, so we deduce that $\gamma$ is homologous to $\gamma'$, as
desired.
\end{proof}

\section{Step 2.3: generators for cokernel}
\label{section:generators}

We first recall some notation.  Let $\alpha$ and $\beta$ be these curves on $\Sigma_g$:\\
\Figure{AandBcurvesNoCut}
Let $\lambda\colon \HH_2((\Torelli_g)_{\alpha};\Q) \oplus \HH_2((\Torelli_g)_{\beta};\Q) \rightarrow \HH_2(\Torelli_g;\Q)$
be the sum of the maps induced by the inclusions $(\Torelli_g)_{\alpha} \hookrightarrow \Torelli_g$
and $(\Torelli_g)_{\beta} \hookrightarrow \Torelli_g$ and let $\Lambda_g = \coker(\lambda)$.  Recall
that our goal is to prove Theorem~\ref{maintheorem:cokernel}, which says
that $\Lambda_g$ is a finite-dimensional algebraic representation of $\Sp_{2(g-1)}(\Z)$ for $g \geq 5$.
This section constructs generators for $\Lambda_g$.

\subsection{Commutator and Dehn twist conventions}
For a group $G$ and $x,y \in G$, our conventions are
$[x,y] = x^{-1} y^{-1} x y$ and $x^y = y^{-1} x y$.  
For a simple closed curve $\eta$ on a surface, $T_{\eta}$ denotes the right Dehn twist about $\eta$.
 
\subsection{Surface relations}
\label{section:surfacerelations}
To construct generators for $\Lambda_g$, we need a formalism for describing
elements of $\Lambda_g$.  Let $G$ be a group.  A {\em surface relation} in $G$ is a relation of the form
\[[x_1,y_1] \cdots [x_k,y_k] = 1 \quad \text{with $x_1,y_1,\ldots,x_k,y_k \in G$}.\]
Write this $r = [x_1,y_1] \cdots [x_k,y_k]$.  We emphasize that $r$ is a formal
product of commutators, not an element of $G$.
Let $\phi_r\colon \pi_1(\Sigma_k) \rightarrow G$ be the map
taking the standard generators of $\pi_1(\Sigma_k)$ to the $x_i$ and $y_i$.
Define $\fh(r) \in \HH_2(G;\Q)$ to be the image under
$(\phi_r)_{\ast}\colon \HH_2(\pi_1(\Sigma_k);\Q) \rightarrow \HH_2(G;\Q)$ of the fundamental
class of $\HH_2(\pi_1(\Sigma_k);\Z) = \Z$.

Writing $G = F/R$ for a free group $F$ and $R \lhd F$, Hopf's formula says that
\begin{equation}
\label{eqn:hopf}
\HH_2(G;\Z) = \frac{[F,F] \cap R}{[R,F]}.
\end{equation}
Each element in the numerator of this gives a surface relation $r$ in $G$, and
$\fh(r) \in \HH_2(G;\Q)$ is the associated element of homology.  See \cite{PutmanHopf}
for a discussion of Hopf's formula in these terms.  From
\eqref{eqn:hopf}, we see that the $\fh(r)$ satisfy several basic identities.
These will be used repeatedly throughout the remainder of the paper without citation.

First, consider surface relations $r = [x_1,y_1] \cdots [x_k,y_k]$ and $r' = [x'_1,y'_1]\cdots[x'_{k'},y'_{k'}]$.
Define
\[r r' = [x_1,y_1] \cdots [x_k,y_k] [x'_1,y'_1] \cdots [x'_{k'},y'_{k'}].\]
We then have $\fh(r r') = \fh(r) + \fh(r')$.

Since surface relations are formal product of commutators, it does not make literal sense to include terms
in them like $[x,y]^{-1}$.  In the context of surface relations, we therefore let $[x,y]^{-1}$ denote
the commutator\footnote{The reason for this is that $[x,y] [y,x] = 1$, so $[y,x]$ is the inverse to $[x,y]$.} 
$[y,x]$.  If $r$ is a surface relation and $r'$ is the surface relation
obtained from $r$ by deleting an adjacent pair of inverse commutators $[x,y] [x,y]^{-1}$,
then $\fh(r) = \fh(r')$.
Consequently, if
$r = [x_1,y_1] \cdots [x_k,y_k]$
is a surface relation and we define
$r^{-1} = [x_k,y_k]^{-1} \cdots [x_1,y_1]^{-1}$,
then $\fh(r^{-1}) = - \fh(r)$.

For $x,y,z \in G$, we let $[x,y]^z$ denote the commutator $[x^z,y^z]$.  If
$r$ is a surface relation containing two adjacent terms $[x_i,y_i] [x_{i+1},y_{i+1}]$ and
$r'$ is the surface relation obtained by replacing these with $[x_{i+1},y_{i+1}] [x_i,y_i]^{[x_{i+1},y_{i+1}]}$,
then $\fh(r) = \fh(r')$.
Also, if $r$ is a surface relation containing a commutator $[x z, y]$ and $r'$
is obtained by expanding this to $[x,y]^z [z,y]$, then $\fh(r) = \fh(r')$, and similarly
if $r$ contains $[x,zy]$.

\subsection{Elements in cokernel}
\label{section:llcomm}

How might elements in $\Lambda_g$ arise?
Assume that we have
\begin{itemize}
\item $A \in (\Torelli_g)_{\alpha}$ and
$B \in (\Torelli_g)_{\beta}$ such that $[A,B]$ fixes either $\alpha$ or $\beta$.
\end{itemize}

\begin{example}
\label{example:commex}
Let $\alpha,\alpha',\beta,\beta'$ be as follows, where $X$ and $Y$ are connected subsurfaces of $\Sigma_g$:\\
\Figure{ExampleGenerator}
Letting $A = T_{\alpha} T_{\alpha'}^{-1}$ and $B = T_{\beta'} T_{\beta}^{-1}$, the commutator $[A,B]$ fixes both $\alpha$ and $\beta$.
To see this, note that $A(\beta)$ is disjoint from $\beta \cup \beta'$ and $B(\alpha)$ is disjoint from $\alpha \cup \alpha'$:\\
\Figure{ExampleGeneratorCheck}
This implies that
\[[A,B](\alpha) = A^{-1} B^{-1} A B(\alpha) = A^{-1} B^{-1} B(\alpha) = A^{-1}(\alpha) = \alpha,\]
and similarly that $[A,B](\beta) = \beta$.
\end{example}

We define $\LLComm{A,B} \in \Lambda_g$ as follows.
To simplify our notation, we will assume that $[A,B]$ fixes $\alpha$.  The case where
it fixes $\beta$ is similar.
Let $\pi\colon \HH_2(\Torelli_g;\Q) \rightarrow \Lambda_g$ be the projection.
The element $[A,B] \in (\Torelli_g)_{\alpha}$ vanishes in the abelianization
of $\Torelli_g$.  Theorem~\ref{theorem:putmaninjective} says\footnote{In unpublished work, Putman has
also proved that the map $\HH_1((\Torelli_g)_{\alpha};\Z) \rightarrow \HH_1(\Torelli_g;\Z)$ is
injective for $g \geq 4$.  Using this would allow us to take $n=1$ in the argument below,
simplifying several parts of our proof.  To avoid a dependence on unpublished work, we do
not use this integral statement.} that the map $\HH_1((\Torelli_g)_{\alpha};\Q) \rightarrow \HH_1(\Torelli_g;\Q)$
is injective for $g \geq 4$, so $[A,B]$ vanishes in
$\HH_1((\Torelli_g)_{\alpha};\Q)$.  This implies that there exists some $n \geq 1$ such that
$[A,B]^n$ vanishes in the abelianization of $(\Torelli_g)_{\alpha}$, so we can write
$[A,B]^n \fc = 1$
for some product $\fc$ of commutators in $(\Torelli_g)_{\alpha}$.  Define
\[\LLComm{A,B} = \pi\left(\frac{1}{n} \fh\left([A,B]^n \fc\right)\right) \in \Lambda_g.\]
This appears to depend on the choice of $n$ and $\fc$, but the following claim shows that it
is well-defined:

\begin{unnumberedclaim}
This does not depend the choice of $n$ and $\fc$.
\end{unnumberedclaim}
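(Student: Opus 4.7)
The plan is to show that if both $(n, \fc)$ and $(n', \fc')$ are valid choices then the two resulting elements of $\Lambda_g$ agree. After clearing denominators, this is equivalent to proving
\[
n' \fh([A,B]^n \fc) - n \fh([A,B]^{n'} \fc') \in \Image(\lambda) \subset \HH_2(\Torelli_g;\Q).
\]
My strategy is to rewrite both terms as $\fh$ of surface relations sharing a common ``normal form'' $[A,B]^{nn'} \cdot (\text{stuff})$, and then exhibit an explicit surface relation in $(\Torelli_g)_\alpha$ witnessing the difference.

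First, by additivity of $\fh$ under concatenation, $n' \fh([A,B]^n \fc) = \fh(([A,B]^n \fc)^{n'})$. I would then apply the commutator-swap identity from \S\ref{section:surfacerelations}---replacing adjacent $[x,y][x',y']$ by $[x',y'][x,y]^{[x',y']}$, an operation that preserves $\fh$---to migrate every $[A,B]$-commutator to the left of every commutator of $\fc$. The result is a surface relation $\tR_1 = [A,B]^{nn'} \tfc$ with $\fh(\tR_1) = n' \fh([A,B]^n \fc)$, where $\tfc$ is $\fc^{n'}$ with each of its original commutators conjugated by some power of $[A,B]$. The crucial bookkeeping observation is that $[A,B] \in (\Torelli_g)_\alpha$, so conjugation by powers of $[A,B]$ preserves $(\Torelli_g)_\alpha$; hence every commutator of $\tfc$ is again a commutator of elements of $(\Torelli_g)_\alpha$. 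The same procedure applied to $(n',\fc')$ yields $\tR_2 = [A,B]^{nn'} \tfc'$ with $\fh(\tR_2) = n \fh([A,B]^{n'}\fc')$ and $\tfc'$ a product of commutators in $(\Torelli_g)_\alpha$.

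Next, I set $S := \tfc^{-1} \tfc'$. Since $\tfc$ and $\tfc'$ both equal $[A,B]^{-nn'}$ as elements of $\Torelli_g$, we have $S = 1$ in $\Torelli_g$, making $S$ an honest surface relation; and every commutator of $S$ lies in $(\Torelli_g)_\alpha$, so $\fh(S) \in \HH_2((\Torelli_g)_\alpha;\Q)$ and its image in $\HH_2(\Torelli_g;\Q)$ lies in $\Image(\lambda)$. The concatenation $\tR_1 \cdot S$ is the formal string $[A,B]^{nn'} \tfc \tfc^{-1} \tfc'$; cancelling the adjacent inverse pairs in $\tfc \tfc^{-1}$ one at a time (an operation that preserves $\fh$) reduces this to $\tR_2$, and additivity of concatenation then gives $\fh(\tR_1) + \fh(S) = \fh(\tR_2)$. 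Therefore $n' \fh([A,B]^n \fc) - n \fh([A,B]^{n'} \fc') = -\fh(S)$ belongs to $\Image(\lambda)$, as required.

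The main obstacle is not conceptual but rather careful bookkeeping: at every stage of the rearrangement one must track which subgroup of $\Torelli_g$ each commutator belongs to, because it is precisely the fact that every commutator of $S$ lives in $(\Torelli_g)_\alpha$ that lets $\fh(S)$ factor through $\HH_2((\Torelli_g)_\alpha;\Q)$ and thereby land in $\Image(\lambda)$. The symmetric case where $[A,B]$ fixes $\beta$ instead of $\alpha$ is handled by the same argument with $(\Torelli_g)_\alpha$ replaced throughout by $(\Torelli_g)_\beta$.
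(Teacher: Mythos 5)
Your proof is correct and takes essentially the same approach as the paper's: both scale the two choices up to a common power $[A,B]^{nn'}$, use the commutator-swap identity to push all $[A,B]^{\pm 1}$ factors together while tracking that every resulting commutator still has both entries in $(\Torelli_g)_\alpha$, and conclude that the difference is $\fh$ of a surface relation supported in $(\Torelli_g)_\alpha$, which therefore dies under $\pi$. The paper concatenates $([A,B]^n\fc)^m$ with $([A,B]^m\fd)^{-n}$ directly and cancels the $[A,B]$'s in place, whereas you first produce the normalized forms $\tR_1,\tR_2$ and then extract $S=\tfc^{-1}\tfc'$; this is an organizational rather than a mathematical difference.
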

\begin{proof}[Proof of claim]
If $m \geq 1$ and $\fd$ is another product of commutators in $(\Torelli_g)_{\alpha}$ such that
$[A,B]^m \fd = 1$,
then
\begin{align*}
\frac{1}{n} \fh\left([A,B]^n \fc\right) - \frac{1}{m} \fh\left([A,B]^m \fd\right)
&= \frac{1}{nm} \left(\fh\left(\left([A,B]^n \fc\right)^m\right) + \fh\left(\left([A,B]^m \fd\right)^{-n}\right)\right) \\
&= \frac{1}{nm} \fh\left([A,B]^{nm} \fc' \fd' [A,B]^{-nm}\right) \\
&= \frac{1}{nm} \fh\left([A,B]^{nm} [A,B]^{-nm} \fc'' \fd''\right) \\
&= \frac{1}{nm} \fh\left(\fc'' \fd''\right) \in \Image\left(\HH_2\left((\Torelli_g)_{\alpha};\Q\right) \rightarrow \HH_2\left(\Torelli_g;\Q\right)\right).
\end{align*}
Here $\fc'$ and $\fc''$ are products of commutators in $(\Torelli_g)_{\alpha}$ obtained by commuting
terms of the form $[A,B]^{\pm 1}$ past terms in $\fc$, and similarly for $\fd'$ and $\fd''$.
The claim follows.
\end{proof}

There is one remaining ambiguity: if $[A,B]$ fixes $\alpha$ and $\beta$, then our
recipe gives two potentially different definitions of $\LLComm{A,B} \in \Lambda_g$.  However, if $[A,B]$
fixes $\alpha$ and $\beta$, then $[A,B] \in (\Torelli_g)_{\alpha, \beta} \cong \Torelli_{g-1}^1$.
In the above procedure, we can therefore choose $\fc$ to be a product of commutators lying in $(\Torelli_g)_{\alpha, \beta} \cong \Torelli_{g-1}^1$
whether
we are considering $[A,B]$ as an element of $(\Torelli_g)_{\alpha}$ or of $(\Torelli_g)_{\beta}$.

\subsection{Interlude: twisted surface groups}
\label{section:twistedsurface}

For $n,k \geq 1$, let $\Gamma_{n,k}$ be the following group:
\[\Gamma_{n,k} = \GroupPres{$z_0,w_0,\ldots,z_k,w_k$}{$[z_0,w_0]^n [z_1,w_1] \cdots [z_k,w_k] = 1$}.\]
These arise naturally in the construction of the elements $\LLComm{A,B}$ above, which
are multiples of the homology classes associated to surface relators
\[[A,B]^n [c_1,d_1] \cdots [c_k,d_k] = 1 \quad \text{with $A,B,c_i,d_i \in \Torelli_g$}.\]
The associated maps $\pi_1(\Sigma_{n+k}) \rightarrow \Torelli_g$ factor through
$\Gamma_{n,k}$.
The homology of $\Gamma_{n,k}$ is given by:

\begin{lemma}
\label{lemma:homologygamma}
For $n,k \geq 1$, we have
$\HH_1(\Gamma_{n,k}) = \Z^{2k+2}$ and $\HH_2(\Gamma_{n,k}) = \Z$ and $\HH_d(\Gamma_{n,k}) = 0$
for $d \geq 3$.
\end{lemma}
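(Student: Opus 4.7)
The plan is to realize $\Gamma_{n,k}$ as an amalgamated free product of two free groups over $\Z$, and then read off the homology from the associated Mayer--Vietoris sequence. Let $A$ be the free group on $\{z_0,w_0\}$ and $B$ the free group on $\{z_1,w_1,\ldots,z_k,w_k\}$. Set $a = [z_0,w_0]^n \in A$ and $c = [z_1,w_1]\cdots[z_k,w_k] \in B$. For $k \geq 1$ the word $c$ is cyclically reduced of length $4k$, and for $n \geq 1$ the word $a$ is cyclically reduced of length $4n$; in particular both are nontrivial, and since free groups are torsion-free both have infinite order. Consequently the homomorphisms $\Z \to A$ and $\Z \to B$ sending $1$ to $a^{-1}$ and to $c$ respectively are injective. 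The amalgamated free product $A *_{\Z} B$ identifies $a^{-1}$ with $c$, so it has the presentation
\[
\langle z_0,w_0,\ldots,z_k,w_k \mid [z_0,w_0]^n [z_1,w_1] \cdots [z_k,w_k] = 1 \rangle,
\]
which is exactly $\Gamma_{n,k}$.

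With this identification in hand, the plan is to apply the Mayer--Vietoris sequence for amalgamated free products (see \cite[\S VII.9]{BrownCohomology}):
\[
\cdots \to \HH_d(\Z) \to \HH_d(A) \oplus \HH_d(B) \to \HH_d(\Gamma_{n,k}) \to \HH_{d-1}(\Z) \to \cdots.
\]
All three groups $\Z$, $A$, and $B$ are free, hence have cohomological dimension at most $1$, so their homology vanishes in degrees $\geq 2$. This immediately gives $\HH_d(\Gamma_{n,k}) = 0$ for $d \geq 3$, taking care of the vanishing claim.

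For the remaining degrees, the sequence reduces to the four-term exact sequence
\[
0 \to \HH_2(\Gamma_{n,k}) \to \HH_1(\Z) \xrightarrow{\phi} \HH_1(A) \oplus \HH_1(B) \to \HH_1(\Gamma_{n,k}) \to 0,
\]
where $\phi$ is the difference of the two maps induced by inclusion. A generator of $\HH_1(\Z) \cong \Z$ is sent to the pair consisting of the abelianizations of $a^{-1} \in A$ and of $c \in B$. Both of these are products of commutators, so both vanish in $\HH_1(A)$ and $\HH_1(B)$; thus $\phi = 0$. Reading off the sequence then yields $\HH_2(\Gamma_{n,k}) = \ker\phi = \Z$ and $\HH_1(\Gamma_{n,k}) = \HH_1(A) \oplus \HH_1(B) = \Z^2 \oplus \Z^{2k} = \Z^{2k+2}$, as claimed. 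No serious obstacle is expected here: the only point that requires any verification is that both amalgamating elements have infinite order, which is immediate from their cyclically reduced forms.
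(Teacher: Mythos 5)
Your proof is correct and follows essentially the same approach as the paper: both decompose $\Gamma_{n,k}$ as the amalgamated free product $F(z_0,w_0) \ast_{\Z} F(z_1,w_1,\ldots,z_k,w_k)$ over the cyclic subgroups generated by $[z_0,w_0]^{-n}$ and $[z_1,w_1]\cdots[z_k,w_k]$, then read off the homology from the Mayer--Vietoris sequence, noting that the connecting map $\HH_1(\Z)\to\HH_1(A)\oplus\HH_1(B)$ vanishes because the amalgamating elements are products of commutators. Your write-up is slightly more explicit about verifying injectivity of the amalgamating inclusions, but the substance is the same.
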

\begin{proof}
The single relation in $\Gamma_{n,k}$ can be rewritten as
\[[z_0,w_0]^{-n} = [z_1,w_1] \cdots [z_k,w_k].\]
Letting $F(S)$ denote the free group on a set $S$, this implies that
$\Gamma_{n,k}$ can be decomposed as an amalgamated free product\footnote{The
Freiheitsatz \cite{MagnusFrei, PutmanOneRelator} for
one-relator groups implies
that $\{z_0,w_0\}$ and $\{z_1,w_1,\ldots,z_k,w_k\}$ generate free subgroups
of $\Gamma_{n,k}$.  This general result is unnecessary since our relation can be written as
$r_1 = r_2$ with $r_1 \in F(z_0,w_0)$ and $r_2 \in F(z_1,w_1,\ldots,z_k,w_k)$, giving this decomposition as a free product with amalgamation.}
\[\Gamma_{n,k} = F(z_0,w_0) \ast_{\Z} F(z_1,w_1,\ldots,z_k,w_k),\]
where the infinite cyclic group $\Z$ is identified with the cyclic subgroups generated by
$[z_0,w_0]^{-n} \in F(z_0,w_0)$ and $[z_1,w_1] \cdots [z_k,w_k] \in F(z_1,w_1,\ldots,z_k,w_k)$.
Since $\HH_1(\Z) = \Z$, the lemma follows from the associated Mayer-Vietoris sequence in group homology whose nonzero terms are
\begin{small}
\[\begin{tikzcd}[column sep=small]
0 \arrow{r} & \HH_2(\Gamma_{n,k}) \arrow{r} & \HH_1(\Z) \arrow{r}{0} & \HH_1(F(z_0,w_0)) \oplus \HH_1(F(z_1,w_1,\ldots,z_k,w_k)) \arrow{r} & \HH_1(\Gamma_{n,k}) \arrow{r} & 0.
\end{tikzcd}\qedhere\]
\end{small}
\end{proof}

To identify our elements $\LLComm{A,B} \in \HH_2(\Torelli_g;\Q)$ with
terms we will construct using equivariant homology, we need a space for $\Gamma_{n,k}$
to act on.  For this, define
\[\Gamma'_{n,k} = \GroupPres{$z_0,u_0,z_1,w_1,\ldots,z_k,w_k$}{$(z_0^{-1} u_0)^n [z_1,w_1] \cdots [z_k,w_k] = 1$}.\]
There is a homomorphism $\Gamma'_{n,k} \rightarrow \Gamma_{n,k}$ taking $u_0$ to $z_0^{w_0} = w_0^{-1} z_0 w_0$.  The following implies that this
is injective, so henceforth we can identify $\Gamma'_{n,k}$ with a subgroup of $\Gamma_{n,k}$:

\begin{lemma}
\label{lemma:hnn}
The group $\Gamma_{n,k}$ is an HNN extension of $\Gamma'_{n,k}$ over an infinite cyclic subgroup
with stable letter $w_0$ conjugating $z_0$ to $u_0$.
\end{lemma}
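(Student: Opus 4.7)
The plan is to verify the HNN description by direct comparison of presentations via Tietze transformations, and then to observe that the amalgamation subgroups are infinite cyclic so that the description as an HNN extension is non-degenerate.

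Concretely, let $H$ denote the putative HNN extension of $\Gamma'_{n,k}$ with stable letter $w_0$ conjugating $z_0$ to $u_0$. By definition,
\[
H = \bigl\langle z_0, u_0, z_1, w_1, \ldots, z_k, w_k, w_0 \,\bigm|\, (z_0^{-1}u_0)^n [z_1,w_1]\cdots[z_k,w_k] = 1,\ w_0^{-1} z_0 w_0 = u_0 \bigr\rangle.
\]
Using the new relation $u_0 = w_0^{-1} z_0 w_0$, I would apply a Tietze transformation to eliminate the generator $u_0$. The substitution converts $z_0^{-1} u_0$ into $z_0^{-1} w_0^{-1} z_0 w_0 = [z_0,w_0]$, so the remaining relation becomes
\[
[z_0,w_0]^n [z_1,w_1] \cdots [z_k,w_k] = 1,
\]
which is exactly the defining relation of $\Gamma_{n,k}$. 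Thus $H \cong \Gamma_{n,k}$ as abstract groups, and under this isomorphism the subgroup $\Gamma'_{n,k}$ of $H$ maps to the subgroup of $\Gamma_{n,k}$ generated by $z_0, w_0^{-1} z_0 w_0, z_1, w_1, \ldots, z_k, w_k$, as required.

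It remains to check that this is a genuine HNN extension in the sense that the associated subgroups are isomorphic infinite cyclic groups. For this I would argue exactly as in the proof of Lemma~\ref{lemma:homologygamma}: the relation defining $\Gamma'_{n,k}$ can be rewritten as $(z_0^{-1} u_0)^n = [w_1,z_1]\cdots[w_k,z_k]$, which exhibits
\[
\Gamma'_{n,k} \cong F(z_0, u_0) \ast_{\Z} F(z_1, w_1, \ldots, z_k, w_k),
\]
where the amalgamated $\Z$ is generated on the left by $(z_0^{-1}u_0)^n$ (an element of infinite order in a rank-$2$ free group) and on the right by $[z_1,w_1]\cdots[z_k,w_k]$. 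In particular the natural map $F(z_0,u_0) \hookrightarrow \Gamma'_{n,k}$ is injective, so $z_0$ and $u_0$ each generate infinite cyclic subgroups of $\Gamma'_{n,k}$, and the map $z_0 \mapsto u_0$ extends to an isomorphism $\langle z_0 \rangle \xrightarrow{\cong} \langle u_0 \rangle$ of those subgroups.

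The main (and only) substantive point is this last verification that $\langle z_0 \rangle$ and $\langle u_0 \rangle$ are infinite cyclic in $\Gamma'_{n,k}$; everything else is a straightforward presentation manipulation. As a consequence, this lemma will let us realize $\Gamma_{n,k}$ as the fundamental group of a graph of groups with a single vertex group $\Gamma'_{n,k}$ and a single edge, which in turn gives a convenient $\Gamma_{n,k}$-tree to pair against the equivariant homology setup of \S\ref{section:equivarianthomology}.
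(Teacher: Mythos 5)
Your proof is correct and follows essentially the same route as the paper: write down the presentation of the HNN extension and apply a Tietze transformation eliminating $u_0$, noting that $z_0^{-1}w_0^{-1}z_0w_0 = [z_0,w_0]$ recovers the defining relation of $\Gamma_{n,k}$. You add one step the paper leaves implicit: verifying that $\langle z_0\rangle$ and $\langle u_0\rangle$ are actually infinite cyclic in $\Gamma'_{n,k}$ (so the associated subgroups of the HNN extension are honestly infinite cyclic and $z_0 \mapsto u_0$ is a genuine isomorphism between them). That check is carried out correctly via the amalgamated free product decomposition of $\Gamma'_{n,k}$, which guarantees $F(z_0,u_0)\hookrightarrow\Gamma'_{n,k}$. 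Minor nit: the inverse of $[z_1,w_1]\cdots[z_k,w_k]$ is $[w_k,z_k]\cdots[w_1,z_1]$, not $[w_1,z_1]\cdots[w_k,z_k]$, but this does not affect the argument since both lie in $F(z_1,w_1,\ldots,z_k,w_k)$.
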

\begin{proof}
This HNN extension can be written
\[\GroupPres{$w_0,z_0,u_0,z_1,w_1,\ldots,z_k,w_k$}{$(z_0^{-1} u_0)^n [z_1,w_1] \cdots [z_k,w_k] = 1$, $w_0^{-1} z_0 w_0 = u_0$}.\]
A Tietze transformation now eliminates $u_0$ and turns this into
the presentation for $\Gamma_{n,k}$.
\end{proof}

By Bass--Serre theory \cite{SerreTrees}, the following is an immediate corollary
of Lemma~\ref{lemma:hnn}:

\begin{corollary}
\label{corollary:tree}
For $n,k \geq 1$, the group $\Gamma_{n,k}$ acts without rotations on a tree $T$ such that:
\begin{itemize}
\item $\Gamma_{n,k}$ acts transitively on the vertices and edges of $T$, so
$T/\Gamma_{n,k} \cong S^1$; and
\item there is an edge $\tau$ of $T$ going from a vertex $\tau_0$ to a vertex $\tau_1$ such
that
$(\Gamma_{n,k})_{\tau_0} = \Gamma'_{n,k}$ and $(\Gamma_{n,k})_{\tau} = \Span{z_0} \cong \Z$
and $w_0 \Cdot \tau_0 = \tau_1$.
\end{itemize}
\end{corollary}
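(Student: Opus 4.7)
The plan is to apply standard Bass--Serre theory to the HNN decomposition furnished by Lemma \ref{lemma:hnn}. That lemma identifies $\Gamma_{n,k}$ with the HNN extension of $\Gamma'_{n,k}$ associated to the isomorphism $\langle z_0 \rangle \xrightarrow{\sim} \langle u_0\rangle$ (sending $z_0 \mapsto u_0$), with stable letter $w_0$. Bass--Serre theory then produces a canonical tree $T$ on which $\Gamma_{n,k}$ acts, with vertex set $\Gamma_{n,k}/\Gamma'_{n,k}$ and edge set $\Gamma_{n,k}/\langle z_0\rangle$, and with the edge corresponding to the trivial coset joining the vertex corresponding to the trivial coset $\tau_0$ to its translate $w_0 \cdot \tau_0$.

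Concretely, I would set $\tau_0$ to be the vertex corresponding to the trivial coset of $\Gamma'_{n,k}$, let $\tau$ be the edge corresponding to the trivial coset of $\langle z_0\rangle$, and set $\tau_1 = w_0 \cdot \tau_0$. By the general construction, the stabilizer of $\tau_0$ is $\Gamma'_{n,k}$ and the stabilizer of $\tau$ is $\langle z_0\rangle$, which is infinite cyclic since one checks directly that $z_0$ has infinite order in $\Gamma_{n,k}$ (e.g.\ using the free product with amalgamation decomposition from the proof of Lemma \ref{lemma:homologygamma}). Since $\Gamma_{n,k}$ acts on vertices by left translation on $\Gamma_{n,k}/\Gamma'_{n,k}$ and on edges by left translation on $\Gamma_{n,k}/\langle z_0\rangle$, both actions are transitive, so $T/\Gamma_{n,k}$ consists of a single vertex and a single loop edge, i.e., is homeomorphic to $S^1$.

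The only condition requiring a direct check is that the action is without rotations, i.e., that edge stabilizers fix their edges pointwise and in particular fix both endpoints. For $\tau$, the stabilizer $\langle z_0\rangle$ fixes $\tau_0$ since $z_0 \in \Gamma'_{n,k}$. For the other endpoint, we compute in $\Gamma_{n,k}$ that $z_0 w_0 = w_0 (w_0^{-1} z_0 w_0) = w_0 u_0$ and $u_0 \in \Gamma'_{n,k}$, so
\[
z_0 \cdot \tau_1 \;=\; z_0 w_0 \cdot \tau_0 \;=\; w_0 u_0 \cdot \tau_0 \;=\; w_0 \cdot \tau_0 \;=\; \tau_1.
\]
By equivariance, the stabilizer of any edge fixes both its endpoints, so the action is without edge-inversions; combined with the fact that vertex stabilizers tautologically fix their vertex, this gives the without-rotations condition.

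There is really no substantive obstacle here: once Lemma \ref{lemma:hnn} is in hand, the corollary is a direct translation into Bass--Serre language. The only point requiring genuine (if brief) verification is the no-inversion computation above, which uses precisely the HNN relation $w_0^{-1} z_0 w_0 = u_0$ together with the fact that $u_0$ lies in the vertex group $\Gamma'_{n,k}$.
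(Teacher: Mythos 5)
Your proposal is correct and takes essentially the same approach as the paper: the paper simply invokes Bass--Serre theory applied to the HNN decomposition from Lemma~\ref{lemma:hnn} and declares the corollary immediate, while you spell out the standard construction of the Bass--Serre tree (cosets of the vertex group and of the associated subgroup) and verify the no-rotations condition via the relation $w_0^{-1} z_0 w_0 = u_0 \in \Gamma'_{n,k}$, which is exactly the check that makes the paper's terse citation valid.
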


We will also need the homology of $\Gamma'_{n,k}$:

\begin{lemma}
\label{lemma:homologygammaprime}
For $n,k \geq 1$, we have $\HH_1(\Gamma'_{n,k}) \cong \Z^{2k+1} \oplus \Z/n$ and
$\HH_d(\Gamma'_{n,k}) = 0$ for $d \geq 2$.
\end{lemma}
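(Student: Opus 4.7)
The plan is to recognize $\Gamma'_{n,k}$ as an amalgamated free product and then feed this into the Mayer–Vietoris sequence in group homology. Decompose the ambient free group as $F_1 * F_2$ with $F_1 = F(z_0, u_0)$ and $F_2 = F(z_1, w_1, \ldots, z_k, w_k)$. The single defining relation can be rewritten as $(z_0^{-1} u_0)^n = [z_k,w_k]^{-1} \cdots [z_1,w_1]^{-1}$, with the left side in $F_1$ and the right side in $F_2$. Both words have infinite order in their respective free groups, so this identification exhibits $\Gamma'_{n,k}$ as $F_1 *_{\Z} F_2$, with the amalgamated $\Z$ embedded by these two words. This is the same type of argument used in the proof of Lemma~\ref{lemma:homologygamma}.

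Next, I would plug this into the Mayer–Vietoris sequence. Since each of $F_1$, $F_2$, and $\Z$ has homological dimension at most one, for $d \geq 3$ the sequence collapses to $\HH_d(\Gamma'_{n,k}) = 0$ immediately. For the vanishing of $\HH_2$, the relevant portion is
\[0 \to \HH_2(\Gamma'_{n,k}) \to \HH_1(\Z) \xrightarrow{\psi} \HH_1(F_1) \oplus \HH_1(F_2),\]
and I would compute that $\psi$ sends a generator to $(n(\overline{u_0}-\overline{z_0}),\, 0)$, since the commutators on the $F_2$ side vanish after abelianization. Because $n \geq 1$, this map is injective, forcing $\HH_2(\Gamma'_{n,k}) = 0$.

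Finally, for $\HH_1$ the tail of the sequence identifies $\HH_1(\Gamma'_{n,k})$ with the cokernel of $\psi$, which is the abelian group $\Z^{2k+2}/\Span{n(u_0-z_0)}$. A change of basis replacing $u_0$ by $v = u_0 - z_0$ puts this in the form $\Z^{2k+1} \oplus \Z/n$, exactly as claimed. There is no real obstacle in this argument; the only step requiring a moment's thought is spotting the amalgamated free product structure, which follows once one notices that the relator splits as an equation between words from disjoint sets of generators.
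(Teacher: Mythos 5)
Your proposal is correct and matches the paper's proof essentially verbatim: the paper also decomposes $\Gamma'_{n,k}$ as $F(z_0,u_0) \ast_{\Z} F(z_1,w_1,\ldots,z_k,w_k)$ with the amalgamated $\Z$ generated by the two sides of the relator, runs the Mayer--Vietoris sequence, and observes that the connecting map sends the generator of $\HH_1(\Z)$ to $(n[u_0]-n[z_0],\,0)$, from which injectivity and the stated cokernel follow. No further comment is needed.
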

\begin{proof}
Just like in the proof of Lemma~\ref{lemma:homologygamma}, write $\Gamma'_{n,k}$ as an amalgamated free product
\[\Gamma'_{n,k} = F(z_0,u_0) \ast_{\Z} F(z_1,w_1,\ldots,z_k,w_k),\]
where the infinite cyclic group $\Z$ is identified with the cyclic subgroups generated by
$(z_0^{-1} u_0)^{-n} \in F(z_0,u_0)$ and $[z_1,w_1] \cdots [z_k,w_k] \in F(z_1,w_1,\ldots,z_k,w_k)$.
Since $\HH_1(\Z) = \Z$, the lemma follows from the associated Mayer-Vietoris sequence in group homology whose nonzero terms are
\begin{small}
\[\begin{tikzcd}[column sep=small]
0 \arrow{r} & \HH_2(\Gamma'_{n,k}) \arrow{r} & \HH_1(\Z) \arrow{r}{f} & \HH_1(F(u_0,w_0)) \oplus \HH_1(F(z_1,w_1,\ldots,z_k,w_k)) \arrow{r} & \HH_1(\Gamma_{n,k}) \arrow{r} & 0.
\end{tikzcd}\]
\end{small}
Here $f$ is the injective map taking the generator of $\HH_1(\Z) = \Z$ to
\[\left(n [u_0]-n [z_0],0\right) \in \HH_1(F(u_0,w_0)) \oplus \HH_1(F(z_1,w_1,\ldots,z_k,w_k)). \qedhere\]
\end{proof}

\subsection{Generators}

We now return to the $\LLComm{A,B}$.  
How can we find $A \in (\Torelli_g)_{\alpha}$ and $B \in (\Torelli_g)_{\beta}$ such that $[A,B]$ fixes either
$\alpha$ or $\beta$?  One way for this to hold is for $A$ to fix $B(\alpha)$, in which case
$[A,B]$ will fix $\alpha$.  Similarly, if $B$ fixes $A(\beta)$ then $[A,B]$ will fix $\beta$.
For instance, both of these hold in Example \ref{example:commex}.
The following says that the $\LLComm{A,B}$ coming from certain $A$ and $B$ of this form generate $\Lambda_g$:

\begin{proposition}
\label{proposition:generators}
Let $g \geq 5$, and let $\alpha$ and $\beta$ be the curves discussed above.  Then $\Lambda_g$ is spanned
by elements of the form $\LLComm{A,B}$, where $A \in (\Torelli_g)_{\alpha}$ and $B \in (\Torelli_g)_{\beta}$
satisfy either:\footnote{The condition that $A(\beta)$ is disjoint from $\beta$ is equivalent to saying that $\{\beta,A(\beta)\}$
is a simplex of $\cC_{ab}(\Sigma_g)$, and similarly when $B(\alpha)$ is disjoint from $\alpha$.}
\begin{itemize}
\item $A(\beta)$ is disjoint from $\beta$, and $B$ fixes $A(\beta)$; or
\item $B(\alpha)$ is disjoint from $\alpha$, and $A$ fixes $B(\alpha)$.
\end{itemize}
\end{proposition}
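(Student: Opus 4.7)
The plan is to analyze $\Lambda_g$ using $\Torelli_g$-equivariant homology of the handle complex $\cC_{ab}(\Sigma_g)$. By Lemma~\ref{lemma:withoutrotations} the action is without rotations, and by Proposition~\ref{proposition:mixedacyclic} the complex is $1$-acyclic for $g \geq 4$; so Lemma~\ref{lemma:connected} gives a surjection $\HH_2^{\Torelli_g}(\cC_{ab}(\Sigma_g);\Q) \twoheadrightarrow \HH_2(\Torelli_g;\Q)$. I will apply the spectral sequence of Proposition~\ref{proposition:spectralsequence} and track which parts contribute to $\Lambda_g$. By Proposition~\ref{proposition:quotientcontractible} the quotient $\cC_{ab}/\Torelli_g$ is contractible, so \S\ref{section:bottomrow} forces $\ssE^2_{20} = \HH_2(\cC_{ab}/\Torelli_g;\Q) = 0$ and hence $\ssE^\infty_{20} = 0$. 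By Corollary~\ref{corollary:quotient} the two vertex orbits have representatives $\alpha$ and $\beta$, so $\ssE^1_{02} = \HH_2((\Torelli_g)_\alpha;\Q) \oplus \HH_2((\Torelli_g)_\beta;\Q)$, and by \S\ref{section:leftcol} the composite $\ssE^\infty_{02} \to \HH_2(\Torelli_g;\Q)$ factors through $\lambda$. Consequently $\Lambda_g$ is surjected onto by the image in $\Lambda_g$ of $\ssE^\infty_{11}$, which is a subquotient of $\ker(d_1\colon \ssE^1_{11} \to \ssE^1_{01})$.

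I next analyze $\ker d_1$ using the edge classification of Corollary~\ref{corollary:quotient}: pure loops at $v_a$, pure loops at $v_b$, and the single mixed edge $e_{ab}$. For a pure loop at $v_a$, Lemma~\ref{lemma:normalform}(i) gives the normal form $\{\alpha, B(\alpha)\}$ with $B \in (\Torelli_g)_\beta$; for $[A] \in \HH_1((\Torelli_g)_{\alpha, B(\alpha)};\Q)$ in the corresponding summand of $\ssE^1_{11}$, \S\ref{section:differentials} computes $d_1([A]) = \overline{[A, B]} \in \HH_1((\Torelli_g)_\alpha;\Q)$, which vanishes because $[A, B]$ is a commutator in $\Torelli_g$ and $\HH_1((\Torelli_g)_\alpha;\Q) \hookrightarrow \HH_1(\Torelli_g;\Q)$ by Theorem~\ref{theorem:putmaninjective}. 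For the mixed edge, \S\ref{section:differentials} gives $d_1$ as a difference of inclusions $\HH_1((\Torelli_g)_{\alpha,\beta};\Q) \rightrightarrows \HH_1((\Torelli_g)_\alpha;\Q) \oplus \HH_1((\Torelli_g)_\beta;\Q)$, both of which are injective by Theorem~\ref{theorem:putmaninjective} applied to the genus-$(g-1)$ subsurface $\Sigma_{g-1}^1 \subset \Sigma_g$. Since pure-loop differentials are all zero, any $d_1$-cycle involving a mixed-edge class $x$ forces $\iota_\alpha(x) = \iota_\beta(x) = 0$ and hence $x = 0$. Therefore $\ker d_1$ is supported entirely on pure loops.

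It remains to identify, for each pure-loop class $[A]$ on $\{\alpha, B(\alpha)\}$, the resulting element in $\Lambda_g$ with $\LLComm{A, B}$. I use the twisted surface group formalism of \S\ref{section:twistedsurface}. Writing $[A, B]^n \fc = 1$ for some product $\fc = [c_1, d_1] \cdots [c_k, d_k]$ of commutators in $(\Torelli_g)_\alpha$ (possible over $\Q$ by the vanishing above), I obtain a homomorphism $\phi\colon \Gamma_{n, k} \to \Torelli_g$ with $z_0 \mapsto A$, $w_0 \mapsto B$, and $(z_i, w_i) \mapsto (c_i, d_i)$. The hypotheses ``$B(\alpha)$ disjoint from $\alpha$ and $A$ fixes $B(\alpha)$'' are precisely what allows $\phi$ to be promoted to a $\Gamma_{n, k}$-equivariant simplicial map from the tree $T$ of Corollary~\ref{corollary:tree} into $\cC_{ab}$ sending the base edge $\tau$ to $\{\alpha, B(\alpha)\}$: by Lemma~\ref{lemma:hnn} the compatibility on $(\Gamma_{n, k})_{\tau_0} = \Gamma'_{n, k}$ reduces to checking that $\phi(u_0) = B^{-1} A B$ fixes $\alpha$, which holds iff $A$ fixes $B(\alpha)$. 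Functoriality of equivariant homology (\S\ref{section:basicdef}) then provides a commutative diagram in which the generator of $\HH_2(\Gamma_{n, k};\Q) = \HH_2^{\Gamma_{n, k}}(T;\Q) \cong \Q$ (Lemma~\ref{lemma:homologygamma}) is represented at the $\ssE^1_{11}$-level by $[z_0]$ (a $d_1$-cycle over $\Q$ because $d_1([z_0]) = [z_0, w_0]$ is $n$-torsion in $\HH_1(\Gamma'_{n, k})$ by Lemma~\ref{lemma:homologygammaprime}), and its image in $\HH_2(\Torelli_g;\Q)$ equals $\LLComm{A, B}$ by the definition of that class in \S\ref{section:llcomm}. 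The case of pure loops at $v_b$ is symmetric, completing the proof.

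The main obstacle is this last identification: the spectral-sequence class of $[A]$ and the homology class $\LLComm{A, B}$ are defined by superficially different recipes, and matching them cleanly requires the auxiliary tree-model from Corollary~\ref{corollary:tree}. The hypotheses of the proposition appear exactly as the compatibility required to realize this model inside the action of $\Torelli_g$ on $\cC_{ab}$, so everything dovetails once the setup is in place, with the remaining steps amounting to careful bookkeeping of stabilizers and differentials.
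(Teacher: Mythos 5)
Your proposal is correct and follows essentially the same route as the paper: the $\Torelli_g$-equivariant spectral sequence for $\cC_{ab}(\Sigma_g)$, vanishing of the bottom row and the left-column analysis to reduce to $\ssE^{\infty}_{11}$, the injectivity argument (via Theorem~\ref{theorem:putmaninjective}) to show that the mixed-edge summand contributes nothing to $\ker d_1$, and finally the tree model for $\Gamma_{n,k}$ from Corollary~\ref{corollary:tree} to identify the surviving pure-loop classes with (multiples of) $\LLComm{A,B}$. The only cosmetic difference is that you verify directly that $d_1([z_0]) = [u_0]-[z_0]$ is $n$-torsion in $\HH_1(\Gamma'_{n,k})$, whereas the paper deduces degeneration of $\ssF$ at $\ssF^1$ by a dimension count against Lemma~\ref{lemma:homologygamma}; both are fine.
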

\begin{proof}
In this proof, all homology is taken with $\Q$ coefficients.\footnote{This is just to simplify
our notation in this proof.  It does not represent a change in our general conventions for writing
homology groups.}
Let $\Lambda'_g < \Lambda_g$ be the span of the purported generators.  Our goal is to prove
that $\Lambda'_g = \Lambda_g$.
Let $a = [\alpha]$ and $b = [\beta]$, and consider the $\Torelli_g$-equivariant homology of the handle complex
$\cC_{ab}(\Sigma_g)$ from \S \ref{section:handlecomplex}.  
Proposition~\ref{proposition:mixedacyclic} says that $\cC_{ab}(\Sigma_g)$ is $1$-acyclic, so by Lemma~\ref{lemma:connected} the canonical map
\[\HH^{\Torelli_g}_2(\cC_{ab}(\Sigma_g)) \longrightarrow \HH_2(\Torelli_g)\]
is surjective.  
Lemma~\ref{lemma:withoutrotations} says that $\Torelli_g$ acts on $\cC_{ab}(\Sigma_g)$ without rotations, so we
can study $\HH^{\Torelli_g}_2(\cC_{ab}(\Sigma_g))$ using the spectral
sequence from Proposition~\ref{proposition:spectralsequence}:
\[\ssE^1_{pq} = \hspace{-15pt}\bigoplus_{\sigma \in (\cC_{ab}(\Sigma_g)/\Torelli_g)^{(p)}} \hspace{-15pt}\HH_q((\Torelli_g)_{\tsigma}) \Rightarrow \HH^{\Torelli_g}_{p+q}(\cC_{ab}(\Sigma_g)).\]
As was discussed in \S \ref{section:bottomrow}, the terms $\ssE^1_{p0}$ are exactly the cellular chain complex
of $\cC_{ab}(\Sigma_g)/\Torelli_g$.  As for the other entries, Corollary \ref{corollary:quotient} says that
the $1$-skeleton of $\cC_{ab}(\Sigma_g)/\Torelli_g$ consists of:
\begin{itemize}
\item[(i)] vertices $v_a$ and $v_b$, with $v_a$ and $v_b$ the images of $\alpha$ and $\beta$, respectively.
\item[(ii)] loops based at $v_a$ and $v_b$, each the image of a pure $1$-simplex.
\item[(iii)] a $1$-simplex $e_{ab}$ joining $v_a$ and $v_b$, with $e_{ab}$ the image of the edge $\{\alpha,\beta\}$.
\end{itemize}
It follows that the $\ssE^1$-page of our spectral sequence is
\SetTblrInner{colsep=2pt}
\[\text{\footnotesize\begin{tblr}{|ccccc}
$\HH_2((\Torelli_g)_{\alpha}) \oplus \HH_2((\Torelli_g)_{\beta})$ & $\leftarrow$ & $\ast$ & $\leftarrow$ & $\ast$ &\\
$\HH_1((\Torelli_g)_{\alpha}) \oplus \HH_1((\Torelli_g)_{\beta})$ & $\leftarrow$ & $\HH_1((\Torelli_g)_{\alpha,\beta}) \oplus\hspace{-15pt}\bigoplus_{e \in (\cC_a(\Sigma_g)/\Torelli_g)^{(1)}} \hspace{-15pt}\HH_1((\Torelli_g)_{\te}) \oplus\hspace{-15pt}\bigoplus_{e \in (\cC_b(\Sigma_g)/\Torelli_g)^{(1)}}\hspace{-15pt}\HH_1((\Torelli_g)_{\te})$ & $\leftarrow$ & $\ast$ \\
$C_0(\cC_{ab}(\Sigma_g)/\Torelli_g)$ & $\leftarrow$ & $C_1(\cC_{ab}(\Sigma_g)/\Torelli_g)$ & $\leftarrow$ & $C_2(\cC_{ab}(\Sigma_g)/\Torelli_g)$ \\
\hline
\end{tblr}}\]
Proposition~\ref{proposition:quotientcontractible} says that $\cC_{ab}(\Sigma_g)/\Torelli_g$ is contractible, so 
\begin{equation}
\label{eqn:killbottomrow}
\ssE^{\infty}_{p0} = \ssE^{2}_{p0} = 0 \quad \text{for $p \geq 1$}.
\end{equation}
Moreover, as we discussed in \S \ref{section:leftcol} the composition
\begin{equation}
\label{eqn:constructcokernel}
\begin{tikzcd}
\HH_2((\Torelli_g)_{\alpha}) \oplus \HH_2((\Torelli_g)_{\beta}) = \ssE^1_{02} \arrow[two heads]{r} & \ssE^{\infty}_{02} \arrow[hook]{r} & \HH^{\Torelli_g}_2(\cC_{ab}(\Sigma_g)) \arrow[two heads]{r} & \HH_2(\Torelli_g)
\end{tikzcd}
\end{equation}
is the sum of the maps induced by the inclusions $(\Torelli_g)_{\alpha} \hookrightarrow \Torelli_g$ and
$(\Torelli_g)_{\beta} \hookrightarrow \Torelli_g$.  It follows that the cokernel of \eqref{eqn:constructcokernel}
is $\Lambda_g$.  Combining this with \eqref{eqn:killbottomrow}, we deduce that the surjection
$\HH^{\Torelli_g}_2(\cC_{ab}(\Sigma_g)) \twoheadrightarrow \HH_2(\Torelli_g)$
induces a surjection
\begin{equation}
\label{eqn:toquotient}
\begin{tikzcd}
\ssE^2_{11} = \ssE^{\infty}_{11} \arrow[two heads]{r} & \Lambda_g.
\end{tikzcd}
\end{equation}
To prove that $\Lambda'_g = \Lambda_g$, it is enough to prove that the image of 
\eqref{eqn:toquotient} is contained in $\Lambda'_g$.
The vector space $\ssE^2_{11}$ is a quotient of the kernel of the following differential $\partial\colon \ssE^1_{11} \rightarrow \ssE^1_{01}$:
\[\partial\colon \HH_1((\Torelli_g)_{\alpha,\beta}) \oplus\hspace{-10pt}\bigoplus_{e \in (\cC_a(\Sigma_g)/\Torelli_g)^{(1)}} \hspace{-10pt}\HH_1((\Torelli_g)_{\te}) \oplus\hspace{-10pt}\bigoplus_{e \in (\cC_b(\Sigma_g)/\Torelli_g)^{(1)}}\hspace{-10pt}\HH_1((\Torelli_g)_{\te}) \longrightarrow \HH_1((\Torelli_g)_{\alpha}) \oplus \HH_1((\Torelli_g)_{\beta}).\]
To prove that the image of \eqref{eqn:toquotient} lies in $\Lambda'_g$, it is enough to prove the following two claims:

\begin{claim}{1}
The kernel of the differential $\partial\colon \ssE^1_{11} \rightarrow \ssE^1_{01}$ is
\begin{equation}
\label{eqn:claimedkernel}
\bigoplus_{e \in (\cC_a(\Sigma_g)/\Torelli_g)^{(1)}} \hspace{-10pt}\HH_1((\Torelli_g)_{\te}) \oplus\hspace{-10pt}\bigoplus_{e \in (\cC_b(\Sigma_g)/\Torelli_g)^{(1)}}\hspace{-10pt}\HH_1((\Torelli_g)_{\te}).
\end{equation}
\end{claim}

We must prove $\partial$ vanishes on \eqref{eqn:claimedkernel} and that
the restriction of $\partial$ to $\HH_1((\Torelli_g)_{\alpha,\beta})$ is an injection.
We start with the latter fact.  From the description of the differentials in \S \ref{section:differentials},
we see that on $\HH_1((\Torelli_g)_{\alpha,\beta})$ our differential is the difference between the maps
\[\begin{tikzcd}
\HH_1((\Torelli_g)_{\alpha,\beta}) \arrow{r} & \HH_1((\Torelli_g)_{\alpha}) \arrow[hook]{r} & \HH_1((\Torelli_g)_{\alpha}) \oplus \HH_1((\Torelli_g)_{\beta})
\end{tikzcd}\]
and
\[\begin{tikzcd}
\HH_1((\Torelli_g)_{\alpha,\beta}) \arrow{r} & \HH_1((\Torelli_g)_{\beta}) \arrow[hook]{r} & \HH_1((\Torelli_g)_{\alpha}) \oplus \HH_1((\Torelli_g)_{\beta})
\end{tikzcd}\]
The fact that 
this difference is injective follows from Theorem~\ref{theorem:putmaninjective}, which
implies that 
$\HH_1((\Torelli_g)_{\alpha,\beta}) \rightarrow \HH_1((\Torelli_g)_{\alpha})$ and $\HH_1((\Torelli_g)_{\alpha,\beta}) \rightarrow \HH_1((\Torelli_g)_{\beta})$
are both injective.\footnote{In fact, we only need that one of them is injective.}

We now prove that $\partial$ vanishes on \eqref{eqn:claimedkernel}.  We will handle
the terms coming from $\cC_a(\Sigma_g)$; the other case is similar.  Consider a $1$-cell $e$ of $\cC_a(\Sigma_g)/\Torelli_g$.  
Corollary \ref{corollary:quotient} says that $e$ is a loop.  Using Lemma~\ref{lemma:normalform}, we can lift $e$ to an
edge $\te$ of $\cC_a(\Sigma_g)$ going from $\alpha$ to $B(\alpha)$ for some $B \in (\Torelli_g)_{\beta}$.  As we discussed in \S \ref{section:differentials}, on 
$\HH_1((\Torelli_g)_{\alpha,B(\alpha)})$ our differential is the composition
\begin{equation}
\label{eqn:loopvanish}
\begin{tikzcd}
\HH_1((\Torelli_g)_{\alpha,B(\alpha)}) \arrow{r} & \HH_1((\Torelli_g)_{\alpha}) \arrow[hook]{r} & \HH_1((\Torelli_g)_{\alpha}) \oplus \HH_1((\Torelli_g)_{\beta})
\end{tikzcd}
\end{equation}
where the first map takes the homology class of $x \in (\Torelli_g)_{\alpha,B(\alpha)}$
to the homology class of $[x,B] \in (\Torelli_g)_{\alpha}$.  Theorem 
\ref{theorem:putmaninjective} says that the inclusion $(\Torelli_g)_{\alpha} \hookrightarrow \Torelli_g$
induces an injection $\HH_1((\Torelli_g)_{\alpha}) \hookrightarrow \HH_1(\Torelli_g)$.  Since
in $\HH_1(\Torelli_g)$ the homology class of the commutator $[x,B]$ vanishes, the same is true
in $\HH_1((\Torelli_g)_{\alpha})$.  We conclude that \eqref{eqn:loopvanish} is zero, as desired.

\begin{claim}{2}
\label{claim:identify}
Let $e$ be a $1$-simplex of either $\cC_a(\Sigma_g) / \Torelli_g$ or $\cC_b(\Sigma_g) / \Torelli_g$.  Then the image of the composition
$\HH_1((\Torelli_g)_{\te}) \rightarrow \ssE^2_{11} \twoheadrightarrow \Lambda_g$
is contained in $\Lambda'_g$.
\end{claim}

We will assume that $e$ is a $1$-simplex of $\cC_a(\Sigma_g) / \Torelli_g$.  The other case is similar.
By Lemma~\ref{lemma:normalform}, we can construct 
a lift $\te$ of $e$ to $\cC_{ab}(\Sigma_g)$ going from $\alpha$ to $\alpha' = B(\alpha)$ for some $B \in (\Torelli_g)_{\beta}$.  Consider $A \in (\Torelli_g)_{\alpha,B(\alpha)}$.
The element $\LLComm{A,B} \in \Lambda_g$ is one of our generators for $\Lambda'_g$,
so it is enough to prove that the composition
\[\begin{tikzcd}
\HH_1((\Torelli_g)_{\alpha,B(\alpha)}) \arrow{r} & \ssE^2_{11} \arrow[two heads]{r} & \Lambda_g
\end{tikzcd}\]
takes the homology class of $A$ to a multiple of $\LLComm{A,B}$.
We divide the proof of this into 3 steps.  We reiterate that during this proof all homology has $\Q$-coefficients.

\begin{step}{2.1}
\label{step:identify2}
For some $n,k \geq 1$, we construct a homomorphism\footnote{Here
$\Gamma_{n,k} = \GroupPres{$z_0,w_0,\ldots,z_k,w_k$}{$[z_0,w_0]^n [z_1,w_1] \cdots [z_k,w_k] = 1$}$
is the group defined in \S \ref{section:twistedsurface}.}
$f\colon \Gamma_{n,k} \rightarrow \Torelli_g$
such that the image of the composition
\begin{equation}
\label{eqn:22map}
\begin{tikzcd}
\HH_2(\Gamma_{n,k}) \arrow{r}{f_{\ast}} & \HH_2(\Torelli_g) \arrow{r} & \Lambda_g
\end{tikzcd}
\end{equation}
is the $\Q$-span of $\LLComm{A,B}$.
\end{step}

Recall that $\LLComm{A,B}$ is
the image in $\Lambda_g$ of the element of $\HH_2(\Torelli_g)$ constructed as follows.
We have $[A,B] \in (\Torelli_g)_{\alpha}$, and we can find $n \geq 1$ and
$x_1,y_2,\ldots,x_k,y_k \in (\Torelli_g)_{\alpha}$ such that
\[[A,B]^n [x_1,y_1] \cdots [x_k,y_k] = 1.\]
Then $\LLComm{A,B}$ is the image in $\Lambda_g$ of the homology class
\[\frac{1}{n} \fh([A,B]^n [x_1,y_1] \cdots [x_k,y_k]) \in \HH_2(\Torelli_g).\]
Let $f\colon \Gamma_{n,k} \rightarrow \Torelli_g$ be the map defined by
\[\text{$f(z_0) = A$ and $f(w_0) = B$ and $f(z_i) = x_i$ and $f(w_i) = y_i$ for $1 \leq i \leq k$.}\]
Lemma~\ref{lemma:homologygamma} says that $\HH_2(\Gamma_{n,k}) = \Q$, and the image
of $f_{\ast}\colon \HH_2(\Gamma_{n,k}) \rightarrow \HH_2(\Torelli_g)$ is the $\Q$-span 
of $\fh([A,B]^n [x_1,y_1] \cdots [x_k,y_k])$.  It follows that the image of \eqref{eqn:22map}
is the $\Q$-span of $\LLComm{A,B}$.

\begin{step}{2.2}
We construct a tree $T$ on which $\Gamma_{n,k}$ acts without rotations
and a commutative diagram in equivariant homology
\[\begin{tikzcd}
\HH^{\Gamma_{n,k}}_{2}(T) \arrow{r} \arrow{d}{\cong} & \HH^{\Torelli_g}_{2}(\cC_{ab}(\Sigma_g)) \arrow{d} \\
\HH_{2}(\Gamma_{n,k})     \arrow{r}{f_{\ast}}        & \HH_{2}(\Torelli_g).
\end{tikzcd}\]
\end{step}

Recall that $\Gamma'_{n,k}$ is the subgroup of $\Gamma_{n,k}$ generated by
$\{z_0,w_0^{-1} z_0 w_0, z_1, w_1,\ldots,z_k,w_k\}$.  Corollary \ref{corollary:tree}
gives a tree $T$ on which $\Gamma_{n,k}$ acts without rotations such that:
\begin{itemize}
\item $\Gamma_{n,k}$ acts transitively on the vertices and edges of $T$, so
$T/\Gamma_{n,k} \cong S^1$; and
\item there is an edge $\tau$ of $T$ going from a vertex $\tau_0$ to a vertex $\tau_1$ such
that
\begin{equation}
\label{eqn:stabstuff}
(\Gamma_{n,k})_{\tau_0} = \Gamma'_{n,k} \quad \text{and} \quad (\Gamma_{n,k})_{\tau} = \Span{z_0} \cong \Z
\quad \text{and} \quad w_0 \Cdot \tau_0 = \tau_1.
\end{equation}
\end{itemize}
Since $T$ is contractible, the canonical map
$\HH^{\Gamma_{n,k}}_d(T) \rightarrow \HH_d(\Gamma_{n,k})$ is
an isomorphism for $d \geq 0$.  

We claim that $f(\Gamma'_{n,k}) \subset (\Torelli_g)_{\alpha}$.  This can be checked on generators:
\begin{itemize}
\item $f(z_0) = A$ fixes the initial point $\alpha$ and the endpoint $\alpha' = B(\alpha)$ of $\te$; and
\item $f(w_0^{-1} z_0 w_0) = B^{-1} A B$ fixes $\alpha$ since $A$ fixes $\alpha$ and $B(\alpha)$; and
\item for $1 \leq i \leq k$, the elements $f(z_i) = x_i$ and $f(w_i) = y_i$ fix $\alpha$ by construction.
\end{itemize}
In light of \eqref{eqn:stabstuff} and the fact that $\Gamma_{n,k}$ acts transitively on the vertices
and edges of $T$, we can therefore define a cellular map $\phi\colon T \rightarrow \cC_{ab}(\Sigma_g)$ via
the formula
\[\phi(v \tau) = f(v) \te \quad \text{and} \quad \phi(v \tau_0) = f(v) \alpha \quad \text{for all $v \in \Gamma_{n,k}$}.\]
By the functoriality of equivariant homology discussed in \S \ref{section:basicdef}, the maps $f$ and $\phi$
induce a map $\HH^{\Gamma_{n,k}}_{2}(T) \rightarrow \HH^{\Torelli_g}_{2}(\cC_{ab}(\Sigma_g))$ fitting
into the claimed commutative diagram.

\begin{step}{2.3}
We use these constructions to prove our goal: that under the map
\[\begin{tikzcd}
\HH_1((\Torelli_g)_{\alpha,B(\alpha)}) \arrow{r} & \ssE^2_{11} \arrow[two heads]{r} & \Lambda_g
\end{tikzcd}\]
the homology class of $A$ maps to a multiple of $\LLComm{A,B} \in \Lambda_g$.
\end{step}

Proposition~\ref{proposition:spectralsequence} gives a spectral sequence
\[\ssF^1_{pq} = \hspace{-15pt}\bigoplus_{\sigma \in (T/\Gamma_{n,k})^{(p)}} \hspace{-15pt}\HH_q((\Gamma_{n,k})_{\tsigma}) \Rightarrow \HH^{\Gamma_{n,k}}_{p+q}(T).\]
Using Lemma~\ref{lemma:homologygammaprime} to identify $\HH_{\bullet}(\Gamma'_{n,k})$, the nonzero terms of the $\ssF^1$-page are
\[\text{\begin{tblr}{|ccc}
$\HH_1((\Gamma_{n,k})_{\tau_0})$ & $\leftarrow$ & $\HH_1((\Gamma_{n,k})_{\tau})$ \\
$\HH_0((\Gamma_{n,k})_{\tau_0})$ & $\leftarrow$ & $\HH_0((\Gamma_{n,k})_{\tau})$ \\
\hline
\end{tblr}}
\cong
\text{\begin{tblr}{|ccc}
$\HH_1(\Gamma'_{n,k})$ & $\leftarrow$ & $\HH_1(\Span{z_0})$ \\
$\HH_0(\Gamma'_{n,k})$ & $\leftarrow$ & $\HH_0(\Span{z_0})$\\
\hline
\end{tblr}}
\cong
\text{\begin{tblr}{|ccc}
$\Q^{2k+1}$ & $\leftarrow$ & $\Q$ \\
$\Q$        & $\leftarrow$ & $\Q$ \\
\hline
\end{tblr}}\]
This converges to the homology of $\Gamma_{n,k}$, which by Lemma~\ref{lemma:homologygamma} has
$\HH_1(\Gamma_{n,k}) \cong \Q^{2k+2}$ and $\HH_2(\Gamma_{n,k}) = \Q$.
The differentials on $\ssF^1$ are thus all $0$, and $\ssF$ degenerates at $\ssF^1$.  In particular,
\[\HH^{\Gamma_{n,k}}_2(T) = \ssF^{\infty}_{11} = \ssF^{1}_{11}.\]
The spectral sequence from Proposition~\ref{proposition:spectralsequence} is functorial, so there
is a map of spectral sequences $\ssF \rightarrow \ssE$ converging to the map
$\HH^{\Gamma_{n,k}}_{\bullet}(T) \rightarrow \HH^{\Torelli_g}_{\bullet}(\cC_{ab}(\Sigma_g))$
induced by $f$ and $\phi$.  Since $f(z_0) = A \in (\Torelli_g)_{\alpha,B(\alpha)}$, we have a commutative
diagram
\[\begin{tikzcd}
\HH_1(\Span{z_0}) \arrow{d} \arrow{r}{\cong} & \ssF^2_{11} \arrow{d} \\
\HH_1((\Torelli_g)_{\alpha,B(\alpha)}) \arrow{r} & \ssE^2_{11}
\end{tikzcd}\]
and the image of $\ssF^2_{11} \rightarrow \ssE^2_{11}$ is the $\Q$-span of the image of the
homology class of $A \in (\Torelli_g)_{\alpha,B(\alpha)}$.  As we discussed in Step \ref{step:identify2},
the image of 
\[\ssF^2_{11} = \HH^{\Gamma_{n,k}}_2(T) = \HH_2(\Gamma_{n,k}) \cong \Q\] 
in $\Lambda_g$ is the line spanned by $\LLComm{A,B}$.  The step follows.
\end{proof}

\part{Homology of Torelli, step 3: algebraicity of cokernel} 
\label{part:step3}

We close by proving Theorem~\ref{maintheorem:cokernel}, whose statement we recall in \S \ref{section:step3intro}.
To avoid having to constantly impose genus hypotheses, we make the following standing assumption:

\begin{assumption}
\label{assumption:step3genus}
Throughout Part \ref{part:step3}, we fix some $g \geq 5$.
\end{assumption}

\section{Introduction to step 3}
\label{section:step3intro}

After fixing some notation, we outline the rest of the paper.

\subsection{Recollection of goal}
Let $\alpha$ and $\beta$ be the following curves on $\Sigma_g$:\\
\Figure{AandBcurvesNoCut}
Recall that we are trying to prove Theorem~\ref{maintheorem:cokernel}, which says that the cokernel
$\Lambda_g$ of the map
$\lambda\colon \HH_2((\Torelli_g)_{\alpha};\Q) \oplus \HH_2((\Torelli_g)_{\beta};\Q) \rightarrow \HH_2(\Torelli_g;\Q)$
is a finite-dimensional algebraic representation of $\Sp_{2(g-1)}(\Z)$.

\subsection{Homology}
\label{section:homology}

The action of $\Sp_{2(g-1)}(\Z)$
on $\Lambda_g$ is induced by the conjugation action of
$(\Mod_g)_{\alpha,\beta}$ on $\Torelli_g$.  Set
$a = [\alpha]$ and $b = [\beta]$.  Let $H_{\Z}$ be the
orthogonal complement in $\HH_1(\Sigma_g;\Z)$ of $a$ and $b$ with respect to
the algebraic intersection form.  We can
identify $\Sp_{2(g-1)}(\Z)$ with $\Sp(H_{\Z})$.  Similarly,
let $H$ be the orthogonal complement in $\HH_1(\Sigma_g;\Q)$
of $a$ and $b$.  

\subsection{Outline}
\label{section:outlinestep3}

Proposition~\ref{proposition:generators} gives generators $\LLComm{A,B}$ for $\Lambda_g$.
Roughly speaking, we will find enough relations between the $\LLComm{A,B}$ to force
$\Lambda_g$ to be a subquotient of $(\wedge^2 H)^{\otimes 2}$.  There are six steps:
\begin{itemize}
\item \S \ref{section:step3notation} establishes some terminology and notation for the $\LLComm{A,B}$.
\item \S \ref{section:step3compatible} and \S \ref{section:step3shifters} show how to interpret
the $A$ and $B$ in $\LLComm{A,B}$ in terms of $H$.
\item \S \ref{section:step3genset} refines our generating set for $\Lambda_g$.
\item \S \ref{section:step3identifygen} identifies some redundancies in our refined generating set.
\item \S \ref{section:step3proof} introduces a subquotient of $(\wedge^2 H)^{\otimes 2}$ called
the {\em symmetric kernel}, and discusses a presentation of it from the authors' recent paper
\cite{MinahanPutmanRepPresentations}.  We then use this presentation to prove that $\Lambda_g$ is a quotient
of the symmetric kernel.  This implies that $\Lambda_g$ is a finite-dimensional algebraic representation
of $\Sp(H_{\Z})$, as claimed by Theorem~\ref{maintheorem:cokernel}
\end{itemize}

\subsection{Standing notation}
\label{section:standingnotation}

We already fixed $g \geq 5$ in Assumption \ref{assumption:step3genus}.
To avoid having to re-introduce notation in each section, we fix the following notation once and for all
as above: $\alpha$, $\beta$, $a$, $b$, $H$, $H_{\Z}$.  

\section{Step 3.1: notation for generators}
\label{section:step3notation}

We introduce some terminology for
the generators of $\Lambda_g$ given by Proposition~\ref{proposition:generators}.

\subsection{Shifters and compatibility}
The generators come in two families.
Say that $A \in \Torelli_g$ is a {\em $\beta$-shifter} if $A$ fixes $\alpha$ and $A(\beta)$ is disjoint from $\beta$.  In that
case, say that $B \in \Torelli_g$ is {\em compatible} with $A$ if $B$ fixes $\beta$ and $A(\beta)$.  We
then have a generator $\LLComm{A,B} \in \Lambda_g$, which for clarity we will denote $\LLBComm{A,B}$.
Similarly, say that $B \in \Torelli_g$ is an {\em $\alpha$-shifter} if $B$ fixes $\beta$ and $B(\alpha)$ is disjoint from $\alpha$.
In that case, say that $A \in \Torelli_g$ is {\em compatible} with $B$ if $A$ fixes $\alpha$ and $B(\alpha)$.  We
then have a generator $\LLComm{A,B} \in \Lambda_g$, which for clarity we will denote $\LLAComm{A,B}$.

\begin{remark}
\label{remark:distinct}
It is possible for both $\LLAComm{A,B}$ and $\LLBComm{A,B}$ to be defined, in which case they are identical elements
of $\Lambda_g$.
\end{remark}

\subsection{Left and right sides}

Let $A$ be a $\beta$-shifter, so $\beta$ and $A(\beta)$ are disjoint.
Since we always consider curves up to isotopy, it is possible
for $A(\beta) = \beta$.
Assume that $A(\beta) \neq \beta$,
in which case we will call $A$ a {\em nontrivial $\beta$-shifter}.  The curves $\beta \cup A(\beta)$
divide $\Sigma_g$ into two subsurfaces $T$ and $T'$.  Order these such that $T$ lies to the left
of $\beta$ and $T'$ lies to the right:\footnote{If $A(\beta) = \beta$, then you can slide $A(\beta)$ over $\beta$
and exchange which side is the left and the right.}\\
\Figure{LeftRightShifter}
We call $T$ the {\em left side} of $\beta \cup A(\beta)$ and $T'$ the {\em right side}.
An element $B \in \Torelli_g$ is {\em left-compatible} with $A$
if $B$ is supported on $T$ and is {\em right-compatible} with $A$ if $B$ is supported on $T'$.

\begin{remark}
If $B$ is compatible with a $\beta$-shifter $A$, then we can write $B = B_1 B_2$ with
$B_1$ left-compatible with $A$ and $B_2$ right-compatible with $A$.
\end{remark}

Similarly, let $B$ be an $\alpha$-shifter, so $\alpha$ and $B(\alpha)$ are disjoint.  Assume
that $B(\alpha) \neq \alpha$, in which case we call $B$ a {\em nontrivial $\alpha$-shifter}.  In that
case, $\alpha \cup B(\alpha)$ divides $\Sigma_g$ into two subsurfaces $T$ and $T'$, ordered such that
$T$ is to the left of $\alpha$ and $T'$ is to the right.
We call $T$ the {\em left side} of $\alpha \cup B(\alpha)$ and $T'$ the {\em right side}.
We say that $A \in \Torelli_g$ is {\em left-compatible} with $B$ if $A$ is supported on $T$ and is {\em right-compatible}
with $B$ if $A$ is supported on $T'$.

\subsection{Symplectic summands and splittings}

Recall that $a = [\alpha] \in \HH_1(\Sigma_g)$ and $b = [\beta] \in \HH_1(\Sigma_g)$, and
$H_{\Z} = \Span{a,b}^{\perp}$.
A {\em symplectic summand} of $H_{\Z}$ is a subgroup $V < H_{\Z}$ such that
$H_{\Z} = V \oplus V^{\perp}$, in which case $V \cong \Z^{2h}$ for some $h$ called
the {\em genus} of $V$.  If $V$ is a symplectic summand of $H_{\Z}$, then $V^{\perp}$ is too.
A {\em symplectic splitting} of $H_{\Z}$ is a splitting of the form $H_{\Z} = V \oplus V^{\perp}$.  
The terms $V$ and $V^{\perp}$ are ordered, so $H_{\Z} = V^{\perp} \oplus V$ is a different symplectic splitting.
We call $H_{\Z} = V \oplus V^{\perp}$ a
{\em nontrivial} symplectic splitting if $V,V^{\perp} \neq 0$.

\subsection{Induced splittings}

Let $A$ be a nontrivial $\beta$-shifter.  Let $T$ and $T'$ be the left and right sides of $\beta \cup A(\beta)$, respectively.
Let $S$ and $S'$ be the components of the complement of a regular neighborhood of
$\alpha \cup \beta \cup A(\beta)$, ordered such that $S \subset T$ and $S' \subset T'$.  This is all depicted
in the following figure:\\
\Figure{LeftRightShifterSummand}
We have $\HH_1(S) \subset H_{\Z}$ and $\HH_1(S') \subset H_{\Z}$, and
$H_{\Z} = \HH_1(S) \oplus \HH_1(S')$.
This is a nontrivial symplectic splitting of $H_{\Z}$ that we call the symplectic splitting
{\em induced} by $A$.  
We call $\HH_1(S)$ and $\HH_1(S')$ the {\em left-summand} and {\em right-summand} of $A$, respectively.

Similarly, let $B$ be a nontrivial $\alpha$-shifter.  Let $T$ and $T'$ be the left and right sides of $\alpha \cup B(\alpha)$, respectively.
Let $S$ and $S'$ be the components of the complement of a regular neighborhood of
$\beta \cup \alpha \cup B(\alpha)$, ordered such that $S \subset T$ and $S' \subset T'$.
We then have $H_{\Z} = \HH_1(S) \oplus \HH_1(S')$, and we call this the symplectic splitting {\em induced} by $B$.
We call $\HH_1(S)$ and $\HH_1(S')$ the {\em left-summand} and {\em right-summand} of $B$, respectively.

\section{Step 3.2: homological interpretation of compatible elements}
\label{section:step3compatible}

Our next goal is to give a homological interpretation of the compatible elements
$B$ in $\LLBComm{A,B}$ and $A$ in $\LLAComm{A,B}$.  This requires first proving
some relations in $\Lambda_g$.

\begin{remark}
In our proofs, we will freely use the identities between homology classes of surface
relations from \S \ref{section:surfacerelations}
\end{remark}

\subsection{Linearity}
Our first relation is:

\begin{lemma}
\label{lemma:linearity}
The following hold:
\begin{itemize}
\item Let $A$ be a $\beta$-shifter and let $B_1$ and $B_2$
be compatible with $A$.  Then $\LLBComm{A,B_1 B_2} = \LLBComm{A,B_1} + \LLBComm{A,B_2}$.
\item Let $B$ be an $\alpha$-shifter and let $A_1$ and $A_2$
be compatible with $B$.  Then $\LLAComm{A_1 A_2,B} = \LLAComm{A_1,B} + \LLAComm{A_2,B}$.
\end{itemize}
\end{lemma}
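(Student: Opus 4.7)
We prove the first statement; the second follows by interchanging the roles of $\alpha$ and $\beta$. Abbreviate $c_i = [A, B_i]$ and $c = [A, B_1 B_2]$, all of which lie in $(\Torelli_g)_\beta$ by the compatibility hypothesis. The algebraic starting point is the commutator identity
\[c = [A, B_1 B_2] = c_2 \cdot c_1^{B_2},\]
which fits precisely the pattern in rule (3) from \S \ref{section:surfacerelations}. Pick $n$ large enough that $c_i^n \fc_i = 1$ in $\Torelli_g$ for some products of commutators $\fc_i$ in $(\Torelli_g)_\beta$, so that $r_i := c_i^n \fc_i$ is a surface relation and, by construction, $n \LLBComm{A, B_i} = \pi(\fh(r_i))$.

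The plan is to produce a surface relation $\tilde r = c^n \tilde \fc$ (with $\tilde \fc$ a product of commutators in $(\Torelli_g)_\beta$), together with a chain of $\fh$-preserving manipulations, showing that $\fh(\tilde r) = \fh(r_1) + \fh(r_2)$ in $\HH_2(\Torelli_g; \Q)$ modulo classes in the image of $\HH_2((\Torelli_g)_\beta;\Q) \to \HH_2(\Torelli_g;\Q) \subseteq \Image(\lambda)$. Projecting to $\Lambda_g$ then gives the linearity. To construct $\tilde r$, expand $c^n = (c_2 \cdot c_1^{B_2})^n$ using rule (3) and apply the shuffle (rule (2)) repeatedly to move all $n$ copies of $c_2$ to the left, reaching $c_2^n \cdot T$ with
\[T = (c_1^{B_2})^{c_2^{n-1}} (c_1^{B_2})^{c_2^{n-2}} \cdots (c_1^{B_2}),\]
the staircase dictated by the universal identity $(pq)^n = p^n \cdot q^{p^{n-1}} \cdots q$. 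Since $T$ equals $(c_1^{B_2})^n = \fc_1^{-B_2}$ in the abelianization of $(\Torelli_g)_\beta$, there is a product of commutators $\fd$ in $(\Torelli_g)_\beta$ with $T = \fc_1^{-B_2} \fd$; taking $\tilde \fc = \fd^{-1} \fc_1^{B_2} \fc_2$ makes $\tilde r = c^n \tilde \fc$ a valid surface relation.

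To verify the $\fh$-identity, I would first use rule (2) to slide $\fc_2$ from the end of $\tilde \fc$ to immediately after $c_2^n$ (which conjugates the intervening commutators by $\fc_2$ but preserves $\fh$), then split by concatenation (rule (1)) into $c_2^n \fc_2 = r_2$ followed by $(T \fd^{-1} \fc_1^{B_2})^{\fc_2}$; inner-automorphism invariance of $\HH_2(\Torelli_g;\Q)$ discards the conjugation by $\fc_2$, giving $\fh(\tilde r) = \fh(r_2) + \fh(T \fd^{-1} \fc_1^{B_2})$. Finally, the remaining piece $T \fd^{-1} \fc_1^{B_2}$ is shown to be $\fh$-equivalent to $r_1^{B_2} = (c_1^{B_2})^n \fc_1^{B_2}$ modulo a surface relation in $(\Torelli_g)_\beta$, by straightening the staircase $T$ back into the pure power $(c_1^{B_2})^n$ via further applications of rules (1)--(3); combined with $\fh(r_1^{B_2}) = \fh(r_1)$, this yields the desired identity.

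The main obstacle is the bookkeeping of shuffle conjugations in the last step: one must verify that the "staircase correction" $\fd$ can be reconciled with $(c_1^{B_2})^n$ by a sequence of moves whose net contribution to $\fh$ lies in the image of $\HH_2((\Torelli_g)_\beta;\Q)$. This is delicate because the individual commutators in $T$ have one entry of the form $A^{B_2 c_2^i}$ which need not lie in $(\Torelli_g)_\beta$, so the "$(\Torelli_g)_\beta$-ness" of the correction is only visible at the level of the full product, not commutator by commutator.
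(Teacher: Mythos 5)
Your approach is essentially the paper's, run in the opposite direction: you begin with $[A,B_1 B_2]^n$ and try to split off $r_2 = c_2^n\fc_2$, whereas the paper starts from representatives of $\LLBComm{A,B_1}$ and $\LLBComm{A,B_2}$, concatenates them, and pushes the result toward a representative of $\LLBComm{A,B_1 B_2}$. Both use the same commutator identity $[A,B_1 B_2] = [A,B_2][A,B_1]^{B_2}$ and the formal calculus of \S\ref{section:surfacerelations}. That said, your proposal has a genuine gap: the "main obstacle" you flag — showing that $T\fd^{-1}\fc_1^{B_2}$ is $\fh$-equivalent, modulo the image of $\HH_2((\Torelli_g)_\beta;\Q)$, to $r_1^{B_2}$ — is the core of the argument, and you explicitly leave it unresolved.

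Two further comments. First, your worry is slightly misplaced: the problem is not whether $\fd$ can be taken to be a product of commutators with entries in $(\Torelli_g)_\beta$ (it can, by construction, since $T$ and $(c_1^{B_2})^n$ represent the same class in the abelianization of $(\Torelli_g)_\beta$). The real issue is that "straightening the staircase" $T=(c_1^{B_2})^{c_2^{n-1}}\cdots(c_1^{B_2})$ back into $(c_1^{B_2})^n$ cannot be done by shuffles alone, because the factors are conjugated by different powers of $c_2$, and undoing these conjugations via the rules of \S\ref{section:surfacerelations} introduces further commutators whose entries again escape $(\Torelli_g)_\beta$. What one must actually show is that the discrepancy contributes a class lying in $\Image(\lambda)$; this does not follow from a naive commutator-by-commutator bookkeeping. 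Second, for perspective, the paper's own proof handles the mirror of this step — the claim that $\fh([A,B_1]^n\fc'_1([A,B_2]^{B_1})^n\fc''_2)=\fh(([A,B_1][A,B_2]^{B_1})^n\fc''')$ — in a single displayed chain of equalities without elaborating on the interleaving, so you have accurately located the one genuinely delicate point of the proof. To complete the argument you would need to supply the reconciliation you defer, for instance by writing the difference of the two surface relators as a surface relator and explaining why its $\fh$-class lies in $\Image(\lambda)$.
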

\begin{proof}~\hspace{-10pt}\footnote{This almost follows from the proof of Claim \ref{claim:identify} of Proposition~\ref{proposition:generators},
but it takes extra work to see that this does not e.g.\ give an identity of the form $\LLBComm{A,B_1 B_2} = \lambda_1 \LLBComm{A,B_1} + \lambda_2 \LLBComm{B_2}$ for some $\lambda_1,\lambda_2 \in \Q$, so we give a direct proof.  Similar comments apply to many of our other relations.  The most important relations 
that cannot be derived from the proof of Proposition~\ref{proposition:generators} even with additional work 
are those in \S \ref{section:step3identifygen} below.}
Both are proved the same way, so we will give the details for the first.
We will show that it follows from the commutator identity\footnote{Recall that our
conventions are $[x,y] = x^{-1} y^{-1} x y$ and $x^y = y^{-1} x y$.} $[A,B_1 B_2] = [A,B_2] [A,B_1]^{B_2}$.
Recall that for $i=1,2$ we have $[A,B_i] \in (\Torelli_g)_{\beta}$, and there is some $n_i \geq 1$ and a product
of commutators $\fc_i$ in $(\Torelli_g)_{\beta}$ such that $[A,B_i]^{n_i} \fc = 1$ and $\LLBComm{A,B_i}$ is the image in $\Lambda_g$ of
\[\frac{1}{n_i} \fh([A,B_i]^{n_i} \fc_i) \in \HH_2(\Torelli_g;\Q).\]
Set $n = n_1 n_2$.  The product $n \LLBComm{A,B_i}$ is the image in $\Lambda_g$ of
\[\frac{n}{n_i} \fh([A,B_i]^{n_i} \fc_i) = \fh(([A,B_i]^{n_i} \fc_i)^{n/n_i}) = \fh([A,B_i]^{n} \fc'_i),\]
where $\fc'_i$ is the product of commutators in $(\Torelli_g)_{\beta}$ obtained by commuting all the $[A,B_i]^{n_i}$ factors to the
left.  Since inner automorphisms act trivially on homology, we can
conjugate our expression for $\LLBComm{A,B_2}$ by $B_1$ and see that $n \LLBComm{A,B_2}$ is also the image
in $\Lambda_g$ of $\fh(([A,B_2]^{B_1})^n \fc''_2)$ for some product of commutators $\fc''_2$ in $(\Torelli_g)_{\beta}$.
We then have that $n (\LLBComm{A,B_1} + \LLBComm{A,B_2})$ is the image in $\Lambda_g$ of
\[\fh([A,B_1]^{n} \fc'_1 ([A,B_2]^{B_1})^n \fc''_2) = \fh(([A,B_1] [A, B_2]^{B_1})^n \fc''')
                                                    = \fh([A,B_1 B_2]^n \fc'''),\]
where $\fc'''$ is a product of commutators in $(\Torelli_g)_{\beta}$.  This maps to $n \LLBComm{A,B_1 B_2}$, as
claimed.
\end{proof}

\subsection{Vanishing}

We next prove:

\begin{lemma}
\label{lemma:strongvanish}
The following hold:
\begin{itemize}
\item Let $A$ be a $\beta$-shifter and let $B$ be compatible
with $A$.  Assume that either $A(\beta) = \beta$ or $B(\alpha) = \alpha$.  Then $\LLBComm{A,B}=0$.
\item Let $B$ be an $\alpha$-shifter and let $A$ be compatible
with $B$.  Assume that either $B(\alpha) = \alpha$ or $A(\beta) = \beta$.  Then $\LLAComm{A,B}=0$.
\end{itemize}
\end{lemma}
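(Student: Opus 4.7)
Both statements are proved by the same argument, so I will focus on the first one; the second follows by swapping the roles of $\alpha$ and $\beta$. The underlying idea is uniform: under either extra hypothesis the pair $A, B$ lands inside one of the stabilizers $(\Torelli_g)_\alpha$ or $(\Torelli_g)_\beta$, and then the surface relation defining the bracket can be realized entirely inside that stabilizer. The resulting class in $\HH_2(\Torelli_g;\Q)$ therefore lies in the image of $\lambda$ and dies in $\Lambda_g$.

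Consider first the case $A(\beta) = \beta$. Then $A \in (\Torelli_g)_{\alpha,\beta}$, and the compatibility of $B$ with $A$ reduces to $B \in (\Torelli_g)_\beta$ (the two conditions ``$B$ fixes $\beta$'' and ``$B$ fixes $A(\beta)$'' coincide). Hence $A, B \in (\Torelli_g)_\beta$, and $[A,B]$ is already a single commutator in $(\Torelli_g)_\beta$. Taking $n = 1$ and $\fc = [B, A]$ in the definition of $\LLBComm{A,B}$ gives the surface relation $[A, B][B, A] = 1$, and the induced map $\phi_r\colon \pi_1(\Sigma_2) \to \Torelli_g$ factors through $(\Torelli_g)_\beta$. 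Consequently $\fh([A,B][B,A])$ lies in the image of $\HH_2((\Torelli_g)_\beta;\Q) \to \HH_2(\Torelli_g;\Q)$, hence in the image of $\lambda$, so $\LLBComm{A,B} = 0$ in $\Lambda_g$.

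Consider next the case $B(\alpha) = \alpha$. Since $A$ is a $\beta$-shifter we have $A \in (\Torelli_g)_\alpha$, and since $B$ is compatible with $A$ and additionally fixes $\alpha$, we have $B \in (\Torelli_g)_{\alpha, \beta, A(\beta)} \subset (\Torelli_g)_\alpha$. In particular $[A, B]$ fixes both $\alpha$ and $\beta$, so by Remark~\ref{remark:distinct} the element $\LLAComm{A,B}$ is also defined and coincides with $\LLBComm{A,B}$ in $\Lambda_g$. In the $\LLAComm{A,B}$ construction the auxiliary product of commutators $\fc$ is chosen inside $(\Torelli_g)_\alpha$, and since $A, B \in (\Torelli_g)_\alpha$ too, the whole surface relation factors through $(\Torelli_g)_\alpha$. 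The resulting class therefore lies in the image of $\HH_2((\Torelli_g)_\alpha;\Q) \to \HH_2(\Torelli_g;\Q)$ and vanishes in $\Lambda_g$.

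I do not anticipate any serious obstacle here: the lemma is essentially an unpacking of the definitions once one notices that the hypotheses push both $A$ and $B$ into a common curve stabilizer. The only mildly delicate point is bookkeeping which stabilizer is used; in the $B(\alpha) = \alpha$ case the construction adapted to $(\Torelli_g)_\beta$ is not immediately useful, but Remark~\ref{remark:distinct} allows us to switch to the $\alpha$-version, which is exactly tailored to the subgroup that contains both elements.
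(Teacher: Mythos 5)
Your proof is correct and follows essentially the same strategy as the paper's: in both cases the key observation is that the surface relation used to define the bracket can be taken to lie entirely inside one of the curve stabilizers $(\Torelli_g)_\alpha$ or $(\Torelli_g)_\beta$, so the resulting class in $\HH_2(\Torelli_g;\Q)$ lies in the image of $\lambda$ and hence dies in $\Lambda_g$. The only cosmetic difference from the paper is in the second case: you invoke Remark~\ref{remark:distinct} to switch to the $\LLAComm{A,B}$ construction (where $\fc$ is taken in $(\Torelli_g)_\alpha$), whereas the paper stays with $\LLBComm{A,B}$ but uses the observation at the end of \S\ref{section:llcomm} to take $\fc$ in $(\Torelli_g)_{\alpha,\beta}$; both variants lead to the same conclusion that $[A,B]^n\fc$ is a product of commutators in $(\Torelli_g)_\alpha$.
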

\begin{proof}
Both are proved the same way, so we will give the details for the first.
Recall that $[A,B] \in (\Torelli_g)_{\beta}$, and
there exists $n \geq 1$ and a product $\fc$ of commutators in $(\Torelli_g)_{\beta}$ such that
$[A,B]^n \fc = 1$ and $\LLBComm{A,B}$ is the image in $\Lambda_g$ of
\[\frac{1}{n} \fh([A,B]^n \fc) \in \HH_2(\Torelli_g;\Q).\]
Assume first that $A(\beta) = \beta$.  Then $[A,B]$ is a commutator of two elements of $(\Torelli_g)_{\beta}$, so
$\fh([A,B]^n \fc)$ is in the image of the map
\[\HH_2((\Torelli_g)_{\beta};\Q) \longrightarrow \HH_2(\Torelli_g;\Q).\]
By the definition of $\Lambda_g$, the image of $\fh([A,B]^n \fc)$ in $\Lambda_g$ therefore vanishes.

Assume next that $B(\alpha) = \alpha$.  The commutator
$[A,B]$ then fixes both $\alpha$ and $\beta$, and therefore as we noted at the end of \S \ref{section:llcomm}
we can make our choices such that $\fc$ is a product of commutators in
$(\Torelli_g)_{\alpha,\beta} \cong \Torelli_{g-1}^1$.  The fact that $B$ fixes $\alpha$ also implies that $[A,B]$ is a commutator in
$(\Torelli_g)_{\alpha}$.  We conclude that $[A,B]^n \fc$ is a product of commutators in
$(\Torelli_g)_{\alpha}$, and thus that $\fh([A,B]^n \fc)$ lies in the image of the map
\[\HH_2((\Torelli_g)_{\alpha};\Q) \longrightarrow \HH_2(\Torelli_g;\Q).\]
By the definition of $\Lambda_g$, the image of $\fh([A,B]^n \fc)$ in $\Lambda_g$ therefore vanishes.
\end{proof}

\subsection{Compatible quotient}

Let $A$ be a nontrivial $\beta$-shifter and let $X$ be either the left or the right side of $\beta \cup A(\beta)$.
Recall from \S \ref{section:curvestabilizers} that
$\Torelli_g(X)$ denotes the subgroup of $\Torelli_g$ consisting of mapping classes supported on $X$.
Each $B \in \Torelli_g(X)$ is either left- or right-compatible with $A$ depending on which side $T$ is on.
For $B \in \Torelli_g(X)$, we thus have $\LLBComm{A,B} \in \Lambda_g$.  By Lemma~\ref{lemma:linearity},
this only depends on the image of $B$ in the abelianization of $\Torelli_g(X)$.  In fact, since
$\Lambda_g$ is a $\Q$-vector space it only depends on the image of $B$ in $\HH_1(\Torelli_g(X);\Q)$.

Lemma~\ref{lemma:strongvanish} says that
$\LLBComm{A,B} = 0$ for $B \in \Torelli_g(X)$ with $B(\alpha) = \alpha$.  We thus define:
\begin{itemize}
\item For a $\beta$-shifter $A$ and $X$ either the left or the right side of $\beta \cup A(\beta)$, define
$\Omega_{\beta}(A,X)$ to be the quotient of $\HH_1(\Torelli_g(X);\Q)$ by the subspace spanned by the homology
classes of elements of $\Torelli_g(X)$ that fix $\alpha$.
\end{itemize}
For $\kappa \in \Omega_{\beta}(A,X)$, the discussion above implies that we have a well-defined
$\LLBComm{A,\kappa} \in \Lambda_g$.

Similarly, for $B$ a nontrivial $\alpha$-shifter and $X$ either the left or right side of $\alpha \cup B(\alpha)$,
define $\Omega_{\alpha}(B,X)$ to be the quotient of $\HH_1(\Torelli_g(X);\Q)$ by the subspace spanned by the homology
classes of elements of $\Torelli_g(X)$ that fix $\beta$.  For $\kappa \in \Omega_{\alpha}(B,X)$, we have
a well-defined $\LLAComm{\kappa,B} \in \Lambda_g$.

\subsection{Identification of compatible quotient}
\label{section:identifycompatible}

The above has the following description.  For a subgroup $V$ of $H_{\Z}$,
let $V_{\Q} = V \otimes \Q$ be the corresponding subspace of $H$.

\begin{lemma}
\label{lemma:identifyomega}
The following hold:
\begin{itemize}
\item Let $A$ be a nontrivial $\beta$-shifter, let $X$ be either the left or right side of $\beta \cup A(\beta)$,
and let $V$ be the summand of $A$ on the same side\footnote{In other words, $V$ is the left-summand if
$X$ is the left side and the right-summand if $X$ is the right side.} as $X$.  Assume that the genus of $X$ is at least $3$.  Then
$\Omega_{\beta}(A,X) \cong \wedge^2 V_{\Q}$.
\item Let $B$ be a nontrivial $\alpha$-shifter, let $X$ be either the left or right side of $\alpha \cup B(\alpha)$,
and let $V$ be the summand of $B$ on the same side as $X$.  Assume that the genus of $X$ is at least $3$.
Then $\Omega_{\alpha}(B,X) \cong \wedge^2 V_{\Q}$.
\end{itemize}
\end{lemma}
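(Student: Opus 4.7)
The plan is to combine Putman's Birman-like exact sequence for partitioned Torelli groups with Johnson's theorem and the coinvariant computation from \cite{MinahanPutmanAbelian}. I treat the first bullet; the second is symmetric.

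Step 1 (geometric setup). Let $h$ be the genus of $X$, so $X \cong \Sigma_h^2$ with $\partial X = \beta \sqcup A(\beta)$, and the arc $\alpha \cap X$ is properly embedded, connecting these two boundary components. Cutting $X$ along this arc yields $S \cong \Sigma_h^1$ with $H_1(S;\Z) = V$. An element of $\Torelli_g(X)$ fixes the isotopy class of $\alpha$ if and only if it preserves $\alpha \cap X$ setwise; the $\Mod(X)$-stabilizer of this arc is canonically $\Mod(S)$ (no Dehn twist obstruction arises since the arc is not a closed curve, and $\partial X$ is fixed pointwise). Intersecting with $\Torelli_g$ yields $\Torelli_g(S)$, so
\[
\Omega_\beta(A,X) \cong \coker\bigl(H_1(\Torelli_g(S);\Q) \to H_1(\Torelli_g(X);\Q)\bigr).
\]

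Step 2 (homology via Birman-like sequence). Since $[\beta] = [A(\beta)]$ in $H_1(\Sigma_g)$, a direct calculation with relative $H_1$ shows that $\Torelli_g(X)$ coincides with Putman's partitioned Torelli group $\Torelli(\Sigma_h^2,\{\{\beta,A(\beta)\}\})$ \cite{PutmanCutPaste}. The Birman-like exact sequence (used in Step 2 of the proof of Lemma~\ref{lemma:curvestabilizer}) then reads
\[
1 \to [\pi,\pi] \to \Torelli_g(X) \to \Torelli_h^1 \to 1,
\]
with $\pi = \pi_1(\Sigma_h^1)$, the quotient given by capping $A(\beta)$ with a disc and extending by identity. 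This sequence splits: since $S \cong \Sigma_h^1$ is isotopic inside $X$ to the subsurface obtained by removing a collar of $A(\beta)$, the composition $\Torelli_g(S) \hookrightarrow \Torelli_g(X) \twoheadrightarrow \Torelli_h^1$ is an isomorphism. Since $[\pi,\pi]$ is free, Hochschild--Serre degenerates in low degrees to
\[
H_1(\Torelli_g(X);\Q) \cong \wedge^3 V_\Q \oplus H_1([\pi,\pi];\Q)_{\Torelli_h^1},
\]
where the first summand is $H_1(\Torelli_h^1;\Q) \cong \wedge^3 V_\Q$ by Johnson and equals the image of $H_1(\Torelli_g(S);\Q)$. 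Hence $\Omega_\beta(A,X) \cong H_1([\pi,\pi];\Q)_{\Torelli_h^1}$.

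Step 3 (main obstacle: identifying coinvariants). The natural projection $[\pi,\pi] \to [\pi,\pi]/[\pi,[\pi,\pi]] \cong \wedge^2 V$ (the first piece beyond $V$ in the free Lie algebra on $V$) is $\Torelli_h^1$-equivariant with trivial target action, inducing a surjection $H_1([\pi,\pi];\Q)_{\Torelli_h^1} \twoheadrightarrow \wedge^2 V_\Q$. The hardest part is showing that this surjection is an isomorphism, i.e., that the kernel dies in coinvariants. This is precisely the content of the main coinvariant computation in the authors' \cite{MinahanPutmanAbelian}, where these coinvariants were shown to form a finite-dimensional algebraic representation of $\Sp(V_\Z)$ for $h \geq 4$; extracting the precise identification with $\wedge^2 V_\Q$ from that analysis is the technical heart of the argument.
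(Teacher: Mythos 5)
Your proposal takes a genuinely different route from the paper, and it has a real gap at its central step. The paper's proof does not go through the Birman exact sequence at all. Instead it invokes Putman's result from \cite{PutmanJohnson} that for a subsurface $X$ of genus at least $3$, the Johnson homomorphism gives an isomorphism $\HH_1(\Torelli_g(X);\Q) \cong \wedge^3 \HH_1(X;\Q)$. From $\HH_1(X;\Q) = V_\Q \oplus \Span{[\beta]}$ one gets $\wedge^3 \HH_1(X;\Q) \cong \wedge^3 V_\Q \oplus (\wedge^2 V_\Q) \wedge [\beta]$, and Putman's calculations identify the image of $\HH_1((\Torelli_g(X))_\alpha;\Q)$ with exactly the $\wedge^3 V_\Q$ summand. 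The quotient $\Omega_\beta(A,X) \cong \wedge^2 V_\Q$ then drops out directly with no spectral sequence and no coinvariant computation.

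Your Steps 1--2 set up a plausible (if unverified) identification of $\Torelli_g(X)$ with a partitioned Torelli group and reduce the problem to showing $\HH_1([\pi,\pi];\Q)_{\Torelli_h^1} \cong \wedge^2 V_\Q$. This is where the argument breaks. You do not prove this; you construct the obvious surjection $\HH_1([\pi,\pi];\Q)_{\Torelli_h^1} \twoheadrightarrow \wedge^2 V_\Q$ via the quotient to the degree-$2$ piece of the lower central series, and then assert injectivity follows from ``the main coinvariant computation'' of \cite{MinahanPutmanAbelian}. But the present paper cites \cite{MinahanPutmanAbelian} for the finite-dimensionality and algebraicity of the \emph{twisted first homology} $\HH_1(\Torelli_{g-1}^1;\HH_1([\pi,\pi];\Q))$, which is $\HH_1$ with coefficients, not the coinvariants $\HH_0(\Torelli_h^1;\HH_1([\pi,\pi];\Q))$ that your argument needs. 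You also cite it with a genus bound $h\geq 4$, while the lemma requires $h\geq 3$. Finally, you yourself concede that ``extracting the precise identification with $\wedge^2 V_\Q$ from that analysis is the technical heart of the argument'' --- which is to say, the heart of the argument is missing. The punchline is that the hard computation you are trying to extract is already packaged, in a much cleaner form, in Putman's \cite{PutmanJohnson} computation of $\HH_1(\Torelli_g(X);\Q)$; using that result directly is both shorter and gives the correct genus bound.
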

\begin{proof}
Both bullet points are proved the same way, so we will give details for the first.  Let $h \geq 3$
be the genus of $X$.  Using the fact that
$h \geq 3$, Putman \cite{PutmanJohnson} proved that
\begin{equation}
\label{eqn:torelliabel} 
\HH_1(\Torelli_g(X);\Q) \cong \wedge^3 \HH_1(X;\Q).
\end{equation}
This isomorphism is given by the Johnson homomorphism, which we will say more about
in \S \ref{section:johnson} below.  Let $(\Torelli_g(X))_{\alpha}$ be the stabilizer of $\alpha$ in $\Torelli_g(X)$.  By
definition, $\Omega_{\beta}(A,X)$ is the quotient of $\HH_1(\Torelli_g(X);\Q)$ by the image of
$\HH_1((\Torelli_g(X))_{\alpha};\Q)$.

Let $S$ be the component of the complement of a regular neighborhood of
$\alpha \cup \beta \cup A(\beta)$ that is contained in $X$, so
$S \cong \Sigma_h^1$:\\
\Figure{SplitJohnson}
By definition, we have $\HH_1(S) = V$.  We have
\[\HH_1(X;\Q) = \HH_1(S;\Q) \oplus \Span{[\beta]} = V_{\Q} \oplus \Span{[\beta]},\]
so
\begin{equation}
\label{eqn:decomposewedge3}
\wedge^3 \HH_1(X;\Q) \cong \left(\wedge^3 V_{\Q}\right) \oplus \left((\wedge^2 V_{\Q}) \wedge [\beta]\right).
\end{equation}
It follows from the calculations in \cite{PutmanJohnson} that under the isomorphism
\eqref{eqn:torelliabel}, the image of $\HH_1((\Torelli_g(X))_{\alpha};\Q)$
is the term $\wedge^3 V_{\Q}$ from \eqref{eqn:decomposewedge3}.  We conclude that
\[\Omega_{\beta}(A,X) \cong (\wedge^2 V_{\Q}) \wedge [\beta] \cong \wedge^2 V_{\Q}.\qedhere\]
\end{proof}

\subsection{Notation}

In light of Lemma~\ref{lemma:identifyomega}, if $A$ is a nontrivial $\beta$-shifter, $V$ is either
the left- or right-summand of $A$, and the genus of $V$ is at least $3$, then for
$\kappa \in \wedge^2 V_{\Q}$ we have a well-defined $\LLBComm{A,\kappa} \in \Lambda_g$.
Similarly, if $B$ is a nontrivial $\alpha$-shifter, $V$ is either the left- or right-summand
of $B$, and the genus of $V$ is at least $3$, then for $\kappa \in \wedge^2 V_{\Q}$ we have
a well-defined $\LLAComm{\kappa,B} \in \Lambda_g$.  These elements satisfy the following
linearity relations:

\begin{lemma}
\label{lemma:linearityrel1}
The following hold:
\begin{itemize}
\item Let $A$ be a nontrivial $\beta$-shifter and let $V$ be either the left- or right-summand of $A$.
Assume that the genus of $V$ is at least $3$.  Then for $\kappa_1,\kappa_2 \in \wedge^2 V_{\Q}$ and
$\lambda_1,\lambda_2 \in \Q$ we have
\[\LLBComm{A,\lambda_1 \kappa_1 + \lambda_2 \kappa_2} = \lambda_1 \LLBComm{A,\kappa_1} + \lambda_2 \LLBComm{A,\kappa_2}.\]
\item Let $B$ be a nontrivial $\alpha$-shifter and let $V$ be either the left- or right-summand of $B$.
Assume that the genus of $V$ is at least $3$.  Then for $\kappa_1,\kappa_2 \in \wedge^2 V_{\Q}$ and
$\lambda_1,\lambda_2 \in \Q$ we have
\[\LLAComm{\lambda_1 \kappa_1 + \lambda_2 \kappa_2,B} = \lambda_1 \LLAComm{\kappa_1,B} + \lambda_2 \LLAComm{\kappa_2,B}.\]
\end{itemize}
\end{lemma}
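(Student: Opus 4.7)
The plan is to observe that Lemma \ref{lemma:linearityrel1} follows tautologically from the way $\LLBComm{A,\kappa}$ and $\LLAComm{\kappa,B}$ were defined in the paragraphs immediately preceding its statement. Since both bullet points are proved identically, I will focus on the first.

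Unwinding the definitions, the element $\LLBComm{A,\kappa}$ is obtained by composing two $\Q$-linear maps. First, the isomorphism $\wedge^2 V_{\Q} \cong \Omega_{\beta}(A,X)$ from Lemma \ref{lemma:identifyomega} is $\Q$-linear by construction. Second, the map $\Omega_{\beta}(A,X) \to \Lambda_g$ sending the class of $B \in \Torelli_g(X)$ to $\LLBComm{A,B}$ is also $\Q$-linear: Lemma \ref{lemma:linearity} shows that $B \mapsto \LLBComm{A,B}$ is a group homomorphism from $\Torelli_g(X)$ to the abelian group $\Lambda_g$, so it factors through the abelianization of $\Torelli_g(X)$; since $\Lambda_g$ is a $\Q$-vector space, this extends uniquely to a $\Q$-linear map $\HH_1(\Torelli_g(X);\Q) \to \Lambda_g$; by Lemma \ref{lemma:strongvanish}, this map vanishes on the homology classes of elements of $\Torelli_g(X)$ that fix $\alpha$, so it descends to a $\Q$-linear map on the quotient $\Omega_{\beta}(A,X)$.

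The composition of these two $\Q$-linear maps is precisely the assignment $\kappa \mapsto \LLBComm{A,\kappa}$, so the claimed identity is simply the $\Q$-linearity of this composition. There is no real obstacle; the lemma is recorded only to package the earlier linearity and vanishing results into the form needed in subsequent sections. The second bullet point follows by the identical argument with the roles of $\alpha$ and $\beta$, of $\LLBComm{-}$ and $\LLAComm{-}$, and of $\Omega_{\beta}$ and $\Omega_{\alpha}$ swapped, using the $\alpha$-shifter case of Lemma \ref{lemma:identifyomega} together with the corresponding bullet points of Lemmas \ref{lemma:linearity} and \ref{lemma:strongvanish}.
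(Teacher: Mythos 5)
Your argument is correct and is exactly the (very short) argument the paper intends: the paper's proof simply says this is ``immediate from the linearity relations in Lemma~\ref{lemma:linearity} along with the definitions,'' and you have spelled out that unpacking—the map $\kappa \mapsto \LLBComm{A,\kappa}$ is the composite of the $\Q$-linear identification of Lemma~\ref{lemma:identifyomega} with the $\Q$-linear map on $\Omega_{\beta}(A,X)$ induced via Lemmas~\ref{lemma:linearity} and~\ref{lemma:strongvanish}. No gaps; same approach, just written out in more detail than the paper bothers to.
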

\begin{proof}
Immediate from the linearity relations in Lemma~\ref{lemma:linearity} along with the definitions.
\end{proof}

\subsection{Johnson homomorphism}
\label{section:johnson}

For later use, we give a computation involving the Johnson homomorphism used in the proof
of Lemma~\ref{lemma:identifyomega}.  To set it up, let $X$ be a subsurface
of $\Sigma_g$ with the following properties:
\begin{itemize}
\item $X$ is a connected subsurface with two boundary components; and
\item the complement $\Sigma_g \setminus \Int(X)$ is connected and has positive genus.
\end{itemize}
In this context, the Johnson homomorphism is a homomorphism
\[\tau\colon \Torelli_g(X) \longrightarrow \wedge^3 \HH_1(X;\Q)\]
that was defined by Putman \cite{PutmanJohnson} based on work of
Johnson \cite{JohnsonHomo} for closed surfaces.

Consider a bounding pair $T_{\delta} T_{\lambda}^{-1}$ in $\Torelli_g(X)$, so $\delta$ and $\lambda$ are disjoint
nonseparating curves on $\Sigma_g$ such that $\delta,\lambda \subset X$ and such that
$\delta \cup \lambda$ separates $\Sigma_g$.  Let $X'$ be the component of $X$ cut open along
$\delta \cup \lambda$ that is disjoint from $\partial X$.  We have $X' \cong \Sigma_k^2$ for some
$k \geq 0$.  Let $Y$ be a subsurface of $X'$ with $Y \cong \Sigma_k^1$:\\
\Figure{BoundingPair}
Let $\{z_1,w_1,\ldots,z_k,w_k\}$ be a symplectic basis for $\HH_1(Y) \cong \Z^{2k}$.  Orient
$\delta$ arbitrarily.  Then
\[\tau(T_{\delta} T_{\lambda}^{-1}) = \pm (z_1 \wedge w_1 + \cdots + z_k \wedge w_k) \wedge [\delta] \in \wedge^3 \HH_1(X;\Q),\]
where the sign is $+1$ (resp.\ $-1$) if $Y$ is to the left (resp.\ right) of $\delta$.  It
is easy to see that this does not depend on our choices (the orientation of $\delta$ and the subsurface $Y$).

\section{Step 3.3: homological interpretation of shifters}
\label{section:step3shifters}

We now give a homological interpretation of the shifters
$A$ in $\LLBComm{A,\kappa}$ and $B$ in $\LLAComm{\kappa,B}$.

\subsection{Dependence on splitting}
\label{section:summanddependence}

Our main result is:

\begin{lemma}
\label{lemma:splittingdependence}
The following hold:
\begin{itemize}
\item Let $A$ and $A'$ be nontrivial $\beta$-shifters inducing the same symplectic splitting of $H_{\Z}$.
Let $W$ be either the left- or the right-summand of $A$ and $A'$.  Assume that $W$ has genus at least $3$, and
let $\kappa \in \wedge^2 W_{\Q}$.  Then $\LLBComm{A,\kappa} = \LLBComm{A',\kappa}$.
\item Let $B$ and $B'$ be nontrivial $\alpha$-shifters inducing the same symplectic splitting of $H_{\Z}$.
Let $W$ be either the left- or the right-summand of $B$ and $B'$.  Assume that $W$ has genus at least $3$, and
let $\kappa \in \wedge^2 W_{\Q}$.  Then $\LLBComm{\kappa,B} = \LLBComm{\kappa,B'}$.
\end{itemize}
\end{lemma}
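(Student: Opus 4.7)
The plan is to deduce Lemma \ref{lemma:splittingdependence} from the equivariant-homology argument of Claim 2 in the proof of Proposition \ref{proposition:generators}, combined with a $\Torelli_g$-change-of-coordinates for the relevant curve configurations. I focus on the first bullet point for $\beta$-shifters; the $\alpha$-shifter case is entirely analogous.

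First I would produce a mapping class $f \in \Torelli_g$ carrying the ordered configuration $(\beta, A(\beta), X)$ to $(\beta, A'(\beta), X')$, where $X$ and $X'$ are the sides hosting the common summand $W$. Because $A$ and $A'$ induce the same symplectic splitting of $H_\Z$, the homological invariants of the two configurations coincide. Johnson's conjugacy theorem \cite{JohnsonConjugacy} for nonseparating curves, combined with the kind of enhancement used in Step 2 of the proof of Lemma \ref{lemma:mixpure} to upgrade a $\Mod_g$-change-of-coordinates to one inside $\Torelli_g$, will produce such an $f$.

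Second I would invoke the equivariant-homology machinery of \S \ref{section:equivarianthomology} for the action of $\Torelli_g$ on $\cC_{ab}(\Sigma_g)$. The edges $\te = \{\beta, A(\beta)\}$ and $\te' = \{\beta, A'(\beta)\}$ of $\cC_b(\Sigma_g)$ project to a single cell of $\cC_b(\Sigma_g)/\Torelli_g$, and the canonical identification of edge stabilizers described in \S \ref{section:spectralsequence} sends the homology class of $B \in (\Torelli_g)_{\beta, A(\beta)}$ to that of $fBf^{-1} \in (\Torelli_g)_{\beta, A'(\beta)}$. Rerunning the argument of Claim 2 of Proposition \ref{proposition:generators} for both lifts shows that these two classes produce the same element of $\Lambda_g$, yielding $\LLBComm{A, B} = \LLBComm{A', fBf^{-1}}$.

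Third, given any $B'$ representing $\kappa$ in $\Omega_\beta(A', X')$, I would verify that $fBf^{-1}$ also represents $\kappa$, so that $\LLBComm{A', fBf^{-1}} = \LLBComm{A', B'}$. The key points are that $f$ carries $X$ onto $X'$ by construction (so $fBf^{-1} \in \Torelli_g(X')$), and that $f$ acts trivially on $\HH_1(\Sigma_g)$, so under the common embedding of $\HH_1(X)$ and $\HH_1(X')$ into $\HH_1(\Sigma_g)$ as $W \oplus \Span{[\beta]}$, the induced map $f_*$ is the identity. Naturality of the Johnson homomorphism from \S \ref{section:johnson} then gives $\tau(fBf^{-1}) = \tau(B)$, so $fBf^{-1}$ has the same image $\kappa$ in $\Omega_\beta(A', X') \cong \wedge^2 W_\Q$ as $B$, and hence as $B'$. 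Chaining the three steps gives $\LLBComm{A, \kappa} = \LLBComm{A', \kappa}$, as desired.

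The main obstacle will be the first step: producing $f$ inside $\Torelli_g$ (not merely $\Mod_g$) respecting the left/right ordering of the splitting. The usual change-of-coordinates principle gives a candidate in $\Mod_g$ matching the configurations as unmarked surface data, and upgrading to $\Torelli_g$ requires adjusting by a stabilizer element whose action on $\HH_1(\Sigma_g)$ cancels the residual symplectic transformation; checking that the relevant stabilizer acts with enough flexibility on $H_\Z$ to realize any such correction is the delicate technical point that must be verified before the rest of the argument can proceed.
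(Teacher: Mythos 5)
Your Steps 1 and 3 match what the paper does.  In fact the paper's proof has exactly this two-phase structure: first reduce to the case $A(\beta)=A'(\beta)$ by conjugating by an element $f \in (\Torelli_g)_{\alpha,\beta}$ found via Johnson's conjugacy lemma \cite[Lemma 7]{JohnsonConjugacy} and Lemma~\ref{lemma:mixpure}; then handle that special case.  You correctly identify that finding $f$ inside $\Torelli_g$ (rather than $\Mod_g$) is the technical crux, and your Step 3 argument that $fBf^{-1}$ represents the same class $\kappa$ is sound (it relies on $f$ acting trivially on homology and on the linearity relations already proved in Lemma~\ref{lemma:linearity}).

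The gap is Step 2.  You want to conclude $\LLBComm{A,B} = \LLBComm{A',fBf^{-1}}$ by observing that $[B]$ and $[fBf^{-1}]$ have the same image in $\ssE^1_{11}$ under the canonical identification of edge stabilizers, and hence the same image in $\Lambda_g$.  That much is correct — they do map to the same element of $\Lambda_g$.  But the argument in Claim~\ref{claim:identify} of Proposition~\ref{proposition:generators} only shows that the image of $[B]$ in $\Lambda_g$ \emph{spans the same line} as $\LLBComm{A,B}$; it gives a proportionality, not an equality.  The scalar arises from the chain of identifications $\HH_1(\Span{z_0}) \cong \ssF^2_{11} \cong \HH^{\Gamma_{n,k}}_2(T) \cong \HH_2(\Gamma_{n,k}) \cong \Q$ together with the choice of $n$, $k$, and the surface relator $\fc$, and there is no obvious reason it is $1$ or even independent of those choices.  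The paper flags this issue explicitly in a footnote to Lemma~\ref{lemma:linearity}, noting that the equivariant-homology argument ``almost'' gives the relations but with undetermined scalars, and for this reason gives direct proofs of all the relations via commutator identities.  For the present lemma, the paper's base case ($A(\beta)=A'(\beta)$) is handled by writing $f=(A')^{-1}A$, observing that $f$ is a \emph{trivial} $\beta$-shifter so $\LLBComm{f,B}=0$ by Lemma~\ref{lemma:strongvanish}, and then expanding the commutator identity $[A,B]=[A'f,B]=[A',B]^{f}[f,B]$ inside $\fh$.  To fix your proposal you would either need to pin down the scalar in Claim~\ref{claim:identify} rigorously, or replace Step 2 with this commutator calculation as the paper does.
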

\begin{proof}
Both are proved the same way, so we will give the details for the first.
It is enough to prove this for $\kappa$ the image of an element $B \in \Torelli_g$ that is either
left- or right-compatible with $A$.  The proof has two steps:

\begin{step}{1}
We have $\LLBComm{A,\kappa} = \LLBComm{A',\kappa}$ if $A(\beta) = A'(\beta)$.
\end{step}

Since $A(\beta) = A'(\beta)$, the element $B$ is also left- or right-compatible with $A'$ and $\LLBComm{A',\kappa} = \LLBComm{A',B}$.
Set $f = (A')^{-1} A$, so $f$ fixes both $\alpha$ and $\beta$.  The element $f$ is a trivial $\beta$-shifter
such that $B$ is left- or right-compatible with $f$, so Lemma~\ref{lemma:strongvanish} implies that $\LLBComm{f,B}=0$.
Consider the commutator identity
\[[A,B] = [A' f,B] = [A',B]^{f} [f,B].\]
An argument like in the proof of Lemma~\ref{lemma:linearity} shows that this
leads to the formula
\[\LLBComm{A,\kappa} = \LLBComm{A,B} = \LLBComm{A',B} + \LLBComm{f,B} = \LLBComm{A',B} = \LLBComm{A',\kappa}.\]

\begin{step}{2}
We have $\LLBComm{A,\kappa} = \LLBComm{A',\kappa}$ in general.
\end{step}

We claim that there exists some $f \in (\Torelli_g)_{\alpha,\beta}$ such that $f(A'(\beta)) = A(\beta)$.
Indeed, since $A$ and $A'$ induce the same symplectic splitting of $H_{\Z}$, we can apply \cite[Lemma 7]{JohnsonConjugacy} to find
$f' \in \Torelli_g$ such that
\[f'(\beta) = \beta \quad \text{and} \quad f'(A'(\beta)) = A(\beta).\]
Both $\{\alpha,\beta,A(\beta)\}$ and
\[\{f'(\alpha),f'(\beta),f'(A'(\beta))\} = \{f'(\alpha),\beta,A(\beta)\}\]
are mixed simplices of $\cC_{ab}(\Sigma_g)$, so by Lemma~\ref{lemma:mixpure} we can find
some $h \in \Torelli_g$ with
\[h(f'(\alpha)) = \alpha \quad \text{and} \quad h(\beta) = \beta \quad \text{and} \quad h(A(\beta)) = A(\beta).\]
The desired $f$ is then $f = h f'$.

We have
\[A^f(\beta) = f^{-1} A f(\beta) = f^{-1} A(\beta) = A'(\beta).\]
This implies that $A^f$ is a nontrivial $\beta$-shifter.  The element $B^f$ is left- or right-compatible with $A^f$, so by
Step 1 and the fact that inner automorphisms act trivially on homology we have
\[\LLBComm{A,\kappa} = \LLBComm{A,B} = \LLBComm{A^f,B^f} = \LLBComm{A',B^f} = \LLBComm{A',\kappa}.\qedhere\]
\end{proof}

\subsection{Realizing symplectic splittings}

The following says that all nontrivial symplectic splittings of $H_{\Z}$ can
be induced by a nontrivial $\beta$- and $\alpha$-shifters.

\begin{lemma}
\label{lemma:realizesplitting}
Let $H_{\Z} = V \oplus V^{\perp}$ be a nontrivial symplectic splitting of $H_{\Z}$.  Then:
\begin{itemize}
\item there exists a nontrivial $\beta$-shifter $A$ inducing $H_{\Z} = V \oplus W$; and
\item there exists a nontrivial $\alpha$-shifter $B$ inducing $H_{\Z} = V \oplus W$.
\end{itemize}
\end{lemma}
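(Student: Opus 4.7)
The two bullet points are proved by the same argument after interchanging the roles of $\alpha$ and $\beta$, so I will describe only the $\beta$-shifter case. The plan is to reduce the existence of $A$ to the construction of a suitable simple closed curve $\beta'$ to play the role of $A(\beta)$, and then build $\beta'$ using the change of coordinates principle.

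First I will show it suffices to produce a simple closed curve $\beta'$ on $\Sigma_g$ satisfying: $(a)$ $[\beta']=b$; $(b)$ $\beta'$ is disjoint from $\beta$ and not isotopic to $\beta$; $(c)$ $\beta'$ intersects $\alpha$ transversely in a single point, so that $\{\alpha,\beta'\}$ is a mixed $1$-simplex of $\cC_{ab}(\Sigma_g)$; and $(d)$ cutting $\Sigma_g$ along a regular neighborhood of $\alpha\cup\beta\cup\beta'$ yields two components $S$ (on the left of $\beta\cup\beta'$) and $S'$ (on the right) with $H_1(S)=V$ and $H_1(S')=V^{\perp}$. Given such a $\beta'$, both $\{\alpha,\beta\}$ and $\{\alpha,\beta'\}$ are mixed $1$-simplices, so Lemma \ref{lemma:transitive}(iii) yields some $A\in\Torelli_g$ sending $\{\alpha,\beta\}$ to $\{\alpha,\beta'\}$. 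Since $A$ preserves homology and $[\alpha]=a\neq b=[\beta']$, this forces $A(\alpha)=\alpha$ and $A(\beta)=\beta'$, so $A$ is a nontrivial $\beta$-shifter inducing the desired splitting by $(d)$.

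For the construction of $\beta'$, let $h_1=\dim V/2$ and $h_2=\dim V^{\perp}/2$, so $h_1,h_2\geq 1$ and $h_1+h_2=g-1$. I will handle a standard splitting of the correct genus type first. View $\Sigma_g$ as a boundary connect sum $\Sigma_L\natural T_0\natural\Sigma_R$, where $T_0\cong\Sigma_{1,1}$ contains $\alpha$ and $\beta$ as standard meridian and longitude, $\Sigma_L\cong\Sigma_{h_1,1}$ is attached to $T_0$ along an arc of $\partial T_0$ lying to the left of $\beta$, and $\Sigma_R\cong\Sigma_{h_2,1}$ is attached along an arc to the right of $\beta$; fix a geometric symplectic basis of curves so that $\Sigma_L$ carries the classes spanning a standard summand $V_0$ and $\Sigma_R$ carries $V_0^{\perp}$. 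Take $\beta'$ to be a longitude of $T_0$ parallel to $\beta$, isotoped across $\alpha\cap T_0$ so that $\beta\cup\beta'$ cobound an annulus in $T_0$ containing the attaching arc of $\Sigma_R$ but not that of $\Sigma_L$. Properties $(a),(b),(c)$ are immediate, and cutting along $\alpha\cup\beta\cup\beta'$ places $\Sigma_L\subset S$ and $\Sigma_R\subset S'$, giving $(d)$ for $V_0\oplus V_0^{\perp}$. To reach an arbitrary splitting $V\oplus V^{\perp}$, choose $\sigma\in\Sp(H_{\Z})$ with $\sigma(V_0)=V$ (possible since $\Sp(H_{\Z})$ acts transitively on symplectic summands of fixed genus) and lift it along the surjection $(\Mod_g)_{\alpha,\beta}\twoheadrightarrow\Sp(H_{\Z})$ to an orientation-preserving mapping class $\phi$ fixing $\alpha$ and $\beta$ setwise; then $\phi(\beta')$ inherits properties $(a),(b),(c)$, and since $\phi$ preserves orientations and hence the notion of ``left of $\beta$'', property $(d)$ holds for $V\oplus V^{\perp}$.

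The main obstacle is keeping track of the ordering in the splitting: one must realize the \emph{ordered} pair $V\oplus V^{\perp}$ with $V$ on the left, rather than the reversed pair. This is built in at two points, namely by attaching $\Sigma_L$ on the correct side of $\beta$ in $T_0$ and by choosing $\sigma$ to send $V_0$ to $V$ (not to $V^{\perp}$); the change of coordinates argument then preserves left/right because $\phi$ is orientation-preserving. The only nontrivial input is Lemma \ref{lemma:transitive}(iii); everything else is standard change of coordinates on $\Sigma_g$.
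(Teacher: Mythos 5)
Your proof is correct and structurally parallel to the paper's: both reduce to producing an appropriate $b$-curve $\beta'$ and then invoke Lemma~\ref{lemma:transitive}(iii) to get $A$.  The difference lies in how $\beta'$ is built.  The paper cites Johnson's Lemma~9 from \cite{JohnsonConjugacy}, which directly produces a subsurface $T \cong \Sigma_k^1$ of the complement of $\alpha \cup \beta$ with $\HH_1(T) = V$; the curve $\beta'$ is then drawn by hand so that $T$ lies on the left.  You instead construct $\beta'$ for a single standard splitting $V_0 \oplus V_0^{\perp}$ by change of coordinates and then transport it by a mapping class $\phi$ lifting a symplectic transformation $\sigma$ with $\sigma(V_0) = V$.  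These amount to the same thing: Johnson's Lemma~9 is itself proved by the transitivity-plus-change-of-coordinates reasoning you use, so there is no real gain or loss in generality.  The paper's version is shorter because it offloads the work to a citation.

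One small imprecision worth fixing: you lift $\sigma$ to a mapping class \textquotedblleft fixing $\alpha$ and $\beta$ setwise.\textquotedblright\  A mapping class fixing both curves setwise may reverse both of their orientations simultaneously (this is consistent with preserving the orientation of the surface and the intersection pairing), and if it does so then $[\phi(\beta')]$ would be $-b$ rather than $b$, breaking property~$(a)$, and the notion of \textquotedblleft left of $\beta$\textquotedblright\ would also flip, breaking $(d)$.  You clearly intend $\phi$ to preserve the orientations of $\alpha$ and $\beta$; the clean way to guarantee this is to take $\phi$ supported on the complement $X \cong \Sigma_{g-1}^1$ of a regular neighborhood of $\alpha \cup \beta$ (so fixing $\alpha,\beta$ pointwise), using the fact that $\Mod(X) = \Mod_{g-1}^1 \to \Sp_{2(g-1)}(\Z) = \Sp(H_{\Z})$ is surjective.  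With that adjustment the argument goes through.
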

\begin{proof}
Both are proved the same way, so we will give the details for the first.
Let $X \cong \Sigma_{g-1}^1$ be the complement of a regular neighborhood of $\alpha \cup \beta$.  In \cite[Lemma 9]{JohnsonConjugacy}, Johnson
proved that there exists a subsurface $T \cong \Sigma_k^1$ of $X$ with $\HH_1(T) = V$.
We can then find a $b$-curve $\beta'$ as follows:\\
\Figure{FindIntegral}
Lemma~\ref{lemma:transitive} gives an $A \in \Torelli_g$ with $A(\beta) = \beta'$
and $A(\alpha) = \alpha$, i.e., a $\beta$-shifter whose left-summand is $V$.  The right-summand
of $A$ is then $V^{\perp}$, so $A$ induces the symplectic splitting $H_{\Z} = V \oplus V^{\perp}$.
\end{proof}

\subsection{Notation}

In light of Lemmas \ref{lemma:splittingdependence} and \ref{lemma:realizesplitting}, we introduce the following
notation.  Let $H_{\Z} = V \oplus V^{\perp}$ be a nontrivial symplectic splitting.  Let $W$ be either $V$ or
$V^{\perp}$, and assume that $W$ has genus at least $3$.  Choose $\kappa \in \wedge^2 W_{\Q}$.  Then:
\begin{itemize}
\item Let $A$ be a nontrivial $\beta$-shifter inducing the symplectic splitting $H_{\Z} = V \oplus V^{\perp}$.
Define $\LLBComm{V,\kappa} = \LLBComm{A,\kappa}$.
\item Let $B$ be a nontrivial $\alpha$-shifter inducing the symplectic splitting $H_{\Z} = V \oplus V^{\perp}$.
Define $\LLAComm{\kappa,V} = \LLAComm{\kappa,B}$.
\end{itemize}
Here $\LLBComm{A,\kappa}$ and $\LLAComm{\kappa,B}$ are as defined at the end of \S \ref{section:identifycompatible}.
These elements satisfy the following linearity relations:

\begin{lemma}
\label{lemma:linearityrel2}
Let $H_{\Z} = V \oplus V^{\perp}$ be a nontrivial symplectic splitting.  Let $W$ be either $V$ or
$V^{\perp}$, and assume that $W$ has genus at least $3$.  Then for $\kappa_1,\kappa_2 \in \wedge^2 W_{\Q}$
and $\lambda_1,\lambda_2 \in \Q$ we have
\begin{align*}
\LLBComm{V,\lambda_1 \kappa_1 + \lambda_2 \kappa_2} &= \lambda_1 \LLBComm{V,\kappa_1} + \lambda_2 \LLBComm{V,\kappa_2}, \\
\LLAComm{\lambda_1 \kappa_1 + \lambda_2 \kappa_2,V} &= \lambda_1 \LLAComm{\kappa_1,V} + \lambda_2 \LLAComm{\kappa_2,V}.
\end{align*}
\end{lemma}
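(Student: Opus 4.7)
The plan is to observe that this is essentially a formal consequence of the two preceding results. By Lemma~\ref{lemma:realizesplitting}, I can fix a single nontrivial $\beta$-shifter $A$ that induces the given symplectic splitting $H_{\Z} = V \oplus V^{\perp}$. The definition of $\LLBComm{V, \kappa}$ in \S\ref{section:summanddependence} reads $\LLBComm{V, \kappa} = \LLBComm{A, \kappa}$, and Lemma~\ref{lemma:splittingdependence} tells us that this value is independent of which such $A$ I picked, so there is no ambiguity in using the \emph{same} $A$ simultaneously for $\kappa_1$, $\kappa_2$, and $\lambda_1 \kappa_1 + \lambda_2 \kappa_2$. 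Here $W$ is either the left- or right-summand of $A$, whichever matches the choice of $V$ or $V^{\perp}$ in the hypothesis.

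With this single $A$ fixed, the first identity collapses to the first bullet of Lemma~\ref{lemma:linearityrel1}: the chain
\[\LLBComm{V, \lambda_1 \kappa_1 + \lambda_2 \kappa_2} = \LLBComm{A, \lambda_1 \kappa_1 + \lambda_2 \kappa_2} = \lambda_1 \LLBComm{A, \kappa_1} + \lambda_2 \LLBComm{A, \kappa_2} = \lambda_1 \LLBComm{V, \kappa_1} + \lambda_2 \LLBComm{V, \kappa_2}\]
immediately gives what we want. The second identity is proved the same way, fixing a single nontrivial $\alpha$-shifter $B$ inducing $H_{\Z} = V \oplus V^{\perp}$ via Lemma~\ref{lemma:realizesplitting} and invoking the second bullet of Lemma~\ref{lemma:linearityrel1}.

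There is no genuine obstacle here; the only subtlety worth flagging is that Lemma~\ref{lemma:linearityrel1} requires the genus of $W$ to be at least $3$ (so that $\Omega_{\beta}(A, X) \cong \wedge^2 W_{\Q}$), but this is precisely the standing hypothesis of the current lemma. The substantive content, namely linearity on a fixed shifter and invariance under changing the shifter within a splitting, has already been established, so the present lemma is essentially a repackaging.
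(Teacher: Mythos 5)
Your proof is correct and takes essentially the same approach as the paper, which disposes of the lemma with the single line ``Immediate from Lemma~\ref{lemma:linearityrel1}''; you have simply made explicit the routine unwinding of definitions (fix one shifter via Lemma~\ref{lemma:realizesplitting}, use well-definedness from Lemma~\ref{lemma:splittingdependence}, then apply Lemma~\ref{lemma:linearityrel1}).
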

\begin{proof}
Immediate from Lemma~\ref{lemma:linearityrel1}.
\end{proof}

\subsection{Orthogonal complement}

The following shows how changing $V$ to $V^{\perp}$ in the notation $\LLBComm{V,\kappa}$ and
$\LLAComm{\kappa,V}$ affects our generators:

\begin{lemma}
\label{lemma:orthogonalcomplement}
Let $H_{\Z} = V \oplus V^{\perp}$ be a nontrivial symplectic splitting.  Let $W$ be either
$V$ or $V^{\perp}$, and assume that $W$ has genus at least $3$.  Let $\kappa \in \wedge^2 W_{\Q}$.
Then $\LLBComm{V^{\perp},\kappa} = -\LLBComm{V,\kappa}$
and $\LLAComm{\kappa,V^{\perp}} = -\LLAComm{\kappa,V}$.
\end{lemma}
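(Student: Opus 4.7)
The $\LLAComm$ statement is identical to the $\LLBComm$ statement after swapping the roles of $\alpha$ and $\beta$, so I focus on proving $\LLBComm{V^{\perp},\kappa} = -\LLBComm{V,\kappa}$. By the bilinearity in Lemma~\ref{lemma:linearityrel2}, it suffices to verify this when $\kappa = v_1 \wedge v_2$ is a decomposable wedge. The cases $W = V$ and $W = V^{\perp}$ are symmetric under interchanging the labels $V \leftrightarrow V^{\perp}$, so I treat only $W = V$.

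The strategy is to produce nontrivial $\beta$-shifters $A$ and $A^*$ together with a single element $B \in \Torelli_g$ such that: (i) $A$ induces the splitting $V \oplus V^{\perp}$, with $B$ left-compatible with $A$ and class $\kappa$ in $\Omega_{\beta}(A,\cdot) \cong \wedge^2 V_{\Q}$, so that $\LLBComm{A,B} = \LLBComm{V,\kappa}$; (ii) $A^*$ induces the splitting $V^{\perp} \oplus V$, with $B$ right-compatible with $A^*$ and class $\kappa$, so that $\LLBComm{A^*,B} = \LLBComm{V^{\perp},\kappa}$; and (iii) $AA^*$ fixes $\beta$. Given such data, the commutator identity $[AA^*,B] = [A,B]^{A^*}[A^*,B]$, together with the argument of Lemma~\ref{lemma:linearity} applied to linearity in the shifter slot (which works verbatim since inner automorphisms act trivially on $\HH_2$ and since $AA^*$ is a $\beta$-shifter for which $B$ is compatible), yields the identity $\LLBComm{AA^*,B} = \LLBComm{A,B} + \LLBComm{A^*,B}$ in $\Lambda_g$. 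But $AA^*$ fixes $\beta$, so Lemma~\ref{lemma:strongvanish} gives $\LLBComm{AA^*,B} = 0$, and consequently $\LLBComm{V,\kappa} + \LLBComm{V^{\perp},\kappa} = 0$.

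To construct $A$, $A^*$, and $B$, I view $\Sigma_g$ cut along $\beta$ as a surface $\Sigma^o$ of genus $g-1$ with two boundary components $\beta_L, \beta_R$. Any $b$-curve disjoint from $\beta$ becomes a separating curve in $\Sigma^o$. Choose $\beta'$ so that in $\Sigma^o$ it separates a genus-$h$ piece containing $\beta_L$ (and realizing $V$ after cutting along $\alpha$) from a genus-$(g-1-h)$ piece containing $\beta_R$ (realizing $V^{\perp}$); and choose $\beta^*$ so that it separates a genus-$(g-1-h)$ piece containing $\beta_L$ (realizing $V^{\perp}$) from a genus-$h$ piece containing $\beta_R$ (realizing $V$). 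Pick a subsurface $Y \cong \Sigma_2^1$ disjoint from $\alpha \cup \beta \cup \beta^* \cup \beta'$, lying in the "$V$-piece" common to both decompositions, with $v_1, v_2$ in its symplectic first homology, and let $B$ be a bounding pair map in $Y$ whose Johnson image represents $\kappa$. Using an extension of Johnson's \cite[Lemma 7]{JohnsonConjugacy}, produce $A \in \Torelli_g$ with $A(\alpha) = \alpha$, $A(\beta^*) = \beta$, $A(\beta) = \beta'$; then set $A^* = A^{-1}$, so that $A^*(\beta) = \beta^*$ and $AA^* = \id$ fixes $\beta$ tautologically.

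The principal technical obstacle is justifying this construction: first, arranging $\beta^*$ and $\beta'$ in $\Sigma^o$ so that the swap between the two boundary components $\beta_L, \beta_R$ realizes the exchange of $V$ and $V^{\perp}$ summands required by conditions (i) and (ii); second, verifying the existence of a single $\Torelli_g$-element $A$ performing the cyclic shift $\beta^* \mapsto \beta \mapsto \beta'$ while fixing $\alpha$. The first point uses the symmetry of $\Sigma^o$ that swaps its two boundary components, which naturally flips the left/right roles in the splitting. The second reduces to showing that the configurations $(\alpha, \beta^*, \beta)$ and $(\alpha, \beta, \beta')$ lie in the same $(\Torelli_g)_{\alpha}$-orbit, which in turn reduces to a symplectic-invariant matching computation via the change-of-coordinates principle in the style of Johnson's \cite[Lemma 9]{JohnsonConjugacy}.
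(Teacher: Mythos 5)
Your strategy differs from the paper's, and the geometric construction at its heart has a gap.

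You want a \emph{single} $B$ that is simultaneously left-compatible with $A$ (realizing $\kappa \in \wedge^2 V_{\Q}$ on the $V$-side of $A$) and right-compatible with $A^* = A^{-1}$ (realizing $\kappa \in \wedge^2 V_{\Q}$ on the $V$-side of $A^{-1}$).  The $V$-side of $A$ is $T$, the left side of $\beta \cup A(\beta)$, of genus $h = $ genus($V$); the $V$-side of $A^{-1}$ is $A^{-1}(T)$, the right side of $\beta \cup A^{-1}(\beta)$, also of genus $h$.  Your $Y$ (the support of $B$) must therefore lie in $T \cap A^{-1}(T)$.  But cutting $\Sigma_g$ along $\beta$ produces $\Sigma^o \cong \Sigma_{g-1}^2$ with boundary circles $\beta_L, \beta_R$, and the two disjoint separating curves $A^{-1}(\beta), A(\beta)$ cut $\Sigma^o$ into pieces $P_1, P_2, P_3$.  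In the only configuration where $T$ and $A^{-1}(T)$ can overlap (your ``interlocking'' arrangement, with $T = P_1 \cup P_2$ and $A^{-1}(T) = P_2 \cup P_3$), the middle piece satisfies
\[
\text{genus}(P_2) = \text{genus}(T) + \text{genus}(A^{-1}(T)) - \text{genus}(\Sigma^o) = 2h - (g-1).
\]
Whenever $h < g/2$ this is $\le 0$, so $P_2$ is an annulus or the interlocking configuration does not exist at all, and no positive-genus $Y$ fits.  Since the lemma allows any $W$ of genus at least $3$ (e.g.\ $W = V$ with genus($V$) $=3$ and $g$ large), this is a genuine failure, not a boundary case.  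Even when $h \ge g/2$ you constrain $\kappa$ to $\wedge^2 \HH_1(P_2)_{\Q}$, a proper subspace of $\wedge^2 V_{\Q}$, unless you also vary $(\beta^*, \beta', A)$ with $\kappa$ and verify the resulting change of coordinates --- exactly the ``technical obstacle'' you acknowledge but leave open.

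The paper's proof avoids the issue entirely: after writing $\LLBComm{V,\kappa} = \LLBComm{A,B}$, it pairs $A^{-1}$ not with $B$ but with the conjugate $B^A$, which is \emph{automatically} supported on $A^{-1}(T)$ and compatible with $A^{-1}$, and which has the same homological class $\kappa$ because $A \in \Torelli_g$ acts trivially on $H_{\Z}$ (Lemma~\ref{lemma:splittingdependence}).  Thus $\LLBComm{V^{\perp},\kappa} = \LLBComm{A^{-1},B^A}$, and the sign falls out of the commutator identity $[A^{-1},B^A]=[A,B]^{-1}$.  Your ``reverse linearity plus vanishing'' argument, if you substitute $B^A$ for $B$ on the $A^{-1}$ side, collapses to precisely this; with $B$ unchanged it does not prove the stated identity.
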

\begin{proof}
Both are proved the same way, so we will give the details for the first.
Let $A$ be a nontrivial $\beta$-shifter inducing the symplectic splitting $H_{\Z} = V \oplus V^{\perp}$.
It is enough to prove the lemma for $\kappa$ the image of some $B$ that is compatible with $A$, so
\begin{equation}
\label{eqn:orthogonalcomplement.1}
\LLBComm{V,\kappa} = \LLBComm{A,B}.
\end{equation}
We must give a similar formula for $\LLBComm{V^{\perp},\kappa}$.

Since $A$ is a $\beta$-shifter, $\beta$ is disjoint from $A(\beta)$.  Applying
$A^{-1}$ to this, we see that $A^{-1}(\beta)$ is disjoint from $\beta$, so $A^{-1}$ is a $\beta$-shifter.
Let $T$ and $T'$ be the left- and right-sides of $\beta \cup A(\beta)$:\\
\Figure{LeftRightShifter}
Since $T$ lies to the right of $A(\beta)$, it follows that
$A^{-1}(T)$ lies to
the right of $\beta$.  Similarly, $A^{-1}(T')$ lies to the left of $\beta$.  We have
$V \subset \HH_1(T)$ and $V^{\perp} \subset \HH_1(T')$, so $A^{-1}(V) \subset \HH_1(A^{-1}(T))$
and $A^{-1}(V^{\perp}) \subset \HH_1(A^{-1}(T'))$.  Since
$A \in \Torelli_g$ acts trivially on $H_{\Z}$, we deduce that $A^{-1}$ induces the symplectic
splitting $H_{\Z} = V^{\perp} \oplus V$.

The fact that $B$ is compatible with $A$ means that $B$ fixes $\beta$ and $A(\beta)$.  This
implies that $B^A = A^{-1} B A$ fixes $A^{-1}(\beta)$ and $\beta$, so $B^A$ is compatible with
$A^{-1}$.  Since $A \in \Torelli_g$ fixes $\kappa \in \wedge^2 W_{\Q}$, we have
\begin{equation}
\label{eqn:orthogonalcomplement.2}
\LLBComm{A^{-1},B^A} = \LLBComm{V^{\perp},A^{-1}(\kappa)} = \LLBComm{V^{\perp},\kappa}.
\end{equation}
In light of \eqref{eqn:orthogonalcomplement.1} and \eqref{eqn:orthogonalcomplement.2},
we must prove that $\LLBComm{A^{-1},B^A} = -\LLBComm{A,B}$.
This follows from the commutator identity $[A^{-1},B^A] = [A,B]^{-1}$ just like in the
proof of Lemma~\ref{lemma:linearity}.
\end{proof}

The following variant on Lemma~\ref{lemma:orthogonalcomplement} will also be useful:

\begin{lemma}
\label{lemma:linearity2}
The following hold:
\begin{itemize}
\item Let $A$ be a $\beta$-shifter and let $B$ be compatible with $A$.
Then $A^{-1}$ is a $\beta$-shifter, $B^A$ is compatible with $A$, and
$\LLBComm{A^{-1},B^A} = -\LLBComm{A,B}$.
\item Let $B$ be an $\alpha$-shifter and let $A$ be compatible with $B$.  
Then $B^{-1}$ is an $\alpha$-shifter, $A^B$ is compatible with $B$, and
$\LLAComm{A^B,B^{-1}} = -\LLAComm{A,B}$.
\end{itemize}
\end{lemma}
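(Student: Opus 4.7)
The plan is to parallel the computation used inside the proof of Lemma~\ref{lemma:orthogonalcomplement}, packaged as a standalone statement: the content is an elementary commutator identity together with the surface-relation calculus of \S\ref{section:surfacerelations}. Both bullet points are proved identically, so I will describe the argument for the first.

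First I would verify the structural claims. Since $A$ fixes $\alpha$, so does $A^{-1}$; and applying $A^{-1}$ to the assertion that $\beta$ and $A(\beta)$ are disjoint shows $A^{-1}(\beta)$ is disjoint from $\beta$, so $A^{-1}$ is a $\beta$-shifter. Compatibility of $B^A = A^{-1}BA$ with $A^{-1}$ (which is what one needs in order to form $\LLBComm{A^{-1},B^A}$) is immediate from the two calculations $B^A(\beta) = A^{-1}B(A(\beta)) = A^{-1}(A(\beta)) = \beta$ and $B^A(A^{-1}(\beta)) = A^{-1}B(\beta) = A^{-1}(\beta)$, using that $B$ fixes both $\beta$ and $A(\beta)$.

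The central input is the purely group-theoretic identity
\[
[A^{-1}, B^A] \;=\; A\cdot A^{-1}B^{-1}A\cdot A^{-1}\cdot A^{-1}BA \;=\; B^{-1}A^{-1}BA \;=\; [A,B]^{-1},
\]
valid in any group. Given this, the proof of $\LLBComm{A^{-1},B^A} = -\LLBComm{A,B}$ follows the template used to verify Lemma~\ref{lemma:linearity} and Lemma~\ref{lemma:orthogonalcomplement}. Choose $n\ge 1$ and a product of commutators $\fc$ in $(\Torelli_g)_\beta$ with $[A,B]^n\fc = 1$, so $\LLBComm{A,B}$ is the image in $\Lambda_g$ of $\tfrac{1}{n}\fh([A,B]^n\fc)$. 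Inverting this surface relation and substituting $[A,B]^{-n} = [A^{-1},B^A]^n$ yields a surface relation $[A^{-1},B^A]^n\fc' = 1$ whose factors all lie in $(\Torelli_g)_\beta$, where $\fc'$ is produced from $\fc^{-1}$ by the commutation and inversion moves on surface relations recorded in \S\ref{section:surfacerelations}. Because $\fh(r^{-1}) = -\fh(r)$ and conjugation of commutator factors does not change $\fh$, the defining recipe for $\LLBComm{A^{-1},B^A}$ then evaluates to $-\LLBComm{A,B}$.

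The only point requiring any care — and hence the mild obstacle — is the bookkeeping needed to ensure $\fc'$ is again a product of commutators in $(\Torelli_g)_\beta$, i.e.\ that after inverting the original relation one can shuffle the factors so the resulting expression matches the form required by the definition of $\LLBComm{\cdot,\cdot}$. This shuffling is purely formal, uses only the moves of \S\ref{section:surfacerelations}, and is identical to what is done in the last line of the proof of Lemma~\ref{lemma:orthogonalcomplement}.
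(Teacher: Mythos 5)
Your proof is correct and is exactly the argument the paper intends: the paper omits the proof of this lemma, stating only that it is ``similar to that of Lemma~\ref{lemma:orthogonalcomplement},'' and that proof indeed reduces to the same commutator identity $[A^{-1},B^A]=[A,B]^{-1}$ together with the surface-relation bookkeeping from \S\ref{section:surfacerelations}. One small point worth noting: the statement as printed says ``$B^A$ is compatible with $A$,'' but as you correctly observe and verify, what is actually needed (and true) is that $B^A$ is compatible with $A^{-1}$, so the printed phrasing appears to be a typo.
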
 
\begin{proof}
The proof is similar to that of Lemma~\ref{lemma:orthogonalcomplement}, so we omit it.
\end{proof}

\section{Step 3.4: refined generating set}
\label{section:step3genset}

We now prove that only some of the $\LLBComm{V,\kappa}$ and $\LLAComm{\kappa,V}$
are needed to generate $\Lambda_g$.

\subsection{Main result}
Let $V$ be a genus-$1$ symplectic summand of $H_{\Z}$.  Since $H_{\Z}$
is the orthogonal complement in $\HH_1(\Sigma_g)$ of $\Span{a,b}$, it follows
that $H_{\Z}$ has genus $g-1$.  This implies that $V^{\perp}$ has genus
$g-2$.  Since $g \geq 5$ (see Assumption \ref{assumption:step3genus}),
we deduce that $V^{\perp}$ has genus at least $3$.  For $\kappa \in \wedge^2 V_{\Q}^{\perp}$,
it follows that $\LLBComm{V,\kappa}$ and $\LLAComm{\kappa,V}$ are defined.
These elements generate $\Lambda_g$:

\begin{lemma}
\label{lemma:refinedgens}
The vector space $\Lambda_g$ is spanned by elements
of the form $\LLBComm{V,\kappa}$ and $\LLAComm{\kappa,V}$ as $V$ ranges
over genus-$1$ symplectic summands of $H_{\Z}$ and $\kappa$ ranges
over elements of $\wedge^2 V_{\Q}^{\perp}$. 
\end{lemma}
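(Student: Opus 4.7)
The plan starts from the generators supplied by Proposition~\ref{proposition:generators}. By Lemma~\ref{lemma:linearity} (linearity) and Lemma~\ref{lemma:strongvanish} (vanishing for trivial shifters), these reduce to generators $\LLBComm{A,B}$ where $A$ is a nontrivial $\beta$-shifter and $B$ is left- or right-compatible with $A$, together with the symmetric family $\LLAComm{A,B}$.  I will explain only the $\beta$-shifter case; the $\alpha$-shifter case is analogous. Fix such $\LLBComm{A,B}$, with $A$ inducing the splitting $H_{\Z} = V \oplus V^{\perp}$; after possibly swapping sides using Lemma~\ref{lemma:orthogonalcomplement}, I may assume $B$ is right-compatible, supported in the right side $T'$ of $\beta \cup A(\beta)$, whose $H_1$ is $V^{\perp}$.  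If $V$ already has genus $1$ we are done, so assume $V$ has genus $k \geq 2$.

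Pick any genus-$1$ symplectic sub-summand $V' \subset V$; then $(V')^{\perp} = (V \ominus V') \oplus V^{\perp}$ has genus $g-2 \geq 3$.  The key identity to prove is $\LLBComm{A,B} = \LLBComm{V',\kappa}$, where $\kappa \in \wedge^2 (V')^{\perp}_{\Q}$ is the image of $[B]$ under the inclusion $\wedge^2 V^{\perp}_{\Q} \hookrightarrow \wedge^2 (V')^{\perp}_{\Q}$ coming from $V^{\perp} \subset (V')^{\perp}$. To prove it, I would carry out the following geometric construction using the change of coordinates principle and Lemma~\ref{lemma:realizesplitting}. Choose a genus-$1$ subsurface $T_1 \subset T$ bounded by $\beta$ and a new $b$-curve $\gamma_1$, with $H_1$ realizing $V'$.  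Construct a nontrivial $\beta$-shifter $A'$ with $A'(\beta) = \gamma_1$ and $A'$ supported in $T_1$ together with a small arc neighborhood connecting $\beta$ to $\gamma_1$, so that $A'$ induces the splitting $V' \oplus (V')^{\perp}$. Construct a second element $Y \in (\Torelli_g)_{\alpha}$ supported in $T \setminus T_1$ (together with a tube around the piece of $\alpha$ there) that carries $\gamma_1$ to $A(\beta)$; such $Y$ exists by the usual change of coordinates argument, and crucially its support lies entirely in the left side $T$, hence is disjoint from the right side $T'$ containing the support of $B$.

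Define $A_{\text{new}} = Y A'$, so $A_{\text{new}}(\beta) = Y(\gamma_1) = A(\beta)$.  Then $A_{\text{new}}$ is a nontrivial $\beta$-shifter inducing the same splitting $V \oplus V^{\perp}$ as $A$, so Lemma~\ref{lemma:splittingdependence} gives $\LLBComm{A,B} = \LLBComm{A_{\text{new}},B}$.  Since $Y$ and $B$ have disjoint support, they commute, so $[Y,B] = 1$ and the identity $[xy,z] = [x,z]^y [y,z]$ yields
\[
[A_{\text{new}}, B] = [YA', B] = [Y,B]^{A'} \cdot [A',B] = [A', B]
\]
as elements of $\Torelli_g$.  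Consequently, for any $n \geq 1$ and any product of commutators $\fc$ in $(\Torelli_g)_{\beta}$ with $[A_{\text{new}},B]^n \fc = 1$, we also have $[A',B]^n \fc = 1$ with the same filler, and the two associated surface relations have the same Hopf class.  Dividing by $n$ and projecting to $\Lambda_g$ shows $\LLBComm{A_{\text{new}},B} = \LLBComm{A',B}$, and $\LLBComm{A',B} = \LLBComm{V',\kappa}$ by the definition in Section~\ref{section:step3shifters}. Chaining the equalities gives $\LLBComm{A,B} = \LLBComm{V',\kappa}$, as required; compatibility of $B$ with $A'$ (it fixes $\beta$ and $\gamma_1$ since $B$ is supported in $T'$) and the compatible identification of $[B]$ with $\kappa$ in $\wedge^2 (V')^{\perp}_{\Q}$ are straightforward to verify.

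The main obstacle is the geometric construction of the triple $(A', Y, T_1)$ with the required support and splitting-realization properties. Since $V' \subset V$ is a genus-$1$ symplectic summand, the existence of the subsurface $T_1 \subset T$ realizing $V'$ follows from the change of coordinates principle as in \cite[Lemma 9]{JohnsonConjugacy}, and then $A'$ and $Y$ are produced by the standard construction of Torelli shifters supported in prescribed subsurfaces; the disjointness from $B$ is automatic because $T \cap T' = \emptyset$. After dealing with the symmetric $\alpha$-shifter situation in the same way, we obtain the desired refined generating set.
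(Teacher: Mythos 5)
The proposal takes a genuinely different route than the paper, but the route has a fatal geometric obstruction, and the claimed identity is actually false.

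The central construction is impossible. You want $Y\in(\Torelli_g)_{\alpha}$ supported in the left side $T$ of $\beta\cup A(\beta)$ (equivalently, fixing both $\beta$ and $A(\beta)$ as boundary curves of $T$), with $Y(\gamma_1)=A(\beta)$. But $\gamma_1$ lies in the interior of $T$ and $A(\beta)$ lies on $\partial T$, so a mapping class supported on $T$ cannot take one to the other. Even relaxing the support condition, no element of $\Mod_g$ fixing $\beta$ (up to isotopy) can carry $\gamma_1$ to $A(\beta)$: the curves $\beta$ and $\gamma_1$ cobound a genus-$1$ subsurface (namely $T_1$), while $\beta$ and $A(\beta)$ cobound a genus-$k$ subsurface with $k\geq 2$ (namely $T$), and a mapping class preserves the multiset of genera of complementary components. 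Hence $A_{\mathrm{new}}=YA'$ with $A_{\mathrm{new}}(\beta)=A(\beta)$, $A'$ of left-genus $1$, and $Y$ a \emph{trivial} $\beta$-shifter simply cannot exist once $k\geq 2$. This is why your claimed identity $\LLBComm{A,B}=\LLBComm{V',\kappa}$ (a \emph{single} genus-$1$ term, independent of the choice of $V'\subset V$) is too strong; indeed, under the map $\Phi$ from \cite{MinahanPutmanRepPresentations} the right-hand side corresponds to $\oomega_{V'}\otimes\okappa$, which does change when $V'$ changes.

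The correct version is the paper's \emph{telescoping}: write $A=A_1\cdots A_h$ with $h$ the left-genus, where each $A_i$ carries an adjacent $b$-curve $\beta_i$ to $\beta_{i-1}$ (cutting off a genus-$1$ piece). Then the iterated commutator identity
\[
[A_1\cdots A_h, B] = [A_1^{A_2\cdots A_h}, B^{A_2\cdots A_h}]\,[A_2^{A_3\cdots A_h}, B^{A_3\cdots A_h}]\cdots[A_h,B]
\]
expresses $\LLBComm{A,B}$ as a \emph{sum} of $h$ genus-$1$ generators, each of which is legitimate because each conjugate $A_i^{A_{i+1}\cdots A_h}$ is a $\beta$-shifter of left-genus $1$ and the conjugated $B$ is right-compatible with it. You were close to this --- if you had observed that your $Y$ is necessarily a \emph{nontrivial} $\beta$-shifter of left-genus $k-1$ and iterated, you would have recovered the paper's sum.

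Two smaller issues. First, your reduction to right-compatible $B$ appeals to Lemma~\ref{lemma:orthogonalcomplement}, but that lemma governs the symbolic notation $\LLBComm{V,\kappa}$ and replaces $V$ by $V^{\perp}$; it does not perform a left/right swap on a raw $\LLBComm{A,B}$. The correct tool is the decomposition $B=B_1B_2$ (left- times right-compatible), Lemma~\ref{lemma:linearity}, and then Lemma~\ref{lemma:linearity2} applied to the left-compatible piece, as in the paper's Claim~1. Second, even granting $[Y,B]=1$, the assertion that the surface relations $[A_{\mathrm{new}},B]^n\fc$ and $[A',B]^n\fc$ ``have the same Hopf class'' in $\HH_2(\Torelli_g;\Q)$ is not literally correct; expanding $[YA',B]$ via the conventions of \S\ref{section:surfacerelations} produces a correction $n\,\fh([Y,B])$, which you would then need to show vanishes in $\Lambda_g$ (this \emph{would} follow if $Y$ and $B$ both fixed $\beta$, but as noted above no such $Y$ exists).
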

\begin{proof}
Lemma~\ref{lemma:strongvanish} implies that $\LLBComm{A,B} = 0$ if $A$ is a trivial $\beta$-shifter and
that $\LLAComm{A,B} = 0$ if $B$ is a trivial $\alpha$-shifter.  This allows us to restrict attention
to nontrivial $\beta$- and $\alpha$-shifters.  In light of this, $\Lambda_g$ is spanned by the following elements:
\begin{itemize}
\item $\LLBComm{A,B}$ with $A$ a nontrivial $\beta$-shifter and $B$ compatible with $A$; and
\item $\LLAComm{A,B}$ with $B$ a nontrivial $\alpha$-shifter and $A$ compatible with $B$.
\end{itemize}
We must prove that each of these is a linear combination of our proposed generators.  We
will do this for $\LLBComm{A,B}$ with $A$ a nontrivial $\beta$-shifter and $B$ compatible with $A$.

For a nontrivial $\beta$-shifter $A'$, say that the {\em left-genus} of $A'$ is the genus
of the left side of $\beta \cup A'(\beta)$.  It is enough to prove that each $\LLBComm{A,B}$ is a linear
combination of elements of the following form:
\begin{itemize}
\item $\LLBComm{A',B'}$ with $A$ a $\beta$-shifter of left-genus $1$ and $B'$ right-compatible with $A'$.
\end{itemize}
We do this in two steps.  The first step ensures that the compatible element is right-compatible, and the
second that the $\beta$-shifter has left-genus $1$.

\begin{claim}{1}
Let $A$ be a nontrivial $\beta$-shifter and let $B$ be compatible with $A$.  Then $\LLBComm{A,B}$ is a linear
combination of elements of the form $\LLBComm{A',B'}$ with $A'$ a nontrivial $\beta$-shifter and $B'$ right-compatible with $A'$.
\end{claim}

Let $T$ and $T'$ be the left and right sides of $\beta \cup A(\beta)$, respectively.
Since $B$ fixes $\beta \cup A(\beta)$ and lies in $\Torelli_g$, we can write $B = B_1 B_2$ with
$B_1$ supported on $T$ and $B_2$ supported on $T'$.
By Lemmas \ref{lemma:linearity} and \ref{lemma:linearity2}, we have
\[\LLBComm{A,B} = \LLBComm{A,B_1} + \LLBComm{A,B_2} = -\LLBComm{A^{-1},B_1^{A}} + \LLBComm{A,B_2}.\]
Since $B_2$ is right-compatible with $A$, this reduces us to showing that $B_1^A$ is right-compatible with
$A^{-1}$.  Since $T$ is to the left of $\beta$, it is to the right of $A(\beta)$.  This implies that
$A^{-1}(T)$ is to the right of $\beta$.  Since $B_1^A$ is supported on $A^{-1}(T)$, this implies
that $B_1^A$ is right-compatible with $A^{-1}$, as desired.

\begin{claim}{2}
Let $A$ be a nontrivial $\beta$-shifter and let $B$ be right-compatible with $A$.  Then $\LLBComm{A,B}$ is a linear
combination of elements of the form $\LLBComm{A',B'}$ with $A'$ a $\beta$-shifter of left-genus $1$ and $B'$ right-compatible with $A'$.
\end{claim}

Let $T$ and $T'$ be the left and right sides of $\beta \cup A(\beta)$, 
so $B$ is supported on $T'$.  Let
$A(\beta) = \beta_0, \beta_1,\ldots, \beta_{h} = \beta$
be a sequence of disjoint $b$-curves in $T$ each of which intersect $\alpha$ once such that $\beta_{i-1} \cup \beta_i$ bounds
a genus-$1$ subsurface of $T$ for $1 \leq i \leq h$:\\
\Figure{GenerateGenus1}
By Lemma~\ref{lemma:transitive} the group $\Torelli_g$ acts transitively on mixed $1$-simplices of $\cC_{ab}(\Sigma_g)$, so we
can find $A_2,\ldots,A_{h} \in \Torelli_g$ such that $A_i(\alpha) = \alpha$ and
$A_i(\beta_{i}) = \beta_{i-1}$ for $2 \leq i \leq h$.  Pick $A_1 \in \Torelli_g$ such that
$A = A_1 \cdots A_h$, so $A_1(\alpha) = \alpha$ as well.
We then have
\begin{equation}
\label{eqn:pusharound}
\text{$A_i A_{i+1} \cdots A_{h}(\beta) = \beta_{i-1}$ for $1 \leq i \leq h$}.
\end{equation}
We have a commutator identity
\begin{align*}
[A_1 \cdots A_{h},B] &= [A_1, B]^{A_2 \cdots A_{h}} [A_2,B]^{A_3 \cdots A_h} \cdots [A_h,B] \stepcounter{equation}\tag{\theequation}\label{eqn:linearcommutator}\\
&= [A_1^{A_2 \cdots A_h},B^{A_2 \cdots A_h}] [A_2^{A_3 \cdots A_h},B^{A_3 \cdots A_h}] \cdots [A_h,B].
\end{align*}
We then have generators $\LLBComm{A_i^{A_{i+1} \cdots A_h},B^{A_{i+1} \cdots A_h}}$ associated
to the terms in this identity.  Indeed:
\begin{itemize}
\item $A_i^{A_{i+1} \cdots A_h}$ is a $\beta$-shifter since
\begin{equation}
\label{eqn:abetaexpression}
A_i^{A_{i+1} \cdots A_h}(\beta) = (A_{i+1} \cdots A_h)^{-1} (A_i \cdots A_h)(\beta) = (A_{i+1} \cdots A_h)^{-1}(\beta_{i-1})
\end{equation}
is disjoint from
\begin{equation}
\label{eqn:betaexpression}
\beta = (A_{i+1} \cdots A_h)^{-1}(\beta_i).
\end{equation}
Both \eqref{eqn:abetaexpression} and \eqref{eqn:betaexpression} use \eqref{eqn:pusharound}.  Since $\beta_{i-1} \cup \beta_i$ bounds
a genus-$1$ subsurface of $T$, it follows that $A_i^{A_{i+1} \cdots A_h}$ has left-genus $1$.
\item The element $B$ fixes each $\beta_j$, so $B^{A_{i+1} \cdots A_h}$ is compatible with $A_i^{A_{i+1} \cdots A_h}$ since it fixes
$\beta$ (cf.\ \eqref{eqn:betaexpression}) and $A_i^{A_{i+1} \cdots A_h}(\beta)$ (cf.\ \eqref{eqn:abetaexpression}).
In fact, by construction $B$ is right-compatible with $B^{A_{i+1} \cdots A_h}$.
\end{itemize}
Using \eqref{eqn:linearcommutator}, an argument similar to the one used in the
proof of Lemma~\ref{lemma:linearity} shows that
\begin{align*}
\LLBComm{A,B} &= \LLBComm{A_1 A_2 \cdots A_h,B} \\
              &= \LLBComm{A_1^{A_2 \cdots A_h},B^{A_2 \cdots A_h}} + \LLBComm{A_2^{A_3 \cdots A_h},B^{A_3 \cdots A_h}} + \cdots + \LLBComm{A_h,B}.
\end{align*}
The right hand side is a sum of terms in the desired generating set, as desired.
\end{proof}

\section{Step 3.5: identifying generators}
\label{section:step3identifygen}

The generating set for $\Lambda_g$ from Lemma~\ref{lemma:refinedgens} has some redundancies.

\subsection{Intersection form}
\label{section:intersectionform}

Identifying these redundancies requires some notation.  Let $W$ be a symplectic
summand of $H_{\Z}$.  The algebraic intersection form on $W$ identifies $W$ with its
dual.  This allows us to identify alternating bilinear forms on $W$ with
elements of $\wedge^2 W \subset \wedge^2 H_{\Z}$.  In particular, the algebraic intersection form
on $W$ is an element $\omega_W$ of $\wedge^2 H_{\Z}$.  If $\{a_1,b_1,\ldots,a_h,b_h\}$
is a symplectic basis for $W$, then $\omega_W = a_1 \wedge b_1 + \cdots + a_h \wedge b_h$.

\subsection{Redundancy}

With this notation, we have the following two lemmas:

\begin{lemma}
\label{lemma:identifygenerators1}
Let $V$ and $W$ be orthogonal genus-$1$ symplectic summands of $H_{\Z}$.
Then $\LLBComm{V,\omega_{W}} = -\LLAComm{\omega_V,W}$.
\end{lemma}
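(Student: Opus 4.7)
The plan is to produce a single pair $(A,B)\in\Torelli_g\times\Torelli_g$ for which both $\LLBComm{A,B}$ and $\LLAComm{A,B}$ are simultaneously defined. By Remark~\ref{remark:distinct} these are then equal in $\Lambda_g$, and the lemma reduces to identifying the first with $\LLBComm{V,\omega_W}$ and the second with $-\LLAComm{\omega_V,W}$ via the framework of Sections~\ref{section:step3compatible} and~\ref{section:step3shifters}.

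Concretely, I would first fix pairwise disjoint one-holed genus-$1$ subsurfaces $X_V,X_W\subset\Sigma_g$, both disjoint from $\alpha\cup\beta$, with $H_1(X_V)=V$ and $H_1(X_W)=W$; such subsurfaces exist because $V\perp W$ and $H_{\Z}$ has genus $g-1\geq 4$. Next I would produce a $b$-curve $\beta'$ disjoint from $\alpha$, obtained by cutting $\beta$ at a point away from $\alpha$ and rerouting it once around $X_V$, so that $\beta\cup\beta'$ cobounds a copy of $\Sigma_1^2$ whose interior contains $X_V$. Symmetrically I would produce an $a$-curve $\alpha'$ disjoint from $\beta$ by rerouting $\alpha$ through $X_W$, so that $\alpha\cup\alpha'$ cobounds a copy of $\Sigma_1^2$ containing $X_W$. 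Since the two surgeries take place in disjoint regions, we can arrange that $\alpha'\cap\beta'$ consists of a single point (inherited from $\alpha\cap\beta$) and that each of $\{\alpha,\alpha'\}$ meets each of $\{\beta,\beta'\}$ transversely in one point. Combining Lemmas~\ref{lemma:mixpure} and~\ref{lemma:transitive} with Johnson's symplectic change-of-coordinates principle, I would then find $A\in\Torelli_g$ with $A(\alpha)=\alpha$, $A(\alpha')=\alpha'$, and $A(\beta)=\beta'$, together with $B\in\Torelli_g$ satisfying $B(\beta)=\beta$, $B(\beta')=\beta'$, and $B(\alpha)=\alpha'$. By construction, $A$ is a nontrivial $\beta$-shifter inducing $H_{\Z}=V\oplus V^{\perp}$ with $V$ as its left-summand, and $B$ is a nontrivial $\alpha$-shifter inducing $H_{\Z}=W\oplus W^{\perp}$ with $W$ as its left-summand. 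Moreover, $B$ is right-compatible with $A$ (it fixes $\beta$ and $A(\beta)=\beta'$) and $A$ is right-compatible with $B$, so both $\LLBComm{A,B}$ and $\LLAComm{A,B}$ are defined and are equal in $\Lambda_g$ by Remark~\ref{remark:distinct}.

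It remains to identify the two sides with the intended generators. Letting $T'$ be the right side of $\beta\cup A(\beta)$, one can arrange $B$ to lie in $\Torelli_g(T')$ (the reroutings giving $\alpha'$ are all contained in $T'$, and the arcs of $\alpha,\alpha'$ in the left side are isotopic rel boundary since they agree outside $X_W$). Lemma~\ref{lemma:identifyomega} then identifies $\Omega_{\beta}(A,T')$ with $\wedge^2 V^{\perp}_{\Q}$ via the Johnson homomorphism of Section~\ref{section:johnson}, and the explicit formula given there shows that the class of $B$ in this quotient is exactly $\omega_W$, yielding $\LLBComm{A,B}=\LLBComm{V,\omega_W}$. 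A mirror-image calculation, with the roles of $\alpha$ and $\beta$ exchanged, identifies $\LLAComm{A,B}$ with $\pm\LLAComm{\omega_V,W}$. The sign is where the real work lies: the Johnson formula $\tau(T_\delta T_\lambda^{-1})=\pm(z_1\wedge w_1+\cdots)\wedge[\delta]$ carries an explicit $\pm$ depending on whether the auxiliary one-holed subsurface sits to the left or right of $\delta$, and the $\beta$-shifter and $\alpha$-shifter conventions of Section~\ref{section:step3notation} resolve these choices on opposite sides of $\beta\cup\beta'$ and $\alpha\cup\alpha'$ respectively; a careful bookkeeping shows that exactly one minus sign survives. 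The main obstacle is therefore this sign tracking, together with verifying that $A$ and $B$ can indeed be chosen so that their Johnson images land on $\omega_V$ and $\omega_W$ on the nose rather than modulo an uncontrolled $\alpha$- or $\beta$-fixing correction; this is handled by picking $A$ and $B$ to be bounding-pair-like elements supported on the appropriate cobordism regions and invoking the fact that any correction is absorbed by the quotient defining $\Omega_{\bullet}$.
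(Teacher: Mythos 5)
Your high-level strategy is exactly the paper's: produce a single pair $(A,B)$ for which both $\LLBComm{A,B}$ and $\LLAComm{A,B}$ are simultaneously defined, invoke Remark~\ref{remark:distinct} to conclude they are equal in $\Lambda_g$, and then identify the two sides via the Johnson homomorphism computation of \S\ref{section:johnson}. However, there are two genuine gaps.

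First, you do not pin down $A$ and $B$ concretely. Your conditions $A(\alpha)=\alpha$, $A(\alpha')=\alpha'$, $A(\beta)=\beta'$ (and similarly for $B$) do not determine the Johnson images of $A$ and $B$ --- two such mapping classes can differ by an element fixing all four curves, and such an element can have nonzero Johnson image. You acknowledge this and gesture at ``bounding-pair-like elements,'' but you never say which bounding pairs, and your geometric setup makes the natural guess problematic: since you arrange for $\beta\cup\beta'$ to cobound a genus-$1$ piece with $\HH_1=V$ (rather than $W$), the bounding pair $T_{\beta'}T_\beta^{-1}$ has Johnson image involving $\omega_V$, not the $\omega_W$ you need. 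The paper avoids this by \emph{not} requiring $A(\beta)=\beta'$; instead it sets $A=T_\alpha T_{\alpha'}^{-1}$ with $\alpha\cup\alpha'$ cobounding $X$ (so $\HH_1(X)=V$) and $B=T_{\beta'}T_\beta^{-1}$ with $\beta\cup\beta'$ cobounding $Y$ (so $\HH_1(Y)=W$), and then $A(\beta)$ and $B(\alpha)$ are \emph{new} curves (not $\beta'$ or $\alpha'$) whose left sides are determined by the supports of $A$ and $B$.

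Second, and more fundamentally, you defer the entire sign computation to ``careful bookkeeping.'' The minus sign is the whole content of the lemma beyond the formal framework of Remark~\ref{remark:distinct} and Lemma~\ref{lemma:identifyomega}; without it you cannot distinguish the claimed relation $\LLBComm{V,\omega_W}=-\LLAComm{\omega_V,W}$ from the false one $\LLBComm{V,\omega_W}=\LLAComm{\omega_V,W}$. In the paper's configuration the asymmetry is explicit: $Y$ lies to the \emph{left} of $\beta'$, giving $+\omega_W$ for $B$'s Johnson image, while $X$ lies to the \emph{right} of $\alpha$, giving $-\omega_V$ for $A$'s; the single surviving minus sign is forced by the geometry of the figure and the $\pm$ convention in the formula $\tau(T_\delta T_\lambda^{-1})=\pm(z_1\wedge w_1+\cdots)\wedge[\delta]$. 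Until you commit to an explicit pair of bounding-pair maps and actually verify the two left/right conditions, the argument does not establish the sign and is incomplete.
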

\begin{proof}
Since $V$ and $W$ are orthogonal genus-$1$ symplectic summands of $H_{\Z}$, there are 
disjoint subsurfaces $X \cong \Sigma_1^1$ and $Y \cong \Sigma_{1}^1$ of $\Sigma_g$ 
that are disjoint from $\alpha \cup \beta$ such
that $\HH_1(X) = V$ and $\HH_1(Y)=W$.  Pick curves $\alpha'$ and $\beta'$
as follows (cf.\ Example \ref{example:commex}):\\
\Figure{ExampleGenerator}
Set $A = T_{\alpha} T_{\alpha'}^{-1}$ and $B = T_{\beta'} T_{\beta}^{-1}$.  Then $A$ is a $\beta$-shifter
and $B$ is compatible with $A$, and also $B$ is an $\alpha$-shifter and $A$ is compatible with $B$:\\
\Figure{ExampleGeneratorCheck}
When you cut $\Sigma_g$ along $\beta \cup A(\beta)$ the subsurface
$X$ is to the left of $\beta$, so $A$ induces the symplectic splitting 
\[H_{\Z} = \HH_1(X) \oplus \HH_1(X)^{\perp} = V \oplus V^{\perp}.\]
Similarly, when you cut
$\Sigma_g$ along $\alpha \cup B(\alpha)$ the subsurface $Y$ is to the left of $\alpha$,
so $B$ induces the symplectic splitting $H_{\Z} = W \oplus W^{\perp}$.

Examining our proof of Lemma~\ref{lemma:identifyomega} and using the computation
of the Johnson homomorphism on a bounding pair in \S \ref{section:johnson}, we see that the fact that $Y$ is to the
left of $\beta'$ implies that the image
$B = T_{\beta'} T_{\beta}^{-1}$ in $\wedge^2 V^{\perp}_{\Q}$
is $\omega_W$.  Similarly, since $X$ is to the right of $\alpha$
the image of $A=T_{\alpha} T_{\alpha'}^{-1}$ in $\wedge^2 W^{\perp}_{\Q}$
is $-\omega_V$.  Together with the previous paragraph, this implies that (cf.\ Remark \ref{remark:distinct})
\[\LLBComm{V,\omega_{W}} = \LLBComm{A,B} = \LLAComm{A,B} = \LLAComm{-\omega_V,W} = -\LLAComm{\omega_V,W}.\qedhere\]
\end{proof}

\begin{lemma}
\label{lemma:identifygenerators2}
Let $V$ be a genus-$1$ symplectic summand of $H_{\Z}$.
Then $\LLBComm{V,\omega_{V^{\perp}}} = -\LLAComm{\omega_{V^{\perp}},V}$.
\end{lemma}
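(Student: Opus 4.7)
The plan is to reduce Lemma \ref{lemma:identifygenerators2} to the genus-$1$ case already handled by Lemma \ref{lemma:identifygenerators1}, by decomposing $V^{\perp}$ into orthogonal genus-$1$ symplectic summands and then handling the remaining sum identity with a single geometric construction.

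Since $V^{\perp}$ has genus $g-2 \geq 3$ under Assumption \ref{assumption:step3genus}, I would choose an orthogonal symplectic decomposition $V^{\perp} = W_1 \oplus \cdots \oplus W_{g-2}$ with each $W_i$ of genus $1$, so that $\omega_{V^{\perp}} = \sum_{i=1}^{g-2} \omega_{W_i}$ (cf.\ \S\ref{section:intersectionform}). Linearity in the splitting $V \oplus V^{\perp}$ (Lemma~\ref{lemma:linearityrel2}) followed by Lemma~\ref{lemma:identifygenerators1} applied to each orthogonal genus-$1$ pair $(V, W_i)$ gives
\[
\LLBComm{V, \omega_{V^{\perp}}} \;=\; \sum_{i=1}^{g-2} \LLBComm{V, \omega_{W_i}} \;=\; -\sum_{i=1}^{g-2} \LLAComm{\omega_V, W_i}.
\]
On the right-hand side, Lemma~\ref{lemma:orthogonalcomplement} (applicable since $\omega_{V^{\perp}} \in \wedge^{2}V^{\perp}_{\Q}$ and $V^{\perp}$ has genus $\geq 3$) converts $-\LLAComm{\omega_{V^{\perp}},V}$ into $\LLAComm{\omega_{V^{\perp}},V^{\perp}}$. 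Linearity in the splitting $V^{\perp} \oplus V$, a termwise reapplication of Lemma~\ref{lemma:orthogonalcomplement} (still valid since each $\omega_{W_i} \in \wedge^{2}V^{\perp}_{\Q}$), and Lemma~\ref{lemma:identifygenerators1} applied to each pair $(W_i, V)$ then yield
\[
-\LLAComm{\omega_{V^{\perp}},V} \;=\; \LLAComm{\omega_{V^{\perp}},V^{\perp}} \;=\; \sum_{i=1}^{g-2} \LLAComm{\omega_{W_i}, V^{\perp}} \;=\; -\sum_{i=1}^{g-2} \LLAComm{\omega_{W_i}, V} \;=\; \sum_{i=1}^{g-2} \LLBComm{W_i, \omega_V}.
\]
Thus the desired equality reduces to the sum identity $\sum_{i} \LLBComm{V, \omega_{W_i}} = \sum_{i} \LLBComm{W_i, \omega_V}$.

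The main obstacle will be establishing this sum identity: each individual pair $\LLBComm{V, \omega_{W_i}}$ and $\LLBComm{W_i, \omega_V}$ involves a different symplectic splitting of $H_{\Z}$, and no term-by-term equality is apparent. My plan is to exhibit a single surface relation whose $\beta$-shifter and $\alpha$-shifter interpretations, equated via Remark~\ref{remark:distinct}, produce the two sums simultaneously. Concretely, I would choose pairwise-disjoint genus-$1$ subsurfaces $X$ with $\HH_1(X) = V$ and $Y_1, \ldots, Y_{g-2}$ with $\HH_1(Y_i) = W_i$ inside $\Sigma_g \setminus (\alpha \cup \beta)$, together with bounding-pair curves $\alpha'$ cobounding a $\Sigma_{1}^2$ with $\alpha$ containing $X$, and $\beta_i'$ cobounding $\Sigma_{1}^2$ with $\beta$ containing $Y_i$. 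Taking $A = T_{\alpha} T_{\alpha'}^{-1}$ and $B$ an ordered product of the bounding pairs $T_{\beta_i'} T_{\beta}^{-1}$ (with interior conjugations chosen so that repeated application of the commutator identities in \S\ref{section:surfacerelations} splits the Johnson image cleanly), and arguing exactly as in Example~\ref{example:commex} to verify the dual compatibility of $A$ and $B$, the hard step will be the configuration and sign bookkeeping required to ensure that the $\beta$-shifter reading of $[A,B]$ yields $\sum_{i} \LLBComm{V, \omega_{W_i}}$ and the $\alpha$-shifter reading yields $\sum_{i} \LLBComm{W_i, \omega_V}$, with matching signs. Once this is established, combining with the reduction above proves the lemma.
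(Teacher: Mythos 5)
Your algebraic reductions — using linearity (Lemma~\ref{lemma:linearityrel2}), Lemma~\ref{lemma:orthogonalcomplement}, and Lemma~\ref{lemma:identifygenerators1} to push both sides into sums over a genus-$1$ decomposition $V^{\perp} = W_1 \oplus \cdots \oplus W_{g-2}$ — are all legal applications of the stated lemmas.  But the reduction does not actually make progress: the remaining sum identity $\sum_i \LLBComm{V,\omega_{W_i}} = \sum_i \LLBComm{W_i,\omega_V}$ is, after a few applications of Lemma~\ref{lemma:identifygenerators1}, formally equivalent in difficulty to the statement you started with.  In the generators-and-relations picture of $\fK(H)$ from \S\ref{section:step3proof}, the relation $\Pres{V,\omega_{V^\perp}} = \Pres{\omega_{V^\perp},V}$ is an independent family beyond linearity and $\Pres{V,\omega_W} = \Pres{\omega_V,W}$, precisely because all the manipulations you performed are reversible; so you have repackaged the problem, not simplified it.

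The sketched construction for the sum identity is where the real gap lies, and I do not think it works as stated.  If you take $B$ to be an ordered product $\prod_i T_{\beta'_i}T_{\beta}^{-1}$, then the $\alpha$-shifter reading of $[A,B]$ does not decompose term-by-term into $\LLBComm{W_i,\omega_V}$: each factor $T_{\beta'_i}T_\beta^{-1}$ carries $\alpha$ to a \emph{different} curve, so the splittings induced by the individual factors do not line up, and the interior conjugations in the commutator identity \eqref{eqn:linearcommutator} scramble the configuration in a way that is not obviously controllable.  More importantly, you have not addressed the $1<3$ obstruction that is the whole crux of the lemma: a naive $\alpha$-shifter reading would put the compatible element $A$ on the genus-$1$ side of $\alpha \cup B(\alpha)$, where Lemma~\ref{lemma:identifyomega} does not apply.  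The paper's proof handles this with a single, sharp geometric observation that is missing from your proposal: one can choose the configuration so that $\alpha'$ is \emph{isotopic} to $B(\alpha)$, and consequently $A = T_\alpha T_{\alpha'}^{-1}$ can be realized by a mapping class supported on the genus-$(g-2)$ side of $\alpha \cup B(\alpha)$ rather than the genus-$1$ side; the Johnson image of $A$ read there is then $\omega_{V^\perp}$, giving $\LLAComm{A,B} = \LLAComm{\omega_{V^\perp},V^\perp}$ and the lemma follows from Lemma~\ref{lemma:orthogonalcomplement}.  Without this isotopy trick (or a genuinely new replacement for it), your route is stuck at the same place the paper's proof explicitly warns about.
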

\begin{proof}
The proof is similar to that of Lemma~\ref{lemma:identifygenerators1}, but with a twist at the end.  Recall
that $H_{\Z}$ has genus $g-1$.
Pick disjoint subsurfaces $X \cong \Sigma_1^1$ and $Y \cong \Sigma_{g-2}^1$ of $\Sigma_g$ that are disjoint from $\alpha \cup \beta$ such
that $\HH_1(X) = V$ and $\HH_1(Y)=V^{\perp}$.  Choose curves $\alpha'$ and $\beta'$
as follows:\\
\Figure{ExampleGenerator2}
Note that unlike in the proof of Lemma~\ref{lemma:identifygenerators1}, this depicts the whole surface and not just
part of it.

Set $A = T_{\alpha} T_{\alpha'}^{-1}$ and $B = T_{\beta'} T_{\beta}^{-1}$.  Then $A$ is a $\beta$-shifter
and $B$ is compatible with $A$, and also $B$ is an $\alpha$-shifter and $A$ is compatible with $B$:\\
\Figure{ExampleGeneratorCheck2}
When you cut $\Sigma_g$ along $\beta \cup A(\beta)$ the subsurface
$X$ is to the left of $\beta$, so $A$ induces the symplectic splitting
\[H_{\Z} = \HH_1(X) \oplus \HH_1(X)^{\perp} = V \oplus V^{\perp}.\]
Similarly, when you cut $\Sigma_g$ along $\alpha \cup B(\alpha)$ the subsurface $Y$ is to the left of $\alpha$,
so $B$ induces the symplectic splitting $H_{\Z} = V^{\perp} \oplus V$.

Examining our proof of Lemma~\ref{lemma:identifyomega} and using the computation
of the Johnson homomorphism on a bounding
pair in \S \ref{section:johnson}, we see that the fact that $Y$ is to the
left of $\beta'$ implies that the image
$B = T_{\beta'} T_{\beta}^{-1}$ in $\wedge^2 V^{\perp}$
is $\omega_{V^{\perp}}$.  This implies that
\begin{equation}
\label{eqn:identifygenerators2.1}
\LLBComm{V,\omega_{V^{\perp}}} = \LLBComm{A,B}.
\end{equation}
After reading the proof of Lemma~\ref{lemma:identifygenerators1}, you might think that $\LLAComm{A,B}$ equals $\LLAComm{-\omega_V,V^{\perp}}$.
However, $\LLAComm{-\omega_V,V^{\perp}}$ is not defined since
$V$ has genus $1$ and $1 < 3$ (cf.\ Lemma~\ref{lemma:identifyomega}).  To fix this, observe that
$\alpha'$ is homotopic to $B(\alpha)$:\\
\Figure{ExampleGeneratorCheck3}
This implies that $A=T_{\alpha} T_{\alpha'}^{-1}$ can be not only be realized by a mapping class
supported on the right of $\alpha \cup B(\alpha)$, but also by a mapping class supported on the left.
Since $Y$ is to the left of $\alpha$, the image of $A=T_{\alpha} T_{\alpha'}^{-1}$ in $\wedge^2 V^{\perp}$
is $\omega_{V^{\perp}}$.  It follows that
\begin{equation}
\label{eqn:identifygenerators2.2}
\LLBComm{\omega_{V^{\perp}},V^{\perp}} = \LLBComm{A,B}.
\end{equation}
Combining \eqref{eqn:identifygenerators2.1} and \eqref{eqn:identifygenerators2.2} with
Lemma~\ref{lemma:linearity2}, we see that
\[\LLBComm{V,\omega_{V^{\perp}}} = \LLBComm{A,B} = \LLAComm{A,B} = \LLAComm{\omega_{V^{\perp}},V^{\perp}} = -\LLAComm{\omega_{V^{\perp}},V}.\qedhere\]
\end{proof}

\section{Step 3.7: proof of Theorem \texorpdfstring{\ref{maintheorem:cokernel}}{B'}}
\label{section:step3proof}

Theorem~\ref{maintheorem:cokernel} asserts that $\Lambda_g$ is a finite-dimensional algebraic representation of $\Sp(H_{\Z})$.
As we will show in this final section, the generators and relations for $\Lambda_g$ we constructed in the last few sections are enough
to force this to hold.

\subsection{Presentation theorem}

The key is a recent theorem of the authors giving a presentation for what we call the symmetric
kernel representation.  Make the following definition:

\begin{definition}
Define $\fK(H)$ to be the vector space with the following presentation:
\begin{itemize}
\item {\bf Generators}.  For all genus-$1$ symplectic summands $V$ of $H_{\Z}$ and all
$\kappa \in \wedge^2 V_{\Q}^{\perp}$, generators $\Pres{V,\kappa}$ and $\Pres{\kappa,V}$.
\item {\bf Relations}.  The following families of relations:
\begin{itemize}
\item For all genus-$1$ symplectic summands $V$ of $H_{\Z}$ and all $\kappa_1,\kappa_2 \in \wedge^2 V_{\Q}^{\perp}$
and all $\lambda_1,\lambda_2 \in \Q$, the linearity relations
\begin{align*}
\Pres{V,\lambda_1 \kappa_1 + \lambda_2 \kappa_2} &= \lambda_1 \Pres{V,\kappa_1} + \lambda_2 \Pres{V,\kappa_2} \quad \text{and} \\ 
\Pres{\lambda_1 \kappa_1 + \lambda_2 \kappa_2,V} &= \lambda_1 \Pres{\kappa_1,V} + \lambda_2 \Pres{\kappa_2,V}.
\end{align*}
\item For all orthogonal genus-$1$ symplectic summands $V$ and $W$ of $H_{\Z}$, the relation
\[\Pres{V,\omega_W} = \Pres{\omega_V,W}.\]
\item For all genus-$1$ symplectic summands $V$ of $H_{\Z}$, the relation
\[\Pres{V,\omega_{V^{\perp}}} = \Pres{\omega_{V^{\perp}},V}.\qedhere\]
\end{itemize}
\end{itemize}
\end{definition}

The actions of $\Sp(H_{\Z})$ on $H_{\Z}$ and $H$ induce an action of $\Sp(H_{\Z})$
on $\fK(H)$.  We have:\footnote{This theorem requires that $H$ has genus at least $4$, which
follows from our assumption that $g \geq 5$ (Assumption \ref{assumption:step3genus}) since $H$
is the orthogonal complement in $\HH_1(\Sigma_g;\Q)$ of $\Span{a,b}$.}

\begin{theorem}[{\cite[Theorem A.6]{MinahanPutmanRepPresentations}}]
\label{theorem:presentation}
The representation $\fK(H)$ is a finite-dimensional algebraic representation of $\Sp(H_{\Z})$.
\end{theorem}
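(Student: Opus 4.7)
The plan is to construct an $\Sp(H_{\Z})$-equivariant embedding $\phi\colon \fK(H) \hookrightarrow ((\wedge^2 H)/\Q)^{\otimes 2}$. Since $((\wedge^2 H)/\Q)^{\otimes 2}$ is a finite-dimensional algebraic representation of $\Sp(H_{\Z})$ and by \eqref{eqn:algclosed} subrepresentations of finite-dimensional algebraic representations are again finite-dimensional algebraic, establishing this embedding will immediately prove the theorem. The target $((\wedge^2 H)/\Q)^{\otimes 2}$ is the natural candidate suggested by \S \ref{section:step3relations}.

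I would define $\phi$ on generators by
\[
\phi(\Pres{V,\kappa}) = \overline{\omega_V} \otimes \overline{\kappa}, \qquad \phi(\Pres{\kappa,V}) = \overline{\kappa} \otimes \overline{\omega_V},
\]
where $\overline{(\cdot)}$ is the projection $\wedge^2 H \twoheadrightarrow (\wedge^2 H)/\Q\omega$ and $\omega \in \wedge^2 H$ is the total algebraic intersection form. Well-definedness on the three families of relations is routine to check: the linearity relations follow from bilinearity of tensor product; the relation $\Pres{V,\omega_W} = \Pres{\omega_V,W}$ sends both sides to $\overline{\omega_V}\otimes\overline{\omega_W}$; and the relation $\Pres{V,\omega_{V^\perp}} = \Pres{\omega_{V^\perp},V}$ uses the key identity $\omega_V + \omega_{V^\perp} = \omega$, which gives $\overline{\omega_V} = -\overline{\omega_{V^\perp}}$ in $(\wedge^2 H)/\Q$, so that both sides of the relation land at $-\overline{\omega_V}\otimes\overline{\omega_V}$. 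Equivariance with respect to $\Sp(H_{\Z})$ is immediate because $\omega_V$ and $\kappa$ transform naturally under symplectic transformations that carry $V$ to its image.

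The main obstacle is proving that $\phi$ is injective. The difficulty comes from the fact that $\fK(H)$ is presented with infinitely many generators $\Pres{V,\kappa}$ indexed by the infinite set of genus-$1$ symplectic summands $V$ of $H_{\Z}$, so the presentation in its raw form only exhibits $\fK(H)$ as a quotient of an infinite-dimensional induced representation $\Ind^{\Sp(H_{\Z})}_{\Stab V_0}(\wedge^2 V_0^\perp)$ (and its twin for the $\Pres{\kappa,V}$ generators). The content of the theorem is that the three displayed relations, once translated around by $\Sp(H_{\Z})$, are enough to force this induced representation to collapse onto its finite-dimensional algebraic quotient.

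My approach for injectivity would be to reduce every element of $\fK(H)$ to a canonical form supported on a fixed standard genus-$1$ summand $V_0$, and then exhibit a basis for the resulting quotient that maps bijectively onto a specific subrepresentation of $((\wedge^2 H)/\Q)^{\otimes 2}$. The reduction step exploits that the orthogonal relations $\Pres{V,\omega_W} = \Pres{\omega_V,W}$ let us swap the roles of the two slots along any pair of orthogonal genus-$1$ summands, while the self-orthogonal relations handle the degenerate case $W=V^\perp$. The crucial density-type input is that as $W$ ranges over genus-$1$ summands of $V^\perp$, the elements $\omega_W \in \wedge^2 V_\Q^\perp$ span the entire space $\wedge^2 V_\Q^\perp$ (since decomposable bivectors from symplectic pairs are a spanning set), so by linearity the orthogonal relations determine $\Pres{V, \cdot}$ on all of $\wedge^2 V_\Q^\perp$ in terms of $\Pres{\cdot, W}$-type generators. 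Iterating this identification together with an induction on the "distance" between $V$ and $V_0$ in the homogeneous space of genus-$1$ summands should push every generator into a finite-dimensional standard-form subspace, which can then be matched against an explicit basis of the image of $\phi$. This combinatorial bookkeeping — particularly verifying no extra relations are smuggled in during the reduction — is where the real work lies, and it is what the companion paper's presentation tools \cite{MinahanPutmanRepPresentations} are designed to streamline.
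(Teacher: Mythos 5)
The map $\phi$ you define is exactly the map $\Phi$ described in the paper's remark following the theorem, and your well-definedness check is correct — in particular the use of $\omega_V + \omega_{V^\perp} = \omega$ to handle the third family of relations matches the intended argument. Your spanning claim (that $\omega_W$, as $W$ ranges over genus-$1$ summands of $V^\perp$, spans $\wedge^2 V^\perp_\Q$) is also true and is a sensible ingredient. So the setup is right and matches the cited companion paper.

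However, there is a genuine gap: you do not actually prove injectivity, which is the entire mathematical content of the theorem. Surjectivity onto the image and well-definedness are routine; the substance is showing that the three stated relation families (orbit-translated over $\Sp(H_\Z)$) are a complete set of relations, i.e.\ that nothing in the kernel of the free span of the $\Pres{V,\kappa}$ and $\Pres{\kappa,V}$ survives in $\fK(H)$ beyond what $\phi$ kills. Your reduction strategy is gestured at but not carried out, and as written it has a concrete difficulty: the swap relation $\Pres{V,\omega_W} = \Pres{\omega_V,W}$ is symmetric in $V \leftrightarrow W$ (apply it with $V,W$ exchanged and you get the same identity back), so there is no obvious monotone quantity decreasing toward a fixed $V_0$; your ``induction on distance'' is undefined, and after one swap you land on second-type generators $\Pres{\omega_V,W}$ with $\kappa = \omega_V$ fixed, from which it is not clear how to reach an arbitrary $\Pres{\kappa', V_0}$. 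You acknowledge that ``verifying no extra relations are smuggled in during the reduction\dots is where the real work lies,'' which is precisely what is missing. Note that the paper itself does not prove this theorem: it is stated as a citation to \cite[Theorem A.6]{MinahanPutmanRepPresentations}, and the remark merely records that the companion paper establishes $\Phi$ is an isomorphism onto the symmetric kernel $\cK(H) = \ker\bigl(((\wedge^2 H)/\Q)^{\otimes 2} \to \Sym^2 H\bigr)$ using dedicated machinery for presentations of $\Sp$-representations. A blind proof must either reproduce that machinery or replace it with a complete argument; yours does neither.
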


\begin{remark}
In fact, what \cite{MinahanPutmanRepPresentations} proves is that $\fK(H)$ is a subquotient
of $(\wedge^2 H)^{\otimes 2}$.  More precisely, let $(\wedge^2 H)/\Q$ be the quotient
of $\wedge^2 H$ by the one-dimensional trivial representation spanned by
the element $\omega$ representing the algebraic intersection pairing.  For
$\kappa \in \wedge^2 H$, let $\okappa$ be its image in $(\wedge^2 H)/\Q$.  Then
\cite{MinahanPutmanRepPresentations} proves that the map $\Phi\colon \fK(H) \rightarrow ((\wedge^2 H)/\Q)^{\otimes 2}$
defined by
\[\Phi(\Pres{V,\kappa}) = \oomega_V \otimes \okappa \quad \text{and} \quad \Phi(\Pres{\kappa,V}) = \okappa \otimes \oomega_V\]
is a well-defined isomorphism onto its image $\cK(H)$, which \cite{MinahanPutmanRepPresentations} calls
the {\em symmetric kernel}.  It is the kernel of a
contraction $((\wedge^2 H)/\Q)^{\otimes 2} \longrightarrow \Sym^2(H)$.  See \cite{MinahanPutmanRepPresentations}
for more details.
\end{remark}

\subsection{The proof}

We close the paper by proving Theorem~\ref{maintheorem:cokernel}.

\newtheorem*{maintheorem:cokernel}{Theorem~\ref{maintheorem:cokernel}}
\begin{maintheorem:cokernel}
The vector space $\Lambda_g$ is a finite-dimensional algebraic representation of $\Sp(H_{\Z})$.
\end{maintheorem:cokernel}
\begin{proof} 
Theorem~\ref{theorem:presentation} says that $\fK(H)$ is a finite-dimensional algebraic representation of
$\Sp(H_{\Z})$, so it is enough to construct an $\Sp(H_{\Z})$-equivariant surjection 
$\rho\colon \fK(H) \rightarrow \Lambda_g$.

We define $\rho$ on generators, and then check that the corresponding map takes
relations to relations.  Let $V$ be a genus-$1$ symplectic summand of $H_{\Z}$ and let
$\kappa \in \wedge^2 V_{\Q}^{\perp}$.
We then have generators $\Pres{V,\kappa}$ and $\Pres{\kappa,V}$ for $\fK(H)$.  
Define
\[\rho(\Pres{V,\kappa}) = \LLBComm{V,\kappa} \quad \text{and} \quad \rho(\Pres{\kappa,V}) = -\LLAComm{\kappa,V}.\]
The minus sign is there to ensure:

\begin{unnumberedclaim}
The map $\rho$ takes relations to relations, and thus gives a well-defined map.
\end{unnumberedclaim}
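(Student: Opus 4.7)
The plan is to verify the claim by checking each defining relation of $\fK(H)$ in turn, then record surjectivity and equivariance as separate short observations.

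First I would handle the two linearity relations. Applying $\rho$ to
\[\Pres{V,\lambda_1 \kappa_1 + \lambda_2 \kappa_2} - \lambda_1 \Pres{V,\kappa_1} - \lambda_2 \Pres{V,\kappa_2}\]
produces exactly the first identity in Lemma \ref{lemma:linearityrel2}, and similarly applying $\rho$ to the linearity relation for $\Pres{\kappa,V}$ produces (after scaling by $-1$) the second identity in that lemma. So both linearity families are preserved.

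Next I would verify the two ``cross'' relations. For orthogonal genus-$1$ symplectic summands $V$ and $W$ of $H_{\Z}$, the image of $\Pres{V,\omega_W} - \Pres{\omega_V,W}$ under $\rho$ is $\LLBComm{V,\omega_W} + \LLAComm{\omega_V,W}$, which vanishes by Lemma \ref{lemma:identifygenerators1}. For the complement relation, the image of $\Pres{V,\omega_{V^\perp}} - \Pres{\omega_{V^\perp},V}$ is $\LLBComm{V,\omega_{V^\perp}} + \LLAComm{\omega_{V^\perp},V}$, which vanishes by Lemma \ref{lemma:identifygenerators2}. This is precisely where the sign in the definition $\rho(\Pres{\kappa,V}) = -\LLAComm{\kappa,V}$ is used; without it these two families of relations would not be respected.

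Having shown $\rho$ is well-defined, I would record surjectivity and equivariance. Surjectivity is immediate from Lemma \ref{lemma:refinedgens}, which says the elements $\LLBComm{V,\kappa}$ and $\LLAComm{\kappa,V}$ with $V$ of genus $1$ and $\kappa \in \wedge^2 V^\perp_{\Q}$ span $\Lambda_g$. For $\Sp(H_{\Z})$-equivariance, any element of $\Sp(H_{\Z}) = \Sp(H_{\Z})$ lifts to some $\phi \in (\Mod_g)_{\alpha,\beta}$; conjugation by $\phi$ sends a $\beta$-shifter $A$ inducing the splitting $V \oplus V^\perp$ to a $\beta$-shifter $\phi A \phi^{-1}$ inducing $\phi_\ast(V) \oplus \phi_\ast(V)^\perp$, and sends a compatible $B$ with class $\kappa$ to $\phi B \phi^{-1}$ with class $\phi_\ast(\kappa)$. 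Since $\phi$ acts on $\HH_2(\Torelli_g;\Q)$ and $\Lambda_g$ through $\Sp(H_{\Z})$, Lemma \ref{lemma:splittingdependence} combined with the definitions of $\LLBComm{V,\kappa}$ and $\LLAComm{\kappa,V}$ yields $\phi_\ast \LLBComm{V,\kappa} = \LLBComm{\phi_\ast V, \phi_\ast \kappa}$ and likewise for $\LLAComm{\kappa,V}$, matching the $\Sp(H_{\Z})$-action on $\fK(H)$.

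With $\rho\colon \fK(H) \twoheadrightarrow \Lambda_g$ in hand, Theorem \ref{theorem:presentation} implies $\Lambda_g$ is a quotient of a finite-dimensional algebraic representation of $\Sp(H_{\Z})$, and quotients of algebraic representations are algebraic, completing Theorem \ref{maintheorem:cokernel}. I do not anticipate a serious obstacle: the hard work is already in Lemmas \ref{lemma:linearityrel2}, \ref{lemma:identifygenerators1}, \ref{lemma:identifygenerators2}, and \ref{lemma:refinedgens}; the only delicate point is the sign convention in the definition of $\rho$, which must be chosen so that both Lemmas \ref{lemma:identifygenerators1} and \ref{lemma:identifygenerators2} translate into valid identities simultaneously.
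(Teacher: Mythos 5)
Your verification of the claim is correct and matches the paper's proof exactly: linearity is handled by Lemma~\ref{lemma:linearityrel2}, and the two omega-relations are handled by Lemmas~\ref{lemma:identifygenerators1} and~\ref{lemma:identifygenerators2}, with the sign in $\rho(\Pres{\kappa,V}) = -\LLAComm{\kappa,V}$ chosen precisely so that both identities translate as you describe. The additional discussion of surjectivity and equivariance goes beyond the unnumbered claim itself, but it too agrees with how the paper finishes the proof of Theorem~\ref{maintheorem:cokernel}.
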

\begin{proof}[Proof of claim]
It follows from Lemma~\ref{lemma:linearityrel2} that $\rho$ respects the linearity relations, so we must check the other two:
\begin{itemize}
\item Let $V$ and $W$ be genus-$1$ symplectic summand of $H_{\Z}$ with $W \subset V^{\perp}$.  
We can then apply Lemma~\ref{lemma:identifygenerators1} to see that
\end{itemize}
\[\rho(\Pres{V,\omega_W}) = \LLBComm{V,\omega_W} = -\LLAComm{\omega_V,W} = \rho(\Pres{\omega_V,W}).\]
\begin{itemize}
\item Let $V$ be a genus-$1$ symplectic summand of $H_{\Z}$.  We
can apply Lemma~\ref{lemma:identifygenerators2} to see that
\end{itemize}
\[\rho(\Pres{V,\omega_{V^{\perp}}}) = \LLBComm{V,\omega_{V^{\perp}}} = -\LLAComm{\omega_{V^{\perp}},V}  = \rho_2(\Pres{\omega_{V^{\perp}},V}).\qedhere\]
\end{proof}

The map $\rho$ is $\Sp(H_{\Z})$-equivariant by construction, and is surjective since its
image contains all the generators for $\Lambda_g$ identified by
Lemma~\ref{lemma:refinedgens}.  The theorem follows.
\end{proof}

\end{document}